\def\imod#1{\allowbreak\mkern10mu{\operator@font mod}\,\left(#1\right)}
\newcommand{\defeq}{\mathrel{:\mkern-0.25mu=}}
\newcommand{\eqdef}{\mathrel{=\mkern-0.25mu:}}
\newtheorem{thm}{Theorem}[section]	
\newtheorem{lemma}[thm]{Lemma}
\newtheorem{claim}[thm]{Claim}
\newtheorem{cor}[thm]{Corollary}
\newtheorem{conjecture}[thm]{Conjecture}
\theoremstyle{definition}
\newtheorem{rem}[thm]{Remark}
\newtheorem{definition}[thm]{Definition}
\newtheorem{fact}[thm]{Fact}
\newtheorem{remark}[thm]{Remark}
\newtheoremstyle{named}{}{}{\itshape}{}{\bfseries}{.}{.5em}{#1\thmnote{#3}}
\theoremstyle{named}
\newtheorem*{namedtheorem}{}
\numberwithin{equation}{section}
\def\N{\mathbb{N}}
\def\Z{\mathbb{Z}}
\def\Q{\mathbb{Q}}
\def\R{\mathbb{R}}
\def\P{\mathbb{P}}
\def\E{\mathbb{E}}
\def\F{\mathbb{F}}
\def\1{\mathbbm{1}}
\def\BB{\mathbb}
\def\X{\mathcal{X}}
\def\O{\mathbf{O}}
\def\o{\mathbf{o}}
\def\L{\mathcal{L}}
\def\S{\mathcal{S}}
\def\I{\mathbb{I}}
\def\U{\textsc{Uniform}}
\def\Li{\operatorname{Li}}
\def\ord{\operatorname{ord}}
\def\refp#1{\ref{#1}~(p.~\pageref{#1})}
\algnewcommand{\IIf}[1]{\State\algorithmicif\ #1\ \algorithmicthen}
\algnewcommand{\EndIIf}{\unskip\ \algorithmicend\ \algorithmicif}	
\let\originalleft\left
\let\originalright\right
\renewcommand{\left}{\mathopen{}\mathclose\bgroup\originalleft}
\renewcommand{\right}{\aftergroup\egroup\originalright}
\begin{document}
\title{Rigorous Analysis of a Randomised Number Field Sieve}

\author[Lee]{Jonathan D. Lee}
\address[Lee]{Mathematical Institute, University of Oxford, UK \& Microsoft Research Redmond}
\ead{jonathan.lee@merton.ox.ac.uk, jonatlee@microsoft.com}
\author[Venkatesan]{Ramarathnam Venkatesan}
\address[Venkatesan]{Microsoft Research India \& Redmond}
\ead{venkie@microsoft.com}

\begin{keyword}
Factoring, Probabilistic Combinatorics, Additive Number Theory
\MSC[2010]{11Y05 (primary); 11-04, 05D40, 60C05 (secondary)}
\end{keyword}
\renewcommand*{\today}{June 27, 2017}



\date{\today}

\begin{abstract}
Factorisation of integers $n$ is of number theoretic and cryptographic significance. The Number Field Sieve (NFS) introduced circa 1990, is still the state of the art algorithm, but no rigorous proof that it halts or generates relationships is known. We propose and analyse an explicitly randomised variant.
For each $n$, we show that these randomised variants of the NFS and Coppersmith's multiple polynomial sieve find congruences of squares in expected times matching the best-known heuristic estimates.
\end{abstract}

\maketitle

\setcounter{tocdepth}{1}
\tableofcontents

\section{Introduction}
For real numbers $a, b, x$, we write
\[
L_x\left(a, b\right) = \exp\left(b\left(\log x\right)^a\left(\log \log x\right)^{1-a}\right).
\]
To factor $n$, modern factoring algorithms first find a congruence of squares $x^2 = y^2 \imod{n}$, which is hopefully not trivial in the sense $x \neq \pm y \imod{n}$, and next compute $\gcd(x \pm y, n)$ to obtain factors of $n$. Hence the runtime analysis is devoted to the first part and studied actively~\cite{BuhlerLenstraPomerance, PomeranceICM, Tetali, BBM-squares, QS-Pomerance, dixon, Vallee, SSL,PomeranceSieve}, while the second part has been elusive and heuristic with the exception of variants of Dixon’s algorithm and the class group algorithm. In the subsequent, we introduce a randomised variant of the Number Field Sieve and provide an unconditional analysis on the first part, and provide evidence that the factors so obtained are non-trivial. In particular:
\begin{namedtheorem}[Theorems \refp{NFS-L13-analysis} and \refp{NFS-fruitfull-0-1/2}]
There is a randomised variant of the Number Field Sieve which for each $n$ finds congruences of squares $x^2 = y^2 \imod{n}$ in expected time:
\[
L_n\left(\frac{1}{3}, \sqrt[3]{\frac{64}{9}} + \o(1)\right) \simeq L_n\left(\frac{1}{3}, 1.92299\ldots + \o(1)\right).
\]
These congruences of squares are not trivially of the form $x = \pm y$: conditional on a mild character assumption (Conjecture~\refp{char-decor}), for $n$ the product of two primes congruent to $3$ mod $4$, the factors of $n$ may be recovered in the same asymptotic run time.
\end{namedtheorem}

We use a probabilistic technique, which we term \emph{stochastic deepening}, to avoid the need to show second moment bounds on the distribution of smooth numbers.
These results can be shown to extend to Coppersmith's multiple polynomial sieve of \cite{coppersmithMPS}, a randomised variant of which finds congruences of squares modulo $n$ in expected time:
\[
L_n\left(\frac{1}{3}, \sqrt[3]{\frac{92 + 26\sqrt{13}}{27}} + \o(1)\right) \simeq L_n\left(\frac{1}{3}, 1.90188\ldots + \o(1)\right).
\]

Part of the randomisation is similar to the polynomial selection algorithm of Kleinjung~\cite{Kleinjung}, which is popular in empirical studies, in that we add an $(X-m)R(X)$ to the field polynomial where $m$ is the root of that polynomial in $\Z/n\Z$. Kleinjung chooses $m$ and $R$ to minimise certain norms and improve smoothness, whilst our $R$ is random.

Integer factorisation is of fundamental importance both in algorithmic number theory and in cryptography. In the latter setting, it is especially important to have effective bounds on the run time of existing algorithms, as many existing systems depend on being able to produce integers whose factorisations will remain unknown for decades, even allowing for the rapid increases in the cost-effectiveness of computational hardware. For example, an understanding of the factoring of numbers $n$ with $\log_2 n \approx 4096$ is important in practice, while the public record for a factorisation of a general number stands at $\log_2 n \approx 768$. A uniform and effective bound will be useful in understanding the run time as $\log_2 n$ increases. While our methods apply to general composites, in applications there is particular interest in factoring \emph{semiprimes}, integers with two prime factors of nearly equal size, which are considered to be the most challenging type of integer to factor.

The Number Field Sieve (NFS) has been the state of the art algorithm for factorisation since its introduction nearly three decades ago~\cite{BuhlerLenstraPomerance}. Unfortunately, its analysis has been thus far entirely heuristic~\cite{PomeranceSieve}, with the claimed run time on an input $n$ of $L_n\left(\frac{1}{3}, \sqrt[3]{\frac{64}{9}} + \o(1)\right)$. This became of practical importance in the mid 1990s when it bettered the (also heuristic) $L_n\left(\frac{1}{2}, 1+\o(1)\right)$ run time of the previous champion Quadratic Sieve.

It is a priori unclear how to argue that the NFS even halts~\cite{LenstraPC}. Even assuming standard conjectures (e.g.; GRH), there is no analysis that any substantial part of the NFS will halt. In particular, the NFS and other algorithms critically depend on the existence of sufficient numbers of smooth elements among rational or algebraic integers on certain linear forms, which cannot be guaranteed in current algorithms. Similarly, in implementations the NFS cannot assure the reduction from smooth relations to a congruence of squares, because ideal factorisation is avoided in favour of Adleman's approach based on characters. Our explicit randomisation allows us to get around these problems by analysing the average case as opposed to the worst case, influenced by the recent works on distribution of smooths on arithmetic progressions~\cite{Soundararajan,G1,G2,Harper} and the philosophy that sums of arithmetic functions are essentially determined by the part over smooths~\cite{Gran-Sound-large,Zhang}. In short, we make essential use and strengthening of these tools as well as probabilistic combinatorics, and it may explain why no analysis was available earlier.

The fastest algorithms with known rigorous analysis are unfortunately much slower, with the best result being $L_n\left(\frac{1}{2}, 1+\o(1)\right)$~\cite{SSL}, where the basic operations are performed in the class group on quadratic forms; they also show that hazarding new conjectures that seem necessary to formally analyse run times can be risky, as they may formally contradict earlier natural conjectures. In this paper, we will present and analyse an explicitly randomised version of the NFS. We will show bounds on the expectation of the time taken to produce \emph{congruences of squares}
\[
(x, y) : x^2 \equiv y^2 \imod{n}.
\]
These bounds will be of form
\[
L_n\left(\frac{1}{3}, \boldsymbol{\Theta}(1)\right),
\]
and are the first time that bounds of this type have been obtained for any factorisation algorithm. To obtain sharper estimates for the $\Theta(1)$ term, we use randomness to remove dependence on second moment bounds for which proofs known to us use the Riemann Hypothesis. This is analogous to the situation between the Miller and Miller-Rabin primality tests. 

Historically, there has been a close link in the sieving aspects of integer factorisation and the discrete logarithm problems. The NFS, along with many other factorisation algorithms, has an identically named analogue for computing discrete logarithms. For the discrete logarithm in small characteristic, recent breakthrough results~\cite{Joux,barbulescu2014heuristic} have suggested that much faster algorithms exist. We will not touch on an analysis of this algorithm for the discrete logarithm in this paper.

We provide a conditional analysis of whether the congruences of squares will be \emph{fruitful}, that is whether they yield a non-trivial factorisation of $n$. In the specific case that $n = pq$ is semiprime with $p \equiv q \equiv 3 \imod{4}$, and modulo a character decorrelation conjecture, we are able to show that the factors are non-trivial with probability $1/2$. As the conjecture may indicate, the analysis of this fruitfulness seems involved and likely to require methods that are substantially different from the initial analysis of relationship formation. For example, the analysis of Pollard's Rho algorithm for the discrete logarithm, the run time for forming relationships was shown to be $\sqrt{p}\log^3 p$ \cite{MillerVenkatesan} using characters and quadratic forms; this was later improved to be optimal up to constant factors by Kim, Montenegro, Peres and Tetali \cite{KimMontenegraoPeresTetali} using combinatorial methods. However, the known proof that the relations are fruitful \cite{MillerVenkatesan2} still uses analytic methods with a substantially more complex analysis. For the Number Field Sieve we expect that the analysis will be even more arduous.

\subsection{Combinations of Congruences}\leavevmode

All modern factoring algorithms have core similarities, and are referred to as \emph{combinations of congruences} algorithms to draw attention to this fact. To present the main ideas involved, we will discuss Dixon's random squares algorithm. The first observation, due to Fermat, is that
\[
x^2 \equiv y^2 \imod{n} \Leftrightarrow n \mid x^2 - y^2 = (x+y)(x-y)
\]
and if we are lucky we may find that $n$ does not divide $x\pm y$; in this case $\gcd(n, x+y)$ is a non-trivial factor of $n$. We can generate pairs
\[
(x_i, z_i) : x_i^2 \equiv z_i \imod{n},
\]
by choosing $x_i$ at random to be an integer in $[n]$ and setting $z_i = x_i^2 \imod{n}$. Then to produce a pair $(x, y)$ it suffices to find a subset $S$ of the $z_i$ whose product is a square in $\Z$. We note that even the problem of finding how large a random subset of $[n]$ must be to contain a subset $S$ whose product is a square is of substantial independent interest~\cite{PomeranceICM,Tetali}.
The main step is to search for $B$-\emph{smooth} $z_i$:
\[
z_i : p \text{ prime}, p | z_i \Rightarrow p < B \qquad\Leftrightarrow\qquad
z_i = \prod_{p_j < B}p_j^{e_{i,j}}, \; p_j\text{ prime}.
\]
If all the $z_i$ are $B$-smooth then a product $z_i^{s_i}$ is square if and only if
\[
\forall j,\; 2 | \sum_i s_i e_{i,j} \quad\Leftrightarrow\quad s\in \ker_{\F_2}(E)
\]
where $E=(e_{i,j})$ and $s$ is a column vector of $s_i$, which can be found by standard means whenever it exists.
This calculation with indices $e_{i,j}$ is what gives this class of algorithm its name.
Once a relationship $
x^2 \equiv y^2 \imod{n}
$
has been found, we compute $\gcd(x\pm y, n)$ and hope that at least one is a non-trivial factor of $n$; in this case we say that the congruence is \emph{fruitful}.

Hence to have a functional algorithm it suffices to have methods for finding $B$-smooth values of the $z_i$. Analysis of the run time additionally requires some estimate of the probability that $z_i$ is $B$-smooth. At a high-level, we can see that as $B$ is increased, the density of $B$-smooth integers increases, whilst the number of $B$-smooth $z_i$ we will need to find to guarantee that a vector $s_i$ exists will also increase. These two effects are balanced when $B = L_n\left(\frac{1}{2}, \boldsymbol{\Theta}(1)\right)$. For Dixon's algorithm, this choice results in a run time of $L_n\left(\frac{1}{2}, 2\sqrt{2} + \o(1)\right)$ (see Corollary~\refp{CEP-cor} with $b=1$ and $a= \frac{1}{2}$).

Various modifications of this core algorithm exist. One line of modifications is to keep track of $z_i$ which are \emph{almost} $B$-smooth, in the sense of having few factors which are too large, hoping to combine them later to find $B$-smooth numbers lying under a square in $\Z$~\cite{LargePrime}. Another approach is to attempt to make the numbers $z_i$ smaller, since heuristically the density of $B$-smooth numbers is decreasing in $|z_i|$. This is the core idea in Vall\'ee's algorithm \cite{Vallee}, which can be rigorously shown to have a run time of $L_n\left(\frac{1}{2}, \sqrt{4/3} + \o(1)\right)$. The Quadratic Sieve reduces the size of the $z_i$ to be $n^{\frac{1}{2} + \o(1)}$ by choosing $x_i \simeq \sqrt{n}$, and achieves a heuristic run time of $L_n\left(\frac{1}{2}, 1+\o(1)\right)$, though its analysis seems out of reach.

Observe that in all of these algorithms, we use combinations of congruences to find a product of the $z_i$ which is a square $y^2$, but ensure that the associated product of $x_i^2$ is a square by ensuring that each relation $x_i^2 \equiv z_i \imod{n}$ has a square on the left-hand side. Further gains are made by relaxing this condition, so that we find both $x$ and $y$ as a result of combining congruences. For example, the Schorr-Seysen-Lenstra algorithm~\cite{SSL} shifts its attention from square integers to quadratic forms with one coefficient smooth and of discriminant $-dn$ for small values of $d$, and is able to achieve an expected run time of $L_n\left(\frac{1}{2}, 1+\o(1)\right)$.

\subsection{The Number Field Sieve}\leavevmode

In the NFS, we instead observe that there are rings other than $\Z$ lying over $\Z/n\Z$. In particular, if we are given a monic polynomial $f$ with a root modulo $n$ at some integer $m$, we can form the following commuting diagram:
\begin{figure}[!htb]
\centering
\begin{tikzpicture}[scale=1.5]
\node (A) at (0.7,1.4) {$\Z[X]$};
\node [right] (B) at (1.2,0.7) {$\Z[X] / \left(f\right)$};
\node [left] (C) at (0.2,0.7) {$\Z$};
\node (D) at (0.7,0) {$\Z/n\Z$};
\node (F) at (0.7,0.7) {$\circ$};
\path[->,font=\scriptsize]
(A) edge node[right]{$\imod{f}$} (B.north west)
(A) edge node[left]{$x \rightarrow m$} (C.north east)
(B.south west) edge node[right]{$X \rightarrow m, \imod{n}$} (D)
(C.south east) edge node[left]{$\imod{n}$} (D);
\end{tikzpicture}
\end{figure}

If $f$ is reducible, we may directly extract factors of $n$, and so we may assume $f$ is irreducible. We can identify values of $\Z/n\Z$ which are squares mod $n$ either by virtue of each of them lying under a square in $\Z$ or a square in $\Z[X]/(f)$. The second ring is then a subset of the ring of integers of the number field $\Q[X]/(f)$; on the ring of integers we have a notion of divisibility into \emph{prime ideals}, a notion of size via the \emph{field norm}, and thus we can define a natural analogue of $B$-smoothness.

In the NFS, we choose linear polynomials in $\Z[X]$ with coefficients of size at most $L_n\left(\frac{1}{3}, \boldsymbol{O}(1)\right)$, and project them into both $\Z$ and $\Z[X]/(f)$. We then hope to find many polynomials such that both projections are $B$-smooth. Then we use linear algebra to find a subset whose product is square in $\Z$ and also square in $\Z[X]/(f)$. Then we take square roots in both rings, and project the roots down to $\Z/n\Z$ to produce a congruence of squares.

In practice, the NFS is rather more complex, as factorisation in the ring of integers of $\Q[X]/(f)$ is complicated to work with. Substantial extra bookkeeping needs to be done with characters of large conductor on the number field to guarantee that the square we find has a root in $\Z[X]/(f)$ rather than in the larger ring of integers. However, the gains are substantial. With optimal choice of parameters, both $m$ and the values that we need to be smooth are of size $L_n\left(\frac{2}{3}, \boldsymbol{O}(1)\right)$. Assuming that all the numbers behave as independent uniformly random integers of this size and optimising $B$ yields a run time of
$
L_n\left(\frac{1}{3}, \boldsymbol{\Theta}(1)\right),
$
which is much smaller asymptotically than the other algorithms provide. In practice, the NFS is the fastest known algorithm for factoring numbers in excess of 100 digits.

In the NFS as usually implemented, there is a fixed choice of the polynomial $f$ for each $m$. Additionally, the additional bookkeeping needed on the number field side is standardised. Both of these choices make the NFS very rigid, and a proper analysis would seem to require precisely understanding the distribution of smooth numbers on curves of high degree. Our modification, the \emph{Randomised NFS}, carefully randomises the coefficients of $f$, and chooses the extra bookkeeping characters stochastically. This allows us to reduce the required analysis to an understanding of the average distribution of smooth numbers along arithmetic progressions.

\section{Our Results}\leavevmode

We introduce and analyse a variant we call the ``Randomised NFS'', which provides more
easily controllable behaviour on average.
We heavily use a combination of methods of probabilistic combinatorics and
analytic aspects of number theory, touching on a range of topics.
%
\begin{thm} \label{NFS-L13-analysis}
For \emph{any} $n$, the Randomised NFS runs in \emph{expected} time:
\[
L_n\left(\frac{1}{3},\sqrt[3]{\frac{64}{9}}+\o\left(1\right)\right),
\]
and produces a pair $x,y$ with $x^2=y^2 \mod n$.
\end{thm}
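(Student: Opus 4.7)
The plan is to track the heuristic NFS analysis step by step and replace each probabilistic assumption by a rigorous average-case statement taken over the new randomness. After choosing $d \asymp (\log n/\log\log n)^{1/3}$, a smoothness bound $B = L_n(1/3,\beta)$, a sieve box side $M = L_n(1/3,\mu)$, and $m = \lfloor n^{1/d}\rfloor$, I would sample $f(X) = f_0(X) + (X-m)R(X)$, where $f_0$ is a fixed base polynomial of degree $d$ with $f_0(m) \equiv 0 \imod{n}$ and $R$ is a random polynomial whose coefficients are uniform in a range chosen so that every coefficient of $f$ is of size $L_n(2/3, \O(1))$. We then sieve coprime $(a,b) \in [-M,M]^2$ and retain relations for which both $a - bm$ and the homogeneous norm $F(a,b) = b^{\deg f} f(a/b)$ are $B$-smooth.

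The first task is to show that, in expectation over $R$, each candidate $(a,b)$ contributes a relation with probability at least $L_n(1/3,-\delta)$ matching the heuristic. The rational side is handled by uniform Canfield--Erd\H{o}s--Pomerance estimates applied to $a-bm$. The algebraic side is where the randomisation pays off: for fixed $(a,b)$ the value $F(a,b) \imod{p}$ is nearly equidistributed in $\Z/p\Z$ as $R$ varies, for essentially all primes $p < B$; this reduces the expected algebraic-smoothness probability to the average-case distribution of smooths in arithmetic progressions, for which unconditional results of the cited type (Soundararajan, Granville, Harper) are available. To circumvent the variance bounds that would otherwise require the Riemann Hypothesis, I would apply \emph{stochastic deepening}: run the sieve in geometric rounds of doubling size, with fresh randomness each round, and halt once enough relations are collected. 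A per-round Markov inequality shows the expected number of rounds is $\O(1)$, so the total expected cost is controlled entirely by the first moment.

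Balancing $\beta$ and $\mu$ against the optimal $d$ yields the claimed runtime $L_n(1/3, (64/9)^{1/3} + \o(1))$. To actually output $x^2 \equiv y^2 \imod{n}$, I would extend the $\F_2$ exponent-parity matrix by columns given by quadratic characters at a random collection of auxiliary primes $\mathfrak{q}$ on the number field side, in the style of Adleman. A kernel vector then yields a subset of relations whose product is both a square in $\Z$ and a square in the subring $\Z[\alpha]$ — not merely in the full ring of integers of $\Q(\alpha)$; extracting square roots in both rings and projecting via the commuting diagram delivers $(x,y)$. The main obstacle is the equidistribution-plus-averaging step: making the expected smoothness rate per trial match the heuristic unconditionally requires both showing that the randomised $f$ spreads $F(a,b)$ well enough across residue classes modulo nearly all small primes, and quoting average-case smoothness-on-AP estimates strong enough to be summed over the full sieve without degrading the optimal constant $\sqrt[3]{64/9}$. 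The downstream steps — linear algebra, square-root extraction in $\Z$ and $\Z[\alpha]$, and reduction mod $n$ — then follow by standard if intricate computations.
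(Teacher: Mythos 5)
Your high-level plan matches the paper's architecture closely: randomise $f$ by adding a random multiple of $(X-m)$ to a base polynomial, show that the algebraic side value becomes nearly equidistributed along an arithmetic progression of common difference $a-mb$, appeal to unconditional smooth-on-AP estimates, add random quadratic characters on the number field side in Adleman's style, and fold the whole thing into an expected-time analysis. However, there is a genuine gap in the piece of the argument you describe as ``per-round Markov inequality shows the expected number of rounds is $\O(1)$.'' Markov's inequality bounds the upper tail of a non-negative random variable; it does not give a lower bound on the probability that the number of relations found in a given search exceeds a target, which is what you need. Without a second-moment bound (unavailable unconditionally, which is precisely the obstruction the paper is designed to evade), the first moment of $|\X_{n,m,f}|$ over $(m,f)$ does not control how concentrated it is. The paper's stochastic deepening is instead a two-dimensional breadth-versus-depth search justified by a pigeonhole lemma (Lemma~\ref{expectation-bounds}): if $\E(X)=\mu$ and $0\le X\le K\mu$, then for some $i\le\lceil\log_2 K\rceil$ we have $\P\left(X\ge 2^i\mu/(1+\lceil\log_2K\rceil)\right)\ge 2^{-i-1}$. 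The algorithm then tries, for each $i$, $2^i$ fresh pairs $(m,f)$ each searched to depth $2^{-i}$ times the nominal depth; this costs a multiplicative $\log$ factor that is absorbed into the $\o(1)$. A single geometric doubling of the sieve box does not implement this trade-off and cannot handle the possibility that most $(m,f)$ yield almost no relations while a rare few carry the entire first moment.

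Two smaller but also substantive points. First, on the character step: in the unconditional analysis the auxiliary characters must come from prime ideals of much larger norm (with $\log\mathbf{N}(\mathfrak{p})$ of size $L_n(1/3)$) than the ``just above $B$'' primes of the standard NFS, and even then a small exceptional set of $\O(\log\log n)$ classes in $H$ may be indistinguishable due to possible Siegel zeros of the quadratic extensions $L_h/\Q$; the algorithm must compensate by producing $\O(\log\log n)$ candidate squares and pairing them up. Second, the characters only certify that the product is a square in $\mathcal{O}_K$, not in $\Z[\alpha]$; one must multiply through by $f'(\alpha)^2$ to guarantee a square root in $\Z[\alpha]$ before evaluating at $m$. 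Your phrase ``a square in the subring $\Z[\alpha]$ — not merely in the full ring of integers'' elides this correction, which is needed for the final congruence to be computable.
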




\begin{rem}
We note the importance of the algorithm under discussion being a variant of the NFS. Whilst it is trivial to generate pairs $(x,y)$ such that $x^2 = y^2 \imod{n}$ by taking $x = \pm y \imod{n}$, it is non-trivial to find sub-exponential algorithms that could in principle produce a congruence of squares where $x \neq \pm y \imod{n}$. As in the standard NFS, the entirety of the run time is devoted to finding congruences of squares, as the recovery of a (potentially trivial) factor of $n$ amounts to a trivial gcd calculation. By convention, these algorithms are run repeatedly until a non-trivial factor is found, using independent internal coinflips on each run. The general belief is that NFS type algorithms will not always output trivial factors of $n$ (see Remark~\ref{NFS-conj-rem}), and hereafter we refer to the dominant computation as finding the \emph{congruence} without further comment.
\end{rem}

We also present a partial result on the fruitfulness of the congruences.
\begin{thm}\label{NFS-fruitfull-0-1/2}
For a fixed $n$ semiprime with both prime factors congruent to $3\imod{4}$, conditional on Conjecture~\refp{char-decor} the Randomised NFS finds a pair $x,y$ such that $x^2=y^2 \imod{n}$ and $x \neq \pm y \imod{n}$ in expected time $L_n\left(\frac{1}{3}, \sqrt[3]{\frac{64}{9}} + \o(1)\right)$.
\end{thm}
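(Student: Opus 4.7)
The plan is to combine Theorem~\ref{NFS-L13-analysis} with a decorrelation argument for Legendre-symbol signatures. Theorem~\ref{NFS-L13-analysis} already delivers a congruence of squares $x^2 \equiv y^2 \imod{n}$ in the claimed expected time; it therefore suffices to show, conditionally on Conjecture~\ref{char-decor}, that each such congruence is fruitful with probability bounded below by an absolute positive constant. Since the algorithm's coin flips are independent across runs, repetition then preserves the asymptotic runtime.

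Write $n = pq$ with $p \equiv q \equiv 3 \imod{4}$, and let $\chi_\ell$ denote the quadratic character modulo $\ell$. The hypothesis forces $\chi_p(-1) = \chi_q(-1) = -1$, which gives a clean Legendre-symbol criterion: setting $u = xy^{-1} \imod{n}$ (and discarding the event $\gcd(y,n) > 1$, which either already exposes a prime factor or reduces to the trivial case $x \equiv y \equiv 0 \imod{n}$), we have $u \equiv \pm 1 \imod{\ell}$ for each $\ell \in \{p,q\}$, and the congruence is fruitful precisely when these two signs disagree. Equivalently, fruitfulness is the event
\[
\chi_p(x)\chi_p(y) \neq \chi_q(x)\chi_q(y).
\]
The task therefore reduces to showing that the joint distribution of $(\chi_p(xy), \chi_q(xy)) \in \{\pm 1\}^2$, over the algorithm's randomness, is within $\o(1)$ of uniform on $\{\pm 1\}^2$.

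The next step is to express each sign in terms of the randomised inputs. The output pair arises from a kernel vector $s \in \ker_{\F_2}(E)$ of the relation matrix: $y$ is the rational product $\prod z_i^{s_i}$ over the smooth relations selected by $s$, while $x$ is the image under $X \mapsto m \imod{n}$ of a square root in $\Z[X]/(f)$ of the corresponding algebraic-side product, with Adleman-style bookkeeping characters enforcing that the algebraic product is indeed a perfect square. Consequently $\chi_\ell(xy)$ factors as a product, indexed by the support of $s$, of character values on the individual smooth relations, multiplied by a contribution from the bookkeeping characters; this yields an explicit multiplicative expression in quantities governed by the random auxiliary polynomial $R$, the random root $m$, and the stochastically chosen bookkeeping characters. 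Conjecture~\ref{char-decor} is invoked here to assert that the pair $(\chi_p,\chi_q)$, evaluated along the random configuration of relations contributing to $s$, asymptotically decorrelates. Granted this, the joint distribution of the two signature products approaches uniform on $\{\pm 1\}^2$, so fruitfulness occurs with probability $\tfrac{1}{2} - \o(1)$, and the expected runtime bound follows.

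The main obstacle will be verifying that the kernel vector $s$, which is a deterministic function of the random matrix $E$, does not inherit algorithmic correlations that obstruct the application of Conjecture~\ref{char-decor}. Concretely, one must show that, even after conditioning on $E$ and on the specific $s$ chosen, enough residual randomness remains --- coming from the random $R$, the random $m$, and the random bookkeeping characters --- for the conjecture to imply decorrelation of $(\chi_p(xy),\chi_q(xy))$. This is precisely where the Randomised NFS departs from the classical NFS: the stochastic choice of bookkeeping characters is essential, since without it no such residual randomness would be available. Once this structural reduction to a mixed character-sum estimate is in place, Conjecture~\ref{char-decor} closes the argument and the theorem follows.
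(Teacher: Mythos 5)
Your identification of the fruitfulness criterion is correct and matches the paper: with $p \equiv q \equiv 3 \imod 4$ one has $\chi_p(-1) = \chi_q(-1) = -1$, so the congruence is fruitful iff $\chi_n(xy) = \chi_p(xy)\chi_q(xy) = -1$. But the mechanism you propose for showing this happens with the right probability is not what Conjecture~\ref{char-decor} provides, and the gap you flag at the end is exactly the part that goes unresolved.

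Conjecture~\ref{char-decor} is not a probabilistic decorrelation statement about the pair $(\chi_p(xy),\chi_q(xy))$ mixing toward uniform on $\{\pm 1\}^2$. It is a deterministic $\F_2$-linear-independence statement: the paper constructs a character $\chi_{\mathcal{P}}(h) = \chi_n(\sqrt{h(m)})\,\chi_n(\phi_{m,\alpha}(\sqrt{h(\alpha)}))$ on the semigroup $\mathcal{P}_S$ of smooth double-square polynomials, and the conjecture asserts that $\chi_{\mathcal{P}}$, restricted to $\mathcal{P}_S$, does not lie in the $\F_2$-span of the characters $\chi_{\mathfrak{p}}$ and the parity maps $(-1)^{\ord_p(\cdot)}$, $(-1)^{\ord_{\mathfrak{p}}(\cdot)}$ already used in the linear algebra step. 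Fruitfulness of the output $P$ is precisely $\chi_{\mathcal{P}}(P) = -1$, i.e.\ solving a \emph{non-homogeneous} system, and the conjecture says this extra equation is not a consequence of the homogeneous ones. It is a property of the fixed $n$, $m$, $f$, not something that emerges from the randomness of $R$, $m$, or the stochastic bookkeeping characters as you suggest in your final paragraph. Your reading would require a different (and unstated) hypothesis.

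Given the conjecture in its actual form, the paper's proof is a short linear-algebraic argument and is \emph{not} a mixing argument: since $\chi_{\mathcal{P}}$ is nonzero as a functional on the joint kernel of the bookkeeping characters inside $\mathcal{P}_S$, it takes the value $-1$ on exactly a codimension-one affine subspace, i.e.\ on exactly half of that kernel. To make the sampling uniform, the Randomised NFS is run to depth $L_n(\frac{1}{3}, 2\sigma + \o(1))$ so that \emph{every} smooth pair $(a,b)$ is found, hence every generator of $\mathcal{P}_S$ is present, and then Wiedemann's algorithm is used to sample the kernel uniformly. That yields an exact success probability of $\frac{1}{2}$ per run, with no $\o(1)$ loss and no further appeal to residual randomness. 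Your proposal correctly reduces to the right character condition, and correctly senses that the kernel-vector step is the crux, but the decorrelation framing does not match the conjecture, and you stop short of the step --- exhaustive relation generation plus uniform kernel sampling --- that actually closes the argument.
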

\begin{rem}\label{NFS-conj-rem}
In this case the Randomised NFS is a probabilistic algorithm for factorisation in the style of the Miller-Rabin or Solovay-Strassen primality tests. If Conjecture~\refp{char-decor} fails to hold for a given $n$ and $f$, then \emph{any} congruences of squares found by inspection of $\Z[\alpha]$ and $\Z$ would be trivial. We note that since the NFS has been successfully run to found factors of numbers of this form, the conjecture is not false in general.
\end{rem}

Our analysis splits along the same lines as the internal structure of NFS-type algorithms. We will first study how many smooth relationships exist and prove the following theorem:

\begin{thm}\label{NFS-smoothness-base}
Take $\delta, \kappa, \sigma, \beta, \beta'$ subject to the conditions of Equation~\refp{const-bounds} and~\refp{d-bound}.
For any $n$, the Randomised NFS can almost surely find an irreducible polynomial $f$ of degree $d = \delta\sqrt[3]{\log n / \log \log n}$ and height at most $L_n\left(\frac{2}{3}, \kappa\right)$, with $\alpha$ a root of $f$, $n | f(m)$, and
\[
L_n\left(\frac{1}{3}, \max(\beta, \beta') + \o\left(1\right)\right)
\]
distinct pairs $a < |b| \leq L_n\left(1/3, \sigma\right)$ such that $\left(a-bm\right)$ is $L_n\left(1/3, \beta\right)$-smooth and $\left(a - b\alpha\right)$ is $L_n\left(1/3, \beta'\right)$-smooth, in expected time at most $L_n\left(1/3, \lambda\right)$ for any
\[
\lambda > \max\left(\beta, \beta'\right) + \frac{\delta^{-1}\left(1+\o\left(1\right)\right)}{3\beta} + \frac{\left(\sigma\delta + \kappa\right)\left(1 + \o\left(1\right)\right)}{3\beta'}.
\]
In particular, the probability that the Randomised NFS fails to produce such a set is bounded above by $L_n\left(\frac{2}{3}, \kappa - \delta^{-1}\right)^{-1+\o(1)}$.
\end{thm}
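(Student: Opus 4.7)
The plan is to decompose the statement into polynomial selection and smoothness counting, with the randomisation of $f$ playing a central role in reducing both tasks to calculations that are controlled on average.

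First, I would specify polynomial selection concretely. Take $d = \delta\sqrt[3]{\log n/\log\log n}$ and $m \approx n^{1/(d+1)}$, so that the base-$m$ representation of $n$ yields a monic (after scaling) polynomial $f_0$ of degree $d$ with $f_0(m) \equiv 0 \imod n$ and height $L_n(2/3,\delta^{-1} + o(1))$. To randomise, I would form $f = f_0 + (X - m)R(X)$ where $R$ is a polynomial with coefficients drawn independently and uniformly from an interval of width $L_n(2/3,\kappa)$ (so that the resulting height lies in the target range and the slack $\kappa - \delta^{-1}$ is genuinely random). Irreducibility with high probability follows from standard density estimates on reducible polynomials of prescribed height and degree; when reducibility occurs, we extract a factor of $n$ directly. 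The failure probability for this step is dominated by the event that the random coefficients conspire to something degenerate, which contributes the $L_n(2/3,\kappa - \delta^{-1})^{-1+o(1)}$ bound.

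Second, I would estimate smoothness on each side. For $a<|b|\le L_n(1/3,\sigma)$, the rational form $a-bm$ has magnitude at most $L_n(2/3,\delta^{-1}+o(1))$, so by Canfield--Erd\H{o}s--Pomerance the probability of being $L_n(1/3,\beta)$-smooth is $L_n(1/3,-\frac{1}{3\beta\delta}+o(1))$. The algebraic norm $N(a - b\alpha) = b^d f(a/b)$ has magnitude at most $L_n(1/3,\sigma)^d\cdot L_n(2/3,\kappa) = L_n(2/3,\sigma\delta + \kappa + o(1))$, so the probability of $L_n(1/3,\beta')$-smoothness is $L_n(1/3,-\frac{\sigma\delta+\kappa}{3\beta'}+o(1))$. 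Multiplying by the $L_n(1/3,2\sigma + o(1))$ candidate pairs and requiring the expectation to exceed $L_n(1/3,\max(\beta,\beta'))$ yields the run-time bound on $\lambda$, since each successful trial produces one usable relation.

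Third, and this is where randomisation of $f$ is essential, I would argue that the two smoothness events are approximately independent \emph{on average over the random coefficients of $R$}. Fixing $(a,b)$, the residues of $N(a-b\alpha)$ modulo small primes behave like those of a random integer once the coefficients of $R$ are randomised, because linear combinations $\sum_i R_i \binom{a}{b}^i$ equidistribute modulo each small modulus. This reduces the joint smoothness count to a convolution against the CEP-style smoothness density on arithmetic progressions, which is what the machinery from \cite{Soundararajan,G1,G2,Harper} is designed to control. The hardest part of the argument will be making this equidistribution quantitative across the full range of primes up to $L_n(1/3,\beta')$ needed for the algebraic factor base, since we need the approximation to hold uniformly in $(a,b)$.

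Fourth, rather than attempting a variance bound on the number of smooth pairs, I would use the \emph{stochastic deepening} technique advertised in the introduction. Instead of fixing the search box and proving concentration, we iteratively expand the range of $(a,b)$ (or equivalently the sieving threshold) by a random factor and stop when the empirical smooth count exceeds the target. A first-moment argument then suffices to control the expected stopping time, because the tail of the stopping distribution is absorbed into the random expansion. Combined with the irreducibility and height analysis from the first step, this yields the expected run-time bound $L_n(1/3,\lambda)$ and the stated failure probability. The principal obstacle throughout remains controlling the algebraic-side smoothness distribution uniformly in the polynomial coefficients; all other steps are in essence bookkeeping on the CEP density.
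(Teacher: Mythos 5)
Your high-level decomposition (polynomial selection via a random $(X-m)R(X)$ addend; CEP-style densities on both sides; average-case independence driven by the randomisation of $f$; a stochastic trick to avoid second-moment bounds) is the one the paper follows, but there are two points where the plan as written would not go through.

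First, the \emph{stochastic deepening} you describe is not the mechanism the paper uses, and your variant would fail. You propose, for a fixed $(m,f)$, to iteratively enlarge the $(a,b)$ search range (or the sieving threshold) and stop when the smooth count reaches the target. But the failure mode a first-moment bound cannot rule out is precisely that for the particular $(m,f)$ you chose, $|\X_{n,m,f}|$ is much smaller than its average over $(m,f)$, in which case enlarging the $(a,b)$ box (which also enlarges the norms and therefore degrades the smoothness density) gains you nothing. The paper's technique instead randomises the \emph{breadth--depth trade-off over $(m,f)$}: it appeals to the pigeonhole Lemma~\ref{expectation-bounds} that for a nonnegative $X$ with $\E(X)=\mu$ and $X\le K\mu$ there is some scale $i\le\lceil\log_2 K\rceil$ with $\P(X\ge 2^i\mu/(1+\lceil\log_2K\rceil))\ge 2^{-i-1}$, and the algorithm then, for each $i$, samples $2^i$ independent pairs $(m,f)$ and searches each only to depth $2^{-i}\cdot L_n(\frac{1}{3},\max(\beta,\beta')+2\sigma-\tau+\o(1))$. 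For the correct $i$, a constant fraction of the sampled $(m,f)$ have $|\X_{n,m,f}|$ large enough that the shallow search already produces enough relations; the total work at each level is the same, so the logarithmic number of levels is absorbed into the $\o(1)$. Your ``expand $(a,b)$'' version is applied to the wrong random variable and does not implement this trade-off.

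Second, you understate the difficulty of the independence step. It is not enough to assert that $\sum_i R_i \binom{a}{b}^i$ equidistributes modulo small primes and to cite the Soundararajan--Granville--Harper line of work: the paper must quantify the uniformity of $B'$-smooth numbers in arithmetic progressions whose common difference $a-mb$ is itself conditioned to be $B$-smooth, which is a rare event and hence not covered by the Bombieri--Vinogradov-type theorem of Harper as stated. The paper proves a new lemma (Lemma~\ref{HZ-1-large}) restricting the sum over moduli to $y$-smooth $r$ in a dyadic band $[Q\omega^{-1},Q]$, gaining a factor of $\varrho(Q,y)$ over Harper's bound; it also needs the machinery of $B'$-good moduli (Lemma~\ref{smooth-bad-dist}), the convolution argument of Lemma~\ref{intervalmult}, and the short-interval estimate of Fact~\ref{H-interval} for the rational side. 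Dismissing this as ``bookkeeping on the CEP density'' is where the real work is hidden. You would also want the irreducibility bound to be explicit --- the paper's Lemma~\ref{f-reducible-probability} uses the Tur\'an sieve and gives $L_n(\frac{2}{3},(\kappa-\delta^{-1}+\o(1))/3)^{-1}$, which combined with the logarithmic number of iterations of the deepening loop gives the stated $L_n(\frac{2}{3},\kappa-\delta^{-1})^{-1+\o(1)}$ failure probability.
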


We also show that we can reduce a collection of smooth relationships to a congruence of squares.

\begin{thm}\label{NFS-characters-uncond}
Let $B = L_n\left(\frac{1}{3}, \beta\right)$ and $B' = L_n\left(\frac{1}{3}, \beta'\right)$. Let $f$ be irreducible of degree $d = \delta\sqrt[3]{\log n / \log \log n}$ and height at most $L_n\left(\frac{2}{3}, \kappa\right)$, and let $\alpha$ be a root of $f$. Then for all but a $L_n\left(\frac{2}{3}, \frac{\kappa-\delta^{-1}}{2}\left(1+\o\left(1\right)\right)\right)^{-1}$ fraction of the set of $f$, if we are given
\[
L_n\left(\frac{1}{3}, \max\left(\beta, \beta'\right)\right)\boldsymbol{\Omega}\left(\log \log n\right)
\]
pairs $a< b \leq L_n\left(\frac{1}{3}\right)$ such that $a-mb$ is $B$-smooth and $a-b\alpha$ is $B'$-smooth, we can find a congruence of squares modulo $n$ in expected time at most
\[
L_n\left(\frac{1}{3}, 2\max\left(\frac{2\delta}{3}, \beta, \beta'\right)\right)^{1+\o\left(1\right)}.
\]
\end{thm}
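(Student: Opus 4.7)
The plan is to follow Adleman's character-based reduction of smooth relations to a congruence of squares, but to quantify carefully which polynomials $f$ are \emph{good}, in the sense that the reduction produces a genuine square on the algebraic side. Assemble three blocks of columns over $\F_2$: (i) the exponent parities of the rational primes up to $B$ appearing in $a-bm$, together with a sign bit; (ii) the exponent parities in the prime-ideal factorisation, over the first-degree primes of $\Z[\alpha]$ of norm at most $B'$, of the principal ideal $(a-b\alpha)$; (iii) a collection of quadratic characters $\chi_q$ evaluated at $a-b\alpha$, indexed by primes $q$ chosen slightly above $B'$. For each of the $N = L_n(1/3,\max(\beta,\beta'))\boldsymbol{\Omega}(\log\log n)$ input relations write down the corresponding row; each is sparse, with weight $\O((\log n)^{1+\o(1)})$, and the total number of columns is at most $\max(B, B', L_n(1/3, 2\delta/3))^{1+\o(1)}$, the last quantity accounting for the characters and other bookkeeping needed to resolve unit and Selmer-type obstructions on the number field side.

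Then obtain a non-trivial kernel vector using a sparse linear algebra routine such as block Wiedemann or block Lanczos; on a matrix of this shape its cost is dominated by (columns)$^{2+\o(1)}$, yielding the claimed $L_n(1/3, 2\max(2\delta/3, \beta, \beta'))^{1+\o(1)}$ bound. The kernel picks out a subset $S$ whose product $\prod_{(a,b)\in S}(a-bm)$ is automatically a square in $\Z$, and whose product $\prod_{(a,b)\in S}(a-b\alpha)$ is, by block (ii), the square of a fractional ideal in $\Z[\alpha]$. The role of block (iii) is to eliminate the further obstructions coming from units of $\mathcal{O}_K$ and from the index $[\mathcal{O}_K : \Z[\alpha]]$ so that the ideal square actually lifts to a square of an element of $\Z[\alpha]$; for a typical $f$ of the given shape, $\O((\log n)^{1+\o(1)})$ characters suffice, and the quantitative form of ``typical'' is exactly the exceptional fraction $L_n(2/3, (\kappa-\delta^{-1})/2)^{-1}$ stated in the theorem.

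Given that the product is a genuine square $\gamma^2$ in $\Z[\alpha]$, extract $\gamma$ using the Couveignes--Nguyen--Thom\'e approach: reduce $\gamma^2$ modulo a compatible set of rational primes, compute square roots in the residue rings via Tonelli--Shanks, and reassemble by CRT (or, equivalently, by Hensel lifting in an unramified $p$-adic completion). The time and working precision needed scale polynomially in $d$ and in the logarithmic height of $\gamma^2$, yielding a cost subsumed by the $L_n(1/3,2\delta/3)^{1+\o(1)}$ term. Finally, map both square roots to $\Z/n\Z$ via $\alpha\mapsto m$, obtaining $x, y$ with $x^2 \equiv y^2 \imod{n}$.

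The hard part will be the bound on bad $f$. Concretely, one must show that for a random $f$ with degree $d$ and height $L_n(2/3, \kappa)$, $\O((\log n)^{1+\o(1)})$ quadratic characters already determine the square class of a typical square ideal arising from NFS smooth relations. This is a second-moment estimate over the random polynomial $f$, averaging products of quadratic characters on the lattice of values $a - b\alpha$, and it is exactly where the explicit randomisation of $f$ is indispensable: the corresponding pointwise statement, for a fixed $f$, would presently require GRH-strength distributional control that is not currently available.
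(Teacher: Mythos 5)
Your skeleton --- exponent matrix over $\F_2$ with rational-prime, algebraic-prime, and character blocks; sparse kernel via block Wiedemann or Lanczos in roughly $(\text{columns})^{2+\o(1)}$; number-field square root and projection to $\Z/n\Z$ --- is the right shape, and the cost accounting is broadly consistent with the paper. But the parts of your outline that touch the actual content of the theorem are wrong, starting with the characters themselves. You propose characters $\chi_q$ induced by primes ``slightly above $B'$,'' which is the choice made in the heuristic NFS. The paper has to abandon that choice for the unconditional analysis: its characters $\chi_{\mathfrak{p}}$ are induced by prime ideals with $\log\mathbf{N}(\mathfrak{p}) = L_n(1/3)$, i.e.\ of exponential norm. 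This is forced by the error term in the unconditional effective Chebotarev density theorem (Equation~(\ref{eq:chebotarev})) applied to the quadratic extensions $L_h = K(\sqrt{h})$: the error is only small once $\log x \gg d_L\log^2\Delta_L$ and $\log x \gg \nu^{-1}$ where $1-\nu$ is a possible Siegel zero, and with $\log\Delta_{L_h}$ of size $L_n\left(\frac{2}{3}\right)^{\o(1)}$ this gives $\log x = L_n(1/3)$. With primes only slightly above $B'$ there is simply no unconditional equidistribution statement available for $\chi_{\mathfrak{p}}$ on the obstruction group $H$, which is precisely the problem the theorem is solving.

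Second, the ``good $f$'' bound is not a second-moment estimate over $f$. Lemma~\ref{prob-zero-as-used} is a Siegel-zero repulsion argument: Stark's lemma (Fact~\ref{starkfact}, via Corollary~\ref{starkcor}) reduces a Siegel zero of $\zeta_{L_h}$ to a Siegel zero of the quadratic Dirichlet $L$-function attached to the unique quadratic subfield $F_h\subset L_h$ (this is where the requirement that $d$ be odd enters, to force uniqueness of $F_h$), and Landau's theorem (Fact~\ref{landau-L}) then shows that real Dirichlet $L$-functions with moduli in a bounded range cannot all have large Siegel zeros simultaneously. The conclusion is that at most $\frac{4}{3}\log\log n$ classes $h\in H$ are exceptional, and the $L_n\left(\frac{2}{3}\right)^{-1}$ fraction is coming from Lemma~\ref{f-reducible-probability} (the probability that $f$ is reducible), not from a variance bound. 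And, crucially, because the exceptional subset of $H$ is only shown to have size $\leq \frac{4}{3}\log\log n$ rather than zero, one kernel vector does not yield a square: the paper runs the linear algebra $\ell = \frac{4}{3}\log\log n$ times to produce $P_1,\dots,P_\ell$ and then applies a pigeonhole over the $\binom{\ell}{2}$ products to find some $P_iP_j$ whose image in $H$ is trivial. This is why the theorem requires $L_n\left(\frac{1}{3},\max(\beta,\beta')\right)\boldsymbol{\Omega}(\log\log n)$ relations rather than $L_n\left(\frac{1}{3},\max(\beta,\beta')\right)$; your proposal never uses the extra $\log\log n$ factor, which should have signalled that the outlined argument could not be complete.
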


\begin{rem}\label{MPNFS-thm-mod}
In the case of Coppersmith's \textbf{multiple polynomial Number Field Sieve}~\cite{coppersmithMPS}, we instead have to find a single $m$ and $L_n\left(\frac{1}{3}, \eta\right)$ irreducible polynomials $f^{(i)}$ such that $f^{(i)}(m) = n$, and a collection of $L_n\left(\frac{1}{3}, \max(\beta, \beta' + \eta)\right)$ pairs $(a, b)$ such that $a-mb$ is $B$-smooth and some $f^{(i)}(a, b)$ is $B'$-smooth. In this case the second constraint of equation~\refp{const-bounds} is replaced by $
2\sigma + \eta > \max\left(\beta, \beta' + \eta\right) + \frac{\delta^{-1}\left(1+\o\left(1\right)\right)}{3\beta} + \frac{\left(\sigma\delta + \kappa\right)\left(1 + \o\left(1\right)\right)}{3\beta'}
$
and $\lambda > \max\left(\beta, \beta' + \eta\right) + \frac{\delta^{-1}\left(1+\o\left(1\right)\right)}{3\beta} + \frac{\left(\sigma\delta + \kappa\right)\left(1 + \o\left(1\right)\right)}{3\beta'}$.
The reduction to a congruence of squares similarly has $\beta'$ replaced by $\beta' + \eta$ throughout.
\end{rem}

\section{Preliminaries}

\subsection{Notation and Definitions}\leavevmode
\begin{definition}
For any finite set $S$, we denote the uniform measure over $S$ by $\U\left(S\right)$.
\end{definition}
\begin{definition}
For any two measures $\mu, \nu$ over an additive group $G$, we define their \emph{convolution} to be:
\[
\left(\mu \star \nu\right)\left(x\right) \defeq \sum_{y \in G} \mu\left(y\right)\nu\left(x-y\right).
\]
\end{definition}
\begin{definition}\label{centredinterval}
We define the \emph{centred interval} of length $L$ in $\Z$ to be
\[
\I\left(L\right) \defeq\left[-\frac{1}{2}L, \frac{1}{2}L\right) \cap \Z.
\]
\end{definition}

We now turn to a collection of classical number theoretic results:

\begin{definition}The \emph{prime counting functions} are given by
\begin{align*}
\pi\left(x\right) &\defeq |\{y < x : y \in \N, y\textrm{ prime}\}| \\
\pi_{r.s}\left(x\right) &\defeq |\{y < x : y \in \N, y\textrm{ prime}, y = s \imod{r}\}|,
\end{align*}
\end{definition}
\begin{definition}The \emph{logarithmic integral} $\Li\left(x\right)$ is given by
\[
\Li\left(x\right) \defeq \int_2^x \frac{dt}{\log t} = \frac{x}{\log x}\left(1 + \O\left(\frac{1}{\log x} \right)\right).
\]
\end{definition}
\begin{fact}[The Prime Number Theorem]
There is a constant $a > 0$ such that:
\begin{align*}
\pi\left(x\right) &= \Li\left(x\right) + \O\left(\frac{x}{\log x}\exp\left(-a\sqrt{\log x}\right)\right)
= \frac{x}{\log x} \left(1 + \o\left(1\right)\right)
\end{align*}
\end{fact}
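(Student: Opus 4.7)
The plan is to prove this via the classical Hadamard--de la Vallée Poussin route, using Chebyshev's $\psi$-function and a zero-free region for the Riemann zeta function.

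First, I would reduce from $\pi$ to $\psi(x) = \sum_{p^k \le x} \log p$. It suffices to establish $\psi(x) = x + \O(x\exp(-a'\sqrt{\log x}))$ for some $a' > 0$, since partial summation (together with the trivial bound on prime powers $p^k$ with $k \ge 2$, which contribute $\O(\sqrt{x}\log x)$) converts such an estimate for $\psi(x)$ into the claimed bound for $\pi(x) - \Li(x)$ with a possibly smaller constant $a$. The equivalent formulation $\pi(x) = x/\log x \cdot (1+\o(1))$ is then immediate from $\Li(x) = x/\log x \cdot (1+\O(1/\log x))$.

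Next, I would invoke Perron's formula: for $c > 1$ and $T \ge 2$,
\[
\psi(x) = -\frac{1}{2\pi i}\int_{c-iT}^{c+iT} \frac{\zeta'(s)}{\zeta(s)} \cdot \frac{x^s}{s}\, ds + \O\left(\frac{x^c \log x}{T(c-1)}\right),
\]
and shift the contour to the left of $\Re s = 1$. The residue at the simple pole of $-\zeta'/\zeta$ at $s=1$ produces the main term $x$. To control the shifted contour one needs upper bounds on $\zeta'/\zeta$ in a strip extending slightly into $\Re s < 1$, which in turn demand a quantitative zero-free region for $\zeta$.

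The crucial and hardest step is establishing such a zero-free region of the form $\Re s > 1 - c_0/\log(|\Im s|+2)$ for some $c_0 > 0$. This is done using the trigonometric inequality $3 + 4\cos\theta + \cos 2\theta \ge 0$, applied to the Euler product via $|\zeta(\sigma)^3 \zeta(\sigma+it)^4 \zeta(\sigma+2it)| \ge 1$, combined with standard estimates on $\log\zeta(s)$ near the line $\Re s = 1$ and the Hadamard product factorisation. Shifting the Perron contour to $\Re s = 1 - c_0/(2\log T)$ and bounding $|\zeta'/\zeta|$ by $\O(\log^2 T)$ on this contour yields a contribution of order $x^{1 - c_0/(2\log T)}\log^2 T$. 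Optimising by choosing $\log T \asymp \sqrt{\log x}$ balances this against the Perron truncation error and produces the error term $\O(x\exp(-a\sqrt{\log x}))$ for suitable $a > 0$. The zero-free region argument is the main obstacle, as it is the deep analytic ingredient of the proof; once it is in hand, the rest is routine contour shifting and partial summation.
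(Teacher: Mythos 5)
Your sketch is the standard Hadamard--de la Vall\'ee Poussin argument (reduction to $\psi$ via partial summation, truncated Perron formula, contour shift into the classical zero-free region obtained from $3+4\cos\theta+\cos 2\theta\ge 0$, and the choice $\log T\asymp\sqrt{\log x}$), and it is correct; note only that the resulting error $\O\left(x\exp\left(-a\sqrt{\log x}\right)\right)$ matches the stated form $\O\left(\frac{x}{\log x}\exp\left(-a\sqrt{\log x}\right)\right)$ after absorbing the factor $\log x$ by slightly decreasing $a$. The paper itself offers no proof here --- it records the Prime Number Theorem as a classical Fact --- so there is nothing to compare beyond observing that your route is exactly the standard one the citation presupposes.
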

\begin{definition} We say $n \in \N$ is a \emph{semiprime} if
$n = pq$, with $p, q$ distinct primes.
\end{definition}

\begin{definition}
We define a family of functions $L_n\left(a,c\right): \N \rightarrow \R^{+}$ by:
\[
L_n\left(a, c\right) = \exp\left(c \left(\log n\right)^{a}\left(\log \log n\right)^{1-a}\right).
\]
We note that $a, c$ may be functions of $n$. In our applications, $a\left(n\right)$ will always tend to a constant and $c\left(n\right) = \left(\log \log n\right)^{\boldsymbol{o}\left(1\right)}$, and we will say:
\[
f\left(n\right) = L_n\left(a\right) \Leftrightarrow \exists c\left(n\right) = \left(\log \log n\right)^{\boldsymbol{o}\left(1\right)}\textrm{ s.t. }f\left(n\right) = L_n\left(a, c\right),
\]

We will often perform \emph{arithmetic} directly with these functions. We note in particular that:
\[
L_n\left(a,c\right) L_n\left(a,c'\right) = L_n\left(a, c + c'\right)
\]
and for $d = \delta \left(\frac{\log n}{\log \log n}\right)^\epsilon$, with $\delta = \left(\log \log n\right)^{\boldsymbol{O}\left(1\right)}$, $\epsilon = \boldsymbol{O}(1)$ as functions of $n$:
\[
L_n\left(a,c\right)^d = L_n\left(a+\epsilon, c\delta\right)
\]
\end{definition}

\begin{rem} We note that our definition coincides with the standard definition of $L_n\left(a,c\right)$ when $a$ is taken to be a \emph{constant} function of $n$ and $c$ tends to some finite limit. Throughout, we will mention $\o\left(1\right)$ terms for the exponent $c$ explicitly in our notation.
\end{rem}

\begin{definition}
For $y \in \N$, we say $x \in \N$ is $y$-smooth if
$
p\textrm{ prime} \wedge p \mid x \Rightarrow p < y.
$

For any $x,y,r,a \in \N$ and $\chi$ a multiplicative character, we define:
\begin{align*}
\Psi\left(x, y\right) &\defeq \left|\{z \in \N : z < x, z \textrm{ is }y\textrm{-smooth}\}\right|\\
\Psi_r\left(x, y\right) &\defeq \left|\{z \in \N : z < x, z \textrm{ is }y\textrm{-smooth}, \left(z,r\right) = 1\}\right|\\
\Psi\left(x, y; r, a\right) &\defeq \left|\{z \in \N : z < x, z \textrm{ is }y\textrm{-smooth}, z \equiv a \imod{r}\}\right| \\
\Psi\left(x, y; \chi\right) &\defeq \sum_{z < x}\1_{\{z' : z'\textrm{ is }y\textrm{-smooth}\}}\left(z\right) \chi\left(z\right)\\
\varrho\left(x, y\right) &\defeq \Psi\left(x,y\right)x^{-1}
\end{align*}
\end{definition}
\begin{fact}[Canfield, Erd\H{o}s and {Pomerance~\cite[Corollary pp.15]{CanfieldErdosPomerance}}]
Let $\epsilon > 0$ be arbitrary and let $3 \leq u \leq (1-\epsilon) \frac{\log x}{\log \log x}$. Then:
\begin{equation*}
\Psi\left(x, x^{1/u}\right) = x \exp\left(-u\left(\log u + \log \log u  - 1 	+ \frac{\log \log u - 1}{\log u} \right.\right.
\left.\left.+ \O_{\epsilon}\left(\frac{\log
\log^2 u}{\log ^2 u}\right)\right)\right)
\end{equation*}
\end{fact}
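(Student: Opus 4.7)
My plan is to reduce the estimate for $\Psi(x, x^{1/u})$ to a precise asymptotic for the Dickman function $\rho$, and then obtain that asymptotic by a saddle-point analysis.

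First, I would show that in the stated range $3 \le u \le (1-\epsilon)\log x / \log\log x$, one has $\Psi(x, x^{1/u}) = x\rho(u)(1+o(1))$ uniformly enough that the multiplicative error is absorbed into the displayed $O$-term. One route is the Buchstab identity $\Psi(x,y) = 1 + \sum_{p \le y}\Psi(x/p, p)$ compared against Dickman's integral equation $u\rho(u) = \int_{u-1}^u \rho(t)\,dt$, driven by induction on $u$; the resulting loss is a multiplicative factor $1 + O(\log(u+1)/\log y)$, which remains harmless throughout the admissible range.

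Second, I would obtain $\log \rho(u)$ via the Laplace transform $\widehat{\rho}(s) = \int_0^\infty e^{-su}\rho(u)\,du$. Transforming the delay equation $u\rho'(u) = -\rho(u-1)$ yields $s\widehat{\rho}'(s)/\widehat{\rho}(s) = e^{-s}-1$, whose integration gives $\log\widehat{\rho}(s) = \int_0^s (e^{-t}-1)/t\,dt + \text{const}$. Bromwich inversion combined with the saddle-point method at the unique saddle $\xi = \xi(u) > 0$ solving $e^\xi - 1 = u\xi$ then produces
\[
\log \rho(u) = -u\xi + \int_0^\xi \frac{e^t-1}{t}\,dt - \tfrac{1}{2}\log(2\pi u) + O(u^{-1}).
\]

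Third, I would expand $\xi$ by iterating $\xi = \log(1+u\xi)$ to obtain $\xi = \log u + \log\log u + (\log\log u - 1)/\log u + O((\log\log u)^2/(\log u)^2)$. Integration by parts in $\int_0^\xi (e^t-1)/t\,dt$ produces the dominant term $e^\xi/\xi$ with controlled lower-order corrections; using $e^\xi = 1 + u\xi$ this combines with $-u\xi$ so that the $u\xi$ contributions cancel up to a residue of $+u$, producing the constant $-1$ inside the stated exponent $-u(\log u + \log\log u - 1 + \ldots)$. The main obstacle is this final bookkeeping step: extracting the precise coefficient $(\log\log u - 1)/\log u$ with the sharp $O((\log\log u)^2/(\log u)^2)$ remainder requires carrying the saddle expansion to third order and tracking cancellations between $-u\xi$ and the exponential-integral contribution, uniformly in $u$ throughout the range. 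The $\Psi$-to-$\rho$ comparison and the overall saddle-point framework are standard machinery, but pinning down the lower-order expansion of the exponent of $\rho$ to the claimed precision is the delicate part.
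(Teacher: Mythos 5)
The paper does not prove this statement; it is quoted verbatim as a cited \emph{Fact} from Canfield, Erd\H{o}s and Pomerance, so there is no internal proof to compare against. Your sketch therefore cannot be ``the same as the paper's'' — what it does is reconstruct the result via the analytic route (Dickman function, Laplace transform, saddle point), which is the de Bruijn–Hildebrand–Tenenbaum approach, whereas Canfield–Erd\H{o}s–Pomerance's original argument is elementary and combinatorial: they bound $\Psi(x,y)$ above by Rankin's method and below by counting lattice points in a simplex of smooth-prime exponent vectors, and never pass through $\rho$. The analytic route gives the exponent of $\rho(u)$ cleanly via the saddle equation $e^{\xi}-1 = u\xi$; the combinatorial route is more self-contained and was specifically engineered to cover the full range $u \leq (1-\epsilon)\log x/\log\log x$.

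There is one real gap in your step 1. Hildebrand's theorem $\Psi(x,x^{1/u}) = x\rho(u)\bigl(1 + O(\log(u+1)/\log y)\bigr)$ holds only for $y \geq \exp\bigl((\log\log x)^{5/3+\epsilon}\bigr)$, i.e.\ $u \leq \log x/(\log\log x)^{5/3+\epsilon}$, which is a \emph{strictly narrower} range than the CEP range $u \leq (1-\epsilon)\log x/\log\log x$ (equivalently $y \geq (\log x)^{1/(1-\epsilon)}$). So you cannot simply invoke the multiplicative $\Psi$-to-$\rho$ comparison across the whole stated range. To cover it you must either settle for a logarithmic comparison (which suffices here since only the exponent is being estimated) and control the error in that regime separately, or abandon $\rho$ and argue directly on $\Psi$ as CEP do. Your steps 2 and 3 — the saddle-point expansion of $\log\rho(u)$, iterating $\xi = \log(1+u\xi)$, integrating by parts on $\int_0^{\xi}(e^t-1)/t\,dt$, and the cancellation $-u\xi + e^{\xi}/\xi = -u(\xi-1) + O(1/\xi)$ that produces the $-1$ in the exponent — are correct and standard, and will reproduce the displayed formula if the expansion of $\xi$ is carried to the stated order.
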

\begin{cor}\label{CEP-cor}
Fix $0 < a < b \leq 1$. Then uniformly in $c,d > 0$:
\[
\varrho\left(L_x(b, d), L_x\left(a, c\right)\right) = L_x\left(b-a, \frac{d\left(b-a\right)}{c}\right)^{-1+\o\left(1\right)}.
\]
\end{cor}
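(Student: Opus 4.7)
The plan is to translate the Canfield--Erd\H{o}s--Pomerance estimate (henceforth CEP) directly into $L_x$-notation: I pick the parameter $u$ so that $y^{1/u}$ matches the required smoothness bound, then retain only the leading term of the exponent.

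First I would set $y \defeq L_x(b,d)$ and solve $y^{1/u} = L_x(a,c)$ for $u$. Matching $L_x$-exponents forces
\[
u = \frac{\log y}{\log L_x(a,c)} = \frac{d}{c}\left(\frac{\log x}{\log\log x}\right)^{b-a}.
\]
Since $0 < b-a < 1$, the CEP hypothesis $3 \leq u \leq (1-\epsilon)\log x/\log\log x$ is satisfied for $x$ sufficiently large, uniformly over $c,d$ in the regime of interest.

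Next I would compute $\log u$ and $\log\log u$. A direct expansion gives
\[
\log u = (b-a)\log\log x\,(1 + \o(1)), \qquad \log\log u = \log\log\log x + \O(1),
\]
so in the CEP exponent $-u(\log u + \log\log u - 1 + \cdots)$ every term after $\log u$ is smaller by at least a factor of $\log\log x/\log\log\log x$. Consequently the exponent simplifies to $-u\log u\,(1+\o(1))$, and substituting
\[
u\log u = \frac{d(b-a)}{c}(\log x)^{b-a}(\log\log x)^{1-(b-a)}(1+\o(1))
\]
yields $\Psi(L_x(b,d),L_x(a,c)) = L_x(b,d)\cdot L_x\!\left(b-a,\tfrac{d(b-a)}{c}\right)^{-1+\o(1)}$. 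Dividing through by $y$ converts $\Psi$ to $\varrho$ and gives the claim.

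The only point I would need to handle with any care is the uniformity in $c$ and $d$. The CEP error $\O_\epsilon(\log^2\log u/\log^2 u)$ is already uniform in $u$ within the admissible range, and the $\log(d/c)$ correction to $\log u$ is absorbed into the $(1+\o(1))$ factor as long as $c,d = (\log\log x)^{\o(1)}$, which is precisely the regime in which $L_x$-arithmetic has its intended meaning in the paper. I do not anticipate a genuine obstacle here: the corollary is essentially a routine change of variables from CEP's $(x, u)$ parametrisation into the $L_x$-parametrisation used throughout the rest of the analysis.
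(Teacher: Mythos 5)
Your proof takes essentially the same approach as the paper: identify $u = \frac{d}{c}(\log x/\log\log x)^{b-a}$, verify it is in the CEP range, observe that the CEP exponent is $-u\log u\,(1+\o(1))$, and substitute. Your version is slightly more explicit about the lower-order terms $\log\log u$ and about the uniformity caveat on $c,d$, but the argument is the same one the paper gives.
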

\begin{proof}
Define
\[
u = \frac{\log L_x(b, d)}{\log L_x\left(a, c\right)} = \frac{d}{c} \left(\frac{\log x}{\log \log x}\right)^{b-a}
\]
Then $u \rightarrow \infty$ and $u = \o\left(\frac{\log x}{\log \log x}\right)$. Hence:
\begin{align*}
\varrho\left(L_x(b, d), L_x\left(a, c\right)\right) &= \exp(-(1+\o(1))u \log u) \\
&= \exp\left(-(1+\o(1))\frac{d(b-a)}{c}\log^{b-a} x (\log \log x)^{1-(b-a)}\right)\\
& = L_x\left(b-a,\frac{d(b-a)}{c}\right)^{-1+\o(1)} \qedhere
\end{align*}

\end{proof}
In the sequel, we will mainly take $b = \frac{2}{3}$ and $a = \frac{1}{3}$ in the above corollary, so that the probability of an $L_n(\frac{2}{3})$ sized number being $L_n(\frac{1}{3})$-smooth is $L_n(\frac{1}{3})^{-1}$.

Being substantially more careful allows short intervals of integers to be effectively sieved for smooth numbers, 
yielding for example:
\begin{fact}[{Hildebrand~\cite[Theorems 3 and 1]{HildebrandInterval}}]\label{H-interval}
Fix any $\epsilon > 0$. For any $x\geq 3, \log x \geq \log y \geq (\log \log x)^{5/3 + \epsilon}, 1 \leq z \leq y^{5/12}$, the following estimate which holds uniformly:
\[
\Psi\left(x\left(1+z^{-1}\right), y\right) - \Psi(x,y) = \frac{\Psi(x, y)}{z}\left(1 + \O\left(\frac{\log (u+1)}{\log y}\right)\right).
\]
\end{fact}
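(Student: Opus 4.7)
The plan is to attack this short-interval estimate via the Hildebrand--Tenenbaum saddle-point asymptotic for $\Psi(x,y)$, namely
\[
\Psi(x,y) = \frac{x^{\alpha}\zeta(\alpha,y)}{\alpha\sqrt{2\pi\phi_2(\alpha)}}\bigl(1+O(1/u+\log y/y)\bigr),
\]
where $\zeta(s,y)=\prod_{p\le y}(1-p^{-s})^{-1}$, $\alpha=\alpha(x,y)$ is the unique positive real satisfying $\sum_{p\le y}(\log p)/(p^{\alpha}-1)=\log x$, and $\phi_2(\alpha)$ is the second logarithmic derivative of $\zeta(\cdot,y)$ at $s=\alpha$, controlling the Gaussian width at the saddle. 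First I would apply the truncated Perron formula to both $\Psi(x(1+1/z),y)$ and $\Psi(x,y)$ and subtract, obtaining
\[
\Psi(x(1+1/z),y)-\Psi(x,y)=\frac{1}{2\pi i}\int_{(\alpha)}\zeta(s,y)\,\frac{(1+1/z)^{s}-1}{s}\,x^{s}\,ds
\]
plus a truncation error that is negligible under the hypothesis $\log y\ge(\log\log x)^{5/3+\epsilon}$, which is precisely the range in which the Hildebrand--Tenenbaum asymptotic is uniform.

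Next I would carry out the saddle-point analysis on a central vertical window $|\Im s|\le T$ about $s=\alpha$, with $T$ of order $\phi_2(\alpha)^{-1/2}(\log x)^{\epsilon}$, outside of which the integrand is exponentially smaller. Inside this window I would Taylor expand
\[
(1+1/z)^{s}-1=\frac{s}{z}+O\!\left(\frac{|s|^{2}}{z^{2}}\right),
\]
and verify that the linear term $s/z$ re-identifies, up to the factor $1/z$, the saddle-point integral representation of $\Psi(x,y)$ itself, yielding a leading contribution $(\alpha/z)\Psi(x,y)(1+o(1))$. The quadratic remainder is exactly where the hypothesis $z\le y^{5/12}$ is consumed: one needs $T/z$ to remain small compared with the width $\phi_2(\alpha)^{-1/2}$ across the critical window, and crunching the numbers on $\phi_2(\alpha)\asymp (\log y)^{2}/u$ turns this into the stated range for $z$.

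To finish I would invoke the classical estimate $1-\alpha=(\log(u+1)/\log y)(1+o(1))$, a standard consequence of differentiating the defining equation for $\alpha$, to convert the prefactor $\alpha/z$ into $(1+O(\log(u+1)/\log y))/z$, matching the stated error term exactly. The main obstacle is \emph{uniformity}: the saddle-point asymptotic must hold at both $x$ and $x(1+1/z)$ with compatible error terms, which requires a quantitative stability estimate of the form $\alpha(x(1+1/z),y)-\alpha(x,y)=O(1/(z\log y))$ and a matching comparison of the two Gaussian widths. Pushing $z$ beyond $y^{5/12}$ is precisely the regime in which the Taylor remainder in $(1+1/z)^{s}-1$ is no longer dominated by the saddle-point width, so the hypothesis on $z$ is tight for this strategy; handling the edge case $\log y$ close to $(\log\log x)^{5/3+\epsilon}$ is the other delicate point, since the underlying $\Psi(x,y)$ asymptotic itself degenerates there.
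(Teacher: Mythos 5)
The paper does not prove this statement; it is a black-box citation of Hildebrand's Theorems 3 and 1 from \cite{HildebrandInterval}, and the remark following the Fact explains that Hildebrand's Theorem 3 gives the short-interval count in terms of the Dickman function $\rho(u)$, while Theorem 1 upgrades $x\rho(u)$ to $\Psi(x,y)$ with compatible multiplicative errors. Hildebrand's actual proof of Theorem 3 rests on the Buchstab--Hildebrand functional equation
\[
\Psi(x,y)\log x \;=\; \int_1^x \Psi(t,y)\,\frac{dt}{t} \;+\; \sum_{p\le y}\,\sum_{m\ge 1}\Psi(x/p^m,y)\log p,
\]
combined with an induction on the range of $u$; the restriction $z\le y^{5/12}$ and the error term $\log(u+1)/\log y$ arise from that iterative scheme and from the corresponding estimates for the Dickman function, not from a saddle-point window. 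So your route is genuinely different from the one the paper leans on.

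Unfortunately the sketch as written has a real gap. First, the Perron truncation error is circular here: after truncating at height $T$, the discarded contribution is controlled by the density of $y$-smooth integers in neighbourhoods of $x$ and $x(1+1/z)$ of length roughly $x/T$, which is essentially the quantity you are trying to estimate. This is the standard obstruction to proving short-interval theorems by contour integration, and it is not addressed; one usually needs a smoothing argument followed by a de-smoothing step that itself requires a preliminary (weaker) short-interval bound. Second, the quantitative claim that the linear term in the Taylor expansion yields $(\alpha/z)\Psi(x,y)\bigl(1+o(1)\bigr)$ with the $o(1)$ as small as $O(\log(u+1)/\log y)$ is exactly the crux, and the Hildebrand--Tenenbaum machinery you invoke only gives relative error $O(1/u+\log y/y)$ in the local law $\Psi(cx,y)=c^{\alpha}\Psi(x,y)\bigl(1+O(1/u+\log y/y)\bigr)$. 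Differencing two instances of that produces an \emph{additive} error $O(\Psi(x,y)/u)$, which already swamps the main term $\Psi(x,y)/z$ once $z\gg u$, and the stated range $z\le y^{5/12}$ reaches far beyond $u$. A fresh saddle-point treatment of the single Perron integral for the difference might do better, but it would need to beat the HT error by a large margin and this is not justified by the sketch; in particular your explanation that $z\le y^{5/12}$ is "consumed" by keeping $T/z$ small relative to the Gaussian width does not match where that exponent actually comes from, and does not obviously give a constraint of the stated shape. In short, the strategy is plausible in outline but the two load-bearing steps --- non-circular truncation, and sharpening the relative error from $O(1/u)$ to $O(\log(u+1)/\log y)$ --- are missing, and these are precisely the points Hildebrand's functional-equation argument was designed to handle.
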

\begin{rem}
We note that Theorem 3 of~\cite{HildebrandInterval} provides a short interval estimate in terms of the Dickman function. Theorem 1 of~\cite{HildebrandInterval} allows us to replace this with $\Psi(x, y)$ over the same range and with multiplicative errors of the same order.
\end{rem}
\begin{fact}[Hildebrand and {Tenenbaum~\cite[Theorem 3]{HildebrandTenebaumSaddle}}]\label{HT-smooths}
For any $x,y$, we set $u \defeq \frac{\log x}{\log y}$. There exists an $\alpha = \alpha\left(x,y\right)$, the so-called \emph{saddlepoint}, such that for any $1 \leq c \leq y$:
\begin{align*}
\Psi\left(cx, y\right) &= \Psi\left(x,y\right)c^{\alpha\left(x,y\right)}\left(1+\O\left(\frac{1}{u}+\frac{\log y}{y}\right)\right),\text{ with} \\
\alpha\left(x,y\right) &= \frac{\log\left(\frac{y}{\log x} + 1\right)}{\log y} \left(1+ \O\left(\frac{\log \log \left(y+1\right)}{\log y}\right)\right) \end{align*}
\end{fact}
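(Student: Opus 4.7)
My plan is to approach this via the classical saddle-point method of Hildebrand and Tenenbaum applied to the Dirichlet series of $y$-smooth numbers. The starting point is the Mellin--Perron representation
\[
\Psi(x,y) = \frac{1}{2\pi i} \int_{\sigma - i\infty}^{\sigma + i\infty} \zeta(s,y)\,\frac{x^s}{s}\, ds, \qquad \zeta(s,y) \defeq \prod_{p \leq y}\left(1 - p^{-s}\right)^{-1},
\]
together with the definition of the saddle-point $\alpha = \alpha(x,y)$ as the unique positive solution of
\[
\phi_1(\sigma) \defeq -\partial_\sigma \log \zeta(\sigma, y) = \sum_{p \leq y} \frac{\log p}{p^\sigma - 1} = \log x.
\]
This choice minimises the integrand's modulus on the real axis and balances the factors $\zeta(\sigma,y)$ and $x^\sigma$.

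The first main step is to obtain the base asymptotic: shifting the contour to $\Re s = \alpha$ and locally approximating the integrand by a Gaussian (using $\phi_2(\alpha) = \phi_1'(\alpha)$ as the second-derivative normalisation), the horizontal piece near $s = \alpha$ delivers the main term
\[
\Psi(x,y) = \frac{x^\alpha \zeta(\alpha, y)}{\alpha \sqrt{2\pi \phi_2(\alpha)}}\left(1 + \O\left(\frac{1}{u} + \frac{\log y}{y}\right)\right),
\]
where one must bound the tail contributions using the standard estimate $|\zeta(\alpha + it, y)| \ll \zeta(\alpha, y)$ for $|t|$ small and explicit decay on the remainder. The error term arises from controlling the cubic correction to the Gaussian (contributing $\O(1/u)$, since $\phi_3/\phi_2^{3/2} \asymp 1/\sqrt{u \log y}$ after summation) and from the Euler-product truncation effect (contributing $\O(\log y / y)$ from primes near $y$).

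The ratio formula then follows by applying this base asymptotic at both $x$ and $cx$. Writing $\alpha' = \alpha(cx, y)$ and differentiating the saddle equation, one finds $\alpha' - \alpha = (\log c)/\left(\phi_2(\alpha)\log y\right) \cdot (1+\o(1))$, which is small enough in the regime $c \leq y$ that $(cx)^{\alpha'} \zeta(\alpha', y) = x^\alpha \zeta(\alpha, y) \cdot c^\alpha (1 + \O(1/u + \log y / y))$ after cancellation of the exponentiated saddle-point variation; the prefactor $1/(\alpha\sqrt{\phi_2(\alpha)})$ is stable under the same perturbation. Dividing yields $\Psi(cx,y)/\Psi(x,y) = c^\alpha(1 + \O(1/u + \log y/y))$.

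Finally, for the explicit form of $\alpha(x,y)$, I would evaluate $\phi_1(\alpha) = \log x$ via partial summation against the PNT, writing
\[
\sum_{p \leq y} \frac{\log p}{p^\alpha - 1} = \int_2^y \frac{\log t}{t^\alpha - 1}\, d\pi(t) = \frac{y^{1-\alpha}}{(1-\alpha)\log y}(1+\o(1)) + \O\left(\frac{1}{\alpha}\right),
\]
(with suitable modifications as $\alpha \to 1$). Solving this implicit equation in $\alpha$ yields the approximation $\alpha = \log(y/\log x + 1)/\log y \cdot (1 + \O(\log \log y/\log y))$, where the error absorbs the PNT remainder and the boundary behaviour at $\alpha \approx 1$. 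The main obstacle here is the uniformity of the saddle-point asymptotic across the full range of $(x,y)$: one must ensure that the contour-shift and Gaussian approximation remain valid when $u$ is moderately large without $\alpha$ pushing too close to $0$, which is where Hildebrand--Tenenbaum's careful treatment of $\phi_2(\alpha)$ and the off-axis bounds on $|\zeta(\alpha+it,y)|$ becomes essential.
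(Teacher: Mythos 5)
The paper does not prove this statement: it is imported verbatim from Hildebrand and Tenenbaum (the cited Theorem~3, together with their estimate for the saddle point) and used as a black box throughout. Consequently there is no internal proof to compare your argument against. Your sketch is a correct high-level outline of the cited source's saddle-point argument: the Perron integral for $\Psi(x,y)$ against $\zeta(s,y)=\prod_{p\le y}(1-p^{-s})^{-1}$, the definition of $\alpha$ by $\phi_1(\alpha)=\log x$, the Gaussian approximation at the saddle with the cubic correction producing the $\O(1/u)$ term, the perturbation of the saddle between $x$ and $cx$ to deduce $\Psi(cx,y)/\Psi(x,y)=c^{\alpha}(1+\O(\cdot))$, and the evaluation of $\phi_1$ via the prime number theorem to get the explicit approximation for $\alpha$. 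Two small slips, neither structural: the perturbation of the saddle should be $\alpha(cx,y)-\alpha(x,y)\approx -\log c/\phi_2(\alpha)$ (it is negative, since $\phi_1$ is decreasing, and there is no additional $\log y$ in the denominator; with $\phi_2(\alpha)\asymp \log x\log y$ this is still small enough for $c\le y$); and the main contribution comes from the \emph{vertical} segment of the contour near $\Im s=0$, not a ``horizontal'' one. These do not affect the overall shape of the argument, which matches the cited reference.
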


\begin{fact}[{Tenenbaum~\cite[Main Theorem]{Tenenbaum}}] Take $c > 0$ an arbitrary constant. Denote the number of prime factors (without multiplicity) of $q$ by $\omega(q)$. Let $q$ be $y$-smooth, $2 \leq y \leq x$ and with $\omega(q) \leq y^{c \left(\log (1+u)\right)^{-1}}$. Then:
\[
\Psi_q(x,y) = \frac{\phi(q)}{q}\Psi(x,y)\left(1 + \O_c\left(\frac{\log(1+u)\log(1+\omega(q))}{\log y}\right)\right)
\]
\end{fact}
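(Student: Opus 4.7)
The plan is to combine M\"obius inversion on divisors of $q$ with the saddle-point estimate of Fact~\ref{HT-smooths}. Since $q$ is $y$-smooth, every squarefree $d\mid q$ is also $y$-smooth, so the $y$-smooth multiples of $d$ below $x$ are in bijection with the $y$-smooth integers below $x/d$. Inclusion-exclusion then yields
\[
\Psi_q(x, y) \;=\; \sum_{d \mid q,\; \mu(d)^2 = 1} \mu(d)\, \Psi\!\left(x/d,\; y\right).
\]

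For each squarefree divisor $d \mid q$ with $d \leq y$, Fact~\ref{HT-smooths} gives $\Psi(x/d, y) = \Psi(x, y)\, d^{-\alpha}\left(1 + \O\!\left(1/u + (\log y)/y\right)\right)$. For divisors $d > y$, I would factor $d = d_1 \cdots d_m$ with each $d_j \leq y$ and $m \leq \omega(d) \leq \omega(q)$, and iterate the saddle-point estimate: the leading factor stays $d^{-\alpha}$ and the error accumulates linearly in $m$. Summing main terms over squarefree $d\mid q$ converts the M\"obius sum into an Euler product,
\[
\sum_{d \mid q,\; \mu(d)^2 = 1} \mu(d)\, d^{-\alpha} \;=\; \prod_{p \mid q} \left(1 - p^{-\alpha}\right),
\]
so that $\Psi_q(x,y) = \Psi(x,y)\prod_{p\mid q}\left(1-p^{-\alpha}\right)\left(1 + \mathrm{error}\right)$.

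It remains to compare $\prod_{p\mid q}\left(1-p^{-\alpha}\right)$ with $\phi(q)/q = \prod_{p\mid q}\left(1 - p^{-1}\right)$. The explicit saddle-point formula of Fact~\ref{HT-smooths} yields $1-\alpha \asymp \log(1+u)/\log y$, and Taylor expanding $\log\!\left((1-p^{-\alpha})/(1-p^{-1})\right)$ contributes $\O\!\left((1-\alpha)(\log p)/p\right)$ per prime $p\mid q$. Summing over $p\mid q$ and invoking Mertens' theorem to bound $\sum_{p\mid q}(\log p)/p = \O\!\left(\log(1+\omega(q))\right)$ (attained up to constants when $q$ is a primorial) gives the total discrepancy $\O\!\left(\log(1+u)\log(1+\omega(q))/\log y\right)$, matching the claimed error.

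The principal technical difficulty will be controlling the squarefree divisors $d > y$ of $q$, to which Fact~\ref{HT-smooths} does not directly apply: one must either iterate the estimate carefully across the prime factors of $d$, or instead work with the Dirichlet series $\prod_{p \leq y,\; p \nmid q}\left(1 - p^{-s}\right)^{-1}$ via a contour shift through the saddle $s = \alpha$. Either approach produces errors that grow with $\omega(q)$, and the hypothesis $\omega(q) \leq y^{c/\log(1+u)}$ is calibrated precisely to keep the $2^{\omega(q)}$-many M\"obius terms in check so that the multiplicative error does not overwhelm the main term $\phi(q)/q$.
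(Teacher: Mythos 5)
The paper states this as a Fact cited from Tenenbaum~\cite{Tenenbaum} and gives no internal proof, so there is no argument of the paper's to compare against. Your outline does capture the right shape of the answer: the natural main term is $\prod_{p\mid q}(1-p^{-\alpha})$, and the discrepancy between that and $\phi(q)/q = \prod_{p\mid q}(1-p^{-1})$, Taylor-expanded using $1-\alpha \asymp \log(1+u)/\log y$ and summed via the Mertens-type bound $\sum_{p\mid q}(\log p)/(p-1) \ll \log(1+\omega(q))$, accounts for precisely the claimed error term.

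However, the inclusion-exclusion route you propose has a genuine gap at the point you yourself flag, and it is more serious than the sketch allows. Iterating Fact~\ref{HT-smooths} across a factorisation $d = d_1\cdots d_m$ does \emph{not} give an error ``linear in $m$'': each application contributes a \emph{multiplicative} factor $\bigl(1 + \O(1/u + (\log y)/y)\bigr)$ and uses a \emph{different} saddle point $\alpha(x/(d_1\cdots d_j), y)$ at each stage, so after $m$ steps the error is $\bigl(1+\O(1/u)\bigr)^m$, which is only controlled when $m = \O(u)$. The hypothesis $\omega(q) \leq y^{c/\log(1+u)}$ does not ensure this over the full range $2 \leq y \leq x$: when $y$ is a fixed power of $x$, $u$ is bounded while $\omega(q)$ may be a positive power of $x$. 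Even setting that aside, summing absolute errors over all $2^{\omega(q)}$ squarefree divisors yields a bound of order $\Psi(x,y)\cdot(1/u)\cdot\prod_{p\mid q}(1+p^{-\alpha})$, and one must show this is $\O\bigl(\Psi(x,y)\prod_{p\mid q}(1-p^{-\alpha})\cdot\log(1+u)\log(1+\omega(q))/\log y\bigr)$, which does not follow uniformly from the stated constraint — one would at least need a truncation of the M\"obius sum to divisors with $\O(\log(1+u))$ prime factors and a tail estimate, neither of which appears in the sketch. Tenenbaum's actual argument sidesteps the divisor sum entirely by applying the saddle-point method directly to the Dirichlet series $\prod_{p\leq y,\,p\nmid q}(1-p^{-s})^{-1}$ (the alternative you mention in your last paragraph); this controls the error in a single contour estimate rather than accumulating it over exponentially many terms, and is why the restriction on $\omega(q)$ takes the precise shape it does.
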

We record the following easy corollary as observed by Tenenbaum:
\begin{cor}\label{psi-q-psi}
Take $c > 0$ an arbitrary constant, and retain $\omega$ as above. Let $2 \leq y \leq x$ and with $\omega(q) \leq y^{c \left(\log (1+u)\right)^{-1}}$. Then:
\[
\Psi_q(x,y) = \frac{\phi(q)}{q}\Psi(x,y)\left(1 + \O_c\left(\frac{\log(1+u)\log(1+\omega(q))}{\log y}\right)\right)\left(1 + \O\left(\frac{\omega(q)}{y}\right)\right)
\]
\end{cor}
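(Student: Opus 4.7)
The plan is to reduce the corollary to Tenenbaum's Main Theorem by discarding the non-$y$-smooth part of $q$, which is invisible to $y$-smooth integers, and then controlling the cost of replacing $\phi(q')/q'$ with $\phi(q)/q$.

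First, factor $q = q' q''$ uniquely where $q' = \prod_{p \mid q, \, p < y} p^{v_p(q)}$ is the largest $y$-smooth divisor of $q$ and $q''$ collects the primes $\geq y$. The key observation is that any $y$-smooth integer $z$ has only prime factors strictly less than $y$, so $\gcd(z, q'') = 1$ holds automatically; hence
\[
\Psi_q(x, y) = \Psi_{q'}(x, y).
\]
Since $\omega(q') \leq \omega(q) \leq y^{c/\log(1+u)}$, the hypothesis of Tenenbaum's Main Theorem is satisfied for $q'$, so
\[
\Psi_{q'}(x, y) = \frac{\phi(q')}{q'} \Psi(x, y) \left(1 + \O_c\left(\frac{\log(1+u)\log(1+\omega(q'))}{\log y}\right)\right),
\]
and the error term is dominated by that for $\omega(q)$.

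Next, I would compare $\phi(q')/q'$ to $\phi(q)/q$. By multiplicativity,
\[
\frac{\phi(q)}{q} = \frac{\phi(q')}{q'} \prod_{p \mid q''}\left(1 - \frac{1}{p}\right),
\]
so
\[
\frac{\phi(q')}{q'} = \frac{\phi(q)}{q} \prod_{p \mid q''}\left(1 - \frac{1}{p}\right)^{-1}.
\]
Each prime $p$ dividing $q''$ satisfies $p \geq y$, so $(1 - 1/p)^{-1} = 1 + \O(1/y)$. Since $\omega(q'') \leq \omega(q)$ and (under the standing hypothesis on $\omega(q)$) $\omega(q)/y$ is bounded, expanding the product yields
\[
\prod_{p \mid q''}\left(1 - \frac{1}{p}\right)^{-1} = 1 + \O\!\left(\frac{\omega(q)}{y}\right).
\]
Substituting this into the Tenenbaum estimate produces the claimed formula.

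There is no real obstacle here; the only subtlety is the routine check that the hypothesis $\omega(q) \leq y^{c/\log(1+u)}$ on the full $q$ transfers to $q'$ (trivially, as $\omega(q') \leq \omega(q)$) and that in this regime $\omega(q)/y$ is small enough that the product bound above makes sense as a multiplicative $(1 + \O(\cdot))$ factor. Everything else is bookkeeping of error terms, which combine as a product of two $(1 + \O(\cdot))$ factors exactly as stated.
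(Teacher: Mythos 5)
Your argument is correct and follows exactly the route the paper takes: decompose $q$ into its $y$-smooth part $q'=s$ and rough part $q''=r$, note that coprimality with the rough part is automatic for $y$-smooth integers so $\Psi_q(x,y)=\Psi_{q'}(x,y)$, apply Tenenbaum's theorem to $q'$, and recombine $\phi(q')/q'$ with the factor $\phi(q'')/q''=1+\O(\omega(q)/y)$. Your write-up is in fact a little more explicit than the paper's (which compresses this to a single sentence, with an apparent typo where $\Psi_q(x,y)=\Psi_s(x,y)$ is written as $\Psi_s(x,y)=\Psi_r(x,y)$), but the substance is the same.
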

\begin{proof}
Let $q = sr$ for $s$ a $y$-smooth integer and $r$ with no prime factor less than $y$. Then $\Psi_s(x, y) = \Psi_r(x, y)$, $\phi(q)= \phi(r)\phi(s)$ and $\phi(r)r^{-1} = \prod_{\text{prime }p | r}(1 - p^{-1}) = 1 + \O\left(\omega(q)y^{-1}\right)$ which implies the given bound.
\end{proof}

As mentioned earlier, a key ingredient in combination of congruence algorithms is the detection and factorisation of $y$-smooth numbers. The main difficulty here is that the algorithm must be polynomial time in the logarithm of the integer it is to factor, although it is permitted to be merely sub-exponential in the logarithm of the smoothness bound. That such an algorithm exists is by no means guaranteed.

Typically, the algorithm used here will be the Elliptic Curve Method, due to Lenstra~\cite{Lenstra-ECM}.
For technical reasons, we instead use the somewhat more complex Hyperelliptic Curve Method, which works on the Jacobian of a hyperelliptic curve in place of an elliptic curve.
\begin{fact}[Lenstra, Pila and {Pomerance~\cite[Theorem 1.1]{LenstraPilaPomerance}}]
There exists a constant $c$ such that the hyperelliptic curve method finds a non-trivial factor of any $x$ which has a prime factor less than $y$ in expected time bounded by
$
L_y\left(\frac{2}{3}, c\right)\left(\log x\right)^2
$
\end{fact}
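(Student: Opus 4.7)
The plan is to follow the Lenstra--Pila--Pomerance strategy, in which one replaces the elliptic curves of Lenstra's ECM by Jacobians of hyperelliptic curves of genus $g \geq 2$ in order to obtain a rigorous (rather than heuristic) smoothness analysis. The algorithm takes as input $x$ with unknown prime factor $p < y$, samples a hyperelliptic curve $C$ over $\Z/x\Z$ together with a point $P$ on its Jacobian $J(C)$, and computes $[k]P$ where $k = \operatorname{lcm}\{\ell^{e} : \ell \text{ prime}, \ell^{e} \leq B\}$ for a smoothness bound $B = L_{y}(1/3, b)$ to be optimised. If $\#J(C/\F_{p})$ is $B$-smooth, then $[k]P$ reduces to the identity on $J(C/\F_{p})$ but typically not on the other fibres, so taking a gcd of a coordinate with $x$ at an intermediate step yields a non-trivial factor. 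Iterate over independent random curves until success.

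For the cost per trial, $\log k = (1 + \o(1))B$ by the prime number theorem, so the scalar multiplication $[k]P$ uses $\O(B)$ group operations on the Jacobian, each of which costs $\O((\log x)^{2})$ bit operations (genus is held to $\O(1)$, so group law arithmetic via Cantor composition/reduction is polynomial in $\log x$). Aggregating, each trial costs $B \cdot (\log x)^{2+\o(1)}$; the total expected cost is this multiplied by the expected number of trials, i.e.\ the reciprocal of the smoothness probability.

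The main obstacle, and the reason for working with genus $g \geq 2$ rather than $g = 1$, is showing rigorously that a nontrivial fraction of the random curves have $B$-smooth Jacobian order. By the Weil bound, $\#J(C/\F_{p}) = p^{g} + \O(p^{g-1/2})$, so one must control smoothness in an interval of length $\approx p^{g-1/2}$ about $p^{g}$. For $g \geq 2$ this interval has relative width $p^{-1/2} \geq p^{-1/2}$ but absolute length at least $p^{3/2}$, which fits comfortably inside the rigorous range of Fact~\refp{H-interval} once we choose $y$ and $B$ so that the Hildebrand conditions hold with $x \asymp p^{g}$ and short-interval parameter $z \asymp p^{1/2}$. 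The essential input is then an equidistribution statement: as $C$ varies over a suitable family of hyperelliptic curves (e.g.\ $y^{2} = f(t)$ with $f$ sampled from polynomials of bounded degree and height over $\Z$), the values $\#J(C/\F_{p})$ are sufficiently well distributed in the Hasse--Weil interval that one may transfer the short-interval smoothness density, losing only a constant factor. Combining, the probability of $B$-smoothness for a random trial is $L_{y}(1/3, c')^{-1+\o(1)}$ in the right range of parameters.

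Balancing the cost $B \cdot (\log x)^{2}$ per trial against the expected number $L_{y}(1/3, c')^{1 + \o(1)}$ of trials, and then optimising the smoothness bound $B = L_{y}(1/3, b)$, gives an expected total of the form $L_{y}(1/3, b + c')^{1 + \o(1)} (\log x)^{2}$. The constraint from the short-interval estimate (the $u$ appearing in Hildebrand's range forces $b$ to grow with $y$) prevents staying on the $L_y(1/3)$ scale, and the optimum lies at the $L_{y}(2/3, c)$ scale for a suitable absolute $c$, matching the stated bound. The two forward-looking difficulties to control with care are (i) the equidistribution input for Jacobian orders over the chosen family, which is the step that genuinely distinguishes HCM from ECM and underlies the rigour of the bound, and (ii) verifying that the $g \geq 2$ Cantor arithmetic gives $(\log x)^{2}$ bit-complexity per group operation uniformly in $g$.
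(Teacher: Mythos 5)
This statement is not proved in the paper at all: it is imported verbatim as a Fact from Lenstra, Pila and Pomerance~\cite{LenstraPilaPomerance}, so there is no internal proof to compare against; what can be assessed is whether your sketch would actually establish the cited theorem, and there it has a genuine gap. The analytic step you lean on does not work: you propose to get the smoothness density in the Hasse--Weil interval around $X=p^{g}$ (length $\asymp X^{3/4}$ for $g=2$, i.e.\ short-interval parameter $z\asymp X^{1/4}$) from Hildebrand's short-interval estimate (Fact~\ref{H-interval}). But that estimate requires $z\le y^{5/12}$ where $y$ is the smoothness bound, and in this algorithm the smoothness bound is subexponential in $\log p$, so $B^{5/12}=p^{\o(1)}\ll p^{1/4}$ and the hypothesis fails by an enormous margin. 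The actual heart of the Lenstra--Pila--Pomerance proof is a much deeper theorem on smooth numbers in intervals of length $X^{3/4}$, valid only for smoothness bounds of size $\exp\left((\log X)^{2/3+\o(1)}\right)$; it is exactly this restriction, not a routine optimisation of $B=L_y\left(\frac{1}{3},b\right)$, that forces the $L_y\left(\frac{2}{3},c\right)$ shape of the run time. Your closing remark that the constraint ``pushes $b$ up'' points in the right direction, but the mechanism you cite cannot deliver it.

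The second load-bearing step is also assumed rather than argued: that as the hyperelliptic curve varies, the Jacobian orders are equidistributed enough in the Hasse--Weil interval to transfer the smoothness density ``losing only a constant factor.'' For elliptic curves this is Lenstra's class-number argument (Deuring); for genus $2$ there is no such cheap statement, and establishing a usable lower bound on the number of curves whose Jacobian order lands in a prescribed (smooth) set is a substantial portion of the Lenstra--Pila--Pomerance paper, carried out for a carefully chosen family with class-number and averaging machinery. You flag both items as ``difficulties to control,'' which is honest, but as written the proposal replaces the two hard inputs of the theorem with an inapplicable lemma and an unproven equidistribution hypothesis, so it does not constitute a proof of the stated Fact. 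The bookkeeping you do carry out (cost $B(\log x)^{2}$ per trial via $\log k\asymp B$, trials $\approx\varrho$-reciprocal, genus held bounded) is fine and matches the standard accounting.
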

\begin{cor}\label{HECM-speed}
Suppose $y = \log^{\boldsymbol{\omega}\left(1\right)} x$. Then the hyperelliptic curve method can factor any $y$-smooth number below $x$ in expected time at most
$
L_y\left(\frac{2}{3}, c\right)\left(\log x\right)^3 = y^{\o\left(1\right)}
$
\end{cor}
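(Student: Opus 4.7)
The plan is to reduce complete factorisation of a $y$-smooth integer $N < x$ to a sequence of single non-trivial factor extractions using the cited hyperelliptic curve method of Lenstra, Pila and Pomerance. Since $N$ is $y$-smooth, every prime divisor of $N$ is at most $y$; in particular $N$ admits a prime factor below $y$, so the hypothesis of the cited fact is met and a call returns a non-trivial divisor $d \mid N$ in expected time $L_y\!\left(\tfrac{2}{3}, c\right)(\log x)^2$. I would then recurse on $d$ and $N/d$, interleaving a deterministic polynomial-time primality test such as AKS at each node to terminate the recursion at primes; the primality-test cost is $\operatorname{poly}(\log x)$ and is absorbed into the $(\log x)^2$ factor.

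To bound the recursion, note that any positive integer $N < x$ has at most $\log_2 N \leq \log_2 x$ prime factors counted with multiplicity, so the total number of invocations of the hyperelliptic curve method is $\boldsymbol{O}(\log x)$. Since each subcall is applied to a divisor of $N$, hence still below $x$, the per-call bound $L_y(\tfrac{2}{3}, c)(\log x)^2$ applies uniformly. By linearity of expectation, the total expected time is
\[
L_y\!\left(\tfrac{2}{3}, c\right)(\log x)^3,
\]
which is the first claimed bound.

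It remains to verify the equality $L_y(\tfrac{2}{3}, c)(\log x)^3 = y^{\o(1)}$ under the hypothesis $y = \log^{\boldsymbol{\omega}(1)} x$, equivalently $\log \log x = \o(\log y)$. Taking logarithms,
\[
\log\!\left(L_y(\tfrac{2}{3}, c)(\log x)^3\right) = c(\log y)^{2/3}(\log \log y)^{1/3} + 3 \log \log x.
\]
The first summand is $\o(\log y)$ since $\bigl((\log \log y)/(\log y)\bigr)^{1/3} \to 0$, while the second is $\o(\log y)$ by hypothesis. Hence the right-hand side is $\o(\log y)$, and exponentiating gives $L_y(\tfrac{2}{3}, c)(\log x)^3 = \exp(\o(\log y)) = y^{\o(1)}$, as required.

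The only substantive point beyond routine asymptotic bookkeeping is justifying that the recursion terminates in $\boldsymbol{O}(\log x)$ steps with a uniform per-step bound; this is elementary, so the proof reduces essentially to one application of the cited fact together with the short asymptotic estimate above.
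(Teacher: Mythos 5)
Your proof is correct and takes the natural route; the paper supplies no explicit proof for this corollary, treating it as routine, and your argument (recurse via repeated single-factor extraction, bound the number of calls by $\log_2 x$, then check the asymptotic) is exactly the intended one. One small imprecision worth flagging: you assert the primality-test cost is ``absorbed into the $(\log x)^2$ factor,'' but with AKS the per-test cost exceeds $(\log x)^2$, and when $y$ is only barely super-polylogarithmic in $x$ (say $\log y \sim (\log\log\log x)(\log\log x)$) the quantity $L_y(\tfrac23,c)$ is \emph{not} bounded below by any fixed power of $\log x$, so the $\operatorname{poly}(\log x)$ primality cost need not be dominated by $L_y(\tfrac23,c)(\log x)^3$. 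The cleanest fix is simply to note that $\operatorname{poly}(\log x) = \exp(\boldsymbol{O}(\log\log x)) = \exp(\o(\log y)) = y^{\o(1)}$ directly from the hypothesis $\log\log x = \o(\log y)$, so the additive primality-test term is harmless for the final $y^{\o(1)}$ bound even if it is not strictly dominated by the displayed intermediate expression.
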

\begin{rem}
Both the ECM and HECM are successful if the order of the Jacobian of the randomly chosen curve is smooth.
In the HECM case, the Hasse-Weil interval is of the form $[x-4x^{3/4}, x+4x^{3/4}]$, and the density of smooth numbers in such intervals is unconditionally understood.
\end{rem}

\subsection{Overview of NFS algorithms}\leavevmode

We now provide a detailed look at the function of the NFS and the Randomised NFS. From a \emph{number theoretic} perspective, we fix some $\alpha \in \BB{C}$ with minimal polynomial $f$ over $\Z$, with leading coefficient $f_d$, such that $f\left(m\right) \equiv 0 \imod{n}$. Hence in particular $f_d\alpha$ is an algebraic integer. We will summarise the following discussion with the following diagram:
\begin{figure}[!htb]
\centering
\begin{tikzpicture}[scale=1.5]
\node (A) at (0.7,1.4) {$\Z[X]$};
\node [right] (B) at (1.2,0.7) {$\Z[X] / \left(f\right) \simeq \Z[\alpha]$};
\node [left] (C) at (0.2,0.7) {$\Z$};
\node (D) at (0.7,0) {$\Z/n\Z$};
\node (F) at (0.7,0.7) {$\circ$};
\node [right] (G) at (4.0,0.7) {$\left\{\frac{r}{f_d^s} :r, s \in \N\right\}\subset \Q$};
\node [right] (H) at (4.0,0) {$\F_2^{\textrm{poly}(\log n)}$};
\path[->,font=\scriptsize]
(A) edge node[right]{$\imod{f}$} (B.north west)
(A) edge node[left]{$x \rightarrow m$} (C.north east)
(B.south west) edge node[right]{$X \rightarrow m, \imod{n}$} (D)
(C.south east) edge node[left]{$\imod{n}$} (D)
(B.east) edge node[above]{$\mathbf{N}$} (G)
(B.south east) edge node[below]{$\chi_{\mathfrak{p}_i}$}(H.west);
\end{tikzpicture}
\caption{Algebra underlying the Number Field Sieve.}
\end{figure}
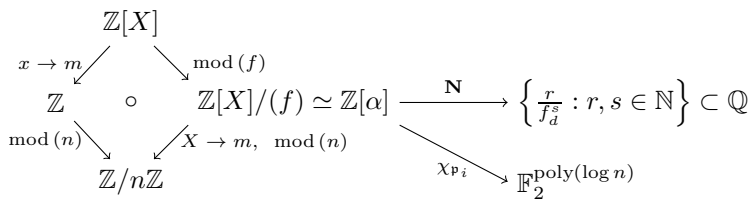

The key to finding a congruence of squares in $\Z/n\Z$ is to suppose that some $P \in \Z[X]$ projects to two squares, say $u^2 \in \Z[X] / \left(f\right)$ and $v^2 \in \Z$. Since the diagram commutes $u\left(m\right)^2 \equiv v^2 \imod{n}$, and so we have found a congruence of squares. We will find the squares in $\Z$ and $\Z[\alpha]$ by combining congruences, and so first we present a notion of smoothness for both rings.

We observe that both $\Z$ and $\Z[\alpha]$ have norms, given by the absolute value and the field norm $\mathbf{N}$ respectively. Recall that the field norm $\mathbf{N}$ is given by the product of all projections of the number field into $\mathbb{C}$. In particular, $f_d\mathbf{N}(a-b\alpha) = b^d f(a/b)$ for $d$ the degree of $f$. In general on the ring of integers $\mathcal{O}_{\Q(\alpha)}$ the norm is integral. Hence we say that an element of $\mathcal{O}_{\Q(\alpha)}$ is \emph{smooth} if its norm is smooth in $\Z$, and say that the linear polynomial is smooth in $\Z[X]/(f)$ if $b^d f(a/b)$ is smooth. Now, if an element of $\Z$ is smooth it can be factored into primes of small norm.

In $\Z[\alpha]$ this is not so straightforward. First, we may have $\Z[\alpha] \not\subseteq\mathcal{O}_{\Q(\alpha)}$, as $\alpha$ need not be an algebraic integer. Hence the norm is a rational whose denominator is a power of $f_d$, or more formally the direct limit $\underrightarrow{\lim}\{f_d^{-i}\Z : i \in \N\}$. In the case where $f$ is monic this is simply the integers. Note that $f_d(a - b\alpha) \in \Z[f_d\alpha]$, so that $N(a - b\alpha) \in \frac{1}{f_d^{d-1}}\Z$. More generally $\Z[f_d\alpha] \subseteq\mathcal{O}_{\Q(\alpha)}$. Second, prime ideals in $\Q(\alpha)$ are not necessarily contained in $\Z[\alpha]$, but instead in $\mathcal{O}_{\Q\left(\alpha\right)}$. Hence we cannot deduce that an element is a square of an element of $\Z[f_d\alpha]$ from the multiplicity of each prime dividing it being even. Finally, we cannot guarantee that the ring of integers $\mathcal{O}_{\Q\left(\alpha\right)}$ is a unique factorisation domain, and so irreducible factors need not be prime.

To address these difficulties in $\Z[\alpha]$, we only partially control the factorisation into ideals, and introduce a collection of additional multiplicative characters $\chi_{\mathfrak{p}_i}$ on $\Z[\alpha]$. We will be able to guarantee that if these characters all evaluate to $1$ on a subset product, then it is a square with reasonable probability; in particular the quotient group formed by taking these pseudo-squares and quotienting by the squares is of logarithmic size, and so we can guarantee that with only a small number of relationships we can find a pair whose product in $\mathcal{O}_{\Q\left(\alpha\right)}$ is in fact square. To ensure that the root is in fact in $\Z[\alpha]$, we then multiply throughout by an additional, carefully chosen square polynomial. In $\Z$, the additional factors of $f_d$ that have been introduced are controlled by insisting that a product of an even number of relationships is taken.

Hence we will search for $P$ by finding linear factors which induce smooth elements of $\Z$ and $\Z[\alpha]$, and then multiply these factors to obtain a suitable $P$. Since square roots in $\Z[f_d\alpha]$ and in $\Z$ can be taken in time polynomial in the degree and the logarithm of the coefficients, this allows us to recover the polynomial $u$ and the integer $v$, and thus a congruence of squares.

The above discussion holds for both the NFS (as observed in detail in \cite{BuhlerLenstraPomerance}) and the Randomised NFS, but thus far we have not shown how we choose the parameters of the algorithm. As previously noted, the difference between the two algorithms lies entirely in the process by which $f$ and the characters $\chi_{\mathfrak{p}_i}$ are chosen.

Computationally, the algorithm proceeds as follows. We choose a degree
\[
d \sim \sqrt[3]{\frac{\log n}{\log \log n}}.
\]
In the Randomised NFS, we will additionally insist that $d$ is odd, whilst the NFS does not make any such insistence. We then choose an $m$ such that:
\[
m^d \leq n < 2m^d.
\]
As a corollary, $m = L_n\left(\frac{2}{3}\right)$. We then choose an irreducible polynomial $f$ such that $n | f(m)$. We define a polynomial $\hat{f}_{n,m}$ by expressing $n$ in base $m$, taking the coefficients of the resulting expression and using them as the coefficients of $\hat{f}_{n,m}$. Note that by construction, $\hat{f}_{n,m}$ is monic of degree $d$.

In the NFS, we take $f = \hat{f}_{n,m}$.
In the Randomised NFS, we will generally homogenise $f$ for notational convenience, writing $f(x, y) = y^df\left(\frac{x}{y}\right)$ and set:
\begin{equation}\label{first-f-def}
f\left(x,y\right) \defeq \hat{f}\left(x,y\right) + R\left(x, y\right),\qquad R\left(x,y\right)=\sum_{i=0}^{d-1}c_{i}\left(x-ym\right)y^{i}x^{d-i-1}
\end{equation}
where $c_{i}$ are uniform and independently chosen with $|c_{i}| \leq L_{n}\left(\frac{2}{3}\right)$. The key purpose of this randomisation is to cause the norm of the image of $a-bX$ to become a random variable in $\Z$. This allows us to show that in the Randomised NFS, for any fixed linear polynomial we consider, the smoothness of the images in $\Z$ and $\Z[\alpha]$ are independent.

In both the NFS and the Randomised NFS, we will search through linear terms $a-bX$ with $|a|, |b| \leq L_n\left(\frac{1}{3}\right)$. We observe that $\mathbf{N}(a-b\alpha) = f(a,b)f_d^{-1}$. Since $f$ is of degree $d$ and has coefficients which are at most of size $L_n\left(\frac{2}{3}\right)$, both $f_d\mathbf{N}(a-b\alpha)$ and $a-bm$ are integers of size $L_n\left(\frac{2}{3}\right)$. Hence in both the NFS and the Randomised NFS, we take the smoothness bound $B$ to be $L_n\left(\frac{1}{3}\right)$ so that heuristically the likelihood that both numbers are $B$-smooth is $L_n\left(\frac{1}{3}\right)^{-1}$.

The remaining ambiguity is in the selection of characters $\chi_{\mathfrak{p}_i}$. In the NFS these are canonically taken to be quadratic characters induced
by finding a map from $\Z[\alpha]$ into $\F_r$ for primes $r$ which are just above $B$, and lifting the Legendre symbol modulo $r$. In the Randomised NFS, we follow a similar pattern, but choose maps from $\Z[\alpha]$ into $\F_{r^k}$ stochastically and close to uniformly across all $k \log r < L_n\left(\frac{1}{3}\right)$. This exponential increase in the size of the fields used to induce characters is needed to guarantee that we get unconditional equi-distribution of the characters. However, even on the GRH we require taking $k \log r \sim \log^{4/3} n (\log \log n)^{-1/3}$, which is substantially larger than the characters used in the NFS.

To recognise and factor these smooth numbers in the Randomised NFS we use the
\emph{hyperelliptic curve method} of Lenstra, Pila and Pomerance~\cite{LenstraPilaPomerance} which provides
a completely rigorous and efficient means to recognise and factor smooth numbers.

Once a sufficiently large set of linear factors have been found with smooth images in both $\Z$ and $\Z[\alpha]$, we combine congruences to find a subset with even multiplicity of each factor and with image $1$ under the quadratic characters $\chi_{\mathfrak{p}_i}$. Whilst we could use general matrix inversion methods to find a non-trivial element of the kernel, we can exploit the structure of the matrix of exponents to find such an element more quickly. In particular, since both $f_d\mathbf{N}(a-b\alpha)$ and $|a-bm|$ are bounded by $L_n\left(\frac{2}{3}\right)$, they have at most a logarithmic number of factors and so the matrix of exponents is \emph{sparse}. Hence we can use the faster algorithms of Wiedemann~\cite{ThomeWiedemann} or Montgomery~\cite{MontgomeryLanczos}, which are specialised to finding non-trivial elements of the kernel of sparse matrices over $\F_2$.

We also note that the selection of suitable $m, f$ is challenging, as there is no guarantee that all pairs give similar densities of linear factors. We demonstrate a stochastic search method that allows us to to find suitable $m, f$ and extract a congruence of squares with at most a logarithmic slowdown compared to the run time on the heuristic that linear factors with smooth images in $\Z$ and $\Z[\alpha]$ have the same density for all $m, f$. In turn, this means that we do not need to show bounds on the second moments of the number of linear factors available as $m, f$ vary, which allows us to obtain results without use of assumptions such as the Generalised Riemann Hypothesis. The situation as noted earlier may be compared to the analogous case of primality testing, where prior to the AKS results, the deterministic Miller primality test was known to work only under GRH, whilst the randomised Miller-Rabin test worked unconditionally but probabilistically.

\subsection{Concrete Specification of the Algorithm}\leavevmode

We define the Randomised NFS following Buhler, Lenstra and Pomerance \cite{BuhlerLenstraPomerance}. In the subsequent analyses, we use \textsc{IsSmooth} and \textsc{KernelVector}, implicitly implemented via the Hyperelliptic Curve Method and the Wiedemann algorithm respectively. Furthermore, we assume that once \textsc{IsSmooth} has been called, the order of divisibility of each prime below the smoothness bound is known. We abuse notation slightly to write $\log_{-1}$ as the map from the multiplicative group $\{\pm 1\}$ to the additive group of $\F_2$.
\begin{algorithmic}
\Function{RandomNFS}{$n, \beta, \beta', \delta, \sigma, \kappa$}
\State $d \gets \delta\sqrt[3]{\log n\left(\log \log n\right)^{-1}}$ 
\While{\textbf{true}}
  \For{$0 \leq i \leq (2\sigma - \tau) \log^{1/3} n \log \log n)^{2/3}$}
    \For{$0 \leq j \leq 2^{i}$}
      \State{$k \gets 0$}, $m \gets \U\left(\left(\frac{n}{2}\right)^{\frac{1}{d}}, n^{\frac{1}{d}}\right)$
      \State $\hat{f}\left(x,y\right) \gets \sum_{l=0}^d \bar{c}_l x^ly^{d-l} : \bar{c}_l \in \N, \bar{c}_d = 1, \bar{c}_l < m, n = \sum_{i=0}^d \bar{c}_l m^l$
      \State $c_l \gets \U\left(-L_n\left(\frac{2}{3}, \kappa - \delta^{-1}\right), L_n\left(\frac{2}{3}, \kappa-\delta^{-1}\right)\right)$
      \State $f\left(x,y\right) \gets \hat{f}\left(x,y\right) + \sum_{i = 0}^{d-1} c_i\left(x - my\right)x^{d-i-1}y^i$
      \IIf{$f$ is reducible} \Return FAIL \EndIIf
      \State $\S \gets \left\{p < L_n\left(\frac{1}{3}, \beta\right) : p\textrm{ prime}\right\},\;
      \S' \gets \left\{p < L_n\left(\frac{1}{3}, \beta'\right) : p\textrm{ prime}\right\}$
      \State $\mathcal{R} \gets \left\{4d(\delta\kappa\log_2\left(n\right) + \frac{\delta^2\kappa}{2\log2}\frac{\log^{4/3}n}{\log\log n^{1/3}})\textrm{ random pairs }\left(q, s\right) \textrm{ s.t. }q \in [\exp\left(d^4\right), 2\exp\left(d^4\right)], \right.$ \newline
$ \left.\qquad\qquad\qquad q \textrm{ prime, }q \mid f\left(s, 1\right), q \nmid \left(\frac{\partial f}{\partial x}\left(x,y\right)\right)\left(s, 1\right) \right\}$
      \State $\mathcal{L} \gets$ Empty list of pairs $((a, b), \{0,1\}^*)$
      \For{$0 \leq l \leq  2^{-i} . 4(B + B')L_n\left(\frac{1}{3}, \frac{\delta^{-1}}{3\beta} + \frac{\kappa+\sigma\delta}{3\beta'}\right) . \frac{4}{3}\log \log n$}
        \State $a, b \gets \U\left(\left\{\left(a,b\right) : a < |b| \in \left[\frac{1}{2}L_n\left(\frac{1}{3}, \sigma\right), L_n\left(\frac{1}{3}, \sigma\right)\right]\right\}\right)$
        \If{$\Call{IsSmooth}{a-mb, \S} \wedge \Call{IsSmooth}{f\left(a,b\right), \S'}$}
          \State $E_1\gets \langle \ord_p\left(a - bm\right) \in \F_2 : p \in \S \rangle \rangle$
          \State $E_2\gets \left\langle \1_{p \mid a+br}\ord_p\left(N\left(a - b\alpha\right)\right) \in \F_2 : \begin{array}{l}p\in\S', r\in[p], p \mid f(1, r)\end{array}\right\rangle $
          \State $E_3\gets \langle \log_{-1}\left(\frac{a+bs}{q}\right) : \left(q, s\right) \in \mathcal{Q}\rangle$
          \State $\mathcal{L} \gets \mathcal{L} \cup \left\{\left(\left(a,b\right), \langle E_1, E_2, E_3\rangle\right)\right\}$
        \EndIf
        \If{$|\mathcal{L}| > 1+ |\S| + d|\S'| + |\mathcal{R}|$}
          \State $M \gets$ The matrix $M_{i.} = \langle E_1\left(a,b\right), E_2\left(a,b\right), E_3\left(a,b\right), 1\rangle$ for $i \in \mathcal{L}_k$
          \State $V \gets \Call{KernelVector}{M}$
          \State $u_k \gets \left(\prod f_d(a - bm) : \left(a, b\right) = \mathcal{L}[i]\text{ and }V_i = 1\right)$
          \State $v_k \gets \left(\prod f_d(a - bX) : \left(a, b\right) = \mathcal{L}[i]\text{ and }V_i = 1\right)\imod f$
          \State $k \gets k + 1$
          \State $\mathcal{L} \gets$ Empty list of pairs $((a, b), \{0,1\}^*)$
        \EndIf
        \If{$ k = \frac{4}{3} \log \log n$}
          \For{$S \subset [\frac{4}{3}\log \log n], 0 < |S| \leq 2$}
            \If{$f'^2 \prod_{s \in S} v_s$ is square in $\Z[X] \imod f$}
              \State $u \gets \sqrt{f'\left(m\right)^2 \prod_{s \in S} u_s},
              \quad v \gets \left(\sqrt{f'^2 \prod_{s \in S} v_s}\right)(m)$
              \State \Return $\Call{gcd}{u+v, n}$
            \EndIf
          \EndFor
        \EndIf
      \EndFor
    \EndFor
  \EndFor
\EndWhile
\EndFunction
\end{algorithmic}

\subsection{Heuristic Difficulties}\leavevmode

As one would expect, a significant portion of the analysis revolves around precise control over smooth
numbers. Heuristically, one would expect that $f\left(a,b\right)$ behaves as a uniformly random
number below some bound. However, this turns out not to be the case. We are required
to ensure that $f\left(m, 1\right) = n$; this is enforced by ensuring that the random polynomial $R$
is a multiple of $x - my$. As a corollary (see Equation~\refp{first-f-def}),
for $a, b$ fixed our randomisation will constrain
$f\left(a,b\right)$ to lie on an arithmetic progression of common difference $a-mb$. We postpone the
numerical details of the coefficients of $f$ and $\hat{f}$ to Equations~\refp{f-definition} and~\refp{ap-definition}.

The heuristic analysis of the NFS assumes that $\hat{f}$ is
a ``random'' polynomial in some suitable sense. However, $\hat{f}$ is in
fact determined entirely by the fixed $n$ and our chosen $m$. In applications,
$m$ is often chosen carefully to attempt to optimise $\hat{f}$ so that when it is reduced modulo small
primes, it has an unusually large number of roots~\cite{MurphyThesis}. This makes the NFS as used substantially more complicated to analyse, as no variables other than $a$ and $b$ can be considered to be random in a natural way.

We note that even if the polynomial $f$ was completely random, almost all of our analysis
would still be required. In particular, we would still need to show that since a single $f$ is
fixed, the smoothness of the values of $f\left(a,b\right)$ for many pairs $\left(a,b\right)$ are not too correlated.
For example, if a small collection of $f$ were responsible for the majority of smooth values of $f\left(a,b\right)$,
then we would have to examine a large number of different polynomials $f$ before we found one for which
we could generate many pairs $\left(a,b\right)$ as required.

In fact, our polynomial $f = \hat{f} + R$ is not
entirely random, which introduces a degree of additional complexity. However, we are able to show
that its value distribution for small values of $x, y$ is such that the two numbers
$
f\left(a,b\right)$ and $\left(a-bm\right)
$
are $L_{n}\left(\frac{1}{3}\right)$ smooth as often as needed when we choose $\left(a,b\right)$ at
random with their values bounded by $L_{n}\left(\frac{1}{3}\right).$ We also provide a
rigorous analysis of the process of lifting a congruence of squares involving norms
to a congruence involving elements in the number field. As is standard, this involves
an analysis of the primes in $\Q\left(\alpha\right)$, and a small collection
of quadratic characters.

We record the following summary of the computations involved in both the NFS and the Randomised NFS:

\begin{framed}
\begin{enumerate}
\item Fix $m$ an integer, $f$ a homogeneous bivariate polynomial such that $n | f\left(m, 1\right)$, $f$ is irreducible of degree $\left(\frac{\log n}{\log \log n}\right)^{\frac{1}{3} + \o\left(1\right)}$ and with coefficients which are not too large.
\item Generate polynomials $\left(a-bX\right)$ at random for $a, b$ which are not too large
\item \label{smooth-generate}Keep only those polynomials such that $a-mb \in \Z$ and $a-b\alpha \in \Z[\alpha]$ both being smooth.
 (Recall that $a-b\alpha$ is smooth iff $f\left(a,b\right)$ is smooth)
\item \label{square-formation}Find a subsets $S_i$ of pairs $\left(a,b\right)$ such that
\[
\prod_{S_i} f_d\left(a-mb\right)\text{ and } \prod_{S_i}f_d\left(a-b\alpha\right)\text{ are square, and }\forall \chi_{\mathfrak{p}_j},\,
\prod_{S_i} \chi_{\mathfrak{p}_j}\left(a-mb\right) = 1.
\]
\item \label{find-squares}Produce a polynomial whose projection into $\Z$ and $\Z[\alpha]$ are both squares.
\item Produce a congruence of squares in $\Z/n\Z$.
\end{enumerate}
\end{framed}

Note that for Step~\ref{square-formation} to be sure of success, we must find at least as many polynomials in Step~\ref{smooth-generate} as the sum of the number of primes of small norm in $\Z$ and $\Z[\alpha]$. The success of Step~\ref{square-formation} or Step~\ref{find-squares} is not established in the NFS; in the Randomised NFS it is precisely controlled.

Theorem~\refp{NFS-characters-uncond} will give us broad conditions under which Step~\ref{square-formation} and Step~\ref{find-squares} can be completed in the Randomised NFS sufficiently quickly asymptotically almost surely. Primarily, this will correspond to ensuring that we can find a square root in $\Q\left(\alpha\right)$, and will require working with a relatively small random collection of large quadratic characters.

Our other theorems primarily concern themselves with characterising situations in which Step~\ref{smooth-generate} can be achieved with sufficiently high probability. In particular, we will use the flexibility in the choice of $f$ and $m$ to make the events ``$a-b\alpha$ is smooth'' and ``$a-bm$ is smooth'' almost independent and characterise the probability with which they occur. By bounding various correlations we are able to show that for a reasonably large fraction of the $f$ we might choose, the probability with which a polynomial $a-bX$ passes the conditions of Step~\ref{smooth-generate} is reasonably large.

\section{The Randomised Number Field Sieve}

Recall that we add a large random multiple of $\left(X-m\right)$ to our polynomial $f$. This will not substantially increase the coefficients, whilst ensuring that $f\left(m, 1\right) = n$ and ensuring that values of the polynomial at small values of $x$ are randomised usefully. Additionally, for technical reasons we will insist that the degree of our polynomial is \emph{odd} (see the proof of Lemma~\refp{prob-zero-as-used}).

We fix smoothness bounds $B = L_n\left(\frac{1}{3}, \beta\right)$, $B' = L_n\left(\frac{1}{3}, \beta'\right)$, and parameters $\delta, \kappa$, and $\sigma$ to control the degree, coefficients and points of evaluation of our polynomial. We insist that:
\begin{align}\label{const-bounds}
\kappa > \delta^{-1}, \qquad
2\sigma > \max\left(\beta, \beta'\right) + \frac{\delta^{-1}\left(1+\o\left(1\right)\right)}{3\beta} + \frac{\left(\sigma\delta + \kappa\right)\left(1 + \o\left(1\right)\right)}{3\beta'},
\end{align}
\begin{equation}\label{d-bound}
\delta^{-1} < \frac{\kappa + \sigma\delta}{2}
\end{equation}
See Remark~\refp{const-reasons} for a discussion of these bounds.

\begin{definition} Let $\X$ be the set of tuples $\left(f, n, m, a, b\right)$ such that the following four conditions hold:
\begin{enumerate}
\item $m$ is an integer, $m \in \left[2^{-\frac{1}{d}}L_n\left(\frac{2}{3}, \delta^{-1}\right), L_n\left(\frac{2}{3}, \delta^{-1}\right)\right]$,
\item $f$ is a homogeneous polynomial of degree $d = \delta\sqrt[3]{\frac{\log n}{\log \log n}}$, $d$ odd, in two variables with integer coefficients bounded by $L\left(\frac{2}{3}, \kappa\right) \left(1+\o\left(1\right)\right)$, with $f\left(m,1\right) = n$. In particular, we count such $f$ such that that:
\begin{align}
c_i &\in \I\left(2L_n\left(\frac{2}{3}, \kappa-\delta^{-1}\right)\right)\text{ (Recall Definition~\refp{centredinterval}), and set}\nonumber\\
\qquad\quad\label{f-definition} f\left(x,y\right) &\defeq \hat{f}_{n,m}\left(x,y\right) + \sum_{i=0}^{d-1} c_i\left(x - my\right)x^{d-i-1}y^i
\end{align}
and $\hat{f}_{n,m}\left(x,y\right) \defeq \sum C_i x^{d-i}y^i$, with $C_i$ given by expressing $n$ as a polynomial in $m$ with coefficients in $\left[0, m\right)$ (that is, by expressing $n$ as an $m$-ary number). We recall that this is the major alteration in the Randomised NFS, as the NFS can be seen to take $c_i \equiv 0$, whereas in the Randomised NFS the $c_i$ are chosen independently and uniformly randomly.
\item $a,b$ are integers, $0 \leq a < |b| \in \left[\frac{1}{2}L_n\left(\frac{1}{3}, \sigma\right), L_n\left(\frac{1}{3}, \sigma\right)\right]$, with $a - bm$ being $B$-smooth and $f\left(a,b\right)$ being $B'$-smooth.
\end{enumerate}
We also define $\X_{n,m,f}$ be the set of pairs $(a, b)$ such that $(f, n, m, a, b) \in \X$.
\end{definition}

Recalling our earlier discussion of combination of congruence algorithms, it can be seen that the condition $(a, b) \in \X_{n,m,f}$ are \emph{almost} those required for the factor $\left(a - Xb\right)$ to be used in the combination of congruences. Hence showing that the number of such tuples is \emph{large} will correspond to showing that we can find a large number of tuples \emph{quickly}. The sole missing condition is that we do not require $f$ to be irreducible; indeed, we will freely interchange between $f$ considered as a homogeneous bivariate polynomial and the single variable non-homogeneous $f$ usually discussed in the NFS.

\begin{rem}\label{const-reasons}
The constraints given in Equation~\refp{const-bounds}. The first condition ensures that $c_i \gg m$. We will use this to show, speaking loosely, that for any fixed pair $a < |b| < L_n\left(\frac{1}{3}, \sigma\right)$, the event of $f\left(a,b\right)$ being smooth is driven by the values of $c_i$ rather than by the inflexible interaction of $n$ and $m$. The second constraint from Equation~\refp{const-bounds} will ensure that there are enough suitable pairs $a,b$ that almost surely there will be a congruence of squares. Equation~\refp{d-bound} will ensure that the distribution of smooth numbers $f(a,b)$ modulo $a-mb$ can be controlled by character methods. We further note that the value of $f\left(a,b\right)$ lies on the arithmetic progression:
\begin{align}\label{ap-definition}
\left\{\hat{f}_{n,m}\left(a,b\right) + \left(a-mb\right) z : |z| \leq dL_n\left(\frac{2}{3}, \kappa-\delta^{-1}\right)b^d\right\}
\end{align}
Crucially, we will later show that as $c$ is randomised, $f\left(a,b\right)$ is $B'$-smooth as often as a uniformly random element of this progression is.
\end{rem}

\begin{rem} Since $c_i \gg m$, the coefficients in $f$ are somewhat larger than in $\hat{f}_{n,m}$. Thus the bounds on the discriminant $\Delta\left(f\right)$ are weakened in the Randomised NFS by comparison to the standard NFS. This will have an impact in the proof of Theorem~\refp{NFS-characters-uncond}, although we will see there that the bounds are still sufficiently tight. In particular, the squares of smooth-normed elements of $\Z[\alpha]$ are still a comparatively large subset of the elements of $\Z[\alpha]$ with smooth and square \emph{norms}, and so a small collection of quadratic characters can be used to identify the squares amongst elements with smooth and square norms.
\end{rem}

We first reduce Theorem~\refp{NFS-L13-analysis} to Theorems~\refp{NFS-smoothness-base} and~\refp{NFS-characters-uncond}.

\begin{proof}[Proof of Theorem~\refp{NFS-L13-analysis}]
Fix $n, \beta, \beta'$, $\sigma$, $\delta$, $\kappa$ satisfying the conditions of Equation~\refp{const-bounds} and Equation~\refp{d-bound}. Then by Theorem~\refp{NFS-characters-uncond} we can extract a congruence of squares mod $n$ from $L_n\left(\frac{1}{3}, \max\left(\beta, \beta'\right) + \o(1)\right)$ pairs $(a, b) \in \X_{n, m, f}$ for a fixed $(m, f)$ in expected time
\[
L_n\left(\frac{1}{3}, 2 \max\left(\frac{2\delta}{3}, \beta, \beta'\right)(1+\o(1))\right)
\]

Theorem~\refp{NFS-smoothness-base} tells us that a fixed $(m, f)$ and this many pairs $(a, b) \in \X_{n, m, f}$ will be found in expected time
\[
L_n\left(\frac{1}{3}, \max\left(\beta, \beta'\right) + \frac{\delta^{-1}\left(1+\o\left(1\right)\right)}{3\beta} + \frac{\left(\sigma\delta + \kappa\right)\left(1 + \o\left(1\right)\right)}{3\beta'}\right).
\]
Hence we can run the Randomised NFS to obtain a congruence of squares mod $n$ with the expected time bounded by
\begin{gather*}
L_n\left(\frac{1}{3}, \lambda\left(1+\o\left(1\right)\right)\right), \quad
\lambda \defeq \max\left(2\max\left(\frac{2\delta}{3}, \beta, \beta'\right), \max\left(\beta, \beta'\right) + \left(\frac{\delta^{-1}}{3\beta} + \frac{\kappa+\sigma\delta}{3\beta'}\right)\right)
\end{gather*}
Note that to obtain a concrete bound we must choose $\beta, \beta', \delta, \sigma, \kappa$ subject to Equation~\refp{const-bounds} and~\refp{d-bound}, which we collect here for convenience:
\begin{gather*}
\min\left(\frac{\kappa + \sigma\delta}{2}, \kappa\right) > \delta^{-1},
2\sigma > \max\left(\beta, \beta'\right) + \frac{\delta^{-1}\left(1+\o\left(1\right)\right)}{3\beta} + \frac{\left(\sigma\delta + \kappa\right)\left(1+ \o\left(1\right)\right)}{3\beta'}.
\end{gather*}

We optimise the constants. Note that increasing the lesser of $\beta, \beta'$ cannot increase $\lambda$ or cause the conditions on the constants to be violated, so we can assume $\beta = \beta'$. We can compute the following bound on $\sigma$:
\begin{align*}
2\sigma \geq \lambda &\geq \min_{\beta, \delta}\left(\beta + \frac{2\delta^{-1} + \sigma\delta + \o\left(1\right)}{3\beta}\right)
\geq  \min_{\beta}\left(\beta + \frac{\sqrt{8\sigma}+ \o\left(1\right)}{3\beta}\right)
\geq 2\sqrt[4]{\frac{8\sigma}{9}} + \o\left(1\right)
\end{align*}

Fix any $\epsilon > 0$, $\epsilon = \o\left(1\right)$. If we take $\beta = \beta' = \sigma = \frac{2\delta}{3} = \sqrt[3]{\frac{8}{9}} + \epsilon$, $\kappa = \sqrt[3]{3^{-1}}+ \epsilon$, the above are all equalities (up to $\O\left(\epsilon\right)$ terms). Furthermore, all the conditions of Equation~\refp{const-bounds} and~\refp{d-bound} are satisfied, and $\lambda = 2\sqrt[3]{\frac{8}{9}} + \o\left(1\right)$ matching the heuristic optima as claimed.
\end{proof}

\begin{rem}
The above argument, with the statement of Theorems~\refp{NFS-smoothness-base} and~\refp{NFS-characters-uncond} modified as in Remark~\refp{MPNFS-thm-mod}, plainly establishes that a Randomised Coppersmith multiple polynomial NFS finds a congruence of squares in the given time. Optimising the constants yields $\beta = \frac{3\beta'}{\sqrt{13}-1} = \sigma = \frac{3\delta}{4\sqrt{13}-10} = \frac{3\eta}{4 - \sqrt{13}} = (\frac{46 + 13\sqrt{13}}{108})^{1/3} + \o(1)$, $\kappa = \delta^{-1} + \o(1)$, achieving $\lambda = \sqrt[3]{\frac{92 + 26\sqrt{13}}{27}} + \o(1)$.
\end{rem}

\section{Finding Many Relationships and the Proof of Theorem~\ref{NFS-smoothness-base}}
Given $\left(f,n,m,a,b\right) \in \X$, let $\alpha \in \BB{C}$ with $f\left(\alpha,1\right) = 0$. Then the map
\[
\Z[\alpha] \simeq \Z[X]/\left(f\left(x, 1\right)\right) \rightarrow \Z / n\Z\textrm{ defined by }1 \rightarrow 1, \alpha \rightarrow m
\]
and extended multiplicatively is a homomorphism of rings, since $f\left(m, 1\right) \rightarrow n \equiv 0 \imod{n}$. We also have a multiplicative map from the ring of integers $\Q\left(\alpha\right) \rightarrow \Q$, the so-called \emph{field norm} $\mathbf{N} = \mathbf{N_{\Q\left(\alpha\right)/\Q}}$. This norm can be defined by sending any element $z$ of the number field to the product of all images of $z$ under embeddings of the field into $\BB{C}$.

Note that on $\Z +\alpha \Z$, such a product can be expressed as a sum of integer multiples of products of the symmetric polynomials evaluated at the roots of $f$. Since the elementary symmetric polynomials in the roots of $f$ are the \emph{coefficients} of $f / f_d \in \frac{1}{f_d}\Z$, the field norm is guaranteed to be in $\frac{1}{f_d}\Z$ on $\Z + \alpha\Z$.

Hence if $f$ is irreducible, we are in the setting discussed earlier and so the established NFS strategy can be used to find a congruence of squares modulo $n$.
\begin{lemma}\label{f-reducible-probability}
$\P(f\text{ is reducible}) \leq L_{N}\left(\frac{2}{3}, \frac{\kappa-\delta^{-1} + \o\left(1\right)}{3}\right)^{-1}$.
\end{lemma}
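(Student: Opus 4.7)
The plan is a union bound over non-trivial factorizations $f=gh$ in $\Z[x]$, using Mignotte's height bound and the identity $g(m)h(m)=n$. Throughout let $H_1\defeq L_n(2/3,\kappa-\delta^{-1})$.

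Since $f(m)=\hat{f}_{n,m}(m)+(m-m)Q(m)=n$, any factorization in $\Z[x]$ forces $g(m)h(m)=n$, so $g(m)$ divides $n$. Mignotte's inequality gives $\|g\|_\infty\leq 2^d\|f\|_\infty\leq L_n(2/3,\kappa+\o(1))$ (using that $2^d=L_n(1/3,\delta\log 2)$ is negligible on the $L_n(2/3)$ scale); set $G\defeq L_n(2/3,\kappa+\o(1))$ as a uniform height bound on factors, and assume without loss of generality that $j\defeq\deg g\leq d/2$. For each divisor $e\mid n$ and degree $j$, the monic $g$ of degree $j$ with $g(m)=e$ and $\|g\|_\infty\leq G$ are counted by a slab lattice argument: the constraint $\sum_{i=0}^{j-1}g_im^i=e-m^j$ has normal vector $(1,m,\ldots,m^{j-1})$ of norm $\sim m^{j-1}$, yielding $O((G/m)^{j-1})$ monic $g$'s per $e$. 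For each such $g$ (coprime to $x-m$ since $g(m)\neq 0$), the condition $g\mid f$, obtained by polynomial division of $f=\hat{f}_{n,m}+(x-m)Q$ by $g$, reduces to $j$ linear equations on $(c_0,\ldots,c_{d-1})$; the affine sub-lattice of solutions in $\Z^d$ has codimension $j$ and its intersection with $[-H_1,H_1]^d$ contains at most $(2H_1+1)^{d-j}$ points.

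Combining and dividing by the ensemble size $(2H_1+1)^d$, and using $G/m=H_1\cdot L_n(2/3,\o(1))$:
\[
\P(f\text{ reducible})\leq\sum_{j=1}^{d/2}\tau(n)\,(G/m)^{j-1}(2H_1+1)^{-j}=\tau(n)\cdot L_n(2/3,\o(1))/H_1.
\]

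The main obstacle is matching the claim's $1/3$ exponent: the above bound carries an unwanted $\tau(n)$ factor, and $\tau(n)\leq n^{O(1/\log\log n)}$ is \emph{not} $L_n(2/3,\o(1))$ in the worst case. Closing the gap requires restricting to the ``relevant'' divisors $e$ satisfying $|e-m^j|\leq G m^{j-1}$ (outside which no lattice solutions exist), and exploiting the randomization of $m$ uniform in $[2^{-1/d}L_n(2/3,\delta^{-1}),L_n(2/3,\delta^{-1})]$ to show that only a $L_n(2/3,\o(1))$ fraction of divisors of $n$ falls into such a short interval on average; alternatively, one may invoke an effective Hilbert irreducibility estimate for the family $f(c_0,\ldots,c_{d-1};x)\in\Z[c_0,\ldots,c_{d-1}][x]$. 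The $1/3$ in the claim's exponent likely reflects a specific balance among the degree-$j$ counting, the Mignotte slack, and the covolume estimates for the conditions $g\mid f$.
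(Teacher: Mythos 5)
The paper proves this lemma via the Tur\'an sieve, not via a direct union bound over integer factorisations. Working modulo each prime $r \leq z$, it counts irreducible polynomials $g \bmod r$ satisfying $g(m)\equiv n$ (via a function-field zeta argument giving $\tfrac{r^{D-1}}{D(D-1)} + O(r^{D/2})$ such monic irreducibles), lifts these counts to the coefficient cube to estimate $|\mathcal{A}_r|$ and $|\mathcal{A}_r\cap\mathcal{A}_{r'}|$, and invokes the Tur\'an sieve inequality to bound the number of $f$ reducible modulo \emph{every} prime $r\leq z$. Optimising $z\sim H^{1/3}\log^{1/3}H$ produces the exponent $1/3$; crucially, this argument works for $n$ and $m$ \emph{fixed}, randomising only the coefficients $c_i$.

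Your approach is genuinely different, and if it could be completed it would actually give a sharper exponent (roughly $H^{-1}$ rather than $H^{-1/3}$), since the Tur\'an sieve only extracts one power of $z$ from second-moment information. But there is a real gap, which you yourself flag: the divisor-counting factor $\tau(n)$. Since $\tau(n)$ can be as large as $n^{O(1/\log\log n)}$, this term dominates $L_n(2/3,\cdot)$ for worst-case $n$, so the bound as written fails. Your suggested repair via the randomisation of $m$ would not prove the lemma as stated: the paper's proof, and hence the lemma, holds for every fixed $m$, whereas averaging over $m$ would establish only a weaker, $m$-averaged version. (For $j=1$ the ``relevant divisors'' window $[m-G,\,m+G]$ has length $2G \gg m$, so it is not obviously sparse in divisors of $n$; controlling this would require a separate argument.) A secondary issue: $f$ is \emph{not} monic in the Randomised NFS (its leading coefficient is $1+c_0$), so you cannot assume $g$ is monic; this changes the slab-lattice exponent from $j-1$ to $j$, though it does not change the qualitative conclusion. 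Finally, your closing speculation about the $1/3$ exponent is off-target: it comes purely from the Tur\'an sieve threshold optimisation $z\sim H^{1/3}$, not from any balance among Mignotte slack, degree counting, or covolumes in a direct factorisation argument.
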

\begin{proof} Fix $n,m$, and let $H = 2L_n\left(\frac{2}{3}, \kappa-\delta^{-1}\right)$ be the range of each coefficient of the random part of our polynomial $f$. Note that if a polynomial over $\Z$ is reducible it is reducible modulo every prime. Hence if we bound the number of reducible polynomials modulo $\F_z$ for each prime $z$, and bound how often a polynomial is reducible modulo several primes $z$, we can get good bounds on the number of irreducible polynomials $f$.

We count the reducible polynomials $f$ with the Tur\'an Sieve, as in \cite[Section 4.3]{CojocauruMurty}. Let:
$
\mathcal{A} \defeq \{(c_{d-1}, \ldots, c_0) \in \Z^d, |c_i| < H\}
$
which we equate with the set of $f$ as before as
$
f(x,y) = \hat{f}_{n,m}(x,y) + (x - my)R(x,y)
$
with $f, \hat{f}_{n,m}$ both homogeneous of degree $d$ and with $(c_i)$ the coefficients of R. For any prime $r$, let $\mathcal{A}_r$ correspond to the subset of $\mathcal{A}$ corresponding to irreducible polynomials mod $r$. Note that for any $f$ to correspond $g\imod{r}$ we must have
$
(x - my) | \hat{f}_{n,m} - g \in \F_r[X,Y]$ or equivalently $g(m,1) \equiv n \imod{r}
$

We do not insist that $g$ is monic, although any irreducible $g$ must be a scalar multiple of a monic irreducible. To estimate the number of irreducibles, we follow the argument of~\cite[Chapter 2]{rosen}:
\begin{claim}
For any $0 < i < r$, the number of monic irreducibles $g$ of degree $D$ such that $g(m) \equiv i \imod r$ is $\frac{r^{D-1}}{D(D-1)} + \O(r^{D/2})$.
\end{claim}
\begin{proof}
We work in $\F_r$, and let $|g| \defeq r^{\operatorname{deg}(g)}$. We observe that for $\chi$ a non-trivial multiplicative character:
\[
\zeta_{\F_r[X]}(s, \chi) = \sum_{\substack{g \in \F_r[X],\\ g\text{ monic}}}\frac{\chi(g(m))}{r^{s\deg(g)}} = 1,
\]
as for every degree $d \geq 1$ the number of monic polynomials whose evaluation at $m$ is any chosen $i$ is exactly $r^{d-1}$. Let $v = r^{-s}$, and let $a_{d,i}$ be the number of irreducibles $g$ of degree $d$ with $g(m) = i$. As is standard, we express the sum as an Euler product over the monic irreducibles and take the logarithmic derivative:
\[
1 = \prod_{d, i} \left(1 - \chi(i)v^d\right)^{-a_{d,i}},\text{ so }
0 = \sum_{d,i} \frac{da_{d,i}\chi(i)v^{d-1}}{1-\chi(i)v^d}
\]
Expanding and comparing terms, we obtain that for every $D$:
\[
0 = \sum_{d | D}d \sum_{i} a_{d,i} \chi(i)^{D/d},\text{ so }
\Rightarrow \sum_{i} a_{D,i} \chi(i) = - D^{-1}\sum_{d | D, d < D}d \sum_{i} a_{d,i} \chi(i)^{D/d}
\]
Note that $\sum_{d | D}d \sum_{i\neq 0} a_{d,i} = r^D - 1$, and $\sum_{d | D, d < D}d \sum_{i} a_{d,i} = \O(r^{D/2})$. Hence by writing the indicator $\mathbbm{1}_{x \equiv i \imod{r}}$ as a sum of characters, we obtain:
\[
a_{D,i} = \frac{r^D - 1}{D(D-1)} + \O\left(r^{D/2}\right). \qedhere
\]
\end{proof}
To continue the proof of Lemma~\refp{f-reducible-probability}, we note that for any $g$ over $\F_r$ such that $g(m) \equiv n$ with $r \ll \sqrt{H}$, there are:
\[
\left(\frac{H}{r} + \O(1)\right)^{d} = \left(\frac{H}{r}\right)^{d} + \O\left(\left(\frac{H}{r}\right)^{d-1}\right)
\]
polynomials lying over $g$ in $\mathcal{A}$, and none if $g(m) \not\equiv n \imod{r}$. Hence by a union bound over the irreducibles mod $r$:
\begin{align*}
|\mathcal{A}_r| &\leq \frac{H^{d}}{d(d-1)} + \O\left(\frac{H^{d}}{r^{d/2-1}}\right) + \O\left(H^{d-1}r\right) \\
|\mathcal{A}_r \cap \mathcal{A}_{r'}| &\leq \frac{H^d}{d^2(d-1)^2} + \O\left(\frac{H^d}{r^{d/2-1}}\right) + \O\left(\frac{H^{d}}{{r'}^{d/2-1}}\right) + \O\left(H^{d-1}rr'\right)
\end{align*}
From the Tur\'an Sieve \cite[Theorem 4.3.1]{CojocauruMurty}, considering all primes $r \leq z$ for any $z \ll \sqrt{H}$, the number of reducible polynomials $f$ is 
$
\ll H^{d}z^{-1}\log z + H^{d-1}z^2,
$
which for $z \sim H^{1/3}\log^{1/3}H$ is $H^{d-\frac{1}{3}}\log^{\frac{2}{3}} H$. The number of potential $f$ for this fixed $n,m$ is $H^{d}$, and so the probability that $f$ is reducible is at most:
\[
H^{-\frac{1}{3}}\log^\frac{2}{3}H = L_n\left(\frac{2}{3}, \frac{\kappa-\delta^{-1} + \o\left(1\right)}{3}\right)^{-1}. \qedhere
\]
\end{proof}

\begin{rem}
We will assume a fortiori that if $f$ is reducible then the algorithm fails. We will sample at most $L_n\left(\frac{1}{3}\right)$ polynomials, and so the probability that any of them are reducible is $\o(1)$.
\end{rem}

For $f$ irreducible, $\mathbf{N}\left(a - b\alpha\right) = f_d^{-1}f\left(a,b\right)$. We prove the following Theorem later; assuming it we can complete the proof of Theorem~\refp{NFS-smoothness-base}.

\begin{thm}\label{NFS-X-large} With $\beta = \beta', \delta, \sigma, \kappa$ chosen subject to Equation~\refp{const-bounds} and~\ref{d-bound}:
\[
\E_{m,f}\left(|\X_{n,m,f}|\right) \geq L_n\left(\frac{1}{3},\tau\right)\textrm{, with }\tau = 2\sigma - \frac{\delta^{-1}}{3\beta'}\left(1+\o\left(1\right)\right) - \frac{\sigma\delta + \kappa}{3\beta}\left(1 + \o\left(1\right)\right)
\]
\end{thm}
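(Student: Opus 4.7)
The plan is to apply linearity of expectation, writing
\[
\mathbb{E}_{m,f}\,|\mathcal{X}_{n,m,f}| = \sum_{(a,b)} \Pr\bigl(a-bm \text{ is } B\text{-smooth and } f(a,b) \text{ is } B'\text{-smooth}\bigr),
\]
where the sum runs over the $L_n(1/3, 2\sigma)^{1+\o(1)}$ pairs in the allowed range. Since the algorithm samples $m$ first and then the $c_i$ independently, I would analyse the two smoothness events as a marginal event on $m$ followed by a conditional event on $(c_0, \ldots, c_{d-1})$, and multiply the resulting densities.

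For the first event, $a - bm$ depends only on $m$ and is an integer of size $L_n(2/3, \delta^{-1})^{1+\o(1)}$. Corollary~\refp{CEP-cor} gives the global density of $B$-smooths at this scale as $L_n(1/3, \delta^{-1}/(3\beta))^{-1+\o(1)}$. As $m$ varies uniformly over its sampling interval (of length $\asymp L_n(2/3, \delta^{-1})/d$), the quantity $a - bm$ traces a sub-interval long enough for Fact~\refp{H-interval} to guarantee that the density of $B$-smooths is preserved up to a $1+\o(1)$ factor. Hence $\Pr_m(a-bm \text{ is } B\text{-smooth}) = L_n(1/3, \delta^{-1}/(3\beta))^{-1+\o(1)}$.

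For the second event, Equations~\refp{first-f-def} and~\refp{ap-definition} show that, conditional on $(m,a,b)$, the value $f(a,b)$ lies on the arithmetic progression with common difference $q \defeq a-mb$ inside a window of size $L_n(2/3, \kappa+\sigma\delta)^{1+\o(1)}$ as the $c_i$ vary over their cube. The key substep is to show that $f(a,b)$ is near-uniform on this progression: the constraint $\kappa > \delta^{-1}$ of Equation~\refp{const-bounds} forces $|q|$ to be much smaller than the window size, and elementary lattice counting over the $d$-fold product of $c_i$-intervals should establish equidistribution up to a $1+\o(1)$ factor. Conditioning on $q$ being $B$-smooth forces $\omega(q) \ll \log n / \log B$, well within the hypothesis of Corollary~\refp{psi-q-psi}; its adaptation to fixed residue classes then gives the density of $B'$-smooths along the progression as $L_n(1/3, (\kappa+\sigma\delta)/(3\beta'))^{-1+\o(1)}$, matching the global density of $B'$-smooths at the corresponding scale.

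The main obstacle is the equidistribution step: showing that the polynomial expression $\hat{f}_{n,m}(a,b) + (a-mb)\sum_{i} c_i b^i a^{d-i-1}$ is sufficiently well-spread modulo $q$ as $(c_i)$ runs over its cube, so that smoothness density on the AP can be read off directly from Tenenbaum's estimate. Once this is in hand, multiplying the two smoothness probabilities and summing over the $L_n(1/3, 2\sigma)^{1+\o(1)}$ pairs $(a,b)$ gives
\[
\mathbb{E}_{m,f}\,|\mathcal{X}_{n,m,f}| \geq L_n\!\left(\tfrac{1}{3},\; 2\sigma - \tfrac{\delta^{-1}}{3\beta}(1+\o(1)) - \tfrac{\sigma\delta+\kappa}{3\beta'}(1+\o(1))\right),
\]
which equals $L_n(1/3, \tau)^{1+\o(1)}$ under the assumption $\beta = \beta'$, delivering the claimed bound.
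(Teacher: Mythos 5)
Your outline follows the same general decomposition as the paper (linearity of expectation, first the marginal density of $a-bm$ being $B$-smooth via Fact~\refp{H-interval}, then a conditional density for $f(a,b)$ via equidistribution on the arithmetic progression of common difference $a-mb$), but there is a genuine gap in the second half. You invoke Corollary~\refp{psi-q-psi} and then say ``its adaptation to fixed residue classes then gives the density of $B'$-smooths along the progression.'' This is precisely where the argument breaks down: Corollary~\refp{psi-q-psi} controls $\Psi_q(x,y)$, the count of $B'$-smooths \emph{coprime to} $q$, and says nothing about how those smooths are distributed among the $\phi(q)$ reduced residue classes modulo $q$. The statement you need --- that for the specific smooth modulus $q=a-mb$, the $B'$-smooths up to $\approx L_n(2/3,\kappa+\sigma\delta)$ are near-uniform in residue classes modulo $q$ --- is false for individual moduli in general, and is not a corollary of anything you cite.

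The paper addresses this by introducing the notion of a $B'$-\emph{good} modulus and factoring the estimate through the conditional event that $a-mb$ is both $B$-smooth and $B'$-good: Lemma~\refp{f-is-smooth} gives the density of smooth $f(a,b)$ \emph{only} under that conditioning, and Lemma~\refp{smooth-bad-dist} shows that conditioned on $a-mb$ being $B$-smooth, it is $B'$-good with probability $1-\o(1)$. The proof of Lemma~\refp{smooth-bad-dist} is the substantial piece of new analytic number theory in the paper: Lemma~\refp{HZ-1-large}, a Bombieri--Vinogradov-type estimate for smooth numbers in arithmetic progressions where the sum over moduli is \emph{restricted to $y$-smooth moduli} in a dyadic window near $Q$. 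This is sharpened from Harper's Theorem~1 precisely because the naive application of Harper's result only recovers a weaker run time of $L_n(\frac13, \O(\log\log n))$ (see the remark before Definition~\refp{def-q-max}). Your proposal correctly identifies the equidistribution of $f(a,b)$ along the progression (the lattice-counting argument in Lemma~\refp{intervalmult}) as a point to check, but the harder obstacle --- passing from coprimality counts to residue-class counts for a smooth modulus sitting inside a rare event --- is the one you've waved away, and it cannot be waved away.
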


\begin{rem}
The constant $\tau$ defined above is natural, and we will see the terms comprising it regularly in this work. Observe that
$
m \sim L_n\left(\frac{2}{3}, \delta^{-1}\right)
$
and that since $a, b \sim L_n(\frac{1}{3}, \sigma)$ and $d = \delta\log^{\frac{1}{3}} n (\log \log n)^{-\frac{1}{3}}$, for all $0 \leq i \leq d$,
$
a^ib^{d-i} \sim L_n\left(\frac{2}{3}, \sigma\delta\right).
$
We also note that the coefficients of $f$ are of size $L_n(\frac{2}{3}, \kappa)$. Hence when terms of the form $\kappa + \sigma\delta$ appear in the exponents of $L_n$, this should be thought of heuristically as taking a typical evaluation $f(a,b)$, whilst terms of the form $\delta^{-1}$ denote a typical evaluation $a-mb$.

The replacement of $\frac{2}{3}$ by $\frac{1}{3}$ as the first argument and division of the exponent by $3\beta$ or $3\beta'$ correspond exactly to considering the probability that an $L_n(\frac{2}{3})$ number is in fact $B$-smooth or $B'$-smooth.
\end{rem}
\begin{proof}[Proof of Theorem~\refp{NFS-smoothness-base}]
Define $\tau = 2\sigma - \frac{\delta^{-1}}{3\beta'} - \frac{\sigma\delta + \kappa}{3\beta}$, and note that:
\[
\lambda \geq \max\left(\beta, \beta'\right) + \frac{\delta^{-1}\left(1+\o\left(1\right)\right)}{3\beta} + \frac{\left(\sigma\delta + \kappa\right)\left(1 + \o\left(1\right)\right)}{3\beta'} = \max\left(\beta, \beta'\right) + 2 \sigma - \tau + \o(1).
\]

For any fixed pair $(m, f)$, Corollary~\refp{HECM-speed} that we can use the hyperelliptic curve method to examine any pair $\left(a,b\right)$ for suitable smoothness of $a-mb$ and $f\left(a,b\right)$ in $\max\left(B, B'\right)^{\o\left(1\right)}$ time. Hence we can determine whether a pair $(a, b)$ is in $\X_{n, m, f}$ in time $L_n(\frac{1}{3}, \o(1))$.

Lemma~\refp{f-reducible-probability} implies that the probability that $f$ is reducible is $L_n(\frac{2}{3})$, and we have an unconditional uniform bound $|\X_{n, m, f}| \leq L_n(\frac{1}{3}, 2 \sigma)$. Hence from Theorem~\refp{NFS-X-large} we deduce
\[
\E_{m, f}\left(\left|\X_{n,m,f}\right| \middle| f\textrm{ irreducible}\right) \geq L_n\left(\frac{1}{3}, \tau + \o(1)\right)
\]

We now introduce a method of searching large parameter spaces we term \emph{stochastic deepening} to complete the proof. In particular, once $m, f$ have been chosen the depth of the search for pairs $(a,b)$ for is random, with deeper searches being rarer. Suppose there is a reasonable probability that a normal depth search fails for random $m,f$. Then it must be that most $|X_{n,m,f}|$ are small. Since the expectation is controlled, this means that in the remaining cases $|X_{n,m,f}|$ must be large. In this case, a much shallower search will find enough pairs if $|X_{n,m,f}|$ is large, so we can test many $m,f$ less intensely. To make this intuition rigorous, we first note:
\begin{lemma}\label{expectation-bounds}
If a random variable $X$ has $\E(X) = \mu$ and there is a $K \geq 1$ such that $0 \leq X \leq K \mu$ uniformly, then $\exists i \in \{0, \ldots, \lceil\log_2 K\rceil\}$ such that:
\[
\P\left(X \geq \frac{2^{i} \mu}{1 + \lceil\log_2 K\rceil}\right) \geq \frac{1}{2^{i+1}}
\]
\end{lemma}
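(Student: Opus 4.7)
The plan is to argue by contradiction using a dyadic decomposition of the survival function of $X$. Set $L = \lceil \log_2 K\rceil$, so that $K \leq 2^L$, and suppose for contradiction that $\P(X \geq 2^i\mu/(L+1)) < 2^{-(i+1)}$ for every $i \in \{0,1,\ldots,L\}$. Writing $a_i \defeq 2^i\mu/(L+1)$ and using the layer-cake identity $\E(X) = \int_0^\infty \P(X \geq t)\,dt$, I would partition the range of integration according to the $a_i$: on $[0,a_0]$ I use only the trivial bound $\P(X \geq t) \leq 1$, contributing at most $\mu/(L+1)$; on each middle band $[a_i, a_{i+1}]$ for $i = 0,\ldots,L-1$ I use monotonicity of the survival function together with the hypothesis at $a_i$, contributing at most $(a_{i+1}-a_i)\cdot 2^{-(i+1)} = \mu/(2(L+1))$; and on the tail $[a_L, K\mu]$ (beyond which the survival function vanishes) I again use the hypothesis at $a_L$, contributing at most $(K\mu - a_L)\cdot 2^{-(L+1)}$.

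Summing these bounds gives
\[
\E(X) \;\leq\; \frac{\mu}{L+1} \;+\; L\cdot\frac{\mu}{2(L+1)} \;+\; \Bigl(K\mu - \frac{2^L\mu}{L+1}\Bigr)\cdot\frac{1}{2^{L+1}} \;=\; \frac{\mu}{2} + \frac{K\mu}{2^{L+1}} \;\leq\; \mu,
\]
where the telescoping of the $(L+1)$-denominators yields the clean $\mu/2$ and the final estimate uses $K \leq 2^L$. Because the strict bound from the hypothesis is applied on at least one interval of positive measure in the middle sum whenever $L \geq 1$, the overall inequality is strict: $\E(X) < \mu$, contradicting $\E(X) = \mu$.

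The degenerate case $L = 0$ (which forces $K = 1$) has to be checked separately, but is immediate: then $0 \leq X \leq \mu$ and $\E(X) = \mu$ imply $X = \mu$ almost surely, so $\P(X \geq \mu) = 1 \geq 1/2$, and the conclusion holds with $i = 0$. The only delicate point in the whole argument is tracking that the strict inequality in the hypothesis survives the integration step to give a strict contradiction with $\E(X) = \mu$; this is automatic once $L \geq 1$ because some middle band contributes a genuinely strict bound, and the $L = 0$ case is handled by the direct argument above.
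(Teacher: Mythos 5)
Your proof is correct and follows essentially the same approach as the paper's: argue by contradiction with a dyadic decomposition at the thresholds $2^i\mu/(L+1)$ and show $\E(X) < \mu/2 + K\mu/2^{L+1} \leq \mu$. The paper compresses this into a single displayed inequality, whereas you spell out the layer-cake integral, track the strictness, and dispatch the degenerate case $L=0$ separately, but the underlying computation is identical.
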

\begin{proof}
Suppose not. Then:
\[
\E(X) < \sum_{i = 0}^{\lceil\log_2 K\rceil}\left(\frac{1}{2^{i}} - \frac{1}{2^{i+1}}\right) \frac{2^{i} \mu}{1 + \lceil\log_2 K\rceil} + \frac{K\mu}{2^{1 + \lceil\log_2 K\rceil}} \leq \mu\qedhere
\]
\end{proof}

\begin{rem}
Conceptually, this lemma states that for non negative variables which do not vary too much, there must be a reasonably large set where the value is large, whose contribution to the mean is large. This is the core observation that permits stochastic deepening to provide a search algorithm whose run times are shown to be near optimal without establishing accurate variance bounds.
\end{rem}

In our application, we consider $|\X_{n,m,f}|$ to be a random variable of $(m, f)$, with $K \leq L_n(\frac{1}{3}, 2\sigma - \tau)$. Hence for some $i^* \leq 1 + \lceil\log_2 K\rceil = \O(\log^{1/3} n (\log \log n)^{2/3})$, we have (absorbing logarithmic terms):
\[
\P_{m, f}\left(|X_{n, m, f}| \geq 2^{i^*} L_n\left(\frac{1}{3}, \tau + \o(1)\right)\right) > 2^{-i^*}
\]

To find a collection of pairs the algorithm iterates through each $i \in \{0, \ldots, 1 + \lceil \log_2 K\rceil\}$, and for each $i$ generates $2^i$ pairs $(m, f)$, and for each pair $(m, f)$ generates $2^{-i} L_n(\frac{1}{3}, \max(\beta, \beta') + 2\sigma - \tau + \o(1))$ pairs $(a,b)$ and tests for smoothness of $a - mb$ and $f(a, b)$.

Then if $|\X_{n, m, f}| > 2^i L_n\left(\frac{1}{3}, \tau + \o\left(1\right)\right)$, with constant probability the algorithm finds $L_n(\frac{1}{3}, \max(\beta, \beta') + \o(1))$ pairs as required. Furthermore, if $\P_{m, f}\left(|X_{n, m, f}| \geq 2^i L_n\left(\frac{1}{3}, \tau + \o(1)\right)\right) > 2^{-i}$ then with constant probability at least one of the pairs $(m, f)$ satisfies this condition.

Note that the total time taken to test a single $i$ is $L_n\left(\frac{1}{3}, \max\left(\beta, \beta')\right) + 2\sigma - \tau + \o(1)\right)$, and so we can absorb the logarithmic number of iterations into the $\o(1)$ term. Since this algorithm succeeds with constant probability, iterating it at most a logarithmic number of times reduces the probability of failure to $L_n\left(\frac{2}{3}, \kappa - \delta^{-1}\right)$. 

Hence the expected time taken to complete the algorithm is:
\[
L_n\left(\frac{1}{3}, \max\left(\beta, \beta'\right) + 2 \sigma - \tau + \o(1)\right)
\]
as required.
\end{proof}
\begin{rem}
We can save the logarithmic factors lost by the stochastic deepening by noting that if for a particular $m, f$ the search for pairs $(a,b)$ is to succeed, it must find $L_n\left(\frac{1}{3}, \max(\beta, \beta')\right)$ of them. As a corollary, at some early stage of a planned search (say a $\ll \left(\log n\right)^{-1}$ fraction of the way through), one has reasonable estimates of the density of pairs $(a, b)$ for this $m,f$. Aborting searches early can be shown to reduce the cost of searches that would fail to generate at least $1-\o(1)$ of the needed pairs by a factor $\gg \log n$, whilst discarding almost no searches that would find enough pairs. Hence continuing any search that is not aborted to $1+\o(1)$ of its planned depth will find enough relationships.


\end{rem}

Our goal is the proof of Theorem~\refp{NFS-X-large}, which appears on page \pageref{NFS-X-large-pf}, and we proceed with preparatory lemmas. For each $n$ and $b$, we determine how likely the pair $(a,b)$ is to be in $\X_{n,m,f}$ as $a,m,f$ vary. In particular, the distribution of $f$ is well understood, whilst the resulting distribution of $f\left(a,b\right)$ is not. We seek to show that this randomness of $f$ causes $f\left(a,b\right)$ to be as likely to be smooth as a typical number of the same size. An obstruction is that $a-mb$ must be $B$-smooth, which is a rare event and hence heuristically derived ``typical'' behaviour does not have to hold at the points where we evaluate $f$. We will bound how far $f\left(a,b\right)$ deviates from being uniformly random along any arithmetic progression of common difference $\left(a-mb\right)$. Then we can show that $f\left(a,b\right)$ is as likely to be smooth as a random integer.

Recall from Equation~\refp{f-definition} that for any $n, m$, we take $f$ to be uniformly random by choosing:
\begin{align}\label{c-definition}
\left(c_i\right) \sim \mu \defeq \U\left(\I\left(2L_n\left(\frac{2}{3}, \kappa-\delta^{-1}\right)\right)^{d}\right)
\end{align}
and defining $f$ according to definition of Equation~\refp{f-definition}. Note that $\hat{f}_{n,m}$ is completely determined by $n,m$, but the random sum \[
f\left(x,y\right)-\hat{f}_{n,m}\left(x,y\right) = R(x,y) = \sum_{i=0}^{d-1}c_i\left(x-my\right)x^{d-i-1}y^i
\]
dominates $\hat{f}$ as $\kappa > \delta^{-1}$. For any $a,b$, $f\left(a,b\right) \equiv \hat{f}_{n,m}\left(a,b\right) \imod{a-mb}$. Clearly, $\gcd\left(a,b\right)^d$ divides $f\left(a,b\right)$ and $\hat{f}\left(a,b\right)$. Hence $R\left(a,b\right)$ has $\left(a-mb\right)\gcd\left(a,b\right)^{d-1}$ as a factor. We take $b$ and $a$ to be uniformly random in their ranges.

\begin{lemma}\label{a-mb-is-smooth}
Fix $b$ in its interval. If $a, m$ are uniformly random, then:
\[
\P_{a,m}\left(a-bm\textrm{ is } B\textrm{-smooth}\right) = L_n\left(\frac{1}{3}, \frac{\delta^{-1}}{3\beta}\left(1+\o\left(1\right)\right)\right)^{-1}.
\]
\end{lemma}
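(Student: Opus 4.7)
The plan is to reduce the probability in question to the density of $B$-smooth integers in a specific interval of $\mathbb{Z}$, and then apply Fact~\refp{H-interval} and Corollary~\refp{CEP-cor}.

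First, WLOG assume $b > 0$ (the case $b < 0$ is symmetric). Let $M_1, M_2$ denote the endpoints of the range of $m$, so $M_1 \sim 2^{-1/d} n^{1/d}$ and $M_2 \sim n^{1/d}$. I would observe that the map $(a, m) \mapsto a - bm$ is injective on $\{0, 1, \ldots, b-1\} \times \{M_1, \ldots, M_2\}$: any collision forces $b \mid (a_1 - a_2)$ with $|a_1 - a_2| < b$, hence $a_1 = a_2$ and $m_1 = m_2$. Its image is exactly the set of integers in $[-bM_2, -bM_1 + b - 1]$, each hit exactly once; note this interval consists of strictly negative integers since $a < b \leq bM_1$. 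So the probability equals the density of $B$-smooth integers in the interval $[bM_1 - b + 1, bM_2]$ on the positive side, which (since $b \ll bM_1$) differs from the density in $[bM_1, bM_2]$ only by a $(1 + o(1))$ factor.

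Second, I would control the size of the interval. Using $1 - 2^{-1/d} = \log 2 / d + O(d^{-2})$, we have $b(M_2 - M_1) \sim b n^{1/d} \log 2/d$. Since $n^{1/d} = L_n\left(\frac{2}{3}, \delta^{-1}\right)$, and $b, d^{-1}$ contribute only to the $\o(1)$ in the second argument, both the interval length and the endpoint $bM_1$ are $L_n\left(\frac{2}{3}, \delta^{-1}(1 + \o(1))\right)$.

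Third, I would apply Fact~\refp{H-interval} with $x = bM_1$, $y = B = L_n\left(\frac{1}{3}, \beta\right)$, and $z = d/\log 2 \cdot (1 + \o(1))$, so that $bM_2 = bM_1(1 + z^{-1})$. The three hypotheses are easily checked: $\log x \sim \delta^{-1}(\log n)^{2/3}(\log\log n)^{1/3}$ dominates $\log y \sim \beta (\log n)^{1/3}(\log\log n)^{2/3}$, which in turn dominates $(\log \log x)^{5/3 + \epsilon} = (\log \log n)^{5/3 + \epsilon}$; and $z \sim (\log n / \log \log n)^{1/3}$ is vastly smaller than $y^{5/12} = L_n\left(\frac{1}{3}, 5\beta/12\right)$. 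The error term $\log(u+1)/\log y$ is $O((\log \log n)^{1/3}/(\log n)^{1/3}) = \o(1)$. This gives
\[
\Psi(bM_2, B) - \Psi(bM_1, B) = \frac{\Psi(bM_1, B)}{z}\left(1 + \o(1)\right),
\]
so that the density of $B$-smooth integers in the sub-interval equals $\varrho(bM_1, B)(1 + \o(1))$. Finally, Corollary~\refp{CEP-cor} with $b = \frac{2}{3}$, $a = \frac{1}{3}$, $d = \delta^{-1}$, $c = \beta$ yields $\varrho(bM_1, B) = L_n\left(\frac{1}{3}, \delta^{-1}/(3\beta)\right)^{-1 + \o(1)}$, as required.

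The main technical point is the verification that Hildebrand's short-interval estimate applies at our parameter scales; once this is established the result is a direct consequence of Corollary~\refp{CEP-cor}. The bijection argument is elementary but essential: it converts the probability over the product measure into a density question on a genuine integer interval, so that the analytic tools are applicable without needing any finer equidistribution hypotheses.
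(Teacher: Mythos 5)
Your proposal is correct and follows essentially the same route as the paper's proof: reduce the probability to a smooth-number density over an integer interval of length $L_n(\frac{2}{3})$ (of relative width $\Theta(1/d)$), apply Hildebrand's short-interval estimate (Fact~\ref{H-interval}), and convert to an $L_n$-exponent via Corollary~\ref{CEP-cor}. Your explicit bijection argument between $\{0,\dots,b-1\}\times\{M_1,\dots,M_2\}$ and the integers in $[-bM_2,-bM_1+b-1]$ just makes precise the uniformity of $a-bm$ that the paper asserts without elaboration (and your value $z\sim d/\log 2$ is the correct one — the paper has an apparent typo writing $z = \O(d^{-1})$ where it means $z^{-1} = \O(d^{-1})$).
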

\begin{proof}
We fix $b$. Note that $a$ is uniformly random on an interval of length $b$, and $m$ uniformly random over an interval of length comparable to its largest value. In particular:
\begin{align*}
a-bm &\sim \U\left[-bL_n\left(\frac{2}{3}, \delta^{-1}\right), -b\left(2^{-\frac{1}{2d}}L_n\left(\frac{2}{3}, \delta^{-1}\right)+1\right)\right) 
= \U\left[-x(1+z^{-1}), -x)\right)
\end{align*}
for $x = L_n\left(\frac{2}{3}, \delta^{-1}(1+\o(1))\right)$ and $z \approx 2^{\frac{1}{2d}} - 1 = \O(d^{-1})$. Note that $d = \o(B^{5/12})$, and that $\log \log B = \O(\log \log x)$. Hence from Fact~\refp{H-interval} the number of smooth values of $a-mb$ is:
\[
\frac{\Psi(x, B)}{z}\left(1 + \O\left(\frac{\log (u + 1)}{\log B}\right)\right)
\]
Since the range of values is of length $x/z$,
\[
\P_{a,m}\left(a-bm\textrm{ is } B\textrm{-smooth}\right) = \varrho(x, B)\left(1 + \O\left(\frac{\log (u + 1)}{\log B}\right)\right).
\]
Recall that $B = L_n\left(\frac{1}{3}, \beta\right)$ and $x = L_n\left(\frac{2}{3}, \delta^{-1}\right)^{1+\o(1)}$. Furthermore, note that $\log u < \log
\log n = \o(\log B)$. Hence recalling Corollary~\refp{CEP-cor}:
\[
\varrho\left(L_n\left(\frac{2}{3}, \delta^{-1}\right)^{1+\o(1)}, B\right) = L_n\left(\frac{1}{3}, \frac{\delta^{-1}}{3\beta}\right)^{-1+\o(1)}.
\]
We can absorb the multiplicative $1+\o(1)$ error into the $\o(1)$ error in the exponent to obtain:
\[
\P_{a,m}\left(a-bm\textrm{ is } B\textrm{-smooth}\right) = L_n\left(\frac{1}{3}, \frac{\delta^{-1}}{3\beta}(1+\o(1))\right)^{-1}.
\]
\end{proof}
\begin{rem}
To prove the analogous statement for Coppersmith's multiple polynomial NFS, as modified by Remark~\refp{MPNFS-thm-mod}, we use Lemma~\ref{expectation-bounds} twice, first to select an $m$ and for each $m$ to attempt to find many polynomials $f^{(i)}$ with many smooth pairs $a, b$. If we guess correctly the values of $i$ correctly at both steps then with probability $\O(1)$ our sample of values of $m$ contains a value, such that with probability $\O(1)$ the sample of $f^{(j)}$ chosen for this $m$ has a large enough $\sum|\X_{n, m, f^j}|$ that with probability $\O(1)$ we find enough pairs $(a,b)$ that are smooth for some $f^{(j)}$.
\end{rem}
\begin{rem} To prove Theorem~\refp{NFS-X-large}, we will estimate the probability that $f\left(a,b\right)$ is $B$-smooth. Note that for a pair $\left(a,b\right)$ to be in $\X_{n,m,f}$, it is required that $a-mb$ to be $B$-smooth. As a corollary, we know that for all of these pairs the greatest common divisor of the pair $\left(a,b\right)$ is $B$-smooth. Hence we can divide $a$ and $b$ by $\gcd\left(a,b\right)$ without changing the $B$-smoothness of $f\left(a,b\right)$. In Lemma~\refp{S-for-phi} to Lemma~\refp{f-is-smooth} we only seek to establish the $B$-smoothness of $f\left(a,b\right)$. Hence for convenience we will take $\gcd\left(a,b\right) = 1$ without loss of generality.
\end{rem}

We wish to show that $f\left(a,b\right)$ is as likely to be $B$-smooth as a random number of the same size. To do this, we will show that
\begin{enumerate}
\item $f\left(a,b\right)$ is close to uniformly distributed along long progressions of common difference $\left(a-mb\right)$ (proved in Lemma~\refp{S-for-phi} to Lemma~\refp{intervalmult}).
\item For most $B$-smooth moduli $a-mb$, the $B'$-smooth numbers are approximately uniformly distributed modulo $a-mb$.
\end{enumerate}
To show the first property, we fix the residue $f\left(a,b\right) \imod{a-mb}$, and consider the effect of our random choice of vector $c$ in Equation~\refp{c-definition}. First, we show that there exist small changes to $c$ that will alter $f\left(a,b\right)$ by any small multiple of $\left(a-mb\right)$ in Equation~\refp{f-c-modification}. To

To show the second, we introduce a notion of goodness for moduli which is strong enough to allow us to control the NFS should the modulus $a-mb$ turn out to be $B'$-good.
\begin{definition}Fix $B = L_n\left(\frac{1}{3}\right)$, $F = L_n\left(\frac{2}{3}\right)$ and some $\epsilon\left(F,B,r,a\right) = \o_n\left(1\right)$, $\omega = L_n\left(\frac{1}{3}\right)$. We say a modulus $r$ is $B$-good for $F$ if uniformly over all $\left(a,r\right) = 1$:
\begin{align*}
\Psi\left(F,B; r,a\right) &= \left(\frac{\Psi_r\left(F,B\right)}{\phi\left(r\right)}\right)^{1+\epsilon}
\end{align*}
and $B$-bad for $F$ otherwise. We will routinely suppress $\epsilon$, as we only need that the error exponent is taken to be $\o\left(1\right)$.

If $\mathcal{F} = L_n\left(\frac{2}{3}\right)$ and for all $F \in [\mathcal{F}\omega^{-1}, \mathcal{F}]$, $r$ is $B$-good for $F$ then we say $r$ is $B$-good near $\mathcal{F}$. Often, we will suppress $\mathcal{F}$ and say $r$ is $B$-good. Our results on the number of $B$-good moduli $r$ will not be sensitive to the precise form of $\omega$, and so we suppress it.
\end{definition}
Heuristically, a modulus is $B$-good when $B$-smooth numbers up to $F \in \mathcal{F}$ modulo $r$ are close to uniformly distributed.

\begin{lemma}\label{S-for-phi}
Given $a < b$, with $\gcd\left(a,b\right) = 1$, define $\varphi = \varphi_{a,b} : \Z^d \rightarrow \Z$,
\[
\varphi\left(\left(v_0, \ldots, v_{d-1}\right)\right) \defeq \sum_{i=0}^{d-1} v_i a^{d-1-i} b^i.
\]
There exists a set $S \subseteq \I\left(4 L_n\left(\frac{1}{3}, \sigma\right)\right)^{d}$ such that $\varphi$ bijects $S$ and $\I\left(b^{d-1}\right)$.
\end{lemma}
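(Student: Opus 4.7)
The plan is to construct $S$ explicitly by a ``greedy base-$b$ with $a$-twisted digits'' extraction. Given any target $t \in \I(b^{d-1})$, since $\gcd(a,b) = 1$, each $a^{d-1-i}$ is invertible modulo $b$, so we may iteratively define $t_0 \defeq t$, choose $v_i \in \I(b)$ to be the unique representative with $v_i a^{d-1-i} \equiv t_i \imod{b}$ for $0 \leq i \leq d-2$, and set $t_{i+1} \defeq (t_i - v_i a^{d-1-i})/b \in \Z$. Finally, take $v_{d-1} \defeq t_{d-1}$. This produces a map $\Phi: \I(b^{d-1}) \to \Z^d$, and we let $S \defeq \Phi(\I(b^{d-1}))$.

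The first step is to verify that $\Phi$ actually lands in $\I(4L_n(1/3, \sigma))^d$. For $i \leq d-2$ we have $|v_i| < |b|/2 \leq L_n(1/3, \sigma)/2$ by construction. For the final coordinate, I would prove by induction that $|t_i| \leq \tfrac{1+i}{2}|b|^{d-1-i}$: the base case $|t_0| < |b|^{d-1}/2$ is immediate, and the inductive step
\[
|t_{i+1}| \leq \frac{|t_i| + |v_i| a^{d-1-i}}{|b|} \leq \frac{1+i}{2}|b|^{d-2-i} + \frac{1}{2}\left(\frac{a}{|b|}\right)^{d-1-i}|b|^{d-2-i} \leq \frac{2+i}{2}|b|^{d-2-i}
\]
uses $a < |b|$ crucially. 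In particular $|v_{d-1}| = |t_{d-1}| \leq d/2$, which is much smaller than $L_n(1/3, \sigma)$.

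Next I would show $\varphi \circ \Phi$ is the identity on $\I(b^{d-1})$: unrolling the recursion $t_i = v_i a^{d-1-i} + b \cdot t_{i+1}$ gives $t_0 = \sum_{i=0}^{d-1} v_i a^{d-1-i} b^i = \varphi(\Phi(t))$. So $\varphi$ is surjective from $S$ onto $\I(b^{d-1})$. Injectivity of $\varphi$ restricted to $S$ is forced by the uniqueness built into the construction: if $\varphi(v) = \varphi(v') = t$ with $v, v' \in S$, then running $\Phi$ on $t$ recovers both $v$ and $v'$ coordinate by coordinate (the constraint $v_0 a^{d-1} \equiv t \imod b$ with $v_0 \in \I(b)$ pins down $v_0$, and then induction applies), so $v = v'$.

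There is no real obstacle here; the only slightly delicate point is propagating the size bound on $t_i$ without it blowing up, which is where the hypothesis $a < |b|$ enters essentially to keep the geometric factor $(a/|b|)^{d-1-i}$ bounded by $1$. This yields the desired bijection with $|v_i| < |b|/2 \leq L_n(1/3, \sigma)/2 < 2 L_n(1/3, \sigma)$ for $i<d-1$ and $|v_{d-1}| \leq d/2 \ll L_n(1/3, \sigma)$, so $S \subseteq \I(4 L_n(1/3, \sigma))^d$ as required.
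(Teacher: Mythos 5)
Your construction does not in general land in $\I\left(4L_n\left(\tfrac{1}{3},\sigma\right)\right)^d$, and the inductive bound $|t_i|\leq \tfrac{1+i}{2}|b|^{d-1-i}$ that is supposed to guarantee this is false. In your displayed chain the second term should be $|v_i|a^{d-1-i}/|b| \leq a^{d-1-i}/2$ (using $|v_i| < |b|/2$), not $\tfrac12 \left(a/|b|\right)^{d-1-i}|b|^{d-2-i} = a^{d-1-i}/(2|b|)$; you have lost a factor of $|b|$. With the correct term the inductive step would require $a^{d-1-i}/2 \leq \tfrac12 |b|^{d-2-i}$, i.e.\ $a\left(a/|b|\right)^{d-2-i}\leq 1$, which fails as soon as $a$ is close to $|b|$. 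Unrolling the true recursion $|t_{i+1}|\leq |t_i|/|b| + a^{d-1-i}/2$ instead gives $|t_{d-1}| \leq \tfrac12 + \tfrac{a}{2}\sum_{k=0}^{d-2}\left(a/|b|\right)^k$, which for $a=|b|-1$ and $d\ll |b|$ is of order $d\,|b|$, not $\boldsymbol{O}(|b|)$.

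This is not merely slack in the estimate: the construction itself fails. Take $a=99$, $b=100$, $d=10$, and $t_0 = -89624955977549950 \in \I(100^9)$. Your recursion then produces $v_0=v_1=\cdots=v_8=-50$ and $v_9 = 428$. But $|b|\geq \tfrac12 L_n\left(\tfrac13,\sigma\right)$ forces $2L_n\left(\tfrac13,\sigma\right)\leq 4|b| = 400 < 428$, so $v_9 \notin \I\left(4L_n\left(\tfrac13,\sigma\right)\right)$. The overflow grows linearly in $d$, so no fixed constant in front of $L_n\left(\tfrac13,\sigma\right)$ can save this version.

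The paper avoids the problem with a two-step digit extraction at each stage: it first chooses $y$ with $|y| < a$ so that $|t - ya^i|\leq b^i$ (a magnitude-reduction step), and only then chooses $z\in[b]$ so that $za^i \equiv t - ya^i \pmod b$ (divisibility), setting $x_0 = y+z$ with $|x_0|\leq a+b$. The reduction step caps the next remainder at $b^{i-1}+a^i$, so the residual terms stay in control throughout. Your single greedy choice of $v_i \in \I(|b|)$ collapses the two roles into one and has no mechanism to pull the remainder back down before it accumulates.
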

\begin{proof}
For each $i \geq 0$, we claim that for any $|t| \leq b^i + a^{i+1}$ there exists a representation:
\[
t = a^i x_0 + a^{i-1}bx_1 + \ldots + b^i x_i
\]
with $|x_0|, \ldots, |x_i| \leq a + b$. We proceed inductively. Note that the number of terms in the sum is $i+1$. The case $i = 0$ is trivial. If $i > 0$, we may choose $y$ with $|y| < a$ such that
\[
|t - ya^i| \leq b^i.
\]
We then fix $z \in [b]$ such that $za^i \equiv t - ya^i \imod{b}$. Note that $|y| < a$ and $|z| < b$. We set $x_0 = y+z$, so $|x_0| \leq a+b$. Note that $b \mid t - x_0a^i$ and that:
\[
\left|\frac{t - x_0a^i}{b}\right| = \left|\frac{(t-ya^i) - za^i}{b}\right| \leq b^{i-1} + a^i
\]
We need $(t - x_0a^i)b^{-1} = a^{i-1}x_1 + a^{i-2}bx_2 + \ldots + b^{i-1}x_i$, which we can guarantee inductively with $|x_1|, \ldots, |x_i| \leq a+b$.

We now directly show the existence of $S$. For any $t \in \I\left(b^{d-1}\right)$, $|t| \leq b^{d-1}$ and so the conditions of the above hold with $i = d-1$. So there exist a sequence $x_0, \ldots x_{d-1}$ such that $\sum_{i=0}^{d-1} x_ia^{d-1-i}b^i = t$ with $|x_0|,\ldots,|x_{d-1}| < a+b$. Hence we have a vector $v_t$ given by $(v_t)_i = x_i$, with $v_t \in \I\left(2\left(a+b\right)\right)^d \subset \I\left(4L_n\left(\frac{1}{3}, \sigma\right)\right)^d$ and $\varphi\left(v_t\right) = t$.

Hence take $S = \left\{v_t : t \in \I\left(b^{d-1}\right) \right\}$. For each $t$ we have constructed a single $v_t$, so function $\phi$  is injective and surjective on $S$ as required.
\end{proof}

By definition of $\varphi = \varphi_{a,b}$, and making the dependence of $f$ on $c$ explicit as $f_c$ (with $m, n$ held constant):
\begin{equation}\label{rho-and-f}
f_c\left(a,b\right) = f_{c'}\left(a,b\right) + \left(a-mb\right)\varphi_{a,b}\left(c-c'\right).
\end{equation}
This motivates the following definition, which will give us an additive kernel whose support is bounded to a small cube and which makes a uniformly random small change to $f_c(a,b)$ when it is applied to $c$.
\begin{definition}
We take $S$ to be the set given from Lemma~\refp{S-for-phi}. For any $l \leq b^{d-1}$, we define a set $S_l$ and a measure $\nu_l$ as follows:
\begin{equation}\label{nu-l-def}
S_l \defeq \left\{v \in S : \varphi\left(v\right) \in \I\left(l\right) \right\}, \qquad
\nu_l \defeq \U\left(S_l\right).
\end{equation}
In particular, $\nu_l$ gives a uniformly random element of $S$ whose image under $\varphi$ is in $\I\left(l\right)$.
\end{definition}
From the definition of Equation~\refp{rho-and-f}, if $v \sim \nu_l$,
\begin{equation}\label{f-c-modification}
f_{v}\left(a,b\right) \sim f_{\underline{0}}\left(a,b\right) + \left(a-mb\right)\U\left(\I\left(l\right)\right),
\end{equation}
i.e. that measures $\nu_l$, with support $S_l$, give us additive alterations that can be made to the vector $c$ of coefficients which will alter $f\left(a,b\right)$ additively by $a-mb$ times a uniformly random value on $\I\left(l\right)$.
\begin{rem}The key observation is that $S$ (and thus the sets $S_\ell$), projected onto any axis, is much smaller than the range of any of the entries $c_i$ as $c$ varies. As a corollary, we hope to show that the randomness implicit in $c$ will in fact cause $f_c\left(a,b\right)$ to be almost uniformly random over short intervals, as~\refp{rho-and-f} allows us to replace randomness of $c$ over cosets of $S_l$ with randomness of $f_c\left(a,b\right)$ over short arithmetic progressions.
\end{rem}
\begin{definition}
For $\bar{\mu} : X \rightarrow \R^{+}$ a measure and $F : X \rightarrow Y$ a function, we define a measure $F^{\bar{\mu}} : Y \rightarrow \R^{+}$ by:
$
F^{\bar{\mu}}\left(y\right) \defeq \bar{\mu}\left(\{F^{-1}\left(y\right)\}\right) = \sum_{x : F\left(x\right) = y}\bar{\mu}\left(x\right)
\Longrightarrow F^{\bar{\mu}}$ is the output distribution of $F$ when the input distribution is $\bar{\mu}$.
\end{definition}
\begin{definition}\label{def-F-max}
In any context where $a, b, d$ are fixed, we say
\[
F_{\max} \defeq L_n\left(\frac{2}{3}, \kappa-\delta^{-1}\right)\left(a-mb\right)\sum_{i = 0}^{d-1}a^ib^{d-1-i}.
\]
\end{definition}

We sketch the aims, methods and use of Lemma~\refp{intervalmult} and Lemma~\refp{f-is-smooth}. Recall that $\mu$ is a uniform distribution on a cube of side $2L_n\left(\frac{2}{3}, \kappa-\delta^{-1}\right)$. Furthermore, $\nu_l$ is uniform and has support bounded to a cube of side length $4L_n\left(\frac{1}{3}, \sigma\right)$. We will show that for almost every $v \sim \mu$, $\mu |_{v + S_l}$ is uniform and equal to $\mu\left(v\right)$. Heuristically, this holds as $v$ is at least $4L_n\left(\frac{1}{3}, \sigma\right)$ from the boundary of the support of $\mu$. We will then deduce that $\left(f-\hat{f}\right)\left(a,b\right)$ is close to uniform on short ranges of multiples of $\left(a-mb\right)$.

From this we will show that $\mu$ is not substantially altered (in the $\ell_1$ metric) by convolving it with the distributions $\nu_\ell$. Furthermore, the linearity of $\varphi$ implies that when it is applied to any ``reasonably smooth'' convolution involving $\nu_\ell$ the result is ``reasonably close'' to uniform on short intervals. Then in particular $\mu$ is close to $\mu \star \nu_\ell$, the latter being close to uniform on short progressions of common difference $\left(a-mb\right)$.

We use this convolution to formally show the heuristically obvious claim that the large random sum contributing to $f\left(a,b\right)$ does in fact make it close to uniformly random on short progressions. In fact, we will convolve with several distributions $\nu_{\ell_i}$, with each convolution allowing us (heuristically) to treat each coefficient in $f$ as if it were independent and uniformly random.

We begin by showing that $\varphi^{\mu}$ is close to a convolution of uniform distributions on intervals. The proof of this claim is an exercise in checking that the required convolution can be constructed by an additive kernel whose support is bounded to a cube of size much smaller than the randomness in our choice of $c$, and is not core to the intuitions of the proof of Lemma~\refp{f-is-smooth}. We place the proof here to collect the required results about $\varphi^{\mu}$ to a single place.
\begin{lemma}\label{intervalmult}
Fix $a,b$.
There is a distribution $\vartheta$ such that $\vartheta$ is the convolution of uniform distributions on intervals of lengths $L_n\left(\frac{2}{3}, \kappa-\delta^{-1}\right)a^ib^{d-1-i}$ for $i = 0$ to $d-1$ with:
\begin{align*}
||\varphi^{\mu} - \vartheta||_1 = \O\left(L_n\left(\frac{2}{3}, (\kappa-\delta^{-1})\left(1+\o\left(1\right)\right)\right)^{-1}\right),
\quad|\E(\vartheta)| \leq \sum_{i=0}^{d-1}a^ib^{d-1-i}.
\end{align*}
\end{lemma}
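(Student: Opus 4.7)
The plan is to realize $\vartheta$ explicitly as $\varphi^\mu \star K$ for a short-range convolution kernel $K$, and then bound $\|\varphi^\mu - \varphi^\mu \star K\|_1$ by showing that $\varphi^\mu$ is nearly translation-invariant on the scale of $\mathrm{supp}(K)$, using Lemma~\refp{S-for-phi}.

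First I would set $H := L_n(\tfrac{2}{3}, \kappa - \delta^{-1})$ and $N_i := a^{d-1-i}b^i$. Since $\mu$ is a product of uniforms, $\varphi^\mu = \psi_0 \star \cdots \star \psi_{d-1}$, where $\psi_i$ is the law of $v_i N_i$ for $v_i \sim \U(\I(2H))$. Convolving with the uniform measure $U_i$ on $\{0, 1, \ldots, N_i - 1\}$ fills the gaps of the $N_i$-spaced lattice support of $\psi_i$, producing $\vartheta_i := \psi_i \star U_i$, which is uniform on an integer interval of length $2H N_i$. Setting $\vartheta := \vartheta_0 \star \cdots \star \vartheta_{d-1}$ and $K := U_0 \star \cdots \star U_{d-1}$, one has $\vartheta = \varphi^\mu \star K$, $\mathrm{supp}(K) \subseteq [0, \sum_i N_i]$, and a direct computation gives $|\E(\vartheta)| = d/2 \leq \sum_i N_i$.

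Second, by the triangle inequality applied to the convolution,
\[
\|\varphi^\mu - \vartheta\|_1 = \|\varphi^\mu - \varphi^\mu \star K\|_1 \leq \sum_s K(s)\,\|\varphi^\mu - T_s \varphi^\mu\|_1 \leq \max_{s \in \mathrm{supp}(K)} \|\varphi^\mu - T_s \varphi^\mu\|_1,
\]
where $T_s$ denotes translation by $s$, so it suffices to bound $\|\varphi^\mu - T_s \varphi^\mu\|_1$ uniformly over $|s| \leq \sum_i N_i$. Decomposing such an $s$ as $s = s_1 + \cdots + s_k$ with $k = \O(d)$ and each $|s_j| \leq b^{d-1}$, Lemma~\refp{S-for-phi} yields $w_j \in \Z^d$ with $\varphi(w_j) = s_j$ and $|w_j|_\infty \leq 2(a+b)$; summing produces $w \in \Z^d$ with $\varphi(w) = s$ and $|w|_1 \leq 2 d^2 (a + b)$. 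Since $\varphi^{-1}(t - s) = \varphi^{-1}(t) - w$, writing $A := \I(2H)^d$ and summing cell-by-cell,
\[
\|\varphi^\mu - T_s \varphi^\mu\|_1 \leq \frac{|A \mathbin{\triangle} (A + w)|}{|A|} \leq \frac{2 |w|_1 (2H)^{d-1}}{(2H)^d} = \O\!\left(\frac{d^2 (a+b)}{H}\right).
\]
Since $d = \O((\log n)^{1/3})$ and $a + b = L_n(\tfrac{1}{3}, \sigma)$, the factor $d^2(a+b)$ is $L_n(\tfrac{1}{3})$-sized and is absorbed into the $1 + o(1)$ of the exponent in the stated bound $\O(L_n(\tfrac{2}{3}, (\kappa - \delta^{-1})(1 + o(1)))^{-1})$.

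The main obstacle is the assembly in the last step: Lemma~\refp{S-for-phi} provides short preimages only for $|s| \leq b^{d-1}$, whereas $\mathrm{supp}(K)$ extends up to $\sum_i N_i$, which can be as large as $\Theta(d b^{d-1})$. One must iterate the lemma $\O(d)$ times per shift and then carefully verify that the resulting $\mathrm{poly}(d)$ loss (from both the iteration and the $|w|_1 \leq d |w|_\infty$ step) still fits inside the $o(1)$ of the exponent in $L_n(\tfrac{2}{3}, \cdot)$, which is where the constraint $\kappa > \delta^{-1}$ from Equation~\refp{const-bounds} is used.
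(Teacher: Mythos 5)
Your proof is correct and is, at its core, the same argument as the paper's: both realise $\vartheta$ as $\varphi^\mu$ smoothed by a short-supported convolution kernel, and both reduce the $\ell_1$ bound to the symmetric-difference estimate for a translated cube of side $2H$ together with the short-preimage construction of Lemma~\ref{S-for-phi}. The only meaningful difference is the order of operations. The paper builds auxiliary kernels $\nu_\ell$ \emph{in} $\Z^d$ (uniform on the sets $S_\ell \subseteq S$ produced by Lemma~\ref{S-for-phi}), shows $\mu \star \bigstar_i \nu_{a^i b^{d-1-i}}$ equals $\mu$ off a thin boundary shell, and then pushes everything forward by $\varphi$; the crucial point is that $\varphi^{\nu_\ell} = \U(\I(\ell))$ by the design of $S_\ell$. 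You instead push forward \emph{first}, obtaining $\varphi^\mu = \bigstar_i \psi_i$ as a convolution of lattice-spaced uniforms, smooth each factor with $U_i$ on $\{0,\ldots,N_i-1\}$, and then, to estimate $\|\varphi^\mu - \varphi^\mu\star K\|_1$, lift each shift $s \in \mathrm{supp}(K)$ back to $\Z^d$ via Lemma~\ref{S-for-phi}. Because $|s|$ can be as large as $\sum_i N_i \sim d\,b^{d-1}$ while the lemma only handles $|s| \le \tfrac{1}{2}b^{d-1}$, you pay an extra $\O(d)$-fold iteration, which the paper avoids by encoding the lift prime-factorwise into the $\nu_\ell$; both versions lose the same $\mathrm{poly}(d)\,L_n(\tfrac13,\sigma)$ factor and it is harmlessly absorbed into the $\o(1)$ of the exponent. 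Two small inaccuracies that you should tidy: Definition~\ref{centredinterval} gives $\I(b^{d-1}) = [-\tfrac12 b^{d-1}, \tfrac12 b^{d-1})$, so each $|s_j|$ must be at most $\tfrac12 b^{d-1}$ (this changes $k$ only by a constant factor); and the interval lengths you obtain are $2H N_i$ rather than $H N_i$, but this factor-of-$2$ discrepancy is already present between Equation~\refp{c-definition} and the statement of the lemma, so it is not a defect of your argument.
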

\begin{rem}
In the Randomised NFS, we will consider at most $L_n\left(\frac{1}{3}\right)$ polynomials $f$, and hence at most $L_n\left(\frac{1}{3}\right)$ samples of $\varphi^{\mu}$ for any fixed $a,b$. Note that here we bound the total variation by $L_n\left(\frac{2}{3}\right)^{-1}$. As a corollary the total variation between our sample from $\varphi^{\mu}$ and a sample from $\vartheta$ of the same length is $L_n\left(\frac{2}{3}\right)^{-1}$. Our desired probabilities for smoothness are $L_n\left(\frac{1}{3}\right)^{-1}$, so establishing events occur with this probability for $\vartheta$ guarantees that they occur with this probability for $\varphi^{\mu}$.
\end{rem}
\begin{proof} We denote the convolution of distributions by $\star$, and define:
\[
\nu \defeq \mu\star\left[\bigstar_{i= 0}^{d-1} \nu_{a^i b^{d-1-i}}\right].
\]

From Lemma~\refp{S-for-phi}, the support of $\nu_{a^ib^{d-i}}$ is contained in a cube of side $4L_n\left(\frac{1}{3}, \sigma\right)$. Hence the support $P$ of $\bigstar_{i= 0}^{d-1} \nu_{a^i b^{d-1-i}}$ is contained in a cube of side $4dL_n\left(\frac{1}{3}, \sigma\right)$.
When $||x||_\infty < L_n\left(\frac{2}{3}, \kappa-\delta^{-1}\right) - 4dL_n\left(\frac{1}{3}, \sigma\right)$ and $p \in P$:
\[
\mu(x - p) = \mu(x) = |\operatorname{supp}(\mu)|^{-1},
\]
so $\nu(x)$ is a convex combination of values in $\{\mu(x-p) : p \in P\} = \{\mu(x)\}$. Hence $\nu(x) = \mu(x)$ on the $l_\infty$ ball of radius $L_n\left(\frac{2}{3}, \kappa-\delta^{-1}\right) - 4dL_n\left(\frac{1}{3}, \sigma\right)$.
Then since $L_n\left(\frac{1}{3}, \sigma\right)$ is $L_n\left(\frac{2}{3}, \o\left(\kappa-\delta^{-1}\right)\right)$:
\begin{align*}
\P_{x \sim \mu}\left(\nu\left(x\right) = \mu\left(x\right)\right) &\geq \left(1-4dL_n\left(\frac{2}{3}, \kappa-\delta^{-1}\right)^{-1+\o\left(1\right)}\right)^{d}\\
&\geq 1 - 4d^2L_n\left(\frac{2}{3}, \kappa-\delta^{-1}\right)^{-1+\o\left(1\right)}
 = 1 - L_n\left(\frac{2}{3}, \kappa-\delta^{-1}\right)^{-1+\o\left(1\right)}, \\
\end{align*}
In particular, we have a bound on the $\ell_1$ distance between $\mu$ and $\nu$:
\begin{align*}
||\mu - \nu||_1 &= \sum_{x\in \Z^{d}}|\nu\left(x\right) - \mu\left(x\right)|
\leq \P_{x \sim \mu}\left(\nu\left(x\right) \neq \mu\left(x\right)\right) . \left(||\mu||_{\infty} + ||\nu||_{\infty}\right)\\
&= \O\left(L_n\left(\frac{2}{3}, (\kappa-\delta^{-1})\left(1+\o\left(1\right)\right)\right)^{-1}\right).
\end{align*}
Now for fixed $a, b$ we apply the map $\varphi$ to $\mu$ and $\nu$ to obtain:
\[
||\phi^{\mu} - \phi^{\nu}||_1 \leq ||\mu - \nu||_1 \leq
\O\left(L_n\left(\frac{2}{3}, (\kappa-\delta^{-1})\left(1+\o\left(1\right)\right)\right)^{-1}\right).
\]
and so the $\ell_1$ difference of the distributions $\phi\left(\mu\right)$ and $\phi\left(\nu\right)$ on $\Z$ is small.

Recall from~\refp{nu-l-def} that $\varphi^{\nu_l} = \U\left(\I\left(l\right)\right)$. Since applying our map $\varphi$ to a measure commutes with convolution of measures:
\[
\varphi^{\nu} = \varphi^{\mu} \star \left[\bigstar_{i=0}^{d-1}\U\left(\I\left(a^ib^{d-1-i}\right)\right)\right].
\]
Since $c_i \sim \U\left(\I\left(L_n\left(\frac{2}{3}, \kappa-\delta^{-1}\right)\right)\right)$ are independent random variables:
\begin{equation}\label{c+unif}
c_ia^i b^{d-1-i} +\U\left(\I\left(a^ib^{d-1-i}\right)\right)
\end{equation}
is uniformly distributed along an interval of length $L_n\left(\frac{2}{3}, \kappa-\delta^{-1}\right)a^ib^{d-1-i}$. Note that $\underline{c} \sim \mu$. Hence there is a constant $C$ such that for all $x \in \Z$:
\begin{equation*}
\varphi^{\nu} \left(x\right) \sim \bigstar_{i = 0}^{d-1}\left[\U\left(\I\left(L_n\left(\frac{2}{3}, \kappa-\delta^{-1}\right) a^i b^{d-1-i}\right)\right)\right] \left(x -C\right)
\end{equation*}
The shift $C$ accounts for the difference in expectation caused by the fact that the intervals associated with~\refp{c+unif} are not centred (recall Definition~\refp{centredinterval}). However, the centre of each of these intervals has modulus at most $\frac{1}{2}a^ib^{d-1-i} + \frac{1}{2}$, and so $|C| \leq \sum_{i=0}^{d-1}a^ib^{d-1-i}$. We take $\vartheta = \varphi^{\nu}$ to complete the proof of Lemma~\refp{intervalmult}.
\end{proof}
The convolution $\vartheta$ allows us to replace $R\left(a,b\right) = (a-mb)\varphi_{a,b}\left(c\right)$ by $a-mb$ times a convolution of uniform measures on intervals. In Lemma~\refp{f-is-smooth}, this will give us control over the distribution of $R(a,b)$ on progressions of common difference $a-mb$.

It remains to control $f\left(a,b\right) \imod{a-mb}$. 
In Section~\refp{NFS-uniform-chap}, we will characterise the moduli for which the smooth numbers are uniformly distributed across their residue classes, at which point the specific residue class of $f\left(a,b\right) \imod{a-mb}$ will not significantly affect its probability of being smooth as $c$ varies.

We now combine the previous claims to show that $f\left(a,b\right)$ is $B$-smooth as often as random integers of the same size. Note that if $\gcd\left(a,b\right)$ had been greater than one, then throughout we could have divided it out, and the probability of smoothness would be increased.

\begin{lemma}\label{f-is-smooth}
 Fix $a,b,m,n$ in their intervals and let $f$ be uniformly random as before. Then:
\begin{equation*}
\P_{f}\left(f\left(a,b\right)\textrm{ is }B'\textrm{-smooth} \mid \left(a-mb\right)\textrm{ is }B'\textrm{-good}\right) = L_n\left(\frac{1}{3}, \frac{\kappa +\sigma\delta}{3\beta'} \left(1+\o\left(1\right)\right)\right)^{-1}.
\end{equation*}

\end{lemma}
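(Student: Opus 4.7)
The plan is to exploit the randomness in the coefficient vector $c$ to show that $f(a,b) = \hat{f}_{n,m}(a,b) + (a-mb)\varphi_{a,b}(c)$ is close to uniformly distributed along the arithmetic progression $\{\hat{f}_{n,m}(a,b) + rz : z \in \I(F_{\max}/r)\}$, where $r = a-mb$ and $F_{\max} \sim L_n(\frac{2}{3}, \kappa+\sigma\delta)$, and then to use the $B'$-goodness hypothesis on $r$ to count $B'$-smooth integers in this progression.

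First I would apply Lemma~\refp{intervalmult} to replace the distribution $\varphi^\mu$ by the convolution $\vartheta$ of uniforms on intervals of lengths $L_n(\frac{2}{3}, \kappa-\delta^{-1})a^ib^{d-1-i}$. The induced $\ell_1$ error is $\O(L_n(\frac{2}{3}, (\kappa-\delta^{-1})(1+\o(1)))^{-1})$, much smaller than the target probability of order $L_n(\frac{1}{3})^{-1}$, and can be absorbed into the final $\o(1)$. Thus I may assume $\varphi(c) \sim \vartheta$ for the remainder of the argument, so that $f(a,b)$ is a fixed additive shift by $\hat{f}_{n,m}(a,b)$ of $r$ times a sample from $\vartheta$.

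Next I would show that $\vartheta$ is locally close to uniform on an intermediate scale. The dominant component of the convolution has length comparable to $F_{\max}/r$, while the smaller components act as smoothing kernels; on a central portion of its support, $\vartheta(J) = (|J|/|\operatorname{supp}(\vartheta)|)(1+\o(1))$ for any subinterval $J$ whose length lies in a wide intermediate range. Choosing such $J$ long enough for Fact~\refp{H-interval} to apply, so that $B'$-smooth numbers are uniformly distributed on the short interval, yet short enough that the $B'$-goodness of $r$ gives equidistribution of smooths across residue classes coprime to $r$, I would partition the progression into such intervals and sum the contribution of $B'$-smooth values in the residue class $\hat{f}_{n,m}(a,b) \imod{r}$ on each.

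Summing over the partition then yields that the probability $f(a,b)$ is $B'$-smooth is to leading order $\varrho(F_{\max}, B')$. Converting the $B'$-goodness count $\Psi_r(F_{\max}, B')/\phi(r)$ to $\Psi(F_{\max}, B')/r$ uses Corollary~\refp{psi-q-psi}, which is applicable because $r$ is $B$-smooth and so $\omega(r)$ is at most logarithmic. Corollary~\refp{CEP-cor} then evaluates $\varrho(F_{\max}, B') = L_n(\frac{1}{3}, \frac{\kappa+\sigma\delta}{3\beta'}(1+\o(1)))^{-1}$, giving the claim. The hardest step will be upgrading the global $\ell_1$ bound from Lemma~\refp{intervalmult} into a pointwise local uniformity statement for $\vartheta$ on scales that simultaneously satisfy the demands of Fact~\refp{H-interval} and the $B'$-goodness property; a secondary technicality is residue classes with $\gcd(\hat{f}_{n,m}(a,b), r) > 1$, handled by extracting the common (automatically $B'$-smooth) divisor and applying $B'$-goodness to the reduced modulus.
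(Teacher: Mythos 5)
Your opening move matches the paper's: apply Lemma~\refp{intervalmult} to replace $\varphi^\mu$ by the convolution $\vartheta$ of uniform measures, absorbing the $L_n\left(\frac{2}{3}\right)^{-1}$ total-variation error into the $\o(1)$ term.

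Where you diverge, however, there is a genuine gap, and the thing you flag as ``the hardest step'' is precisely the place where the paper's proof takes a different and much cleaner route. You propose to establish a pointwise local uniformity statement for $\vartheta$ on intermediate scales, then partition the support into short intervals $J$ and count $B'$-smooth numbers in each short segment of the progression via a combination of Fact~\refp{H-interval} and the $B'$-goodness of $r$. The paper never proves or needs local uniformity of $\vartheta$. Instead it observes that after discarding an $\O(\omega^{-1})$ fraction of the mass near zero (defining $\vartheta'$ from $\vartheta$ via a truncation at scale $Y = L_n\left(\frac{2}{3},\kappa-\delta^{-1}\right)b^{d-1}\omega^{-1}$), the distribution $\vartheta'$ is unimodal and monotone away from $0$, hence can be written \emph{exactly} as a mixture $\sum_{y\geq Y} W_y\,\U([0,y]) + W_{-y}\,\U([-y,0))$ of uniform measures on intervals anchored at the origin. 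This reduces the computation to estimating $\P\bigl(\hat f_{n,m}(a,b) + r\,\U([0,y])\text{ is }B'\text{-smooth}\bigr)$ for $y \geq Y$, which is the count of $B'$-smooth numbers in a single long progression starting essentially at $0$ — exactly the quantity the definition of $B'$-good controls, applied at one endpoint $x \approx ry$.

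Your plan is not only more work, it runs into a substantive obstruction with $B'$-goodness on short intervals. The definition of $B'$-good controls $\Psi(F,B';r,a)$ only for $F$ in a window $[\mathcal F\omega^{-1},\mathcal F]$ with multiplicative error $\bigl(\Psi_r(F,B')/\phi(r)\bigr)^{\o(1)}$; extracting the count on a short sub-interval $[F_1,F_2]$ requires subtracting two such counts, and the resulting additive error is of order $\o(1)\cdot\log\bigl(\Psi_r/\phi(r)\bigr)\cdot\Psi_r(F_2,B')/\phi(r)$, which need not be dominated by the main term $\approx\bigl((F_2-F_1)/F_2\bigr)\Psi_r(F_2,B')/\phi(r)$ unless $(F_2-F_1)/F_2$ is bounded below in terms of the unspecified $\o(1)$ error exponent in the definition. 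Fact~\refp{H-interval} does not repair this, since it counts smooth numbers in short intervals without reference to the residue class. The mixture-of-uniforms decomposition via unimodality is the key idea you are missing: it keeps every interval anchored at the origin so that the subtraction never happens, and the only thing that needs controlling is the negligible offset $|\hat f_{n,m}(a,b)|\leq \hat F_{\max}$ and the $\O(\omega^{-1})$ truncated mass.
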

In the subsequent, $\delta^{-1} - \kappa$ controls the exponent of the error terms in several uniformity claims: for this reason we imposed the condition $\kappa > \delta^{-1}$ in Equation~\refp{const-bounds}.

\begin{proof}
Let $a-mb = r$. Recalling Lemma~\refp{intervalmult}:
\begin{multline*}
\P_{n,f}\left(f_{n,m}\left(a,b\right)\textrm{ is }B'\textrm{-smooth}\right)
= \P_{n,c}\left(\hat{f}_{n,m}\left(a,b\right) + r\phi_{a,b}\left(c\right) \textrm{ is }B'\textrm{-smooth}\right) \\
= \P_{n,\vartheta}\left(\hat{f}_{n,m}\left(a,b\right) + r\vartheta \textrm{ is }B'\textrm{-smooth}\right)
+ \O\left(L_n\left(\frac{2}{3}, (\kappa-\delta^{-1})\left(1+\o\left(1\right)\right)\right)^{-1}\right)
\end{multline*}
Recall that $\vartheta$ has
$
|\E(\vartheta)| \leq \sum_{i=0}^{d-1}a^ib^{d-1-i}
$
and is sampled according to the convolution of uniform measures on intervals of length $L_n\left(\frac{2}{3}, \kappa-\delta^{-1}\right)a^ib^{d-1-i}$ for $i = 0, \ldots, d-1$. Hence $\vartheta$ is unimodal with mode at some $M$ satisfying
\[
|M| \leq \sum_{i=0}^{d-1}a^ib^{d-1-i} < db^{d-1} = L_n\left(\frac{2}{3}, \sigma\delta(1+\o(1))\right),
\]
and the support of $\vartheta$ is contained in $[M - F_{\max}|r|^{-1}, M + F_{\max}|r|^{-1}]$. 
We choose an $\omega = L_n\left(\frac{2}{3}, \o(1)\right)$, such that $\omega \rightarrow \infty$, and set
\[
Y \defeq L_n\left(\frac{2}{3}, \kappa-\delta^{-1}\right)b^{d-1} \omega^{-1} = L_n\left(\frac{2}{3}, \kappa-\delta^{-1} + \sigma\delta-\o(1)\right).
\]
Now, we define a measure $\vartheta'$ to be
\[
\vartheta'(x) \defeq \left\{
\begin{aligned}
&\vartheta\left(\max\left(x, Y \right)\right)&\quad x \geq 0\\
&\vartheta\left(\min\left(x, -Y \right)\right)&\quad x < 0\\
\end{aligned}
\right.
\]
Later, we will see that using this measure allows us to control the density of smooth numbers only on progressions of length at least $Y$. Then:
\begin{align*}
||\vartheta'-\vartheta||_1 &\leq \P_{z \sim \vartheta}(|z| < Y) \leq 2Y\left(L_n\left(\frac{2}{3}, \kappa-\delta^{-1}\right)b^{d-1}\right)^{-1} \\
 &= 2\omega^{-1},
\end{align*}
from the definition of $Y$. Note that $Y$ is much larger than $M$ and so $\vartheta'$ is monotone decreasing away from $0$; hence there are non-negative weights $W_y$ for $y \in \Z$, with $W_y = 0$ for $|y| > F_{\max}|r|^{-1}$ such that:
\[
\vartheta' = \sum_{y \geq Y} W_y \U([0, y]) + W_{-y}\U([-y, 0))
\]
and $\left|1 - \sum_y W_y\right| \leq 2\omega^{-1}$. Hence we have:
\begin{multline*}
\P_{f}\left(f_{n,m}\left(a,b\right)\textrm{ is }B'\textrm{-smooth}\right) = \O\left(\omega^{-1}\right) +
\sum_{y = Y}^{F_{\max}|r|^{-1}} W_y\P\left(\hat{f}_{n,m}(a,b) + r \U\left([0,y]\right)\textrm{ is }B'\textrm{-smooth}\right) \\
+ W_{-y}\P\left(\hat{f}_{n,m}(a,b) + r \U([-y, 0))\textrm{ is }B'\textrm{-smooth}\right)
\end{multline*}

We note that $\O(\omega^{-1}) = L_n(\frac{2}{3}, \o(1))^{-1}$ terms can be absorbed into our $\o(1)$ terms, and so it suffices to show that for any fixed, $B'$-good $r$ and any $y \in [Y, F_{\max}|r|^{-1}]$:
\begin{align*}
\P\left(\hat{f}_{n,m}(a,b) + r \U\left([0,y]\right)\textrm{ is }B'\textrm{-smooth}\right)= L_n\left(\frac{1}{3}, \frac{\kappa +\sigma\delta}{3\beta'} \right)^{-1+\o(1)}, \\
\P\left(\hat{f}_{n,m}(a,b) + r \U\left([-y, 0)\right)\textrm{ is }B'\textrm{-smooth}\right)= L_n\left(\frac{1}{3}, \frac{\kappa +\sigma\delta}{3\beta'} \right)^{-1+\o(1)}.
\end{align*}
Since $|\hat{f}_{n,m}(a,b)| \leq \hat{F}_{\max} \defeq Y L_n\left(\frac{2}{3}\right)^{-1}$, we can absorb the probability that the value on the left is negative or positive (respectively) in the above two equations. From the definition of $B'$-good and Corollary~\refp{psi-q-psi}, for any $x \in [|r|Y-\hat{F}_{\max}, F_{\max}+\hat{F}_{\max}]$:
\[
\Psi\left(x,B',r,s\right) = \frac{\Psi_r\left(x,B'\right)}{\phi(r)} L\left(\frac{1}{3}, \o(1)\right) = \frac{\Psi\left(x,B'\right)}{r} L\left(\frac{1}{3}, \o(1)\right)
\]
and so to finish the estimate we observe that for any $x \in [|r|Y-\hat{F}_{\max}, F_{\max}+\hat{F}_{\max}] $:
\[
\rho(x, B') = \rho\left(L_n\left(\frac{2}{3}, \kappa + \sigma\delta\right), B'\right) = L_n\left(\frac{1}{3}, \frac{\kappa +\sigma\delta}{3\beta'} \right)^{-1+\o(1)}. \qedhere
\]

\end{proof}

We state the following Lemma that we will prove in Section~\refp{smooth-bad-dist-proof}.
\begin{lemma}\label{smooth-bad-dist}
Fix any $b$. Then
\[
\P_{a,m}\left(a-mb\textrm{ is }B'\textrm{-good} \mid a-mb\textrm{ is }B\textrm{-smooth}\right) = 1 - \o\left(1\right)
\]
\end{lemma}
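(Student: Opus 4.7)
The plan is to fix $b$, treat $r := a - mb$ as essentially uniform on an interval of length $X = b L_n(\frac{2}{3}, \delta^{-1}) = L_n(\frac{2}{3}, \delta^{-1}(1+o(1)))$, and reduce the statement to the counting estimate
\[
\#\{r \in [-X, X] : r \text{ is } B\text{-smooth and } B'\text{-bad near } \mathcal{F}\} = o\left(\Psi(X, B)\right),
\]
where $\mathcal{F} \sim L_n(\frac{2}{3}, \kappa + \sigma\delta)$ is the natural size of $f(a, b)$. Divisibility issues arising when $\gcd(a, b) > 1$ are handled exactly as in the remark preceding Lemma~\refp{S-for-phi}.

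By Dirichlet character orthogonality, for any $(a, r) = 1$,
\[
\Psi(F, B'; r, a) - \frac{\Psi_r(F, B')}{\phi(r)} = \frac{1}{\phi(r)}\sum_{\chi \neq \chi_0 \bmod r}\bar\chi(a)\,\Psi(F, B'; \chi).
\]
Because the definition of $B'$-good only requires the two sides of the identity to match after taking logarithms and up to a $1+o(1)$ multiplicative factor in the log, it suffices to bound the non-principal character sums by any fixed factor $L_n(\frac{2}{3}, -\eta)$ of the main term (with $\eta > 0$) outside an exceptional set of $r$. I would obtain such a bound via an averaged second-moment estimate on smooth character sums in the spirit of the Granville--Soundararajan and Harper results cited in the introduction: for $F$ of size $L_n(\frac{2}{3}, \kappa + \sigma\delta)$ and $X \leq F^{1-\eta_0}$ for some fixed $\eta_0 > 0$,
\[
\sum_{r \leq X} \max_{\chi \neq \chi_0 \bmod r} |\Psi(F, B'; \chi)|^2 \ll X \cdot \Psi(F, B')^2 \cdot L_n\left(\frac{2}{3}, -\eta\right).
\]
The condition $\delta^{-1} < (\kappa + \sigma\delta)/2$ from Equation~\refp{d-bound} is precisely what guarantees that $X^2 < F \cdot L_n(\frac{2}{3}, -\eta_0)$ for some $\eta_0 > 0$, placing us in the range where such equidistribution results are unconditionally available.

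The main obstacle is upgrading the ``most $r \leq X$'' conclusion to ``most $B$-smooth $r \leq X$''. Since $B$-smooth moduli are exponentially sparse, of relative density only $\rho(X, B) = L_n(\frac{1}{3}, \delta^{-1}/(3\beta))^{-1+o(1)}$, an unweighted $o(1)$ bound on the exceptional set is insufficient --- in principle the bad moduli could be entirely supported on the $B$-smooth set. The plan is to re-run the second-moment argument with the additional weight $\1_{r \text{ is } B\text{-smooth}}$ attached to the $r$-sum; the equidistribution of $B$-smooth integers among small-conductor residue classes supplied by Corollary~\refp{psi-q-psi} allows the weighted sum to be controlled by the unweighted sum at the cost of a $\Psi(X, B)/X$ factor, so that Markov then yields at most $o(\Psi(X, B))$ exceptional $B$-smooth moduli. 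Finally, the $B'$-good-near-$\mathcal{F}$ definition requires the estimate to hold simultaneously for all $F \in [\mathcal{F}\omega^{-1}, \mathcal{F}]$; this is handled by choosing a $\mathrm{poly}(\log n)$-dense net of $F$-values, union-bounding the exceptional set over the net (which is absorbed into $o(1)$), and interpolating between adjacent net points using Hildebrand's short-interval estimate (Fact~\refp{H-interval}).
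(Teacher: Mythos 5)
There is a genuine gap at the key estimate, and it is exactly the gap the paper closes with Lemma~\ref{HZ-1-large} and Claim~\ref{smooth-inverse-sum}. Your proposed second-moment bound
\[
\sum_{r \leq X} \max_{\chi \neq \chi_0 \bmod r} |\Psi(F, B'; \chi)|^2 \ll X \, \Psi(F, B')^2 \, L_n\left(\tfrac{2}{3}, -\eta\right)
\]
with $\eta > 0$ fixed is much too strong to hold unconditionally. The unconditional savings available for characters of small and medium conductor (Facts~\ref{character-bound}, \ref{zero-free-character}, \ref{Harper-Page}, and the zero-density input Fact~\ref{zero-density}) are of the form $\exp(-\O(u/\log^2 u))$, $y^{-\O(1)}$, $m^{-1/10}$ --- all $L_n(\frac{1}{3})$-sized, not $L_n(\frac{2}{3})$-sized --- and a single primitive real character of bounded conductor with a near-Siegel zero, which cannot be ruled out, induces non-principal characters modulo $\gg X$ moduli $r \le X$, each contributing essentially $\Psi(F,B')^2 L_n(\frac{1}{3})^{-\O(1)}$ to the left side, which already exceeds your right side. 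Indeed the paper notes (remark before Lemma~\ref{HZ-1-large}) that Harper's \emph{unrestricted} bound yields only the weaker run time $L_n(\frac{1}{3}, \O(\log\log n))$; if an $L_n(\frac{2}{3})$-saving were obtainable for the unrestricted moment, the whole smooth-modulus machinery would be superfluous.

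The reweighting step does not repair this. Corollary~\ref{psi-q-psi} controls $\Psi_q(x,y)$, the count of $y$-smooth integers coprime to $q$; it says nothing about whether the exceptional moduli of a Bombieri--Vinogradov-type sum avoid the $B$-smooth integers, which is precisely the obstruction --- in the worst case every exceptional modulus could be $B$-smooth and no density transfer of the form ``weighted sum $\le$ unweighted sum $\times \Psi(X,B)/X$'' is available. The paper's actual route is a bespoke first-moment Bombieri--Vinogradov estimate (Lemma~\ref{HZ-1-large}) restricted to $y$-smooth moduli in a dyadic window $[Q\omega^{-1}, Q]$. Two features of that restriction are load-bearing and missing from your sketch: the exclusion of small moduli $r < Q\omega^{-1}$, without which the smooth moduli are not sparse and the estimate is false (see the remark after the lemma); and the fact that the $\varrho(Q, y)$ gain is extracted not by a global reweighting but by writing each modulus $q = rs$ as conductor times smooth complement and bounding $\sum_{s\text{ smooth}}1/\phi(s)$ over the appropriate dyadic range by $\varrho$-sized quantities (Claim~\ref{smooth-inverse-sum}). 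The $F$-net/union-bound/short-interval interpolation at the end of your proposal is fine and matches the paper, but it rests on the modulus count that the missing lemma supplies.
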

We are now able to prove Theorem~\refp{NFS-X-large}
\begin{proof}[Proof of Theorem~\refp{NFS-X-large}]\label{NFS-X-large-pf}
Lemma~\refp{a-mb-is-smooth} and Lemma~\refp{smooth-bad-dist} randomise over $a,m$ for any fixed $b$, and uniformly over $n, f$. Hence for any $b, n, f$:
\[
\P_{a, m}\left(a-bm \text{ is }B\text{-smooth and }B'\text{-good}\right) = L_n\left(\frac{1}{3}, \frac{\delta^{-1}}{3\beta}(1+\o(1))\right)^{-1}
\]
Since Lemma~\refp{f-is-smooth} randomises over $f$ for any fixed $a,b,m$, we have for each fixed $b$:
\[
\P_{a, m, f}\left(a-bm \text{ is }B\text{-smooth and }B'\text{-good}, f(a, b) \textrm{ is }B'\text{-smooth}\right) = L_n\left(\frac{1}{3}, \frac{\delta^{-1}}{3\beta} + \frac{\kappa + \sigma\delta}{3\beta'}\right)^{-1 + \o(1)}
\]
as multiplicative factors of $1+\o(1)$ may be absorbed into the $\o(1)$ in the exponent of the $L_n\left(\frac{1}{3}\right)$ terms. Summing over the $L_n\left(\frac{1}{3}, \sigma\right)^2$ choices for a fixed pair $\left(a,b\right)$:s
\begin{align*}
\E_{m,f}\left(|\X_{n,m,f}|\right) &= \sum_{a,b} \P_{n,m,f}\left(\left(f,n,m,a,b\right) \in \X\right) 
= L_n\left(\frac{1}{3}, \sigma\right)\sum_{b}\P_{n,m,f,a}\left(
\begin{gathered}\left(a-bm\right)\textrm{ is } B\textrm{-smooth} \\
\wedge f\left(a,b\right)\textrm{ is }B'\textrm{-smooth}\end{gathered}
\right) \\
&\geq L_n\left(\frac{1}{3}, \sigma\right)\sum_{b}\P_{n,m,f,a}\left(
\begin{gathered}\left(a-bm\right)\textrm{ is } B\textrm{-smooth} \wedge (a-bm)\text{ is }B'\textrm{-good} \\
\wedge f\left(a,b\right)\textrm{ is }B'\textrm{-smooth}\end{gathered}
\right) \\
&\geq L_n\left(\frac{1}{3}, 2\sigma - \left(\frac{\delta^{-1}}{3 \beta}\right)\left(1+\o\left(1\right)\right) + \left(\frac{\sigma\delta+ \kappa}{3 \beta'}\right)\left(1+\o\left(1\right)\right)\right)\qedhere
\end{align*}
\end{proof}

\section{Controlling Algebraic Obstructions to Squares and the Proof of Theorem~\ref{NFS-characters-uncond}}\label{algebraic-squares}

We begin with some high-level discussion. At the end of Step~\refp{smooth-generate} of the algorithm, we have a large collection of linear polynomials $a-Xb$ which, when sent to $\Z[\alpha]$ or $\Z$ by morphisms sending $X$ to $\alpha$ or $m$ respectively, are \emph{smooth normed} in both rings. Recall that in Step~\refp{square-formation}, we seek to find a subset of these elements whose product is sent to the square of an element of $\Z[\alpha]$ and a square in $\Z$ by these two morphisms.

Now, if we are given an element $z \in \Z$ and asked whether it is square, we need only check that for any prime $r$ dividing $z$, the multiplicity of $r$ as a factor of $z$ is \emph{even}. In this situation, we can halve the order of every prime and take a product to yield another integer whose square will be $z$. Hence given the factorisations of the images $a-mb$ in $\Z$ for $1+B$ polynomials found in Step~\refp{smooth-generate}, we can find a subset whose product is square by looking for a subset such that the total multiplicity of every prime less than $B$ across the subset is even. We can send each $a-mb$ to a vector over $\F_2$ of the orders of primes dividing $a-mb$; then the process of square formation is exactly finding an element in the kernel over $\F_2$ of a large matrix of exponents.

We might na\"{i}vely hope that we can follow this algorithm in $\Z[\alpha]$, by factoring the norms of $a - b\alpha$ to ensure that we find a subset whose product is square in both $\Z$ and $\Z[\alpha]$. However, over $K \defeq \Q\left(\alpha\right)$ and its ring of integers $\mathcal{O}_K$, this is more subtle, but the essential idea still works.

Note that $\mathcal{O}_K$ is a Dedekind domain, so non-zero prime ideal is maximal and so $\mathcal{O}_K / \mathfrak{p}$ is a field, say $\F_{r^k}$. Hence $N(\mathfrak{p}) = r^k$, and $\mathfrak{p} | (r)$ the ideal generated by $r$ in $\mathcal{O}_K$. Such a prime $\mathfrak{p}$ is said to be of \emph{$k$-th degree}.
The quotient map $\mathcal{O}_{K} \rightarrow \mathcal{O}_{K} / \mathfrak{p} \simeq \F_{r^k}$ is determined entirely by its action on $\alpha$. Hence we can identify the prime ideal $\mathfrak{p}$ with an element of $\F_{r^k}$, which is in turn identified with a minimal (and thus irreducible) polynomial $p_{\mathfrak{p}}$ over $\F_r$ of degree $k$. Note that we can apply the \emph{same} map by recalling that $\mathcal{O}_K$ is a subring of $\Q(\alpha)$, which may be quotiented by $(p_{\mathfrak{p}}(\alpha))$, or more explicitly $\mathcal{O}_K \ni g(\alpha) \rightarrow(g \mod p_{\mathfrak{p}})(\alpha)$ which preserves the representation of any element as a ratio of polynomials in $\alpha$.

On the other hand, it remains to see which polynomials correspond to primes. Suppose we are given a polynomial $p$ of degree $k$. It is plain that if the polynomial $\gcd(f, p) = 1$ over $\F_r$, then the quotient of $\mathcal{O}_K \subseteq K$ by $(p(\alpha))$ sends every element to $0$, and hence the ideal is not prime. Since $p$ is irreducible over $\F_r$, a non-trivial gcd implies that $p$ is one of the irreducible factors of $f \imod{r}$. Furthermore, the image of $\Z[\alpha]$ under the quotient map is plainly surjective. So we can identify this polynomial with the quotient map, and hence with the associated prime ideal $\mathfrak{p}$.

We can equate prime ideals $\mathfrak{p} \subset \mathcal{O}_K$ with pairs of a prime $r \in \Z$ and an irreducible factor $p_{\mathfrak{p}}$ of $f \imod{r}$. The latter representation will be substantially more straightforward to handle computationally. Furthermore, we note the particular ease of use of the \emph{degree one} primes, which correspond to simple roots of $f \imod{r}$. For these primes, the quotient map applied to a polynomial in $\Z[X]$ is mere evaluation at the root. In what follows, we will routinely abuse notation to equate the prime ideal $\mathfrak{p}$ in $\mathcal{O}_{K}$ and the irreducible polynomial divisor $p_{\mathfrak{p}}$ of $f\left(x,1\right)\imod{r}$. We will also equate the ideal $\mathfrak{p}$ with the pair $\left(r,s\right)$, with $r$ a modulus and $s$ a root of $\mathfrak{p}$ in $\F_{r^k}$ when $r$ is prime.

We note that, unlike the situation in $\Z$, there may be multiple prime ideals of the same \emph{norm}, since for a prime $r \in \Z$ the ideal $\left(r\right)$ may lift to an ideal $\left(r\right) \subseteq \mathcal{O}_{K}$ which is not a power of a single prime ideal. However, this is not a substantial problem, as the norm of the ideal $\left(r\right)$ in $\mathcal{O}_{K}$ is the greatest common divisor of the norm of each element of $(r)$, and so divides $\mathbf{N}(r) = r^d$. Since the norm of a prime ideal is an integer exceeding $1$, and norms are multiplicative, the number of prime ideals dividing $(r)$ in the ring of integers is bounded above by $d \ll \log_2 n$

Of course more is known; it is a result of Landau~\cite{LandauIdeal} that the number of prime ideals in $\mathcal{O}_{\Q\left(\alpha\right)}$ of norm less than $x$ is:
\[
\frac{x}{\log x} + x\exp\left(-\O_\alpha\left(\sqrt{\log x}\right)\right).
\]
As should be expected, the dependence on $\alpha$ in this bound in fact driven by the position of a (hypothetical) troublesome zero of the zeta function associated to the field extension $K / \Q$; Montgomery and Vaughan~\cite{MontgomeryVaughan} have a substantial discussion. We could use the fact that we have taken $f$ to be random to gain better control of the number of ideals, but as $\log n = L_n\left(\frac{1}{3}, \o\left(1\right)\right)$ already we do not need the sharper bounds.

More subtly, since $\mathcal{O}_{K}$ need not be a unique factorisation domain as we have no guarantee that irreducible elements are in fact prime, and it might be the case that the number of irreducibles of small norm is much larger than the number of primes. It is also difficult to work directly with primes in the full number field, since they generally will not be in $\Z[\alpha]$.

This obstacle is standard in the family of NFS algorithms, and the methods first suggested by Adleman~\cite{Adleman} and studied in detail by Buhler, Lenstra and Pomerance~\cite{BuhlerLenstraPomerance} allow us to avoid it. They remark that a complete analysis of these characters was out of reach, and suggest that much stronger versions of the Chebotarev Density Theorem might be required. We instead proceed to show that for our randomised field and with a stochastic collection of characters with large conductor, the number of ways in which an element might appear square and yet not be is small enough that we can apply the pigeonhole principle to find a square.

Our first task is to keep track of the ways in which a given prime $r$ might come to divide $f_d\mathbf{N}\left(a-b\alpha\right)$. In particular we observe that:
\begin{align*}
&r \mid f_d\mathbf{N}\left(a-b\alpha\right) = f(a,b)
&\Rightarrow &f(a,b) = 0 \imod{r}\\
\Rightarrow &r | b \text{ or } f(ab^{-1}, 1) = 0 \imod{r}
&\Rightarrow &r | b \text{ or } \exists s : \left(r,s\right) = 1, f\left(s, 1\right) \equiv 0 \imod{r}
\end{align*}
and that furthermore if $r | b$ then $r | f_d\mathbf{N}(a) = f_da^d$, and so $r | f_da$. In this situation we can note that $f_d(a-b\alpha)$ is divisible by every prime ideal lying over $(r)$, and so we can assume that $r$ does not divide $b$. Hence we split each prime $r < B'$ into a collection of ``primes'' $\left(r,s\right)$, one for each $0 < s < r$ coprime to $r$ with $f\left(s,1\right) \equiv 0 \imod r$.

Number theoretically, these correspond to the \emph{first degree} primes in $\mathcal{O}_{K}$:
\[
(r, s) : r\text{ prime}, r \mid f(s,1) \text{ are in correspondence with } \mathfrak{p} \mid (r), \mathbf{N}(\mathfrak{p}) = r
\]
These are particularly convenient, as the norm of the ideal generated by these prime ideals is a prime in $\Z$; as a corollary, working modulo $\mathfrak{p}$ entails mapping $\alpha$ into an element of $\Z/r\Z$ rather than $\mathbf{F}_{r^k}$. In particular, we define the following functions (after~\cite{BuhlerLenstraPomerance})
\[
e_{r,s}\left(a-b\alpha\right) \defeq \ord_r\left(f\left(a,b\right)\right) \1_{a \equiv bs \imod{r}}
\]
and note that this apportions the responsibility for the divisibility of $f\left(a,b\right)$ by $r$ to a specific solution $s$ of $f\left(s,1\right) \equiv 0 \imod r$.

Note that there are at most $d$ solutions to $f\left(s,1\right) \imod{r}$ and again $d = \log^{\frac{1}{3} + \o(1)}n$ which is much smaller than $L_n\left(\frac{1}{3}\right)$. As mentioned, that these $e_{r, s}$ correspond to the splitting of first degree primes in $\Z[\alpha]$ dividing $\left(r\right)$, and so $e_{r,s}$ extends to a linear map from the multiplicative semigroup of $K^\times$ to $\Z$~\cite[Lemma 5.5]{BuhlerLenstraPomerance}.

Hence given $1 + B + dB'$ polynomials from Step~\refp{smooth-generate} we can use linear algebra over $\F_2$ to find a subset product $P$ such that $P\left(m\right) \in \Z$ is square and $P\left(\alpha\right) \in \Z[\alpha]$ is such that $ 2 \mid e_{r,s}\left(P\left(\alpha\right)\right)$. It remains to show that extending this linear algebra can force $P\left(\alpha\right)$ to be the square of an element of $\Z[\alpha]$

We will first show that the number of ways that we can fail to produce a square in $K$ is controlled by an $\F_2$ vector space (denoted $H$) of small dimension. We will then randomly construct a multiplicative map (denoted $\Psi_\mathcal{F}$) which almost surely distinguishes all of the elements of $H$. In particular this allows us to identify when a product is a square of an element of $\Q\left(\alpha\right)$, once we know it to be an element of $\mathcal{O}_{K}$ with square and smooth norm.

This map $\Psi_\mathcal{F}$ will be multiplicative, it will be a \emph{linear} function of the order of each prime dividing $a - b\alpha$. As a corollary, we can use additional sieving to find a subset whose product maps to a square in $\Z$ and $\mathcal{O}_{K}$ and such that $\Psi_\mathcal{F}$ shows the product in the number field to be a square of an element of $\Q\left(\alpha\right)$. In particular, $\Psi_\mathcal{F}$ will be a collection of a logarithmic number of random quadratic characters on the number field. To force the square to in fact be a square of an element of $\Z[\alpha]$ requires that we multiply by an additional constant.

We note that whilst this general approach is standard, the details of our method will be somewhat different. In particular, the standard NFS produces the map $\Psi_F$ by taking a collection of maps corresponding to first degree primes $\mathfrak{p}$ lying over primes $\left(p\right)$ in $\Z$ which are just above the smoothness bound $B'$. By contrast, we will take arbitrary primes $\mathfrak{p}$ of norm below a much larger bound, in general, we will have $\log\left(\mathbf{N}\left(\mathfrak{p}\right)\right)$ being $L_n\left(\frac{1}{3}\right)$.

To show this in detail, we will have to study various extensions of $\Q\left(\alpha\right)$, corresponding precisely to adjoining roots of elements which fail to be square in the ring of integers. In particular, the standard bounds on the discriminant of $\Q\left(\alpha\right)$ extends to similar bounds on the discriminant of the quadratic extensions of interest, and we use effective results of Stark~\cite{Stark} to show that the majority of such extensions have no Siegel zero. This allows us to show that for characters of suitably large conductor, the kernel of $\Psi_{\mathcal{F}}$ is small enough that it can be handled by brute force.

We now begin the formal argument. We implicitly equate $C_2$ and the additive group of $\F_2$ (via the map $\left(-1\right)^b \rightarrow b$). Recall that $\alpha$ has minimal polynomial $f\left(x,1\right)$, $\mathbf{N}$ is the field norm on $\Z[\alpha]$ and $K \defeq \Q(\alpha)$. We define a group:
\[
H \defeq \{z \in K^\times : \forall s<r,\; e_{r,s}\left(z\right)\equiv 0 \imod{2}\} / \{z^2 : z \in \Q\left(\alpha\right)^\times\}.
\]
\begin{lemma}\label{H-dim}
$H$ is an $\F_2$ vector space of dimension at most
\[
\left(\delta\kappa + \o\left(1\right)\right)\log_2 n + \frac{\delta^2\kappa}{2\log2}\frac{(\log n)^{4/3}}{(\log\log n)^{1/3}}
\]
\end{lemma}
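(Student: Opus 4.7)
The plan is to identify $H$ with a $2$-Selmer-type group of the number field $K = \Q\left(\alpha\right)$ and bound its $\F_2$-dimension via the $2$-rank of the class group, the rank of the unit group modulo squares, and a conductor correction accounting for the fact that the algorithm operates inside the order $\Z[\alpha] \subseteq \mathcal{O}_K$. Using the correspondence between pairs $\left(r, s\right)$ (with $s \in \F_{r^k}$ for a degree-$k$ prime) and prime ideals of $\mathcal{O}_K$ described earlier, the condition $e_{r,s}\left(z\right) \equiv 0 \imod{2}$ for every $\left(r, s\right)$ is equivalent to the principal fractional ideal $\left(z\right)$ being a square, which places $H$ in the classical short exact sequence
\[
1 \longrightarrow \mathcal{O}_K^\times/\left(\mathcal{O}_K^\times\right)^2 \longrightarrow H \longrightarrow \operatorname{Cl}\left(K\right)\left[2\right] \longrightarrow 1,
\]
with the right-hand map sending $z$ to the class of $\mathfrak{a}$ satisfying $\mathfrak{a}^2 = \left(z\right)$.

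I would then bound each piece. Dirichlet's unit theorem gives $\dim_{\F_2}\mathcal{O}_K^\times/\left(\cdot\right)^2 \leq r_1 + r_2 + 1 \leq d+1 = \O\left(\left(\log n\right)^{1/3}\right)$, which is absorbed into the $\o\left(1\right)\log_2 n$ correction. For the class group, $\dim_{\F_2}\operatorname{Cl}\left(K\right)\left[2\right] \leq \log_2 h_K$, and Minkowski's bound yields $\log_2 h_K \leq \tfrac{1}{2}\log_2|\Delta_K| + \O\left(d \log \log n\right)$. Since $\Z[\alpha] \subseteq \mathcal{O}_K$ forces $|\Delta_K| \mid |\Delta_f|$, Hadamard's inequality applied to the $\left(2d-1\right)\times\left(2d-1\right)$ Sylvester matrix of $\left(f, f'\right)$ (whose entries are bounded by $d \cdot L_n\left(\tfrac{2}{3}, \kappa\right)\left(1+\o\left(1\right)\right)$) gives $\log_2|\Delta_f| \leq 2\delta\kappa\log_2 n \left(1+\o\left(1\right)\right)$, producing the first summand $\left(\delta\kappa + \o\left(1\right)\right)\log_2 n$.

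The second summand captures the Picard-group obstruction of the order $\Z[\alpha]$, since the algorithm multiplies elements of $\Z[\alpha]$ rather than $\mathcal{O}_K$. Replacing $\operatorname{Cl}\left(K\right)\left[2\right]$ in the above sequence with $\operatorname{Pic}\left(\Z[\alpha]\right)\left[2\right]$ and using the standard exact sequence
\[
1 \to \mathcal{O}_K^\times/\Z[\alpha]^\times \to \left(\mathcal{O}_K/\mathfrak{f}\right)^\times/\left(\Z[\alpha]/\mathfrak{f}\right)^\times \to \operatorname{Pic}\left(\Z[\alpha]\right) \to \operatorname{Cl}\left(K\right) \to 1
\]
with $\mathfrak{f}$ the conductor of $\Z[\alpha]$ in $\mathcal{O}_K$ introduces an extra $2$-rank bounded by that of $\left(\mathcal{O}_K/\mathfrak{f}\right)^\times/\left(\cdot\right)^2$. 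Using the identities $[\mathcal{O}_K:\Z[\alpha]]^2 = |\Delta_f|/|\Delta_K|$ and $N\left(\mathfrak{f}\right) \leq [\mathcal{O}_K:\Z[\alpha]]^d$, a prime-by-prime local count (each odd prime of $\mathcal{O}_K$ in the support of $\mathfrak{f}$ contributing one generator from its cyclic residue-field unit group) bounds this extra contribution by $\tfrac{d}{2}\log_2[\mathcal{O}_K:\Z[\alpha]] \leq \tfrac{d}{2}\delta\kappa\log_2 n$; substituting $d = \delta\left(\log n/\log\log n\right)^{1/3}$ yields the second summand $\tfrac{\delta^2\kappa}{2\log 2}\cdot\left(\log n\right)^{4/3}\left(\log\log n\right)^{-1/3}$.

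The main obstacle will be the local calculation needed for the sharp constant in the second summand: primes of $\mathcal{O}_K$ above $2$ have principal-unit groups that are themselves $2$-groups and contribute multiple $\F_2$-generators, so they require separate handling from odd primes, and one must verify that their combined contribution still fits inside the stated bound rather than blowing up the constant.
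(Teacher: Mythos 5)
Your decomposition is in the right conceptual territory — units, class group, and an order/conductor obstruction — but it has three concrete problems, each of which the paper's chain $V \supset W \supset Y \supset (K^\times)^2$ (after Buhler--Lenstra--Pomerance) is designed to sidestep.

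First, the proposed short exact sequence does not describe $H$ as defined. The condition defining $V$ is that $e_{r,s}(z)$ is even for all first-degree prime pairs $(r,s)$; this is strictly weaker than requiring the principal ideal $(z)$ to be a square of a fractional $\mathcal{O}_K$-ideal, because nothing constrains the valuation of $z$ at primes of degree $\geq 2$. Your sequence $1 \to \mathcal{O}_K^\times/(\mathcal{O}_K^\times)^2 \to \cdot \to \operatorname{Cl}(K)[2] \to 1$ characterizes $W/(K^\times)^2$ where $W$ is the group of $z$ with $(z) = \mathfrak{a}^2$; the gap $[V:W]$ is exactly what the paper controls with the bound $[V:W] \leq [\mathcal{O}_K : \Z[f_d\alpha]]$ (their cited Proposition 7.4). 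Second, since $f$ is not monic, $\alpha$ need not be an algebraic integer, and the claim $|\Delta_K| \mid |\Delta_f|$ is not available; the correct identity is $[\mathcal{O}_K : \Z[f_d\alpha]]\sqrt{|\Delta_K|} = \sqrt{|\Delta_g|}$ with $g$ the minimal polynomial of $f_d\alpha$, and $|\Delta_g| \asymp f_d^{d(d-1)}|\Delta_f|$ --- it is the $f_d^{d(d-1)}$ factor, not any conductor argument, that produces the $(\log n)^{4/3}$ summand. Third, and most seriously, your Picard-group bound $\tfrac{d}{2}\log_2[\mathcal{O}_K:\Z[\alpha]]$ overshoots: the numerics in the final line only close because you assumed $\log_2[\mathcal{O}_K:\Z[\alpha]] \leq \delta\kappa\log_2 n$, but this index can itself be as large as $\tfrac12\log_2|\Delta_g|$, i.e.\ of order $\delta\kappa\log_2 n + \tfrac{\delta^2\kappa}{2\log2}(\log n)^{4/3}(\log\log n)^{-1/3}$; multiplying by $d/2$ then introduces a spurious $(\log n)^{5/3}$ term. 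The paper incurs no factor of $d$ at this step --- the index bound $[V:W] \leq [\mathcal{O}_K:\Z[f_d\alpha]]$ is applied once, additively, and the product $[\mathcal{O}_K:\Z[f_d\alpha]] \cdot h$ is then bounded by $\sqrt{|\Delta_g|}\,(\log n)^{\O(d)}$ in one stroke. If you want to rescue your route you would need to (i) reinterpret $H$ through first-degree valuations rather than ideal squares, (ii) replace $\Delta_f$ by $\Delta_g$ throughout, and (iii) bound the order contribution by the index $[\mathcal{O}_K:\Z[f_d\alpha]]$ directly rather than through a local $2$-rank count on $(\mathcal{O}_K/\mathfrak{f})^\times$, which as stated is too lossy.
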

\begin{rem}
The $\log^{4/3 + \o(1)} n$ term does not appear in the case that $f$ is monic, and thus is not in the standard presentation of the NFS. More generally, the term is $\O(d^2 \log f_d)$, and is being driven by the increased coefficients in the minimal polynomial for an algebraic integer in $\Q(\alpha)$.
\end{rem}
\begin{proof}
The coefficients of $f$ are bounded by $L_n\left(\frac{2}{3}, \kappa\right)$ (whereas in the standard NFS the bound is $m = L_n\left(\frac{2}{3}, \delta^{-1}\right)$). 
Recall that the degree $d$ of $f$ is $\delta\sqrt[3]{\frac{\log n}{\log \log n}}$.

To bound $|H|$, we follow Buhler, Lenstra and Pomerance's presentation of the NFS, using Lemma 3.3 and the argument of Theorem 6.7 from \cite{BuhlerLenstraPomerance}. We differ firstly in that their claims are restricted to the case $\kappa = \delta^{-1}$, but the arguments are plainly seen to be more general. To implement the more general case, we keep the dependence on $\Delta$ explicit. We observe that in \cite{BuhlerLenstraPomerance} the argument is given for a univariate non-homogeneous polynomial, which in the notation of this paper is $f(x, 1)$. Note also that in this paper, we cannot guarantee that $\alpha$ is an algebraic integer, although $f_d\alpha$ is.

\begin{rem} We note that the result in \cite[Lemma 3.3]{BuhlerLenstraPomerance}, claims a bound of form $d^{2d}n^2M^{-3}$ in the setting $\delta = \kappa^{-1}$. The argument presented there does not clearly hold as $f'_{d-1} = (d-1)f_{d-1}$, but the ratio of the matching terms in the first column is $d$ and so simply subtracting the first column from the second cannot cause all entries in the second column to be of order $1$.
\end{rem}	
\begin{claim}If the coefficients of $f$ are bounded by $M = L_n\left(\frac{2}{3}, \kappa\right)$ then the discriminant $\Delta_f$ of $f$ is bounded by
$
|\Delta_f| \leq d^{2d} n^{2\delta \kappa} M^{-2}
$
\end{claim}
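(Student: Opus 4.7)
My plan is to sidestep the column-reduction argument flagged in the preceding remark and prove the bound via the classical Mahler--Landau chain of inequalities. Let $\alpha_1,\ldots,\alpha_d \in \BB{C}$ denote the roots of $f(x,1)$ with multiplicity; the only identity I need is
\[
\Delta_f = f_d^{2d-2}\prod_{i<j}(\alpha_i - \alpha_j)^2.
\]

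The first step is to control the squared Vandermonde product. Recognising $\prod_{i<j}(\alpha_j - \alpha_i) = \det V$ for the Vandermonde matrix $V_{ij} = \alpha_i^{j-1}$, and applying Hadamard's inequality row-by-row, I obtain $|\det V| \leq \prod_{i=1}^d \left(\sum_{j=0}^{d-1}|\alpha_i|^{2j}\right)^{1/2} \leq d^{d/2}\prod_i\max(1,|\alpha_i|)^{d-1}$. Substituting into the identity above and collecting the $|f_d|$ factors yields the clean Mahler-style bound $|\Delta_f| \leq d^d M(f)^{2(d-1)}$, where $M(f) = |f_d|\prod_i\max(1,|\alpha_i|)$ is the Mahler measure.

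The second step converts this into a coefficient bound. Landau's inequality $M(f) \leq \|f\|_2$, combined with the hypothesis $\|f\|_\infty \leq M$ (and $f$ having $d+1$ coefficients), gives $M(f) \leq M\sqrt{d+1}$. The elementary estimate $(d+1)^{d-1} \leq d^d$ for $d \geq 1$ follows by checking that $h(d) = d\log d - (d-1)\log(d+1)$ satisfies $h(1) = 0$ and $h'(d) \geq -\log(1+1/d) + 2/(d+1) \geq (d-1)/(d(d+1)) \geq 0$. Plugging in then upgrades the bound to $|\Delta_f| \leq d^d (d+1)^{d-1} M^{2d-2} \leq d^{2d} M^{2d-2}$.

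The final step rewrites this in the stated form. From $M = L_n(\frac{2}{3},\kappa)$ and $d = \delta\sqrt[3]{\log n/\log\log n}$ one computes $d\log M = \delta\kappa\log n$ exactly, hence $M^d = n^{\delta\kappa}$ and $M^{2d-2} = n^{2\delta\kappa}M^{-2}$. I do not anticipate a serious obstacle: the argument is elementary, self-contained, and works for arbitrary $f$ (no irreducibility or monicity required). The one point requiring care is to use the Vandermonde/Mahler route rather than a direct Hadamard bound on the $(2d-1)\times(2d-1)$ Sylvester resultant matrix; the latter only produces $|\Delta_f| \ll d^{2d + \o(d)} M^{2d-1}$ and would lose an extra factor of $M$, so it cannot recover the tight exponent $n^{2\delta\kappa}M^{-2}$ claimed here.
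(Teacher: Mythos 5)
Your proof is correct, and it takes a genuinely different route from the paper. The paper works directly with the $(2d-1)\times(2d-1)$ Sylvester matrix of $f$ and $f'$, performs a careful column reduction (subtracting $f'_{d-i}/f'_{d}$ times the first column from each later column) to zero out all but one entry of the $d$-th row, and then applies Hadamard's inequality row-by-row with explicit control of the resulting Euclidean norms. That column reduction is precisely the paper's repair of the argument in \cite[Lemma~3.3]{BuhlerLenstraPomerance} that the preceding remark flags as gappy. You instead expand $\Delta_f = f_d^{2d-2}\prod_{i<j}(\alpha_i-\alpha_j)^2$ as a Vandermonde determinant, apply Hadamard there to get $|\Delta_f|\le d^{d}M(f)^{2d-2}$ in terms of the Mahler measure, and then pass to coefficients via Landau's bound $M(f)\le\|f\|_2\le M\sqrt{d+1}$. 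The elementary inequality $(d+1)^{d-1}\le d^{d}$ (your monotonicity argument for $h(d)=d\log d-(d-1)\log(d+1)$ checks out) and the exact identity $M^{d}=n^{\delta\kappa}$ then give $|\Delta_f|\le d^{2d}M^{2d-2}=d^{2d}n^{2\delta\kappa}M^{-2}$, matching the paper's constant. Your route is self-contained, requires no irreducibility or monicity, and entirely avoids the delicate Sylvester column operations; the paper's computation does retain an extra $3^{-d/2}$ factor that yours drops, but since only the $d^{2d}M^{2d-2}$ level is used downstream this is immaterial. Your closing aside --- that a na\"{i}ve Hadamard bound on the unreduced Sylvester matrix would only yield roughly $d^{2d+\o(d)}M^{2d-1}$ and thus lose a full power of $M$ --- is also correct and correctly explains why some refinement (either the paper's column reduction or your Vandermonde/Mahler route) is required.
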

\begin{proof}
For $f(x, 1) = \sum f_i x^i$, we have that $|f_d\Delta_f|$ is the resultant of $f(x, 1)$ and $\frac{d}{dx}f(x,1)$. Let $f'_i = if_i$. We define the associated $(2d-1) \times (2d-1)$ Sylvester matrix:
\[
S = \begin{bmatrix}
f_d & f_{d-1}  & \cdots & \cdots & f_1 & f_0 & 0 & \cdots & 0 \\
0 & f_d & f_{d-1} & \cdots & \cdots & f_1 & f_0 & 0 & \vdots \\
\vdots & 0 & \ddots & \ddots & & & \ddots & \ddots & 0\\
0 & \cdots & 0 & f_d & f_{d-1} & \cdots &\cdots & f_1 & f_0\\
f'_d & f'_{d-1} & \cdots & f'_2 & f'_1 & 0 & \cdots & \cdots & 0 \\
0 & f'_d & f'_{d-1} & \cdots & f'_2 & f'_1 & 0 & \cdots&\vdots \\
0 & 0 &f'_d & f'_{d-1} & & \ddots & \ddots & \ddots&\vdots \\
\vdots & & \ddots & \ddots & \ddots & & \ddots & \ddots & 0\\
0 & \cdots & \cdots & 0 & f'_d & f'_{d-1} & \cdots & f'_2 & f'_1\\
\end{bmatrix},
\]
with $|\Delta| = |\operatorname{det}(S)|f_d^{-1}$. 
We modify $S$ by subtracting $f'_{d-i}/f'_d$ times the first column from each of the later columns to obtain $S'$. By construction $\operatorname{det}(S) = \operatorname{det}(S')$. The first row of $S'$ has non-zero entries $(f_d, \frac{1}{d}f_{d-1}, \frac{2}{d}f_{d-2}, \ldots, f_0)$, and so the euclidean norm of the first row of $S'$ is bounded by:
\[
\left(M^2 + M^2 \frac{d(d+1)(2d+1)}{6d^2}\right)^{\frac{1}{2}} = \left(\frac{M^2d}{3}\right)^{\frac{1}{2}} \sqrt{1 + \frac{9}{2d} + \frac{1}{2d^2}} 
 \leq \left(\frac{M^2d}{3}\right)^{\frac{1}{2}} \exp\left(\frac{9}{4d} + \frac{1}{4d^2}\right)
\]
Similarly, the norm of rows $2$ through $d-1$ of $S'$ are bounded by
\[
\left(M^2 + M^2d\right)^{\frac{1}{2}} = Md^{\frac{1}{2}} \sqrt{1 + \frac{1}{d}} \leq Md^{\frac{1}{2}} \exp\left(\frac{1}{2d}\right)
\]
and the norm of rows $d+1$ through $2d-1$ of $S'$ are bounded by
\[
\left(M^2d^2 + M^2\frac{d(d-1)(2d-1)}{6}\right)^{\frac{1}{2}} = \left(\frac{M^2d^3}{3}\right)^{\frac{1}{2}} \sqrt{1 + \frac{3}{d} + \frac{1}{2d^2}}
\leq \left(\frac{M^2d^3}{3}\right)^{\frac{1}{2}} \exp\left(\frac{3}{2d} + \frac{1}{4d^2}\right).
\]
The $d^{\text{th}}$ row has only one non-zero entry and norm $df_d$. Now Hadamard's bound provides that $|\operatorname{det}(S')|$ is at most the product of the norms of the rows of $S'$, and so:
\begin{align*}
|\operatorname{det}(S')| &\leq \left(Md^{1/2}\right)^{d-1} df_d \left(Md^{3/2}\right)^{d-1} 3^{-\frac{d}{2}} \exp\left(\frac{8d-1}{4d}+\frac{d}{4d^2}\right) \\
&= f_dM^{2d-2} d^{2d} \left(3^{-\frac{d}{2}} e^2 d^{-1}\right)
\end{align*}
Note that since $d \geq 3$, the product of the last three terms is bounded above by $e^{2} 3^{-5/2} < 1$. We have $M^d = n^{\delta\kappa}$, and hence:
\[
|\Delta_f| = |\det(S')|f_d^{-1} \leq d^{2d}n^{2\delta\kappa}M^{-2}.\qedhere
\]
\end{proof}

Let $g$ be the minimal polynomial of $f_d\alpha$. Then clearly $g(x) = \sum_i (f_i f_d^{d-i-1}) x^i$, and so
\[
|\Delta_g| = |\Delta_f| f_d^{d(d-1)} = L_n\left(\frac{4}{3}, \delta^2\kappa + \o(1)\right).
\]

\begin{claim}
$
|H| \leq \sqrt{|\Delta_g|} (\log n)^{\O(d)}\qquad\text{\cite[Theorem 6.7]{BuhlerLenstraPomerance}}.
$
\end{claim}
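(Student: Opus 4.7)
The plan is to reduce to \cite[Theorem 6.7]{BuhlerLenstraPomerance} by working with $g$ (the minimal polynomial of the algebraic integer $f_d\alpha$) rather than $f$, since $\Q(\alpha) = \Q(f_d\alpha) \eqdef K$ and the discriminant of $\Z[f_d\alpha]$ divides $\Delta_g$. The previous claim already gives the bound $|\Delta_g| = L_n(4/3, \delta^2\kappa + \o(1))$, so $\log_2\sqrt{|\Delta_g|} = \frac{\delta^2\kappa}{2\log 2}\frac{(\log n)^{4/3}}{(\log\log n)^{1/3}}(1+\o(1))$, which will match the dominant term of the target bound. The subleading $(\delta\kappa + \o(1))\log_2 n$ term will be absorbed into $(\log n)^{O(d)}$ (noting that $d^2 \log f_d = O(d \log n)$ drives the bookkeeping from non-monicity).

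First, I would injectively embed $H$ into the product of three finite $\F_2$-vector spaces, following the standard NFS dissection. For $z \in H$, consider the fractional ideal $(z) \subset \mathcal{O}_K$. The vanishing of $e_{r,s}(z) \pmod 2$ for every first-degree prime $(r,s)$ means that the degree-one part of $(z)$ is a square in the free abelian group of such ideals; the obstruction to $z$ being a square thus decomposes as (a) the image of the remaining (higher-degree) part of $(z)$ in the $\F_2$-span of prime ideals of degree $\geq 2$; (b) the class of the ``square-reduced'' ideal in $\mathrm{Cl}(K)[2]$; and (c) once we write this ideal as $(w)^2$ with $w \in K^\times$, the residue of $z/w^2 \in \mathcal{O}_K^\times$ in $\mathcal{O}_K^\times/(\mathcal{O}_K^\times)^2$.

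Next I would bound each of the three pieces. For (c), Dirichlet's unit theorem gives $\mathrm{rank}_\Z\,\mathcal{O}_K^\times \leq d-1$, so this contribution is $\leq 2^{d+1} = (\log n)^{O(d)}$ after accounting for roots of unity. For (b), the Minkowski bound gives $h_K \leq \frac{d!}{d^d}\left(\frac{4}{\pi}\right)^{r_2}\sqrt{|\Delta_g|} \leq \sqrt{|\Delta_g|}\cdot(\log n)^{O(d)}$, and the $2$-torsion satisfies $|\mathrm{Cl}(K)[2]| \leq h_K$. For (a), it suffices to bound the number of prime ideals of degree $\geq 2$ that can appear in the factorization of $(z)$ for $z$ arising from our smooth relations; following the argument in \cite[\S6]{BuhlerLenstraPomerance}, this piece contributes at most a further $(\log n)^{O(d)}$ factor using a counting of irreducible factors of $f$ modulo small primes and a divisor-bound argument.

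Multiplying the three bounds yields $|H| \leq \sqrt{|\Delta_g|}(\log n)^{O(d)}$ as claimed, and taking $\log_2$ converts this into the displayed dimension bound. The main technical obstacle is (a): one must ensure that the higher-degree obstruction is tame despite our having very weak control over the factorisation pattern of the randomised non-monic $f$; the crucial input is that $\Delta_g$ is still only $L_n(4/3)$, so the $\sqrt{|\Delta_g|}$ estimate swamps any polylog $(\log n)^{O(d)}$ losses incurred in the non-monic bookkeeping.
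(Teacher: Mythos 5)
Your overall decomposition — units, class group, and the failure of the exponent maps $e_{r,s}$ to capture the full factorisation — is morally the same dissection the paper performs via the chain $V \supset W \supset Y \supset (K^\times)^2$ (with $[Y:(K^\times)^2]=2^{d-s}$, $[W:Y]\leq h_K$, $[V:W]\leq [\mathcal{O}_K:\Z[f_d\alpha]]$). However, step (a) as you have phrased it does not work, and the way you have budgeted the $\sqrt{|\Delta_g|}$ is off.

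First, the target of (a), ``the image of the higher-degree part of $(z)$ in the $\F_2$-span of prime ideals of degree $\geq 2$,'' is an infinite-rank $\F_2$-module: there is no a priori control over which higher-degree primes divide $(z)$ for a general $z \in K^\times$, and $H$ is defined over all of $K^\times$, not over the elements produced by the algorithm. So restricting to ``$z$ arising from our smooth relations'' is not available here, and without it (a) does not give an injection into anything finite. Moreover, (a) omits the degree-one primes lying over primes $r$ that divide the index $[\mathcal{O}_K : \Z[f_d\alpha]]$; at such $r$ the quantities $e_{r,s}(z)$ are not the $\mathcal{O}_K$-valuations, so even parity of $e_{r,s}$ does not force the degree-one part of $(z)$ to be a square. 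This is exactly what \cite[Proposition 7.4]{BuhlerLenstraPomerance} addresses: $[V:W]$ is bounded by the index $[\mathcal{O}_K:\Z[f_d\alpha]]$, which is \emph{not} polylogarithmic — it can be as large as $\sqrt{|\Delta_g|/|\Delta_K|} = L_n(4/3)$.

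Second, your step (b) invokes the Minkowski bound with $\sqrt{|\Delta_g|}$, but Minkowski gives $h_K \ll \sqrt{|\Delta_K|}(\log n)^{O(d)}$ in terms of the \emph{field} discriminant $\Delta_K$, not the polynomial discriminant $\Delta_g$. Since $\Delta_K \mid \Delta_g$ the inequality you write is numerically true, but it is wasteful by precisely the factor $[\mathcal{O}_K:\Z[f_d\alpha]] = \sqrt{|\Delta_g|/|\Delta_K|}$ — which is the same factor you discarded from (a). The two errors compensate numerically, but that is a coincidence, not a proof. The correct accounting is: $h_K \ll \sqrt{|\Delta_K|}(\log n)^{O(d)}$ from Minkowski, $[V:W]\leq [\mathcal{O}_K:\Z[f_d\alpha]]$, and $\sqrt{|\Delta_K|}\,[\mathcal{O}_K:\Z[f_d\alpha]] = \sqrt{|\Delta_g|}$, with the unit contribution $2^{d-s}$ absorbed into $(\log n)^{O(d)}$.
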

\begin{proof}
We follow the presentation of \cite[Theorem 6.7]{BuhlerLenstraPomerance}, differing only in that we track the dependence on $\Delta$ precisely.
We define:
\begin{align*}
V &= \{z \in K^\times : \forall s<r,\; e_{r,s}\left(z\right)\equiv 0 \imod{2}\},\\
W &= \left\{\gamma \in K^\times : \gamma\mathcal{O}_{K} = \mathfrak{a}^2, \mathfrak{a}\text{ a fractional }\mathcal{O}_{K}\text{-ideal}\right\},\\
Y &= \mathcal{O}_{K}^\times{K^\times}^2
\end{align*}
Note that $V \supset W \supset Y \supset {K^\times}^2$ and $|H| = [V : {K^\times}^2]$.
Now, \cite[Proposition 7.4]{BuhlerLenstraPomerance} gives that:
\[
[V : W] \leq [\mathcal{O}_{K} : \Z[f_d\alpha]].
\]
Additionally, if the order of the ideal class group of $\mathcal{O}_{K}$ is $h$, then:
\[
[W : Y] \leq h,
\]
as (using the notation of the definition of $W$) for any $\gamma \in W$ the map sending $\gamma$ to the ideal class of $\mathfrak{a}$ has $Y$ as its kernel. If $K$ has $2s$ complex embeddings, then Dirichlet's unit theorem implies that:
\[
[Y : {K^{\times}}^2] = 2^{d-s}
\]
since $Y / {K^{\times}}^2 \simeq \mathcal{O}_{K}^{\times} / {\mathcal{O}_{K}^{\times}}^2$. As in~\cite{BuhlerLenstraPomerance} we define the Minkowski constant $M_K$:
\begin{align*}
M_K \defeq \frac{d!}{d^d}\left(\frac{4}{\pi}\right)^s \sqrt{|\Delta_K|}
\leq \sqrt{|\Delta_K|}
\end{align*}
with the inequality following from $s \leq \lfloor\frac{d}{2}\rfloor$ and Stirling's approximation. From \cite[Chapter III, Proposition 8 and 14]{Lang} and the definition of polynomial discriminants, it is immediate that $\sqrt{\Delta_K} [\mathcal{O}_K : \Z[f_d\alpha]] = \sqrt{\Delta_g}$.
Now, from \cite[Theorem 6.5 and Remark]{Lenstra-Algo}:
\[
h \leq M_K . \frac{(d-1 + \log M_K)^{d-1}}{(d-1)!}
\]
Recall that $\log\left(|\Delta|\right) = \O(\log n)$ and $d = \o(\log n)$. Hence:
\begin{align*}
|H| &= [V :{K^{\times}}^2] \leq [\mathcal{O}_{K} : \Z[f_d\alpha]] h 2^{d-s} \\
&\leq [\mathcal{O}_{K} : \Z[f_d\alpha]]  \sqrt{|\Delta_K|} \frac{(d-1 + \log \sqrt{|\Delta_K|})^{d-1}}{(d-1)!} 2^{d-s} \\
&\leq \sqrt{|\Delta_g|}(d-1 + \log \sqrt{|\Delta_g|})^{d-1}d^{\O(d)}
\leq \sqrt{|\Delta_g|}\left(\log n \right)^{\O(d)}.\qedhere
\end{align*}
\end{proof}
We now finish proving Lemma~\refp{H-dim}. Since $\left(\log n\right)^{\O(d)} = n^{\o(1)}$, we use the above two results:
\[
|H| \leq n^{\delta\kappa + o\left(1\right)} f_d^{d(d-1)/2}
\]
Note that $f_d \leq L_n\left(\frac{2}{3}, \kappa\right)$ and that $d = \delta \log^{1/3}n (\log \log n)^{-1/3}$. Hence
\[
\log_2 |H| \leq \left(\delta\kappa+\o\left(1\right)\right)\log_2 n + \frac{\delta^2\kappa}{2\log2}\frac{\log^{4/3}n}{\log\log n^{1/3}}.
\]
Since $K^\times$ is commutative, any element of $H$ can be represented as a coset
$
h . \{z^2 : z \in K^\times\}.
$
Hence the square of any element of $H$ is in fact the identity element, since it is equivalent to $h^2 \{z^2 : z \in K\}$ and $h \in K^\times$. Thus $H$ is naturally an $\F_2$ vector space, and $v \in (K^\times)^2$ equivalent to $v\rightarrow 0$ under projection to $H$.
\end{proof}

\subsection{Characters over the number field}
We now discuss the construction of our characters $\chi_{\mathfrak{p}}$. Observe that quadratic characters on $\Z[\alpha]$ are well defined as maps from $H$, as they are multiplicative and so are trivial on any square in $\Z[\alpha]$. We restrict our attention to characters induced by the quadratic character on some finite field. We recall our previous discussion of the prime ideals, which allow us to characterise all of the maps from $\mathcal{O}_{\Q\left(\alpha\right)}$ to finite fields. In particular, on terms of the form $\left(a - \alpha b\right)$, such characters have the form:
\begin{equation}\label{eq:chi-def}
\left(a - \alpha b\right) \overset{\chi_\mathfrak{p}}{\longmapsto} \left(a - bX\right)^{\frac{1}{2}\left(r^k - 1\right)} \in \F_r[X]/\left(p_{\mathfrak{p}}\right) \simeq \F_{r^k}
\end{equation}
where $p_{\mathfrak{p}}$ is an irreducible polynomial of degree $k$ dividing  $f\left(x,1\right)\imod{r}$ exactly once. We note that as $\F_{r^k}^\times$ is cyclic, this map in fact sends every pair $\left(a,b\right)$ to $\pm 1$ or $0$, and is thus a quadratic character. Furthermore, we recall that $\mathfrak{p} \simeq \left(r, p_{\mathfrak{p}}\left(\alpha\right)\right)$ is a prime ideal of degree $k$ in $\mathcal{O}_{\Q\left(\alpha\right)}$ dividing $\left(r\right)$.

We note that in fact this representation of the character is computationally challenging, as it requires exponentiation. Instead, it is more convenient to observe that the above is:
\begin{equation}\label{eq:chi-def2}
\left(a - \alpha b\right) \overset{\chi_\mathfrak{p}}{\longmapsto} \left(\frac{a - bX}{p_{\mathfrak{p}}(X)}\right)
\end{equation}
where the right-hand side is the Legendre symbol over $\F_r[X]$.

It is natural to think of searching for $\mathfrak{p}$ by seeking to factorise $f\left(x,1\right) \imod{r}$ and examining the irreducible divisors. Given a set $\mathcal{F}$ of these $\chi_\mathfrak{p} = \chi_{r,s}$, we define
\[
\Psi_\mathcal{F} : H \rightarrow \F_2^{|\mathcal{F}|},\quad x \overset{\Psi_\mathcal{F}}{\longmapsto} \left(\chi_{r,s}\left(x\right) : \chi_{r,s} \in \mathcal{F}\right).
\]
We will produce a random set $\mathcal{F}$ such that almost surely $\ker\left(\Psi_\mathcal{F}\right)$ is small. 

\begin{lemma}\label{prime-sampling}
There is a sampleable distribution $\Upsilon$ for pairs $r,s$, such that $\chi_{r,s}$ is a character following~\refp{eq:chi-def}, such that for all but $\log \log n$ of the $h \in H$, considering $\chi_{r,s}$ as a map from $H$ to $\F_2$:
\[
\P_{\Upsilon}\left(\chi_{r,s}\left(h\right) = -1\right) \geq \frac{1+\o\left(1\right)}{2}.
\]
Sampling according to $\Upsilon$ takes at most $L_n\left(\frac{1}{3}, c\right)$ time for $c$ to be defined later. Furthermore, each character $\chi_{r,s}$ can be evaluated in time at most $L_n\left(\frac{1}{3}, \frac{c}{2}\right)$.
\end{lemma}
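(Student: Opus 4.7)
The plan is to define $\Upsilon$ as the uniform distribution on pairs $(r,s)$ with $r$ a prime in a dyadic range $[R, 2R]$ and $s \in \F_r$ a simple root of $f(x,1)\bmod r$, so that $\mathfrak{p} = (r, \alpha - s)$ is a first-degree prime ideal of $\mathcal{O}_K$ of norm $r$. The range $R$ needs to be large enough that effective Chebotarev equidistribution applies to the quadratic extensions $K(\sqrt{h})/K$ for $h \in H$, but small enough for cheap sampling; concretely I would take $\log R$ to be a suitable polynomial in $\log n$, e.g.\ the scale $d^4 = \delta^4 \log^{4/3} n (\log\log n)^{-4/3}$ already used in the algorithm specification. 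To sample, draw a uniform integer in $[R, 2R]$, test primality in polylog time, and use Cantor--Zassenhaus to extract a root of $f(x,1) \bmod r$; since $\deg f = d = \log^{1/3 + \o(1)} n$, this root-finding step runs in $(d \log R)^{\O(1)}$ operations in $\F_r$, which is bounded by $L_n(\tfrac{1}{3}, c)$ for some $c$ as small as desired. Evaluating $\chi_{r,s}$ on $a - b\alpha$ then amounts to the Legendre symbol $\left(\tfrac{a-bs}{r}\right)$, computable by quadratic reciprocity in time $L_n(\tfrac{1}{3}, c/2)$.

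For the equidistribution statement I would reduce to a Chebotarev density claim for quadratic extensions. Fix any non-identity $h \in H$, so that $h$ is not a square in $K^\times$. From the identification~\refp{eq:chi-def2}, $\chi_{\mathfrak{p}}(h) = 1$ iff $h$ is a square modulo $\mathfrak{p}$, iff $\mathfrak{p}$ splits in $K(\sqrt{h})/K$; hence the natural density of first-degree primes $\mathfrak{p}$ with $\chi_{\mathfrak{p}}(h) = -1$ is exactly $\tfrac{1}{2}$. To turn this into a probability bound of the form $\tfrac{1+\o(1)}{2}$ over $\Upsilon$, I would invoke an effective Chebotarev density theorem in the style of Lagarias--Odlyzko applied to each extension $K(\sqrt{h})/\Q$. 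Since $\log|\Delta_K|$ is controlled by Lemma~\refp{H-dim} and $\operatorname{disc}(K(\sqrt{h})/K)$ divides $4h\,\mathcal{O}_K$, choosing $\log R$ to be a sufficiently large multiple of the discriminant exponent gives the required bound for all $h$ whose associated $L$-function has no exceptional real zero near $s = 1$.

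The main obstacle is the possible existence of Siegel zeros for the Dedekind zeta functions of the $K(\sqrt{h})$, which would bias $\chi_{r,s}(h)$ away from $1/2$. I would address this by invoking Stark's effective lower bounds for $L$-functions, as signposted in the text preceding the lemma: standard applications of Stark's result show that at most a polylogarithmic number of quadratic extensions with bounded discriminant can have such a zero. Concretely I would argue that among the family $\{K(\sqrt{h}) : h \in H\}$ at most $\log\log n$ extensions possess a Siegel zero, and these $h$ form the exceptional set permitted by the lemma. For all other $h$, effective Chebotarev combined with the elementary density calculation above yields $\P_{\Upsilon}(\chi_{r,s}(h) = -1) \geq \tfrac{1}{2} + \o(1)$, which is the required bound. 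The delicate points will be to make the $\o(1)$ error truly uniform across the roughly $2^{|H|}$ non-trivial elements of $H$ and to verify that the Stark-based bound on bad extensions is indeed the claimed $\log\log n$ rather than some slightly weaker quantity; the rest is bookkeeping on extension discriminants and prime counts.
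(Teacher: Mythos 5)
Your overall plan — sample prime ideals of $K$, apply effective Chebotarev to the quadratic extensions $K(\sqrt{h})/K$, and combine Stark's lemma with a zero-repulsion argument to cut out a small exceptional set of $h$ — does track the paper's argument, and the reduction of $\chi_{r,s}$ to a Legendre symbol is the same. But there is a genuine gap in your estimate of the required prime-norm range, and it invalidates the claim that the sampling exponent $c$ can be made ``as small as desired.'' The threshold in Stark's lemma scales with $[L_h':\Q]$ for $L_h'$ the normal closure of $L_h = K(\sqrt{h})$: the zero descends to a quadratic subfield only when $\nu^{-1} > c(L_h)\log|\Delta_{L_h}|$, and here $c(L_h)$ is of order $2^d d!$ with $d = \delta(\log n/\log\log n)^{1/3}$. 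Since $\log(2^d d!) \sim d\log d \sim \tfrac{\delta}{3}(\log n)^{1/3}(\log\log n)^{2/3}$, this threshold is already $L_n(\tfrac13,\tfrac{\delta}{3}(1+\o(1)))$ — \emph{super}polynomial in $\log n$, not a polylogarithmic multiple of the discriminant exponent. Consequently, for $h$ outside the size-$\log\log n$ exceptional set, the best bound one gets on a potential Siegel zero is $\nu^{-1} \leq 2^{d+2}d!\log|\Delta_{L_h}| = L_n(\tfrac13,\tfrac{\delta}{3}(1+\o(1)))$, and since the Chebotarev error term contains an $x^{-\nu}$ factor, one must take $\log R = \boldsymbol{\omega}(\nu^{-1}) = L_n(\tfrac13,\varepsilon)$ with $\varepsilon > \delta/3$. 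Your proposed $\log R = d^4 \approx \log^{4/3}n$ also fails even the Lagarias--Odlyzko hypothesis $\log x \gtrsim d_L\log^2\Delta_L \sim \log^3 n$. So the sampling time is genuinely $L_n(\tfrac13,\Theta(\delta))$; the paper achieves $c = (4+\o(1))\varepsilon$.

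Two smaller points. First, the reduction to the quadratic subfield $F_h \subset L_h$ that underlies the exceptional-set count needs $F_h$ to be the \emph{unique} quadratic subfield, which the paper gets by insisting $d$ is odd (so that $K$ has no quadratic subfield and $F_h \cap K = \Q$); your sketch should state this. Second, you restrict to first-degree primes of $K$, which is tidier than the paper's sampler over all degrees but still does not automatically give the uniform distribution on which the $\tfrac12$-density rests: picking a uniform prime $r$ and then a uniform root of $f$ mod $r$ weights primes $\mathfrak{p}\mid(r)$ inversely by the number of roots, and without a rejection step this only pointwise dominates $\tfrac1d$ times the uniform distribution — exactly the loss the paper accepts to end up with $\P_{\Upsilon}(\chi_{r,s}(h)=-1) \geq \tfrac{1}{2d}(1+\o(1))$. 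You would also need an argument, uniform over the randomized $f$, that first-degree primes dominate in the range $[R,2R]$.
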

\begin{rem}
We will in fact achieve this unconditionally with $c = \frac{4}{3}\delta + \o(1)$. Conditional on GRH these $L_n(\frac{1}{3})$ bounds become polynomial in $\log n$. We observe that formal guarantees of this form are not present in the literature.

The heuristic notion, dating from Adleman~\cite{Adleman} is that we should be able to consider the various characters $\chi_{\mathfrak{p}}$ as uniformly distributed, \emph{independent} samples of the dual space of $H$, and so a small collection should suffice to distinguish any two elements of $H$. We will not show this here, but instead show the weaker notion above. This will still suffice to ensure that a small collection of samples will distinguish \emph{almost} all elements of $H$ from being trivial.
\end{rem}
\begin{proof}
Following an idea of Adleman~\cite{Adleman}, we will carefully study the behaviour of quadratic characters induced by primes of large norm.

Suppose we have $K$ a finite extension of $\Q$, and $L / K$ Galois, with $G = \operatorname{Gal}\left(L / K\right)$. Let $\Delta_L, \Delta_K$ be the absolute values of the discriminants of $L$ and $K$ respectively, and let $d_L$, $d_K$ be the degrees of $[L : \Q]$ and $[K : \Q]$ respectively. Given any prime $\mathfrak{p}$ in $K$ which is unramified in $L$, we define the \emph{Artin Symbol} $\left[\frac{L/K}{\mathfrak{p}}\right]$ to be the conjugacy class of the Frobenius automorphisms of $L / K$ corresponding to primes in $L$ dividing $\mathfrak{p}$. We define:
\[
\pi_{C}\left(x\right) = \left|\left\{\mathfrak{p} : \mathfrak{p}\textrm{ prime}, \mathbf{N}_K\left(\mathfrak{p}\right) < x, \left[\frac{L/K}{\mathfrak{p}}\right]\in C\right\}\right|,
\]

In the simplest case where $K = \Q$ and $L = \Q\left(\exp\left(\frac{2\pi i}{n}\right)\right)$ is a cyclotomic field, the Artin Symbol of any prime $p \in \N$, with $p \nmid n$ would correspond to the residue of $p$ modulo $n$.

We note the following theorem, which strengthens the celebrated Density Theorem of Chebotarev, which is itself a generalisation of the prime number theorem for arithmetic progressions.

\begin{fact}[The Unconditional Effective Chebotarev density theorem~\cite{LagariasOdlyzko, Serre}]
We have $L / K / \Q$ a sequence of extensions, with $L/K$ Galois, and retain the notation above. Let $C \subseteq G$ such that $gCg^{-1} = C  \;\forall g \in G$, i.e. $C$ is a union on conjugacy classes of $G$. Let $|\tilde{C}|$ be the number of conjugacy classes contained in $G$. Let $1-\nu$ be the Siegel zero of $\zeta_{L}$ if it exists, and 0 otherwise. Then there exists $c_1 > 0$ such that if $\log x \geq 10 d_L \log^2 \Delta_L$ then:
\begin{align}\label{eq:chebotarev}
\left|\pi_{G'}\left(x\right) - \frac{|C|}{|G|}\Li\left(x\right)\right| \leq \frac{|C|}{|G|}\Li\left(x^{1-\nu}\right) + \O\left(x|\tilde{C}|\exp\left(-c_1\sqrt{\frac{\log x}{d_L}}\right)\right).
\end{align}
\end{fact}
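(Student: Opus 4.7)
The plan is to follow the Lagarias--Odlyzko route: reduce to sums of Frobenius-twisted von Mangoldt functions, apply an explicit formula involving zeros of Artin $L$-functions, and insert effective bounds on those zeros coming from Stark's zero-free region for $\zeta_L$.

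First, by character orthogonality on $G$, the indicator of a conjugation-stable set $C$ decomposes as $\1_C(g) = (|C|/|G|)\sum_\chi \overline{\chi(C)}\chi(g)$, with $\chi$ ranging over the irreducible characters. So (up to the standard partial summation that converts $\psi$-style counts to $\pi$-style counts, and a negligible contribution from ramified primes) $\pi_C(x)$ is a $(|C|/|G|)$-weighted linear combination of the twisted counts $\psi_\chi(x)=\sum_{\mathfrak{p}^k:\, N\mathfrak{p}^k\leq x} \chi(\mathrm{Frob}_\mathfrak{p}^k)\log N\mathfrak{p}$, one per irreducible character.

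Second, for each $\chi$ use Brauer induction to express the Artin $L$-function $L(s,\chi,L/K)$ as a product of Hecke $L$-functions, which gives unconditional meromorphic continuation and a functional equation. Standard Mellin--Perron integration of $-L'/L$ along a truncated contour in the critical strip then yields the explicit formula
\[
\psi_\chi(x) = \delta_\chi\, x - \sum_{\rho} \frac{x^\rho}{\rho} + \O(\log^2(x\mathfrak{f}_\chi)),
\]
where $\delta_\chi$ is $1$ for the trivial character and $0$ otherwise, $\rho$ runs over nontrivial zeros of $L(s,\chi)$, and $\mathfrak{f}_\chi$ is the analytic conductor. The conductor-discriminant identity $\prod_\chi \mathfrak{f}_\chi^{\chi(1)} = \Delta_L/\Delta_K$ lets us control the collective analytic conductor purely in terms of $\Delta_L$ and $d_L$.

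Third, apply an effective zero-free region. Using Stark's theorem there is an absolute $c$ such that $\zeta_L(s)$ has no zero in $\sigma \geq 1 - c/(\log\Delta_L + d_L\log(|t|+2))$ except possibly a single real Siegel zero $\beta = 1 - \nu$. Because $\zeta_L = \prod_\chi L(s,\chi)^{\chi(1)}$, every zero of every $L(s,\chi,L/K)$ is a zero of $\zeta_L$, so the same region is zero-free for each $L(s,\chi)$. Standard zero-counting bounds $N(T,\chi) \ll \log\mathfrak{f}_\chi + \chi(1)d_K\log T$ together with the hypothesis $\log x \geq 10 d_L \log^2\Delta_L$ let us shift the contour deep enough that the combined contribution from non-Siegel zeros to each $\psi_\chi(x)$ is $\O(x\exp(-c_1\sqrt{\log x/d_L}))$, while the Siegel zero, if present, contributes $-x^{1-\nu}/(1-\nu)$ to exactly those $\chi$ for which $L(s,\chi)$ vanishes at $\beta$.

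Fourth, assemble. The trivial character supplies the main term $(|C|/|G|)\Li(x)$; the Siegel zero, when it exists, supplies at most $(|C|/|G|)\Li(x^{1-\nu})$ after undoing the partial summation; the remaining zero sums over irreducible characters are aggregated by the triangle inequality and Cauchy--Schwarz-type bounds $\sum_\chi |\chi(C)|/\chi(1) \ll |\tilde{C}|$, which yield the error $\O(x|\tilde{C}|\exp(-c_1\sqrt{\log x/d_L}))$. The main obstacle, and what distinguishes this unconditional effective version from Chebotarev's original density statement, is establishing the zero-free region with a constant that is \emph{uniform} as $\Delta_L$ and $d_L$ grow simultaneously; this requires Stark's refinement of the Deuring--Heilbronn phenomenon for the possible exceptional zero, which is also the only source of the $\Li(x^{1-\nu})$ term in the final bound.
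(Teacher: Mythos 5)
This statement is imported by the paper as a Fact, cited to Lagarias--Odlyzko and Serre, and is not proved internally; so the relevant comparison is with the published proof. Your outline has the right analytic skeleton (orthogonality to pass to character-twisted prime counts, an explicit formula, a zero-free region for $\zeta_L$ with at most one exceptional real zero, and the exceptional zero producing the $\Li\left(x^{1-\nu}\right)$ term), but it deviates from the actual argument at the reduction step, and that is where a genuine gap appears.

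Concretely, you run the explicit formula for every irreducible character of $G=\operatorname{Gal}(L/K)$ and invoke Brauer induction for analytic continuation. Unconditionally, Brauer induction only expresses each Artin $L$-function as a product of Hecke $L$-functions with integer exponents of \emph{both} signs, so $L(s,\chi)$ may have poles in the strip, the claim that ``every zero of $L(s,\chi)$ is a zero of $\zeta_L$'' requires an argument (cancellation between zeros and poles of different factors must be excluded), and --- more seriously for an \emph{effective} statement --- the explicit-formula remainder and the zero-counting bound $N(T,\chi)$ now carry the Brauer induction coefficients, which are not controlled in a way that yields an absolute constant $c_1$ and the stated $|\tilde{C}|$ dependence. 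The Lagarias--Odlyzko proof avoids all of this by Deuring's trick: fix $\sigma\in C$, pass to the fixed field $E=L^{\langle\sigma\rangle}$, so that $L/E$ is cyclic and the only $L$-functions needed are Hecke $L$-functions (entire for non-trivial characters, with conductor--discriminant control by $\Delta_L$, $d_L$); the count $\pi_C(x)$ for $L/K$ is then recovered from the cyclic case by an algebraic counting identity, and summing over the conjugacy classes composing $C$ is what produces the factor $|\tilde{C}|$ --- not a Cauchy--Schwarz bound on $\sum_\chi |\chi(C)|/\chi(1)$. A smaller attribution point: the zero-free region with at most one exceptional zero is the Landau-type lemma of Lagarias--Odlyzko; Stark's theorem enters to locate the possible exceptional zero (which is how this paper uses it, in Lemma~\ref{prob-zero-as-used}). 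If you replace the Brauer-induction step by the Deuring reduction, your sketch becomes essentially the standard proof.
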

For a hands on introduction to this topic, we recommend~\cite{murtyproblems}.
To continue the proof, we set $K = \Q\left(\alpha\right)$, and choose some $h \in \mathcal{O}_K$ of minimal norm representing a non-trivial element of $H$. We let $L = K\left(\sqrt{h}\right)$. Now $G = C_2$, $d_K = d$, $d_L = 2d$. We also note that in this case the value $\left[\frac{L/K}{\mathfrak{p}}\right]$ corresponds exactly to the action of the quadratic character $\chi_\mathfrak{p}$ induced by $\mathfrak{p}$ on $h$.

Now, we use Minkowski's bound on the minimum norm of an integral ideal:
\[
\mathbf{N}_{K/\Q}\left(h\right) \leq M_{K/\Q} = \sqrt{\Delta_{K/\Q}}\left(\frac{4}{\pi}\right)^{\frac{d}{2}}\frac{d!}{d^d} = n^{\delta\kappa\left(1+\o\left(1\right)\right)}.
\]
The relative discriminant $\Delta_{L/K}$ is the norm of the different $\delta_{L/K}$ of the extension. By construction, this ideal is generated by $2h$, and so is an integral ideal~\cite[Chapter III, Proposition 2 and Corollary]{Lang}. Hence we obtain:
\begin{equation}\label{Delta-L-bound}
\Delta_{L/\Q} \leq \mathbf{N}_{K/\Q}\left(2h\right) \Delta_{K/\Q}^2 \leq n^{\left(5+\o\left(1\right)\right)\delta \kappa}.
\end{equation}
We apply~\refp{eq:chebotarev} to the extension $L/K$, noting that it is of degree 2. We obtain that for $\mathfrak{p}$ chosen uniformly randomly with $\mathbf{N}\mathfrak{p} \leq x$:
\begin{align}\label{chebotarev-application}
\left|\P\left(\chi_\mathfrak{p}\left(h\right) = 1\right) - \frac{1}{2}\right| < x^{-\nu(1 + \o(1))} + \O\left(2\log x\exp\left(-c_1\sqrt{\frac{\log x}{d_L}}\right)\right).
\end{align}
We wish to ensure that $\P\left(\chi_\mathfrak{p}\left(h\right) = 1\right) = \frac{1}{2} + \o\left(1\right)$, and so it suffices for us to insist that:
\begin{equation}\label{lower-bound-logx}
\log x = \boldsymbol{\omega}\left(d_L (\log \log x)^2\right),
\text{ and additionally } \log x = \boldsymbol{\omega}\left(\nu^{-1}\right)\text{ if }\zeta_L\text{ has a Siegel zero}
\end{equation}
Note that we \emph{do not} sample from a uniform distribution over characters of bounded norm; we will sample from a distribution which is close enough to being uniform that we can extract useful bounds.

\begin{definition}
For a field $K$ and $h$ a minimal norm representative of an element of $H$, we define $L_h = K\left(\sqrt{h}\right)$. For $\varepsilon > 0$ we define the \emph{exceptional set}:
\[
E_{K,\varepsilon} = \left\{h.\{z^2 : z \in K^\times\}\in H \text{ s.t.} \exists \nu \text{ s.t. }\zeta_{L_h}(1-\nu) = 0, \nu^{-1} > L_n\left(\frac{1}{3}, \varepsilon\right)\right\}
\]
Note that the field $L_h$ is independent of the choice of representative $h$ for the element of $H$.
\end{definition}
The exceptional set is the subset of $H$ which cannot be reliably distinguished from $0$ by characters induced by primes of size $\exp\left(L_n\left(\frac{1}{3}, \varepsilon\right)\right)$; if there is a Siegel zero of this form then it is possible that almost every prime of this size induces a character which vanishes on some element of $H$. We state the following Lemma which we will prove later.
\begin{lemma}\label{prob-zero-as-used} Suppose that $K = \Q\left(\alpha\right)$ is a number field where $\alpha$ is a root of a irreducible $f = \hat{f} + \left(x-m\right)R$ where $R$ is uniformly random. Then for $\varepsilon = \left(\frac{1}{3} + \o\left(1\right)\right)\delta$,
\[
\P_f\left(\left|E_{K, \varepsilon}\right| > \frac{4}{3}\log \log n\right) \leq L_n\left(\frac{2}{3}, \frac{\kappa-\delta^{-1}}{3}\left(1+\o\left(1\right)\right)\right)^{-1}.
\]
\end{lemma}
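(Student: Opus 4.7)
The plan is to apply Stark's effective bound for Siegel zeros of Dedekind zeta functions together with a Heilbronn--Landau-type counting argument, and then to leverage the randomness of $R$ to refine the resulting deterministic estimate.

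For each $h \in H$, I would factor $\zeta_{L_h}(s) = \zeta_K(s) L(s, \chi_h)$, where $\chi_h$ is the quadratic Hecke character of $K$ associated with the extension $L_h/K$. Using the discriminant bound~\refp{Delta-L-bound}, $\log|\Delta_{L_h}| \leq (5 + \o(1))\delta\kappa \log n$. Stark's theorem then asserts that $\zeta_{L_h}$ has at most one real zero in $(1 - c/\log|\Delta_{L_h}|, 1)$ for some effective $c > 0$; since for our choice $\varepsilon = (1/3 + \o(1))\delta$ the inequality $L_n(1/3, \varepsilon)^{-1} \ll c/\log|\Delta_{L_h}|$ holds, any Siegel zero of $L(s, \chi_h)$ at the prescribed depth is automatically this unique Stark-exceptional zero, so a Siegel zero at the prescribed depth of $\zeta_K$ itself contributes at most one element uniformly across $h$.

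Next I would apply the Heilbronn--Landau combining trick to the collection $\{\chi_h\}_{h \in H}$: if $\chi_{h_1}$ and $\chi_{h_2}$ both admit Siegel zeros at depth exceeding $L_n(1/3, \varepsilon)$, examining the Dedekind zeta of the biquadratic compositum $K(\sqrt{h_1}, \sqrt{h_2})$ together with the identity $\chi_{h_1}\chi_{h_2} = \chi_{h_1 h_2}$ yields inequalities that cannot be jointly satisfied by too many characters. An unconditional log-free zero count for real Hecke characters then gives that the number of distinct quadratic extensions $L_h/K$ carrying a Siegel zero of the prescribed depth is $\boldsymbol{O}(\log \log |\Delta_K|) = \boldsymbol{O}(\log \log n)$, yielding a deterministic bound $|E_{K, \varepsilon}| \leq C \log \log n$ for some absolute $C$, uniformly in $f$.

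To sharpen this to $\frac{4}{3}\log \log n$ on all but an exceptional set of $f$, I would exploit the randomness of the coefficients of $R$ (see~\refp{c-definition}) in a manner paralleling Lemma~\refp{f-reducible-probability}. The configurations of $K = \Q(\alpha)$ for which strictly more than $\frac{4}{3}\log \log n$ elements of $H$ yield extensions with Siegel zeros are cut out by explicit algebraic conditions on $R$ (conditions on the factorisation behaviour of $f$ modulo the small moduli governing the relevant characters). A Tur\'an-sieve style argument, bounding the expected size of $|E_{K, \varepsilon}|$ together with second-moment bounds as $R$ varies over $\I(2L_n(2/3, \kappa - \delta^{-1}))^d$, should yield the claimed failure probability $L_n(2/3, (\kappa - \delta^{-1})/3)^{-1}$, matching the shape of the reducibility estimate.

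The main obstacle is the unconditional Heilbronn step: pinning down an $\boldsymbol{O}(\log \log n)$ bound on the number of distinct quadratic Hecke characters of $K$ with Siegel zeros deeper than $L_n(1/3, \varepsilon)^{-1}$ without recourse to GRH. This requires carefully tracking the behaviour of Dedekind zetas of compositums of quadratic extensions of $K$ and the interaction of their zeros, in particular obtaining effective bounds on the Artin $L$-functions arising in the factorisation, including the dependence on degree and discriminant at our scale $d = \delta\log^{1/3} n \,(\log \log n)^{-1/3}$. The subsequent probabilistic refinement is also delicate, since the structure of $H$ itself depends on the random $f$, so the averaging argument must accommodate the joint variation of $H$ and the associated family $\{\chi_h\}$.
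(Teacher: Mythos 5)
Your proposal diverges from the paper's argument in two structural ways, and the second constitutes a genuine gap.

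First, you have misidentified the source of the failure probability $L_n(2/3, (\kappa-\delta^{-1})/3)^{-1}$. The paper's bound $|E_{K,\varepsilon}| \leq \frac{4}{3}\log\log n$ is \emph{deterministic} once $f$ is irreducible; the probability in the lemma's statement is nothing more than the probability that $f$ is reducible (Lemma~\ref{f-reducible-probability}), which is why it "matches the shape of the reducibility estimate" exactly. Your proposed Tur\'an-sieve second-moment argument over the coefficients of $R$ is not needed, and would in fact be substantially more delicate than the paper's route, precisely because (as you note) $H$ itself depends on $f$ so that the family $\{\chi_h\}$ varies with $R$.

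Second, and more seriously, you do not exploit the reduction from number-field $L$-functions to Dirichlet $L$-functions over $\Q$. The paper's argument hinges on the constraint that $d = [K:\Q]$ is \emph{odd} (a standing hypothesis of the Randomised NFS imposed explicitly for this lemma): because $K$ has no quadratic subfield, $L_h = K(\sqrt{h})$ has a \emph{unique} quadratic subfield $F_h = \Q(\sqrt{s_h})$, and distinct classes $h$ give distinct $F_h$. Stark's result (Corollary~\ref{starkcor}, after bounding $[L_h':\Q] \leq 2^d d!$) then locates any sufficiently deep Siegel zero of $\zeta_{L_h}$ as a zero of $\zeta_{F_h} = \zeta \cdot \mathcal{L}(\cdot, \chi_{\Delta_{F_h}})$, i.e.\ of an ordinary quadratic Dirichlet $L$-function. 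Landau's dichotomy (Fact~\ref{landau-L}) says at most one such character with conductor in any multiplicative interval $[q, q^e]$ can carry an exceptional zero of the prescribed depth, and covering the range of possible discriminants $[2, L_n(4/3, (5/4+\o(1))\delta\kappa)]$ requires precisely $\frac{4}{3}\log\log n \,(1+\o(1))$ such intervals, giving the stated constant. Your Heilbronn-style combining argument on Hecke characters of $K$ via biquadratic compositums produces at most a bound of absolute-constant shape (the classical trick excludes more than a few simultaneous exceptional characters), and you supply no mechanism that would produce the $\log\log n$ count, let alone the specific constant $\frac{4}{3}$. The "unconditional log-free zero count for real Hecke characters" you invoke is not justified and is, as you yourself flag, the crux; the paper avoids needing any such input by descending to $\Q$.

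In short: the randomness of $R$ enters only through irreducibility of $f$; the quantitative count of exceptional $h$ is a deterministic consequence of Stark plus Landau once the odd-degree reduction to quadratic subfields of $L_h$ is in hand, and that reduction is the missing idea in your proposal.
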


\begin{rem}
The proof of this lemma will be based on the sparseness of Siegel zeros of zeta functions associated to the extensions $L_h / K$. Then for \emph{most} $f$, at most $\frac{4}{3}\log \log n$ elements of $H$ cannot be distinguished from $0$, and so we can use brute force to find a pair of polynomials mapping to the same element in $H$ without altering the $L_n(\frac{1}{3})$ run time. Then their product must be trivial in $H$ and thus gives a congruence of squares. To obtain an $L_n(\frac{1}{3})$ run time it would suffice to prove the above statement with the $\log \log n$ replaced by any $L_n(\frac{1}{3}, \o(1))$ and the $L_n(\frac{2}{3})$ with any $L_n(\frac{1}{3}, \omega(1))$.
\end{rem}

Given this claim, we have an $x$ satisfying~\refp{lower-bound-logx} for all but $\frac{4}{3}\log \log n$ of the $h \in H$ and with $\log x < L_n\left(\frac{1}{3}, \varepsilon\right)$ for all but a $L_n\left(\frac{2}{3}\right)^{-1}$ fraction of our polynomials $f$. As we will only examine $L_n\left(\frac{1}{3}\right)$ polynomials $f$, we may simply choose to fail on this exceptional set of $f$ and will still guarantee that we fail with probability $\o(1)$.

Any prime $\mathfrak{p}$ with $\mathbf{N}\left(\mathfrak{p}\right) < x$ must divide a prime $p$ with $p < x$, and if $\mathfrak{p}$ is of degree $k$, then $p < \sqrt[k]{x}$. Furthermore, each $k$\textsuperscript{th} degree prime dividing $p$ corresponds to a simple degree $k$ divisor of $f$ modulo $p$.
We present an algorithm to sample $\Upsilon$. This will output ideals, most of which are prime.

\begin{enumerate}
\item[] \textsc{IdealSampler}($f$)
\item Uniformly randomly choose a degree bound $k \in [d]$.
\item Choose a uniformly random integer $r \in \left(x^{\left(k+1\right)^{-1}}, x^{k^{-1}}\right]$.
\item Use the Miller-Rabin primality test to discard composite $r$ with probability $1 - \O\left(\log^{-2}x\right)$. This takes time $\O\left(\log^3 x \log \log x\right)$. With probability at least $\boldsymbol{\Omega}\left(\left(\log x\right)^{-1}\right)$ it will occur that $r$ is prime, and so any $r$ produced at this stage is prime with probability $1-\O\left(d\left(\log x\right)^{-1}\right)$. For the purposes of exposition of the algorithm, we will assume that all the $r$ are prime.
\item Factor $f \mod\left(r\right)$ in time $\O\left(\left(d\log x\right)^3\right)$~\cite{GathenPanario}, and find the collection of irreducible and unrepeated factors $s_i$ of degree at most $k$. Observe that the factors $s_i$ correspond to primes in the number field of norm at most $x$ dividing $\left(r\right)$. For such an $r$, we have at most $d$ primes $s_i$.
\item If we find $j$ factors of degree at most $k$, we take $s$ to be \emph{one} of them uniformly at random with probability $jd^{-1}$. Otherwise return to step 1
\item Output the pair $r,s$.
\end{enumerate}
\begin{rem} To ultimately obtain the run time bounds which we need, we need the run time of \textsc{IdealSampler} to be at most $\O(\log^4 x)$. In particular, we will find that $\log x = L\left(\frac{1}{3}\right)$. This prevents using AKS-style deterministic primality testers~\cite{AKS}, and so we have to permit a small probability that $r$ is not prime.
\end{rem}
\begin{rem}
Note that if $r$ is not prime, then the factorisation of step 4 may fail; if this occurs we return to step 1. If we do obtain a character from a non-prime $r$, we observe that it is still quadratic and therefore vanishes on the squares as required. Since we obtain at most one character from each sampled $r$, and are guaranteed to find a character if $k = d$ and $r$ is chosen to be prime, the fraction of the characters which are not induced by primes is $\o\left(1\right)$. We will absorb this error term into our estimates of the probability that some $h$ is distinguished from $0$.
\end{rem}

To finish the proof of Lemma~\refp{prime-sampling}, we need to show that this algorithm is fast, the characters $\chi_{r,s}$ can be evaluated quickly and that they are sufficiently uniform that the bounds of Equation~\refp{chebotarev-application} give the bounds we need.

\begin{claim}
The expected time taken to sample $(r,s) \sim \Upsilon$ as above is at most $L_n\left(\frac{1}{3}, \left(4 + \o(1)\right)\varepsilon\right)$
\end{claim}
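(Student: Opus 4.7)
The plan is to bound separately the cost of a single iteration of \textsc{IdealSampler} and the expected number of iterations before an output is produced, and then multiply. The dominant per-iteration costs are the Miller--Rabin test at $\O(\log^3 x \log \log x)$ and the factorisation of $f \imod r$ at $\O((d\log x)^3)$; all other bookkeeping is cheaper. Since $d = \log^{1/3 + \o(1)} n$ is absorbed into the $\o(1)$ of the exponent, one iteration costs at most $L_n\!\left(\tfrac{1}{3}, 3\varepsilon\right)$ in expectation.

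For the success probability I would lower-bound each stage. With probability $1/d$ the outer loop selects $k = d$; conditional on that, the prime number theorem on the interval $(x^{1/(d+1)}, x^{1/d}]$ guarantees that a uniformly chosen $r$ from that range is prime with probability $\boldsymbol{\Omega}(1/\log x)$ (this is what the Miller--Rabin step effectively tests). When $k = d$ and $r$ is prime, \emph{every} irreducible factor of $f \imod r$ has degree at most $d$, so the set of candidate factors is non-empty and step~5 outputs a pair with probability at least $1/d$. Multiplying these independent-in-stages contributions gives a per-iteration success probability of $\boldsymbol{\Omega}\!\left(1/(d^2 \log x)\right)$, hence the expected number of iterations is $\O(d^2 \log x)$, again an $L_n(\tfrac{1}{3}, \varepsilon + \o(1))$ factor.

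Combining, the total expected run time is at most
\[
\O\!\left(d^2 \log x\right) \cdot \O\!\left((d \log x)^3 + \log^3 x \log \log x\right)
 \;=\; \O\!\left(d^5 \log^4 x\right) \;=\; L_n\!\left(\tfrac{1}{3}, 4\varepsilon + \o(1)\right),
\]
which is the required bound $L_n\!\left(\tfrac{1}{3}, (4+\o(1))\varepsilon\right)$. The only subtle point is the handling of composite $r$ that slip through Miller--Rabin with probability $\O(\log^{-2} x)$: such $r$ either cause the factoring subroutine to produce a non-sensible output (in which case we restart, adding a negligible constant factor to the iteration count) or yield a quadratic character induced by a non-prime modulus, which is still multiplicative and annihilates squares, so it contributes only an $\o(1)$ correction to the distributional guarantees of $\Upsilon$ and does not affect the run-time estimate. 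The main obstacle, such as it is, lies not in any deep estimate but in ensuring that the $\log$-powers accumulated across Miller--Rabin, polynomial factorisation over $\F_r$ via \cite{GathenPanario}, and the geometric-series bound on restarts all cleanly collapse into the stated $4\varepsilon$ exponent; once one checks that $d$ itself is a $(\log \log n)^{\O(1)}$-power of the inner logarithm it is clear that all such factors are absorbed.
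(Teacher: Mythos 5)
Your proof matches the paper's approach exactly: you bound the per-iteration cost by the $\O\left(\left(d\log x\right)^3\right)$ factorisation step, bound the expected number of iterations by $\O\left(d^2 \log x\right)$ via the same stage-by-stage success probabilities ($1/d$ for $k=d$, $\boldsymbol{\Omega}\left(1/\log x\right)$ for primality, $\boldsymbol{\Omega}\left(1/d\right)$ for step 5), and multiply to obtain $\O\left(d^5\log^4 x\right) = L_n\left(\frac{1}{3}, (4+\o(1))\varepsilon\right)$. The extra remarks about composite $r$ slipping through Miller--Rabin are not strictly needed for this time bound (the paper addresses that point in a separate remark about the validity of the resulting characters), but they do not detract from the argument.
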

\begin{proof}
We note that each attempt from the start of the algorithm takes time $\O\left(\left(d\log x\right)^3\right)$, with the fourth step being slowest.

We are guaranteed to find a factor if our degree bound $k$ is $d$ (a probability $1/d$ event), the integer $r$ is prime (a probability $\boldsymbol{\Omega}(1/\log x)$ event), and we successfully take an ideal in step five (a probability $\boldsymbol{\Omega}(1/d)$ event if $k = d$). Hence the number of attempts needed to output a prime is bounded in expectation by $\O\left(d^2 \log x\right)$.

Hence the time taken to find an ideal is bounded in expectation by $\O(d^5 \log^4 x) = L_n\left(\frac{1}{3}, \left(4 + \o(1)\right)\varepsilon\right)$.
\end{proof}

\begin{claim}
For any fixed $h$, $\P_{\Upsilon}\left(\chi_{r,s}\left(h\right) = -1\right) \geq \frac{1}{2d}\left(1+\o\left(1\right)\right)$
\end{claim}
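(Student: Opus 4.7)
The plan is to lower bound $\P_\Upsilon(\chi_{r,s}(h) = -1)$ by restricting attention to outputs arising from the smallest prime bucket $I_d := (x^{1/(d+1)}, x^{1/d}]$, which corresponds to choosing $k = d$ in step 1 of \textsc{IdealSampler}. The argument will show that bucket $d$ receives at least a $(1+\o(1))/d$ fraction of the final output mass, while within it the effective Chebotarev density theorem guarantees $\chi_\mathfrak{p}(h) = -1$ with probability at least $1/2 + \o(1)$.

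First I would analyze the per-attempt output distribution by tracing steps 1--5 of the sampler. Treating $r$ as uniform over primes in $I_{k'}$ (justified by the remark about Miller-Rabin immediately following the algorithm), a prime ideal $\mathfrak{p}$ of $\mathcal{O}_K$ identified with $(r,s)$ is output on a single attempt with probability $p_\mathfrak{p} = 1/(d^2 |I_{k'(r)}|)$, where $k'(r)$ is the bucket containing $r$ and the final $1/d$ reflects that each of the $j$ eligible factors of $f \bmod r$ is selected with probability $(j/d)\cdot(1/j) = 1/d$, independently of $j$. Because failed attempts are retried, the final output distribution is proportional to $p_\mathfrak{p}$; within any single bucket all reachable ideals have equal probability. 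Using Landau's prime ideal theorem for $K$ to estimate the number of prime ideals lying over primes in $I_{k'}$ (dominated by first-degree contributions), the per-attempt mass in bucket $k'$ is approximately $k'/(d^2 \log x)$, so the normalized weight of bucket $d$ satisfies $w_d \geq (1+\o(1))/d$.

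Second I would apply the effective Chebotarev density theorem~\refp{eq:chebotarev} to the quadratic extension $L_h/K$ with $L_h = K(\sqrt h)$ and non-identity Artin class $\{-1\}$. The discriminant bound $|\Delta_{L_h/\Q}| \leq n^{(5+\o(1))\delta\kappa}$ from~\refp{Delta-L-bound}, combined with the assumption $h \notin E_{K,\varepsilon}$ which rules out Siegel zeros closer than $L_n(1/3,\varepsilon)^{-1}$, yields that for $y = x^{1/d}$ the proportion of prime ideals of $\mathcal{O}_K$ with norm at most $y$ and $\chi_\mathfrak{p}(h) = -1$ is $1/2 + \o(1)$. Verifying the hypotheses of Chebotarev requires both $\log y \geq 20 d \log^2 \Delta_{L_h/\Q}$ and $y^{-\nu} = \o(1)$; both are satisfied by the choice $\log x = L_n(1/3,\varepsilon')$ with $\varepsilon' = (1/3+\o(1))\delta$, which pins down the sampling-time constant $c$.

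Combining, $\P_\Upsilon(\chi_{r,s}(h) = -1) \geq w_d \cdot \P(\chi_\mathfrak{p}(h) = -1 \mid \text{output in bucket } d) \geq (1+\o(1))/(2d)$, as claimed. The principal obstacle will be establishing Chebotarev uniformly at the smallest bucket, where the effective norm bound $y = x^{1/d}$ is smallest and the error terms become tightest; this case dictates the required size of $\log x$ and hence the value of $c$ in the lemma. Lower-order contributions from higher-degree primes and $\o(1)$ errors from occasional Miller-Rabin false positives are absorbed into error terms throughout.
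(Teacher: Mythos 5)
Your proposal takes a genuinely different route from the paper: the paper asserts that $\Upsilon$ pointwise dominates $d^{-1}$ times the uniform distribution over all prime ideals of norm at most $x$ and applies Chebotarev to that uniform distribution, whereas you isolate bucket $d$ and apply Chebotarev there. The second step of your argument, however, has a gap that is visible already in the first paragraph.

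The parenthetical ``(dominated by first-degree contributions)'' misapplies Landau's theorem, and this is what ultimately breaks the Chebotarev step. Dominance of degree-one primes is a statement about prime ideals counted \emph{by norm}: the degree-$j$ primes of norm at most $X$ lie over rational primes $r \leq X^{1/j}$, a much smaller set for $j \geq 2$. Bucket $d$ is not cut out by a norm bound. It is the set of \emph{all} prime ideals of $\mathcal{O}_K$ lying over a rational prime $r \in I_d = (x^{1/(d+1)}, x^{1/d}]$, of every degree $j \leq d$, and the conditional output distribution on bucket $d$ is uniform over this set. For a typical $r \in I_d$ the factorisation of $f \bmod r$ has about $1/j$ irreducible factors of each degree $j$ (Chebotarev in the splitting field of $f$ together with the cycle statistics of random permutations), so each $r$ contributes about $\sum_{j \leq d} 1/j \sim \log d$ prime ideals, of which only about one is first-degree. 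Degree-one primes are therefore only an $\O(1/\log d)$ fraction of bucket $d$, and the remaining primes have norms $r^j$ spread all the way up to $x$, outside the range $\leq x^{1/d}$ to which you apply the effective Chebotarev theorem. As stated, that theorem does not control the Artin symbol of the bulk of bucket $d$, so the inference $\P\left(\chi_\mathfrak{p}(h) = -1 \mid \text{bucket } d\right) \geq \tfrac{1}{2} + \o(1)$ is not justified. Using only what the Chebotarev application actually delivers (the degree-one sub-population) would yield $\P_\Upsilon\left(\chi_{r,s}(h) = -1\right) \geq (1+\o(1))/(2d\log d)$, a factor $\log d$ short of the claim. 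Closing the gap requires either a Chebotarev-type count that simultaneously constrains the degree of $\mathfrak{p}$ in $K/\Q$ and its Frobenius in $L_h/K$ (working in a normal closure over $\Q$), or, more simply, aggregating the degree-one contribution across \emph{all} buckets rather than isolating bucket $d$, which already recovers a bound of at least the claimed order.
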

\begin{proof}
The distribution of primes $\mathfrak{p}$ generated is uniform over $\mathfrak{p} \mid \left(r\right)$ for $r \in \left(x^{\left(k+1\right)^{-1}}, x^{k^{-1}}\right]$ of degree at most $k$. This property also trivially holds for a uniform distribution over primes of norm $\leq x$. Thus the difference between $\Upsilon$ and a uniform distribution over primes of norm $\leq x$ is the distribution of the degree of these primes. 

The probability that $\Upsilon$ samples $\mathfrak{p}$ with $\N(p) \leq x$ and $\mathfrak{p} \mid (r)$ for $r$ in each of these intervals is $\frac{1}{d}$. Hence $\Upsilon$ pointwise dominates $d^{-1}$ times the uniform distribution over all primes of norm below $x$.

Then $\P_{\Upsilon}\left(\chi_{r,s}\left(h\right) = -1\right) \geq \frac{1}{d}\P_{N(\mathfrak{p}) \leq x}\left(\chi_{\mathfrak{p}}\left(h\right) = -1\right) = \frac{1}{2d}\left(1+\o\left(1\right)\right)$.
\end{proof}

\begin{claim}
Evaluating the character $\chi_{r, s}$ associated with the ideal $\mathfrak{p} \simeq (r, s)$ sampled as above on a term $a-b\alpha$ takes time at most $L_n\left(\frac{1}{3}, \left(2 + \o\left(1\right)\right)\varepsilon\right)$.
\end{claim}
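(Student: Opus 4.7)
The approach is a direct complexity analysis of the exponentiation formula in Equation~\refp{eq:chi-def}. Recall that
\[
\chi_{r,s}(a - b\alpha) = (a - bX)^{(r^k - 1)/2} \mod p_{\mathfrak{p}}(X) \in \F_r[X]/(p_{\mathfrak{p}}),
\]
where $p_{\mathfrak{p}}$ has degree $k \leq d$ and $\mathbf{N}(\mathfrak{p}) = r^k \leq x$, so by construction $\log(r^k) \leq \log x = L_n(\tfrac{1}{3}, \varepsilon)$. Evaluation therefore reduces to an exponentiation in a finite field of size $r^k$, computed by repeated squaring.

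First I would bound the cost of a single multiplication in $\F_r[X]/(p_{\mathfrak{p}})$: this amounts to multiplying two polynomials of degree $<k$ over $\F_r$ and then reducing modulo $p_{\mathfrak{p}}$. Using FFT-based polynomial arithmetic (e.g.\ Sch\"onhage--Strassen) over $\F_r$, each polynomial multiplication costs $\tilde{\O}(k \log r)$ bit operations, and the Euclidean reduction modulo the fixed polynomial $p_{\mathfrak{p}}$ is of the same order. Hence one multiplication in the quotient costs $(k \log r)^{1 + \o(1)}$ bit operations.

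Next, square-and-multiply carries out $\O(\log r^k) = \O(k \log r)$ such multiplications to compute $(a - bX)^{(r^k-1)/2}$. Multiplying through yields a total cost of
\[
\O(k \log r) \cdot (k \log r)^{1+\o(1)} = (k \log r)^{2 + \o(1)} = \log^{2 + \o(1)} x = L_n\left(\tfrac{1}{3}, (2 + \o(1))\varepsilon\right),
\]
as claimed. The comparison with $\chi_{r,s}(a - b\alpha) = 1$ or $-1$ (or $0$) in the quotient ring is then instantaneous. The only routine care needed is to confirm that the $\o(1)$ from fast polynomial arithmetic is absorbed by the $\o(1)$ in the exponent of $L_n(\tfrac{1}{3})$, which follows since $\log \log x = \o(\log x)$. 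An alternative route via the polynomial Jacobi symbol in Equation~\refp{eq:chi-def2}, computed by a subquadratic half-GCD algorithm, gives the same complexity; neither presents a genuine obstacle.
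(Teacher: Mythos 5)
Your argument is correct, but it follows a genuinely different path from the paper's. You compute $\chi_{r,s}(a-b\alpha)$ directly from the exponentiation $(a-bX)^{(r^k-1)/2}\imod{p_{\mathfrak{p}}}$, carrying out $\O(k\log r)$ square-and-multiply steps in $\F_r[X]/(p_{\mathfrak{p}})$, each costing $(k\log r)^{1+\o(1)}$ bit operations via FFT or Kronecker-substitution polynomial arithmetic, for a total of $(k\log r)^{2+\o(1)}\leq(\log x)^{2+\o(1)}=L_n\left(\frac{1}{3},(2+\o(1))\varepsilon\right)$. The paper explicitly sets that representation aside as ``computationally challenging, as it requires exponentiation,'' and instead applies Artin's law of quadratic reciprocity in $\F_r[X]$ to the Legendre-symbol form, reducing the evaluation to two integer Legendre symbols modulo $r$, a Horner evaluation of $s$ at $ab^{-1}\imod{r}$, and an extended Euclidean inversion --- all bounded using only schoolbook (add-and-double) arithmetic modulo $r$, giving $\O(d\log^2 r)=L_n\left(\frac{1}{3},(2+\o(1))\varepsilon\right)$. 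The trade-off is instructive: your route is more direct and imports no reciprocity, but is essentially forced to rely on sub-quadratic polynomial multiplication, since naive arithmetic in $\F_r[X]/(p_{\mathfrak{p}})$ would give $\O\left((k\log r)^3\right)=L_n\left(\frac{1}{3},(3+\o(1))\varepsilon\right)$, which would inflate the constant $c$ feeding into the proof of Theorem~\ref{NFS-characters-uncond} and degrade the stated run time; the paper's route avoids arithmetic in $\F_{r^k}$ entirely and works with elementary integer operations, at the cost of invoking function-field reciprocity. Both land on the same final bound because the polylogarithmic overheads are absorbed into the $\o(1)$ in the exponent, given $\log x = L_n\left(\frac{1}{3},\varepsilon\right)$.
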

\begin{remark}
The following proof is somewhat technical in that the logarithms of the numbers of interest are large. Hence we have to quite precisely track which arithmetic operations are used. The reduction to Legendre symbols is of great use, as it allows us to avoid doing arithmetic in $\F_r^k$.
\end{remark}
\begin{proof}
We note that if $r = 2$, then the character is identically $1$ as all elements of the field are squares. Hence we assume $r > 2$.
For any polynomial $P \in \F_r[X]$, let $|P| = r^{\operatorname{deg}(P)}$. We recall from Equation~\refp{eq:chi-def2} that:
\[
\chi_{r, s}(a-b\alpha) = \left(\frac{a-bX}{s(X)}\right)
\]
where the RHS is the Legendre symbol over $\F_r[T]$. We first note that for any constant $c$, we can reduce the calculation to finding a Legendre symbol mod $r$:
\begin{equation}\label{eq:fnfieldrecip}
\left(\frac{c}{P}\right) = c^{\frac{|P|-1}{2}} = \left(\frac{c}{r}\right)^{\frac{r^k - 1}{r-1}} = \left(\frac{c}{r}\right)^{k}
\end{equation}
We draw attention to the law of quadratic reciprocity in function fields, introduced initially in~\cite{ArtinQR} and discussed at length in \cite[Chapter 3]{rosen}. For any two relatively prime monic irreducible polynomials over $\F_r$:
\[
\left(\frac{P}{Q}\right)\left(\frac{Q}{P}\right) = (-1)^{\frac{|P| - 1}{2}\frac{|Q| - 1}{2}},
\]
Hence:
\begin{align*}
\chi_{r, s}(a-b\alpha) &= \left(\frac{-b}{r}\right)^{k}\left(\frac{X-ab^{-1}}{s(X)}\right) 
=\left(\frac{-b}{r}\right)^{k}(-1)^{\frac{r-1}{2}\frac{r^{\operatorname{deg}(s)}-1}{2}} \left(\frac{s(X)}{X-ab^{-1}}\right) \\
&=\left(\frac{-b}{r}\right)^{k}(-1)^{\frac{r-1}{2}\frac{r^{\operatorname{deg}(s)}-1}{2}} \left(\frac{s(ab^{-1})}{X-ab^{-1}}\right)
=\left(\frac{-b}{r}\right)^{k}(-1)^{\frac{r-1}{2}\frac{r^{\operatorname{deg}(s)}-1}{2}} \left(\frac{s(ab^{-1})}{r}\right),
\end{align*}
with the last equality following from Equation~\ref{eq:fnfieldrecip}. Parities of $\frac{r-1}{2}$ and $\frac{r^{\operatorname{deg}(s)}-1}{2}$ can be easily computed. Hence to compute $\chi_{r, s}(a-b\alpha)$ it suffices to compute $s(ab^{-1})$ and two Legendre symbols modulo $r$. By use of reciprocity over $\Q$, we can compute a Legendre symbol modulo $r$ in $\O(\log r)$ additions or subtractions of numbers of size at most $r$.

To compute $b^{-1} \imod{r}$ requires the Extended Euclidean algorithm to be run, which requires $\O(\log r)$ additions of numbers of size at most $r$. To compute $ab^{-1} \imod{r}$ requires one multiplication. To compute $s(ab^{-1}) \imod{r}$ requires at most $\O(d)$ additions and multiplications modulo $r$.

Addition or subtraction of numbers of size $r$ (or modulo $r$) takes $\O(\log r)$ steps. Multiplication modulo $r$ takes $\O(\log^2 r)$ steps by iterative addition and doubling. Hence the computation in total requires time
\[
\O(d\log^2 r) = \O(d^{-1} \log^2 x) = L_n\left(\frac{1}{3}, \left(2 + \o\left(1\right)\right)\varepsilon\right).\qedhere
\]
\end{proof}

Hence we can take $c = \left(4+\o\left(1\right)\right)\varepsilon$ to complete the proof of Lemma~\refp{prime-sampling}. We note that this is tight for both \emph{finding} and \emph{evaluating} the set of characters.
\end{proof}
\begin{rem} Note that we do not show that the characters we sample are \emph{independent} in the sense of \cite[Lemma 8.2]{BuhlerLenstraPomerance}, in that we do not prove that the characters induce independent, uniformly distributed maps in the dual space of $H$. We have instead shown merely that there is a probability, uniformly bounded away from zero, that any element of $H$ is not in the kernel of one of our sampled characters.
\end{rem}
\begin{rem}Note that normally, the NFS takes characters from the smallest primes above $B$ (i.e. $\log x = \O\left(d \log d\right)$). Even conditional on GRH, our methods require taking somewhat larger primes ($\log x = \O\left(d \log^2 d\right)$), and unconditionally we require \emph{much} larger primes to control their statistics. Furthermore, the standard NFS takes only primes of first degree, which are \emph{asymptotically} guaranteed to be almost all of the primes of bounded norm as the bound tends to infinity for a \emph{fixed} number field.  Heuristically, it might seem reasonable that these primes, over a small range, would induce sufficiently random characters to yield the required reduction to squares, but as discussed in~\cite{BuhlerLenstraPomerance}, proving this would require demonstrating exceptionally good equi-distribution properties for the Chebotarev Density theorem applied to the splitting field of $f$ at bounded norm, and gaining sufficient control would require a better effective bound on the error term.
\end{rem}


\begin{proof}[Proof of Theorem~\refp{NFS-characters-uncond}]
With the claims of the previous section, we are in a position to produce our linear $\Psi_\mathcal{F}$ with small kernel, and thus to produce a congruence of squares. We will need to track precisely the computational complexity of these operations, as some of the numbers involved have $L\left(\frac{1}{3}\right)$ bits.

First, we sample $4d(\delta\kappa\log n + \frac{\delta^2\kappa}{2\log2}\frac{\log^{4/3}n}{\log\log n^{1/3}})$ pairs $\left(r_i, s_i\right)$ independently from $\Upsilon$ as in Lemma~\refp{prime-sampling}. Note that our sample is of size $\o\left(\log^2 n\right) = L_n\left(\frac{1}{3}, \o\left(1\right)\right)$. Recall that taking each sample takes at most $L_n\left(\frac{1}{3}, c\right)$ time in expectation, so we can produce the required sample in expected time $L_n\left(\frac{1}{3}, c+\o\left(1\right)\right)$. We have $M = 1 + B + dB'+ 4d\left(\kappa\delta\log n+ \frac{\delta^2\kappa}{2\log2}\frac{\log^{4/3}n}{\log\log n^{1/3}}\right) = L_n\left(\frac{1}{3}, \max\left(\beta, \beta'\right)\right)^{1+\o\left(1\right)}$ linear polynomials. For each of these, we need to evaluate each of our characters,
which takes time $L_n\left(\frac{1}{3}, \frac{c}{2} + \max\left(\beta, \beta\right)\right)^{1+\o\left(1\right)}$.

Fix some $h \in H \backslash \{0\}$ which is not in the exceptional set, which we recall is of size at most $\log \log n$. Each map $\chi_{r_i, s_i}$ is independent and induces a map in $\operatorname{Hom}\left(H, \F_2\right)$ such that:
\[
\P\left(h \notin \ker\left(\chi_{r_i, s_i}\right)\right) \geq \frac{1+\o\left(1\right)}{2d}.
\]
As a corollary:
\begin{align*}
\P\left(h \in \ker\left(\chi_{\mathcal{F}}\right)\right) &\leq \left(1-\frac{1+\o\left(1\right)}{2d}\right)^{4d\left(\kappa\delta\log n+ \frac{\delta^2\kappa}{2\log2}\frac{\log^{4/3}n}{\log\log n^{1/3}}\right)} 
\leq |H|^{-2+\o\left(1\right)}.
\end{align*}
Hence by a union bound over the non-trivial elements of $H$ the probability that any of these non-exceptional and non-zero elements is in the kernel is $\o\left(1\right)$. Hence with high probability the kernel of $\Psi_\mathcal{F}$ has size at most $\frac{4}{3}\log \log n$.

With these additional random characters in Step~\refp{square-formation}, our existing matrix algebra allows us to reduce (concretely) $M$ linear polynomials from Step~\refp{smooth-generate} to a single polynomial $P$ such that $P\left(m\right)$ is square in $\Z$ and $P\left(\alpha\right)$ is a square in $\mathcal{O}_{\Q\left(\alpha\right)}$ multiplied by one of at most $\frac{4}{3}\log \log n$ elements of $h$. Hence after repeating the whole algorithm $\ell = \frac{4}{3}\log \log n$ times to generate some $P_1,\ldots, P_\ell$, we are able to guarantee that for some $i < j$, $P_i$ and $P_j$ lie over the same element $h$, and hence $P_iP_j$ is in fact a square in $\mathcal{O}_{\Q\left(\alpha\right)}$. In the sequel we will test all of these $\binom{\ell}{2} \sim \frac{8}{9}(\log \log n)^2$ polynomials separately.

We now provide some details to establish the required run time bounds. The matrix of exponents modulo 2 and characters is sparse. As a result, we can use fast kernel finding algorithms such as the block-Wiedemann algorithm~\cite{ThomeWiedemann} to find a suitable subset $S_i$ to construct a $P_i$ in time
\[
\O\left(M^2\right) = L_n\left(\frac{1}{3}, 2\max\left(\beta, \beta'\right)\left(1 + \o\left(1\right)\right)\right).
\]
Now, if $\gamma \in \mathcal{O}_{\Q\left(\alpha\right)}$ and $\gamma^2 \in \Z[\alpha]$, then $\gamma . f'\left(\alpha\right) \in \Z[\alpha]$ \cite[Chapter III, Proposition 2]{Lang}. We take $S = S_i \Delta S_j$. We then fix the polynomial $P$ to be
\[
P = \left[\frac{\partial f}{\partial x}\left(x, 1\right)\right]^2 \prod_{\left(a,b\right) \in \S}\left(a-bx\right), \text{ and so }
u^2 = \left[\frac{\partial f}{\partial x}\left(x, y\right)\right]\left(m, 1\right)^2\prod_{\left(a,b\right) \in \S}\left(a-mb\right)
\]
is a square in $\Z$. Hence $u$ can be found by taking the product modulo $n$ over all $r < B$ of $r$ raised to half the total order of $r$ in the terms $\left(a-mb\right)$ for $\left(a,b\right) \in \S$ and multiplying by $f'\left(m,1\right)$. That we compute the square root in this fashion is important to ensure that our computation can be done in polynomial time; we have ensured that we only need to do $M\log n$ additions and divisions to find the exponents, and at most $M\log n$ modular multiplications to compute the $u \imod n$ from the exponents.

Similarly, for at least one of the $\binom{\ell}{2}$ polynomials considered, there exists $v\in \Z[\alpha]$ such that:
\[
v^2 = \left[\frac{\partial f}{\partial x}\left(x, y\right)\right]\left(\alpha, 1\right)^2\prod_{\left(a,b\right) \in \S}\left(a - \alpha b\right).
\]
By Montgomery's method~\cite{Montgomery, Thome}, we can compute square roots in the number field, and thus find $v\left(m,1\right) \imod{n}$ in time $\O(M^2)$.
We abuse notation slightly to write $v\left(m\right)$ as the element of $\Z/n\Z$ obtained by substituting $m$ for $\alpha$. Then:
\begin{align*}
v\left(m\right)^2& \imod{n} = v\left(m\right)^2 \imod{f\left(m,1\right)} \\
&= \left(\left[\frac{\partial f}{\partial x}\left(x, y\right)\right]\left(\alpha, 1\right)^2\prod_{\left(a,b\right) \in \S}\left(a - \alpha b\right) \imod{f\left(\alpha,1\right)}\right)\left(m\right) \\
&= \left[\frac{\partial f}{\partial x}\left(x, y\right)\right]\left(m, 1\right)^2\prod_{\left(a,b\right) \in \S}\left(a - mb\right) \imod{f\left(m, 1\right)}
 = u^2 \imod{n}
\end{align*}
and so we have constructed a congruence of squares in time:
\[
L_n\left(\frac{1}{3}, \max\left(2\max\left(\beta, \beta'\right), \max\left(\beta, \beta'\right) + \frac{c}{2}, c\right)\right)^{1+\o(1)}.
\]
Hence, the run time bound is as claimed as $c \leq \left(\frac{4}{3}+\o\left(1\right)\right)\delta$, we can insist that we have at most $\frac{4}{3}\log \log n$ exceptional values of $h$, and our $f$ lies off a set of probability at most $L_n\left(\frac{2}{3}, \frac{\kappa-\delta^{-1}}{3}\left(1+\o\left(1\right)\right)\right)^{-1}$.
\end{proof}
%
\begin{rem}
To prove the analogous statement for the multiple polynomial NFS, we sample and add these characters for each $f^{(i)}$ used. Then our algebra in each field finds a square in the number field whose matching image in $\Z$ is the product of a $B$-smooth number and a square, and such that the product of these $B$-smooth parts is itself square. Hence taking the product of these relationships yields a congruence of squares.
\end{rem}

We observe an immediate strengthening of Lemma~\refp{prob-zero-as-used} conditional on GRH:
\begin{claim}
Conditional on GRH, for $\varepsilon = \log^{-1/4} n = \o(\delta)$,
\[
\P_f\left(\left|E_{K, \varepsilon}\right| > 0 \right) = 0.
\]
\end{claim}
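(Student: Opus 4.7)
The plan is to observe that under GRH the condition defining $E_{K,\varepsilon}$ is \emph{never} satisfied, so the exceptional set is deterministically empty for every choice of $f$; the probability $\P_f(|E_{K,\varepsilon}|>0)$ is then not merely zero almost surely but literally zero.

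Unpacking the definition, an element $h\in H$ belongs to $E_{K,\varepsilon}$ exactly when $\zeta_{L_h}$ has a zero at $1-\nu$ for some $\nu$ with $\nu^{-1}>L_n(\frac{1}{3},\varepsilon)$; the latter inequality forces $\nu$ to be a positive real with $\nu<L_n(\frac{1}{3},\varepsilon)^{-1}$, so any contributing $\nu$ must come from a real zero of $\zeta_{L_h}$ extremely close to $s=1$ --- that is, a Siegel zero in the classical sense.

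I would then invoke GRH for the Dedekind zeta $\zeta_{L_h}$, which asserts that every non-trivial zero lies on the critical line $\operatorname{Re}(s)=\frac{1}{2}$, while the trivial zeros lie at real points $s\le 0$. Consequently any real $\nu>0$ with $\zeta_{L_h}(1-\nu)=0$ satisfies either $\nu=\frac{1}{2}$ (a critical-line zero) or $\nu\ge 1$ (a trivial zero), so in every case $\nu^{-1}\le 2$.

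Finally, for $\varepsilon=\log^{-1/4}n$ one computes
\[
L_n\!\left(\tfrac{1}{3},\varepsilon\right)=\exp\!\left(\log^{1/12}n\cdot(\log\log n)^{2/3}\right)\longrightarrow\infty,
\]
so for all sufficiently large $n$ the threshold $L_n(\frac{1}{3},\varepsilon)$ strictly exceeds $2\ge\nu^{-1}$, meaning no admissible $\nu$ exists and therefore no $h\in H$ lies in $E_{K,\varepsilon}$, independent of $f$. This gives $\P_f(|E_{K,\varepsilon}|>0)=0$. There is no substantive obstacle here: the technical core of the unconditional Lemma~\ref{prob-zero-as-used} was controlling the density of extensions $L_h/K$ admitting a Siegel zero, whereas GRH eliminates such zeros outright; the side condition $\varepsilon=\o(\delta)$ holds automatically since $\log^{-1/4}n\to 0$ while $\delta$ is bounded away from $0$.
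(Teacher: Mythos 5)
Your proposal is correct and takes essentially the same approach as the paper: invoke GRH to conclude $\nu^{-1}\le 2$ uniformly for any real zero $1-\nu$ of $\zeta_{L_h}$, then observe that $L_n(\tfrac{1}{3},\varepsilon)=\exp(\log^{1/12}n\,(\log\log n)^{2/3})>2$ for large $n$, so the defining condition of $E_{K,\varepsilon}$ is vacuous for every $f$.
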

\begin{proof}
Under GRH, there are no zeros of any of our zeta functions with real part greater than $\frac{1}{2}$. As a corollary, $\nu^{-1} \leq 2$ uniformly. Hence $|E_{K, \varepsilon}| = 0$ if $L_n\left(\frac{1}{3}, \varepsilon\right) > 2$, which is entailed by our choice of $\varepsilon$.
\end{proof}
It remains to prove Lemma~\refp{prob-zero-as-used}. We need the following result of Stark:
\begin{fact}\label{starkfact} (Stark \cite[Lemma 8]{Stark}) Let $K$ be a field of finite degree, let $c(K) = 4$ if $K/\Q$ is normal and $c(K) = 4([K : \Q])!$ otherwise. Suppose there is a real $1 - \nu$ in the range:
\[
1 - (c(K) \log|\Delta_K|)^{-1} \leq 1 - \nu < 1
\]
such that $\zeta_K(1-\nu) = 0$. Then there is a quadratic field $F \subset K$ such that $\zeta_F(1-\nu) = 0$.
\end{fact}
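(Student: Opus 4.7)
The plan is to decompose $\zeta_K$ into Artin $L$-functions, localize the zero at $1-\nu$ to a single $L(s,\chi)$, and use a Dirichlet series positivity argument to force $\chi$ to be a real (order $2$) character; the quadratic field $F$ then arises as the fixed field of $\ker\chi$.

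First, I would reduce to the normal case. If $K/\Q$ is not normal, pass to its Galois closure $\tilde K$. By the Aramata--Brauer theorem, $\zeta_{\tilde K}/\zeta_K$ is entire and has non-negative Dirichlet coefficients in its logarithm, so any real zero of $\zeta_K$ is also a zero of $\zeta_{\tilde K}$. Moreover $[\tilde K:\Q] \leq [K:\Q]!$ and $\log|\Delta_{\tilde K}|$ is controlled by $[K:\Q]!\log|\Delta_K|$ up to explicit factors, which accounts precisely for the $c(K)=4([K:\Q])!$ factor in the non-normal case. Hence it suffices to prove the lemma for $K$ normal with $c(K)=4$, and then a quadratic subfield of $\tilde K$ will automatically sit inside $K$ once we arrange the zero to come from a character that factors through $\operatorname{Gal}(K/\Q)$.

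Second, for $K$ normal with $G=\operatorname{Gal}(K/\Q)$, apply Artin's factorisation
\[
\zeta_K(s) = \zeta_\Q(s)\prod_{\chi\neq 1} L(s,\chi)^{\chi(1)},
\]
the product running over non-trivial irreducible characters of $G$. In the range $1-(4\log|\Delta_K|)^{-1}\leq 1-\nu<1$, the Riemann zeta $\zeta_\Q$ has no real zero, so $L(1-\nu,\chi_0)=0$ for some non-trivial irreducible $\chi_0$. Third, I argue that $\chi_0$ must be real. Suppose instead $\chi_0\neq\overline{\chi_0}$; then by conjugation $L(s,\overline{\chi_0})$ vanishes at $1-\nu$ as well. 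Form the auxiliary Dirichlet series
\[
D(s) = \zeta_\Q(s)\,L(s,\chi_0)\,L(s,\overline{\chi_0})\,L(s,\chi_0\overline{\chi_0}),
\]
which is the Dedekind zeta function of the field cut out by the representation $1\oplus\chi_0\oplus\overline{\chi_0}$ (more precisely, a Rankin--Selberg-type combination); in particular $-D'/D$ has non-negative Dirichlet coefficients. Standard Hadamard/Landau estimates bound
\[
-\frac{D'(s)}{D(s)} \leq \frac{1}{s-1} - \sum_{\rho} \frac{1}{s-\rho} + \O(\log|\Delta_K|),
\]
and inserting the two forced zeros at $s=1-\nu$ (one each from $L(s,\chi_0)$ and $L(s,\overline{\chi_0})$) together with positivity forces $\nu\geq (4\log|\Delta_K|)^{-1}$, contradicting the hypothesis. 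The factor $4$ in $c(K)$ tracks exactly the two simultaneous zeros being exploited here.

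Fourth, the contradiction just obtained means $\chi_0=\overline{\chi_0}$, i.e.\ $\chi_0$ is a real character of order $2$. Let $H=\ker\chi_0\trianglelefteq G$; then $[G:H]=2$ and $F:=K^H$ is a quadratic field contained in $K$. Since $\chi_0$ factors through $G/H=\operatorname{Gal}(F/\Q)$, we have
\[
\zeta_F(s) = \zeta_\Q(s)\,L(s,\chi_0),
\]
and evaluating at $s=1-\nu$ gives $\zeta_F(1-\nu)=0$ as required. The main obstacle in this program is the positivity step: one must choose the auxiliary series $D(s)$ so that its logarithm has genuinely non-negative Dirichlet coefficients (which is where the representation-theoretic identity $1\oplus\chi\oplus\bar\chi$ being a permutation-type character is used) and then carry out the Landau-type bound with the explicit constant $4$; Stark's careful handling of this explicit constant is precisely what makes the lemma quantitatively useful for bounding Siegel zeros in families of extensions.
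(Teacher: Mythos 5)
The paper does not prove this statement at all --- it is quoted as Stark's Lemma~8 and used as a black box (only the slight strengthening in Corollary~\ref{starkcor} is commented on) --- so your proposal must stand on its own, and as written it has gaps at precisely the points that carry the content of the lemma. First, the elimination of a complex character. Your auxiliary function $D(s)=\zeta(s)L(s,\chi_0)L(s,\overline{\chi_0})L(s,\chi_0\overline{\chi_0})$ (the character $|1+\chi_0|^2$, which does give non-negative coefficients) has a pole of order \emph{two} at $s=1$, because $\langle\chi_0\overline{\chi_0},1\rangle=\langle\chi_0,\chi_0\rangle=1$ forces a factor $\zeta(s)$ inside $L(s,\chi_0\overline{\chi_0})$. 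Against the two forced zeros at $1-\nu$ the Landau/Hadamard inequality then reads $0\le 2/(\sigma-1)-2/(\sigma-1+\nu)+\O\left(\log|\Delta_K|\right)$, whose middle expression equals $2\nu/\left((\sigma-1)(\sigma-1+\nu)\right)>0$; it is satisfied for every $\nu>0$, so no contradiction and no lower bound on $\nu$ ever appears, and the constant $4$ does not arise this way. The standard mechanism (and Stark's) is different: $-\zeta_K'/\zeta_K$ itself has non-negative coefficients while $\zeta_K$ has only a \emph{simple} pole, whence $\zeta_K$ has at most one real zero, counted with multiplicity, in $1-(4\log|\Delta_K|)^{-1}\le s<1$; in $\zeta_K=\zeta\prod_{\chi}L(s,\chi)^{\chi(1)}$ this simplicity forces both $\chi_0(1)=1$ and $\chi_0=\overline{\chi_0}$. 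Your sketch never rules out a \emph{real} irreducible $\chi_0$ of degree $\ge 2$ (e.g.\ the two-dimensional character of $S_3$), for which $\ker\chi_0$ need not have index $2$, so Step four does not follow from ``$\chi_0$ real'' alone. One must also address possible poles of Artin $L$-functions (Artin's conjecture being open) in the multiplicity bookkeeping, which is usually done via Aramata--Brauer quotients or the Heilbronn--Stark character; this is absent.

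Second, the non-normal case, which is the heart of the lemma. After passing to the Galois closure $\tilde K$ your argument only produces a quadratic field inside $\tilde K$, whereas the statement asserts $F\subseteq K$; the sentence about ``a character that factors through $\operatorname{Gal}(K/\Q)$'' is not an argument (there is no such group when $K/\Q$ is not normal). What is needed is: with $G=\operatorname{Gal}(\tilde K/\Q)$ and $H=\operatorname{Gal}(\tilde K/K)$, write $\zeta_K(s)=\zeta(s)\prod_{\chi}L(s,\chi)^{\langle\operatorname{Ind}_H^G 1,\chi\rangle}$, show that the vanishing of $\zeta_K$ (not merely of $\zeta_{\tilde K}$) at $1-\nu$ is attributable to the distinguished real linear character $\chi_0$, and then apply Frobenius reciprocity: $\langle\operatorname{Ind}_H^G 1,\chi_0\rangle=\langle 1,\operatorname{Res}_H\chi_0\rangle>0$ together with $\chi_0(1)=1$ gives $H\subseteq\ker\chi_0$, hence $F=\tilde K^{\ker\chi_0}\subseteq\tilde K^{H}=K$. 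The discriminant comparison $\log|\Delta_{\tilde K}|\ll([K:\Q])!\,\log|\Delta_K|$ that you invoke to justify $c(K)=4([K:\Q])!$ also needs a proof (it is where the factorial enters), but that is a lesser omission than the two above.
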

We note that the following slightly stronger statement follows exactly from the proof provided in \cite{Stark}:
\begin{cor}\label{starkcor}
Let $K$ be a field of finite degree, and $K'$ the normal closure of $K$. Then Fact~\ref{starkfact} holds with $c(K) = 4([K' : \Q])$.
\end{cor}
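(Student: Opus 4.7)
The plan is to revisit Stark's proof of Fact~\ref{starkfact} and identify precisely where the factor $([K : \Q])!$ is introduced, then replace it with the sharper $[K':\Q]$.

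In the non-normal case, Stark's argument reduces to the normal-field case by passing to the Galois closure $K'$ of $K/\Q$. Since $\zeta_{K'}$ factors as $\zeta_K$ times a product of Artin $L$-functions attached to the non-trivial irreducible characters of $\operatorname{Gal}(K'/\Q)$, any real zero of $\zeta_K$ in the critical strip is also a zero of $\zeta_{K'}$. Applying the normal-case bound with constant $4$ to $K'$, any real zero $1-\nu$ of $\zeta_{K'}$ satisfying
\[
1 - \nu \geq 1 - \bigl(4 \log|\Delta_{K'}|\bigr)^{-1}
\]
is forced to come from a quadratic subfield $F \subseteq K'$. The subfield $F$ automatically lies inside $K'$, which is exactly what the corollary requires.

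The only remaining step is to re-express this condition using $|\Delta_K|$ and $[K':\Q]$ in place of $|\Delta_{K'}|$. Combining the tower formula with the standard compositum bound for discriminants gives $\log|\Delta_{K'}| \leq [K':\Q]\log|\Delta_K|$, up to lower-order terms that can be absorbed into the constant. Stark then further bounds $[K':\Q] \leq ([K:\Q])!$ via the embedding $\operatorname{Gal}(K'/\Q) \hookrightarrow S_{[K:\Q]}$ induced by the action on the embeddings of $K$, which is precisely what yields his stated constant $c(K) = 4([K:\Q])!$. Retaining the actual degree $[K':\Q]$ in place of this crude factorial upper bound leaves the remainder of Stark's argument untouched and produces $c(K) = 4[K':\Q]$, which is the content of the corollary.

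The only substantive verification is to confirm that the factorial bound is the \emph{unique} source of the $([K:\Q])!$ in Stark's analysis---that no other estimate in his proof inflates the constant by a factor larger than $[K':\Q]$. Since the corollary is stated to follow directly from the proof in \cite{Stark}, I expect this to be a careful but routine reading of Lemma~8 of that paper rather than a genuine analytic obstacle, and I do not anticipate needing any new input beyond what Stark already establishes.
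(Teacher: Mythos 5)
The conclusion of Fact~\ref{starkfact}, and hence of the corollary, is that there is a quadratic field $F \subset K$ with $\zeta_F(1-\nu)=0$: $F$ is a subfield of $K$ itself, not merely of its Galois closure. Your reconstruction only produces $F \subseteq K'$; you pass to $K'$, invoke the normal case, and then assert that $F \subseteq K'$ ``is exactly what the corollary requires.'' It is not, and the distinction is load-bearing downstream: in the proof of Lemma~\ref{prob-zero-as-used} the corollary is applied with $K = L_h$, and both the discriminant-tower computation for $L_h/F_h/\Q$ in Equation~\ref{norm-tower} and the uniqueness of the quadratic subfield $F_h$ (which is what makes the $s_h$ distinct) rely on $F_h \subseteq L_h$, not merely $F_h \subseteq L_h'$.

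The missing step in your reading of Stark is that the offending real zero is a zero of $\zeta_K$ specifically, and $\zeta_K = L(s, \operatorname{Ind}_H^G 1_H)$ for $G = \operatorname{Gal}(K'/\Q)$, $H = \operatorname{Gal}(K'/K)$. The real one-dimensional character $\chi$ that the normal-case argument extracts must therefore occur in $\operatorname{Ind}_H^G 1_H$ with positive multiplicity; since $\chi$ is one-dimensional, Frobenius reciprocity forces $\chi|_H = 1_H$, so $H \subseteq \ker\chi$ and the quadratic field $(K')^{\ker\chi}$ sits inside $(K')^H = K$. This step is logically independent of the choice of constant, so your diagnosis of where the factorial enters---a discriminant estimate of the shape $\log|\Delta_{K'}| \leq [K':\Q]\log|\Delta_K|$ followed by the crude bound $[K':\Q] \leq ([K:\Q])!$---is the right one, and retaining $[K':\Q]$ does give the claimed $c(K)$. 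But as written your argument does not establish the corollary's stated conclusion, and the patch is not optional for the way the corollary is used in this paper.
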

We record the following fact of Landau on the distribution of zeros of Dirichlet $\L$-functions:
\begin{fact}[Landau~\cite{Landau}, see {\cite[pp. 367]{MontgomeryVaughan}}]\label{landau-L}
There is a constant $c$ such that given two characters $\chi_r$, $\chi'_{r'}$ of moduli $r, r'$ respectively, with $\chi_r\chi'_{r'}$ non-principal, then $\L\left(s, \chi_r\right)\L\left(s, \chi'_{r'}\right)$ has at most one real zero in $\left(1-\frac{c}{\log rr'},1\right)$
\end{fact}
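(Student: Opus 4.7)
I would use the classical positive-Dirichlet-series argument of Hadamard--Landau--Page, applied to a product of four $L$-functions. After standard reductions (passing to primitive associates, and, in the case of complex $\chi_r$ or $\chi'_{r'}$, enlarging the product with the complex-conjugate twins to restore real Euler factors), we may assume $\chi_r$ and $\chi'_{r'}$ are primitive real characters. Form
\[
F(s) := \zeta(s)\,L(s,\chi_r)\,L(s,\chi'_{r'})\,L(s,\chi_r\chi'_{r'}),
\]
which is meromorphic with only a simple pole at $s=1$, since $\chi_r\chi'_{r'}$ is non-principal by hypothesis. A case analysis on $(\chi_r(p), \chi'_{r'}(p)) \in \{-1, 0, 1\}^2$ shows that in every case the local Euler factor of $F$ simplifies to a product of $(1-p^{-ms})^{-k}$ with non-negative integer exponents $m, k$. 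Hence $F$ has non-negative Dirichlet coefficients, and so does $-F'/F$. Consequently $F(\sigma) \geq 1$ and $-F'(\sigma)/F(\sigma) \geq 0$ for every real $\sigma > 1$.

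\textbf{Extracting the contradiction.} Suppose, for contradiction, that $L(s,\chi_r)L(s,\chi'_{r'})$ has two real zeros (with multiplicity) $\beta_1, \beta_2$ in $(1-c/\log rr', 1)$. Combining the Hadamard factorisations of the four $L$-factors with the Riemann--von Mangoldt zero-counting estimate $N_\chi(T+1)-N_\chi(T)=O(\log q(|T|+2))$ gives, for $\sigma \in (1,2)$,
\[
-\frac{F'(\sigma)}{F(\sigma)} = \frac{1}{\sigma-1}-\sum_\rho\frac{1}{\sigma-\rho}+O(\log rr'),
\]
where $\rho$ runs through the non-trivial zeros of the four $L$-factors and the error absorbs the Hadamard constants, gamma-factor terms, and the tail over zeros of height at least $1$. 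Since $\Re(1/(\sigma-\rho))>0$ for every $\rho$ with $\Re \rho \leq 1$ and $\sigma > 1$, retaining only the contributions from $\beta_1, \beta_2$ and invoking $-F'/F(\sigma)\geq 0$ yields
\[
\frac{1}{\sigma-\beta_1}+\frac{1}{\sigma-\beta_2}\leq\frac{1}{\sigma-1}+A\log rr'
\]
for an absolute constant $A$. Writing $\delta_i := 1-\beta_i \leq c/\log rr'$ and choosing $\sigma = 1+2c/\log rr'$, each summand on the left is at least $(\log rr')/(3c)$, while the right-hand side equals $(\log rr')/(2c)+A\log rr'$. Dividing by $\log rr'$ gives $2/(3c)\leq 1/(2c)+A$, equivalently $c \geq 1/(6A)$. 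Hence any $c < 1/(6A)$ yields the required contradiction, establishing the claim.

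\textbf{Main obstacle.} The delicate step is justifying the $O(\log rr')$ remainder in the partial fraction expansion of $-F'/F$ uniformly in both conductors $r, r'$ and over all four $L$-factors simultaneously; this requires the Hadamard product for each $L$-function together with uniform Riemann--von Mangoldt zero-counting, which are classical but technically involved. The reductions to primitive real characters, although standard, require bookkeeping: finitely many Euler factors removed in passing to primitive associates must be absorbed into the $O(\log rr')$ error, and the complex-character case needs a symmetric enlarged product to preserve non-negativity, with the same end-game inequality applied inside the larger product.
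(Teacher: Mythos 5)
This statement is imported by the paper as a classical fact of Landau, with a pointer to Montgomery--Vaughan; the paper contains no proof of it, so there is nothing internal to compare against and your attempt has to be judged on its own. Your main line is exactly the standard Landau argument and is correct for two real primitive characters: the coefficients of $F(s)=\zeta(s)\L(s,\chi_r)\L(s,\chi'_{r'})\L(s,\chi_r\chi'_{r'})$ are non-negative because $\bigl(1+\chi_r(p^k)\bigr)\bigl(1+\chi'_{r'}(p^k)\bigr)\ge 0$, the Hadamard factorisation plus Riemann--von Mangoldt counting gives $-F'/F(\sigma)\le \frac{1}{\sigma-1}-\frac{1}{\sigma-\beta_1}-\frac{1}{\sigma-\beta_2}+O(\log rr')$ after discarding the remaining zeros (all of which contribute with the favourable sign), and your choice $\sigma=1+2c/\log rr'$ and the ensuing arithmetic are right. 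The passage to primitive associates is also harmless: the stripped Euler factors vanish only on $\Re s=0$, and non-principality of $\chi_r\chi'_{r'}$ forces the primitive associates to be non-conjugate, so for real characters the fourth factor is genuinely non-principal.

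The genuine gap is your treatment of complex characters. Adjoining the conjugate twins does not restore non-negativity: the local exponent of $\zeta\cdot \L(\chi)\L(\bar\chi)\L(\chi')\L(\bar\chi')\L(\chi\chi')\L(\bar\chi\bar\chi')$ at $p^k$ is $1+2\Re\chi(p^k)+2\Re\chi'(p^k)+2\Re(\chi\chi')(p^k)$, which equals $-1$ when $\chi(p^k)=-1$ and $\chi'(p^k)=1$. Worse, any real zero of a complex factor is automatically a zero of its conjugate factor, so adding conjugates raises the hypothetical zero multiplicity and the pole order in tandem, and the pole-versus-zero comparison by itself yields no contradiction. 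The standard repair is to dispose of complex characters separately: if $\chi$ is complex then $\chi^2$ is automatically non-principal, and the positivity $3+4\cos\theta+\cos 2\theta\ge 0$ applied to $\zeta(\sigma)^3\L(\sigma,\chi)^4\L(\sigma,\chi^2)$ shows that $\L(s,\chi)$ has \emph{no} real zero in $\left(1-\frac{c}{\log r},1\right)$ at all; this reduces the Fact to the case of two distinct real primitive characters, which is the case your product argument does prove (and which, incidentally, is the only case the paper uses, since the Fact is applied there to the real quadratic symbols attached to the discriminants $\Delta_{F_h/\Q}$).
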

\begin{proof}[Proof of Lemma~\refp{prob-zero-as-used}]
We assume $f$ is irreducible; by Lemma~\refp{f-reducible-probability} the probability that $f$ is reducible can be absorbed as our error term.
Recall that $K = \Q(\alpha)$ and $h \in O_K$ is a representative of an element of $H$.

\begin{claim} If $L_h' $ is the normal closure of $L_h = K(\sqrt{h})$, $[L_h' : \Q] \leq 2^d d!$.
\end{claim}
\begin{proof}
Let $K'$ be the splitting field of $K$. By construction, $[K' : \Q] \leq d!$ and $K' / \Q$ is normal (indeed Galois). Given $h \in K$, let $\mathcal{O}_h$ be the orbit of $h$ under the action of $\text{Gal}(K'/\Q)$. Then $|\mathcal{O}_h| \leq d$. We adjoin square roots of each element of $\mathcal{O}_h$ to $K'$ to obtain a field $L'$.

Then $[L' : \Q] \leq 2^dd!$. Since degree 2 extensions are normal, the compositum of normal extensions is normal, and $L' / K'$ is a compositum of at most $d$ degree 2 extensions, the extensions $L' / K'$ and $K' / \Q$ are normal. We note that any $\sigma \in \operatorname{Aut}_{\Q}(K')$ can be extended to an element of $\operatorname{Aut}_{\Q}(L')$, as $\sigma$ acts on $\mathcal{O}_h$ as a permutation. Hence in particular $L' / \Q$ is normal, and so $L_h' \subseteq L'$. Hence $[L_h' : \Q] \leq 2^dd!$.
\end{proof}

Hence from Corollary~\refp{starkcor}, if $\nu^{-1} > 2^{d+2}d! \log \Delta_{L_h/\Q}$, then $1 - \nu$ must be a zero of some quadratic subfield $F_h = \Q\left(\sqrt{s_h}\right) \subseteq L_h$.
Note that as $[K : \Q]$ is odd there are no quadratic subfields of $K$. Hence $F_h \cap K = \Q$ and $F_h$ is the only quadratic subfield of $L_h$.

Furthermore, $L_h$ is the minimal field containing $F_h$ and $K$. Since the classes in $H$ are not related by squares of elements of $K$, the field $L_h$ does not contain a root of any $h'$ in a different class in $H$. Hence as $h$ varies, the produced $L_h$ are distinct fields and so the $s_h$ must all be distinct. We observe that by transitivity of the discriminant (eg. \cite[Thm 1.46]{koch1997algebraic} or \cite[Cor 2.10]{neukirch2013algebraic}) in the towers of fields $L_h / F_h / \Q$ and $L_h / K / \Q$:
\begin{equation}\label{norm-tower}
\Delta_{F_h/\Q}^d \mathbf{N}_{F_h/\Q}\left(\Delta_{L_h/F_h}\right) = \Delta_{K/\Q}^2 \mathbf{N}_{K/\Q}\left(\Delta_{L_h/K}\right)\quad (= \Delta_{L_h/\Q}).
\end{equation}
Furthermore, as in Equation~\refp{Delta-L-bound}, $\Delta_{L_h/K}$ is the norm of the different ideal $(2h)$ and so from Minkowski's bound:
\[
\mathbf{N}_{K/\Q}\left(\Delta_{L_h/K}\right) \leq \sqrt{\Delta_{K/\Q}} \left(\frac{4}{\pi}\right)^d\left(\frac{d!}{d^d}\right) \leq \sqrt{\Delta_{K/\Q}}.
\]
Since $\Delta_{K/\Q} \leq L_n\left(\frac{4}{3}, \frac{1}{2}\delta^2\kappa\right)$ and $\Delta_{L_h/\Q} \leq L_n\left(\frac{4}{3}, \frac{5}{4}\delta^2\kappa\right)$, $\Delta_{F_h/\Q} = \O\left(L_n\left(\frac{4}{3}, \frac{5}{4}\delta\kappa\right)\right)$.
Now, since a prime $p$ contributes a factor $(1-p^{-z})^{-1}$ to $\zeta_{F_h/\Q}\left(z\right)$ if $p \nmid \Delta_{F_h}$ and $(1-p^{-z})^{-2}$ otherwise:
\[
\zeta_{F_h/\Q}\left(z\right) = \zeta\left(z\right)\mathcal{L}\left(z, j \rightarrow \left(\frac{\Delta_{F_h/\Q}}{j}\right)\right)
\]
and by reciprocity $j \rightarrow \left(\frac{\Delta_{F_h/\Q}}{j}\right)$ is a character of modulus $\Delta_{F_h/\Q}$. From Fact~\refp{landau-L}, if there are two characters with moduli $q,q'$ respectively, at most one has an $\mathcal{L}$-function with a zero $1-\nu$ and
$
\nu^{-1} > c\log qq'
$
for some effective constant $c$. As a corollary, there is at most one character with modulus in $[q, q^e]$ with a zero at $1-\nu$ and
$
\nu^{-1} > (e+1)c\log q.
$

Note that since $\Delta_{F_h/\Q} < L_n\left(\frac{4}{3}, \left(\frac{5}{4} + \o(1)\right)\delta\kappa\right)$ and is an integer, the whole range of discriminants can be covered with only $\frac{4}{3}\log \log n$ ranges of the form $[x, x^e]$. Hence there are \emph{at most} $\frac{4}{3}\log \log n$ characters (and hence, potential extensions $L_h$) with exceptional zeros such that
\[
\nu^{-1} > (e+1)c \log\left(L_n\left(\frac{4}{3}, \left(\frac{5}{4} + \o\left(1\right)\right)\delta\kappa\right)\right) = \O\left(\delta\kappa\log^{\frac{4}{3}} n (\log \log n)^{-\frac{1}{3}}\right)
\]
as required. Note that this bound on $\nu^{-1}$ is much weaker than the required $\nu^{-1} > 2^{d+2}d! \log \Delta_{L_h/\Q}$, and so there are at most $\frac{4}{3}\log \log n$ extensions $L_h/\Q$ with exceptional zeros and $\nu^{-1} > 2^{d+2}d! \log \Delta_{L_h/\Q}$. We observe that:
\[
2^{d+2}d! \log \Delta_{L_h/\Q} \leq d^{d\left(1+\o\left(1\right)\right)}\log^{\boldsymbol{O}\left(1\right)}n = L_n\left(\frac{1}{3}, \frac{\delta}{3}\left(1+\o\left(1\right)\right)\right).\qedhere
\]
\end{proof}

\begin{rem} The use of the relative discriminant both here and in the proof of Lemma~\refp{prime-sampling} is, heuristically, to control the extent to which primes can ramify. In turn this allows tight control of the deviations of the behaviour of primes in the number fields from the behaviour over $\Z$. In both cases the detailed numerics of the bounds are not especially important, beyond the fact that they provide upper bounds whose logarithms are much smaller than $L_n\left(\frac{1}{3}\right)$.
\end{rem}

\begin{rem}
We note that we $\Delta_{L_h/\Q}$ is merely known to be the absolute discriminant of a ``random'' field (under mild assumptions about the nature of the field). If we were to heuristically take it to be a random integer of the correct size, modulo being (for example) only $0, 1 \imod{4}$, we would obtain immediately that the probability that $\Delta_{L_h/\Q}$ is divisible by the $d$\textsuperscript{th} power of some integer exceeding $L\left(\frac{1}{3}\right)$ is of order $L\left(\frac{2}{3}\right)^{-1}$. If this held we would be able to remove the condition that $d$ is odd.
\end{rem}
In fact, we can show that under this kind of heuristic, the reduction to squares can be done in $L_n\left(\frac{1}{3}, \o\left(1\right)\right)$ time. In particular, we show:
\begin{claim} Set $K = \Q\left(\alpha\right)$ and $L_h = K\left(\sqrt{h}\right)$, for $\alpha$ the root of a random $f$ and $h$ any non-zero element of the ideal class group of $K$. Then Claim~\refp{prob-zero-as-used} holds (with $\varepsilon \rightarrow 0$) if there is an $\epsilon > 0$ and an $\epsilon' \rightarrow 0$ such that:
\[
P_f\left(\exists h\in H, k \geq L\left(1/3\right)^{\epsilon'} \text{ s.t. } k^d \mid \Delta_{L_h/\Q}\right) < L\left(1/3+\epsilon\right)^{-1}
\]
\end{claim}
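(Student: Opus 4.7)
The plan is to rerun the argument of Lemma~\refp{prob-zero-as-used} with a drastically sharpened bound on the discriminant of the unique quadratic subfield $F_h$ of $L_h$, supplied by the heuristic. As in that lemma, transitivity of the discriminant through the tower $L_h/F_h/\Q$ gives
\[
\Delta_{F_h/\Q}^d \cdot \mathbf{N}_{F_h/\Q}\!\left(\Delta_{L_h/F_h}\right) = \Delta_{L_h/\Q},
\]
so $\Delta_{F_h/\Q}^d$ divides $\Delta_{L_h/\Q}$ as a positive integer. The heuristic asserts precisely that, off a set of $f$ of probability at most $L(1/3+\epsilon)^{-1}$, no such $\Delta_{L_h/\Q}$ admits a $d$-th power divisor exceeding $L(1/3)^{\epsilon'}$; hence on the complement we obtain $\Delta_{F_h/\Q} < L(1/3)^{\epsilon'}$ for every $h \in H$.

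With this tightened range of moduli, I would cover $[2,L(1/3)^{\epsilon'}]$ by $\tfrac{1}{3}\log\log n + O(\log\log\log n)$ geometric intervals $[q,q^e]$. By Fact~\refp{landau-L}, each interval admits at most one Kronecker character $\left(\tfrac{\Delta_{F_h/\Q}}{\cdot}\right)$ whose Dirichlet $L$-function has a Siegel zero with $\nu^{-1} > (e+1)c\log q$. Since $\log q \leq \epsilon'\log^{1/3}n(\log\log n)^{2/3}$ is only polylogarithmic in $n$, whereas the $E_{K,\varepsilon}$ condition $\nu^{-1} > L(1/3,\varepsilon)$ is sub-exponential for any $\varepsilon = \omega((\log\log n)\log^{-1/3}n)$, the $E_{K,\varepsilon}$ condition is strictly stronger than the Landau threshold. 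Hence the count of $h$ whose exceptional zero can be traced to $\zeta_{F_h}$ is at most $\tfrac{1}{3}\log\log n + O(\log\log\log n) < \tfrac{4}{3}\log\log n$.

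The main obstacle, which I expect to be the hardest step, is to justify for $\varepsilon = o(1)$ that every Siegel zero of $\zeta_{L_h}$ with $\nu^{-1} > L(1/3,\varepsilon)$ actually is a zero of $\zeta_{F_h}$. In Lemma~\refp{prob-zero-as-used} this role is played by Stark (Fact~\refp{starkfact} and Corollary~\refp{starkcor}), but its threshold $\nu^{-1} > 2^{d+2}d!\log\Delta_{L_h/\Q} = L(1/3,\tfrac{\delta}{3}+o(1))$ is incompatible with the regime $\varepsilon \to 0$, and the heuristic constrains only the arithmetic structure of $\Delta_{L_h/\Q}$, not its absolute size (which Minkowski still bounds only by $\Delta_K^{5/2}$). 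To close the gap, I would exploit the factorisation $\zeta_{L_h}(s) = \zeta_K(s)\,L(s,\chi_h)$ where $\chi_h$ is the quadratic Hecke character of $K$ corresponding to $L_h/K$, whose conductor divides the different ideal $(2h)$ and so is bounded via Minkowski by $\sqrt{\Delta_K}$. A Hecke-character analogue of Fact~\refp{landau-L} would localise Siegel zeros of the $L(s,\chi_h)$-factor to a polylogarithmic exceptional set, with $\zeta_K(s)$ handled independently once. Granted such a Hecke-Landau input, the total count $|E_{K,\varepsilon}|$ is the sum of two polylogarithmic contributions, comfortably below $\tfrac{4}{3}\log\log n$; combining this with the heuristic's $L(1/3+\epsilon)^{-1}$ failure probability and the reducibility bound of Lemma~\refp{f-reducible-probability} then yields the claimed strengthening of Lemma~\refp{prob-zero-as-used} with $\varepsilon \to 0$.
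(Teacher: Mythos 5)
The paper's proof rests on a result you do not invoke: Stark's Lemma~11, stated as an unlabelled Fact inside that very proof, which gives \emph{unconditionally} that any real zero $1-\nu$ of a quadratic $\zeta_{F/\Q}$ satisfies $\nu^{-1} \ll \sqrt{\Delta_{F/\Q}}$. With the heuristic forcing $\Delta_{F_h/\Q} < L(1/3)^{o(1)}$ for every $h \in H$ (off an $L(1/3+\epsilon)^{-1}$ set of $f$), this bound already yields $\nu^{-1} = L(1/3)^{o(1)}$ for every $h$; there is then no exceptional subset of $H$ to count, so the Landau dichotomy (Fact~\refp{landau-L}) and the dyadic cover that occupy your first two paragraphs are unnecessary detours. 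The paper simply feeds the discriminant bound into Stark's Lemma~11 and declares $E_{K,\varepsilon}$ effectively empty.

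That said, your third paragraph identifies a genuine tension. Stark's Lemma~8 (Fact~\refp{starkfact} via Corollary~\refp{starkcor}) only reclassifies a Siegel zero of $\zeta_{L_h}$ as a zero of $\zeta_{F_h}$ once $\nu^{-1} > c(L_h)\log\Delta_{L_h/\Q} = L(1/3,\delta/3 + o(1))$, and the paper's proof of this claim carries the matching condition $\log x = \boldsymbol{\omega}\left(4(d-1)!\log\Delta_{L_h/\Q}\right) = \boldsymbol{\omega}\left(L(1/3,\delta/3)\right)$ while nevertheless concluding $\log x = L(1/3,o(1))$; as written these are inconsistent, so you were right to flag the Stark threshold as the unresolved step. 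Your repair via the factorisation $\zeta_{L_h} = \zeta_K \cdot L(s,\chi_h)$ and a hypothetical Landau-type statement for quadratic Hecke characters of $K$ is plausible in outline — and you correctly observe that the conductor of $\chi_h$ has norm at most $\mathbf{N}(2h) \leq 2^d M_K = L_n(2/3)$, so a dyadic cover costs $\sim \frac{2}{3}\log\log n$ intervals, inside the $\frac{4}{3}\log\log n$ budget — but such a Hecke--Landau result is neither proved nor cited here, so that bridge remains a genuine gap. In short: you caught a problem the paper's one-line proof does not visibly resolve, but you are missing the ingredient (Stark's Lemma~11) the paper actually uses, and your proposed fix is currently speculation rather than proof.
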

\begin{proof} Again, we need a result of Stark:
\begin{fact}[{\cite[Lemma 11]{Stark}}	] We assume $f$ is irreducible; by Lemma~\refp{f-reducible-probability} the probability that $f$ is reducible is can be adsorbed into $\varepsilon$. Let $F$ be a quadratic field, and $1-\nu$ a zero of $\zeta_{F/\Q}$. Then
$
\nu^{-1} < \O\left(\sqrt{\Delta_{F/\Q}}\right).
$
\end{fact}
We require
$
\log x = \boldsymbol{\omega}\left(\max\left(\sqrt{\Delta_{F/\Q}}, 4\left(d-1\right)! \log\Delta_{L_h/\Q}, 2d (\log \log x)^2\right)\right),
$
and define $F_h$ as before, which is bounded by Equation~\refp{norm-tower}.
Given the conditions of the claim,
$
\P_f\left(\Delta_{F_h/\Q} > L\left(1/3\right)^{\o\left(1\right)}\right) < L\left(1/3 + \epsilon\right)^{-1},
$
and so for all but an $L\left(1/3 + \epsilon\right)^{-1}$ fraction of $f$ we can take $\log x = L\left(1/3, \o\left(1\right)\right)$, which achieves the claimed bounds.
\end{proof}

\section{Non-trivial Factors from Found Congruences}
We now turn to some brief comments on the fruitfulness of the found congruences.
We restrict to the situation where $p \equiv q \equiv 3 \imod{4}$. Then observe that characters $\chi_p, \chi_q$ modulo $p$ and $q$ given by the respective Legendre symbols:
\[
\chi_p(x) \defeq \left(\frac{x}{p}\right), \chi_q(x) = \left(\frac{x}{q}\right)
\]
are by definition multiplicative, of order two and degree one and with:
\[
\chi_p(-1) = \chi_q(-1) = -1.
\]
Consider the character $\chi_n \defeq \chi_p\chi_q$, which by construction is a character modulo $n$. We note that $\chi_n(\pm 1) = 1$, whilst for $x^2 \equiv 1 \imod{n}$, $x \not\equiv \pm 1 \imod n$, $\chi_n(x) = -1$.

Let the multiplicative map from $\Z[\alpha] \rightarrow \Z/n\Z$ given by $1 \rightarrow 1$, $\alpha \rightarrow m$ be denoted $\phi_{m, \alpha}$.
We define a multiplicative semigroup of polynomials $\mathcal{P}$ by:
\begin{align*}
\mathcal{P} &\defeq \mathcal{P}_{m, \alpha} = \left\{h \in \Z[X] : (h(m), n) = 1, h(m) \in \Z \text{ is square}, h(\alpha) = g^2, g \in Z[\alpha]\right\}.
\end{align*}
We say that the smooth part of $\mathcal{P}$, $\mathcal{P}_S$ is the set of $h$ such that $h \in \mathcal{P}$, $h(m)$ is smooth, $h(\alpha)$ is smooth, and $h$ splits as the product of linear factors of height $L_n\left(\frac{1}{3}, \sigma\right)$. We define a character $\chi_{\mathcal{P}}$ on $\mathcal{P}$ by:
\[
\chi_{\mathcal{P}}(h) = \chi_n\left(\sqrt{h(m)}\right)\chi_n\left(\phi_{m, \alpha}\left(\sqrt{h(\alpha)}\right)\right).
\]
Note that since $\chi_n(-1) = 1$, the definition of $\chi_{\mathcal{P}}$ is not dependent on which square roots are taken. Since $(h(m), n) = 1$ for all $h \in \mathcal{P}$, $\chi_{\mathcal{P}}$ naturally extends to the group of fractions of $\mathcal{P}$. Furthermore, $\chi_{\mathcal{P}}(h^2) = 1$ for any $h \in \Z[X]$ with $(h(m), n) = 1$. Hence $\chi_{\mathcal{P}}$ may be extended to a degree one and order two character on $\Z[X]$. Let $\mathcal{G}_{\mathcal{P}}$ be the set of these extensions. Then $\mathcal{G}_{\mathcal{P}}$ is closed under multiplication by any order two character which is trivial on $\mathcal{P}$. Recall that all of the characters we define in section~\refp{algebraic-squares} are of this form. We choose a specific extension in $\mathcal{G}_{\mathcal{P}}$ and denote this by $\chi_{\mathcal{P}}$.

\begin{conjecture}\label{char-decor}
Let $n, m, \alpha$ and notation be as above. Then $\chi_{\mathcal{P}}$, restricted to $\mathcal{P}_S$, cannot be written as a product of the characters $\chi_{\mathfrak{p}}$ and the characters $(-1)^{ord_p(P(m))}$, $(-1)^{ord_{\mathfrak{p}}(P(\alpha))}$.
\end{conjecture}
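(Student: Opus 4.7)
The plan is to read the conjecture as a linear-independence assertion in the $\F_2$-vector space of order-two characters on the group of fractions of $\mathcal{P}$. The character $\chi_{\mathcal{P}}$ sees Legendre-symbol data modulo $p$ and $q$ separately, whereas every character in the purported spanning set factors through data intrinsic to $\Z$ and $\mathcal{O}_K$, where $K = \Q(\alpha)$. The strategy is to exhibit an asymmetry: find pairs $h_1, h_2 \in \mathcal{P}_S$ that agree under every $\chi_{\mathfrak{p}}$ and every parity character $(-1)^{\ord_p(h(m))}$, $(-1)^{\ord_{\mathfrak{p}}(h(\alpha))}$, yet are separated by $\chi_{\mathcal{P}}$.

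Concretely, I would first organize the alleged spanning set. Each generator is determined by prime-factorization data: parities of $\ord_p(h(m))$ at rational primes $p < B$, parities of $\ord_{\mathfrak{p}}(h(\alpha))$ at prime ideals of bounded norm, and quadratic Legendre symbols on $\F_{r^k}$ attached to the splitting of $f$. Write $\mathcal{Q}$ for the group quotient of $\mathcal{P}$ through which all of these factor; any element of the alleged span is pulled back from a character on $\mathcal{Q}$. The conjecture then reduces to showing that $\chi_{\mathcal{P}}$ is \emph{not} such a pullback, i.e. that the fibre of the map $\mathcal{P}_S \to \mathcal{Q}$ over any point is not contained in a single level set of $\chi_{\mathcal{P}}$.

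I would then attack the fibre statement by a density argument. Fix the smooth factorisations of $h(m) \in \Z$ and of the ideal $(h(\alpha)) \subset \mathcal{O}_K$; the square roots $\sqrt{h(m)}$ and $\sqrt{h(\alpha)}$ are then determined up to units and signs, and the question becomes whether, as $h$ ranges over $\mathcal{P}_S$ with this data held fixed, the composite symbol $\chi_n(\sqrt{h(m)}) \chi_n(\phi_{m,\alpha}(\sqrt{h(\alpha)}))$ equidistributes on $\{\pm 1\}$. The hypothesis $p \equiv q \equiv 3 \imod{4}$ guarantees $\chi_n(-1) = 1$, so the sign ambiguity in $\sqrt{\cdot}$ does not spoil the character value. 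Pigeonhole then delivers the conjecture.

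The main obstacle will be establishing this equidistribution, which is a character-sum statement of the shape
\[
\sum_{(a,b) \in \X_{n,m,f}} \chi_n(a - bm)\, \chi_n(\phi_{m,\alpha}(a - b\alpha)) = \o(|\X_{n,m,f}|)
\]
weighted by the smoothness indicators defining $\mathcal{P}_S$. Because the support is sparse and $\chi_n$ is a character of modulus $n$, rather than one visible to the number-field side, this resists elementary treatment and is reminiscent in spirit of the Miller--Venkatesan bounds for Pollard rho. A plausible route combines Burgess-type short-interval estimates for $\chi_n$ with a bilinear-forms decomposition of the smooth indicator, together with a Chebotarev-theoretic input in the spirit of Lemma~\ref{prime-sampling} to decouple the number-field factor. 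The randomness of $f$ from the Randomised NFS is presumably essential: averaging over $f$ rules out the rare polynomials for which $\chi_{\mathcal{P}}$ accidentally lies in the span, and it is this averaging step that likely obstructs a fully unconditional proof and leaves the statement as a conjecture.
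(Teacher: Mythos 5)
The statement you have been asked to prove is \emph{Conjecture}~\ref{char-decor}, and it is deliberately left unproven in the paper: Theorem~\ref{NFS-fruitfull-0-1/2} is stated conditionally on it, and the remark following the conjecture as well as the remark closing the proof of Theorem~\ref{NFS-fruitfull-0-1/2} make clear that no mechanism is known to rule out $\chi_{\mathcal{P}}$ lying in the span. Your final paragraph correctly recognises this; what you have written is a strategy sketch that terminates at an unproven equidistribution estimate, not a proof, which is the honest state of affairs.

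Where your sketch and the paper differ in emphasis is worth noting. Your reformulation of the conjecture as a linear-independence claim in the $\F_2$-space of order-two characters on the group of fractions of $\mathcal{P}$, and your reduction to the non-constancy of $\chi_{\mathcal{P}}$ on fibres of the map $\mathcal{P}_S \to \mathcal{Q}$, is the right way to read the statement and is consistent with how the paper itself discusses it. But the paper's heuristic support is \emph{not} the character-sum equidistribution you propose; it is a pure dimension count. The remark immediately following the conjecture observes that, for parameters satisfying Equation~\ref{const-bounds}, the $\F_2$-dimension of $\mathcal{P}_S$ exceeds the number of characters in the candidate spanning set by a multiplicative $L_n\left(\frac{1}{3}\right)$ factor, so \emph{almost every} order-two character on $\mathcal{P}_S$ lies outside the span. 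That is an a~priori counting argument saying the span is a tiny subspace, whereas your proposal asks for an a~posteriori analytic statement about one specific character. The paper also records (in the remark after the proof of Theorem~\ref{NFS-fruitfull-0-1/2}) that a failure of the conjecture would mean the NFS with that $m,f$ could never produce a non-trivial congruence, and appeals to the empirical success of the NFS as soft evidence; again, no analytic input of the Burgess or bilinear type you invoke appears anywhere in the paper. So your route is a plausible but strictly harder programme than what the paper claims, and the gap you identify at the end, the equidistribution of $\chi_n\!\left(\sqrt{h(m)}\right)\chi_n\!\left(\phi_{m,\alpha}\!\left(\sqrt{h(\alpha)}\right)\right)$ over fibres of $\mathcal{Q}$ for a character $\chi_n$ of modulus $n$ that is invisible to both $\Z$ and $\mathcal{O}_K$, is exactly the sort of statement the authors flag as out of reach and as likely to require methods substantially beyond the ones developed here.

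One small technical caution if you ever try to push the equidistribution route: you need the fibres of $\mathcal{P}_S \to \mathcal{Q}$ to be nonempty and moderately large \emph{uniformly} over the data being held fixed, which is not automatic from the global dimension count alone, and you also need to verify that the unit ambiguity in $\sqrt{h(\alpha)} \in \mathcal{O}_K$ is genuinely killed by $\chi_n$ after composing with $\phi_{m,\alpha}$; the paper handles the $\pm 1$ ambiguity via $p\equiv q \equiv 3 \imod 4$ as you note, but the full unit group of $\mathcal{O}_K$ is larger than $\{\pm1\}$ and would have to be addressed in any honest version of the fibre argument.
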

\begin{rem}
Note that for any $\beta, \beta', \delta, \kappa, \sigma$ satisfying the conditions of Equations~\refp{const-bounds}, we have that the dimension of $\mathcal{P}_S$ exceeds the number of characters given by a multiplicative $L_n\left(\frac{1}{3}\right)$ factor. As a corollary, almost every character on $\mathcal{P}_S$ satisfies the conditions of the conjecture.
\end{rem}

\begin{proof}[Proof of Theorem~\refp{NFS-fruitfull-0-1/2}]
Finding a fruitful congruence is precisely finding a polynomial $P \in \mathcal{P}$ such that
\[
\sqrt{P(m)} \neq \phi_{m, \alpha}\left(\sqrt{P(\alpha)}\right) \imod{n} \Leftrightarrow \chi_{\mathcal{P}}(P) = -1.
\]
If we consider $\chi_{\mathcal{P}}$ to be an additional character on our linear terms, we now seek to solve a non-homogeneous system of equations; we require that each prime appears with even total degree, that each of the additional prime-based characters $\chi_{\mathfrak{p}}$ is $1$, and that $\chi_{\mathcal{P}} \neq -1$.

Now, since $\chi_{\mathcal{P}}$ is not a product of these characters, on the kernel of these characters in $\mathcal{P}_S$ we must have that $\chi_{\mathcal{P}} = -1$ for a subspace of codimension 1. Running the Randomised NFS for at most $L_n\left(\frac{1}{3}, 2\sigma + \o(1)\right)$ time guarantees to find \emph{every} possible factor $a - bX$, and so finds every generator of $\mathcal{P}_S$. Then since we may uniformly sample the kernel of our linear operator by Weidemann's algorithm, we can guarantee that the relationship we find is fruitful with probability $\frac{1}{2}$.
\end{proof}

\begin{remark}
Note that if the conjecture is false for some $n, f$, which implies choices of $m, \alpha$, then the same argument entails that the NFS run with these parameters can never find a non-trivial congruence of squares. We note that since the NFS has been successfully run on a number of $n$ with generic $m$ and $f = \hat{f}_{n,m}$, it would be surprising if the conjecture was false for most $f$. We emphasise that there does not seem to be a natural reason for $\chi_{\mathcal{P}}$ to be related to characters either of form $\chi_{\mathfrak{p}}$ as defined in section~\refp{algebraic-squares} or of form $(-1)^{\ord_p(f(m))}$ or $(-1)^{\ord_{\mathfrak{p}}(f(m))}$ for primes $p$ or $\mathfrak{p}$ of small norm.
\end{remark}

The situation where $p, q$ are not both $3 \imod{4}$ is more complex, as there is no single character which can be used to consistently define which branch of the square root has been taken modulo $p$ and $q$. This is turn means that there is no multiplicative character which reveals whether a congruence is fruitful or not, and so it is unclear how (even notionally) one might show that the linear algebraic step may produce non-trivial congruences.

\begin{rem}
In the case of the multiple polynomial NFS, the space $\mathcal{P}$ is the product of the semigroups defined for each $f^{(i)}$, and the character $\chi_{\mathcal{P}}$ is defined by taking a decomposition of any element of the product into squares in the number fields and taking roots in all places individually. The space $\mathcal{P}_S$ is then the product of the smooth parts of the semigroups defined for each $f^{(i)}$, and the statement of Conjecture~\ref{char-decor} and the analogous proof for Theorem~\refp{NFS-fruitfull-0-1/2} are unchanged.
\end{rem}

\section{Smooth Numbers in Progressions and the Proof of Lemma~\ref{smooth-bad-dist}.}\label{NFS-uniform-chap}\label{smooth-bad-dist-proof}

The core aim of this section will be to establish suitable bounds on the smoothness of numbers in arithmetic progressions, so that we can prove Lemma~\refp{smooth-bad-dist}. In particular we seek equi-distribution results for the smooth numbers in arithmetic progressions. We will now discuss some of the context for this work, and related results which we build upon.
To control $\pi$ or $\pi_{q, a}$, it is natural to work with the Von Mangoldt function
$
\Lambda(n) = \mathbbm{1}_{n\text{ is a prime power}} \log n,
$
as the sum of $\Lambda(n)$ is more straightforwardly controlled. To pass from results of $\Lambda$ back to results on $\pi$ is essentially standard by partial summation. The deviation of $\Lambda$, given by
\[
\Delta\left(x,q,a\right)=\left|\sum_{y < x, y \equiv a \imod{q}} \Lambda\left(y\right)-\frac{y}{\phi\left(q\right)}\right|,
\]
can be effectively bounded with the GRH. The best unconditional bounds which are uniform in the moduli $q$ are given by the Siegel-Walfisz theorem, which is famously ineffective and is too weak for our purposes. However,
we can look at the \emph{average case} or seek to only obtain bounds for \emph{most} $q$.
For example, the \emph{Bombieri-Vinogradov}
theorem states that for all
$A>0,$ $\sqrt{x}/\log^{A}x\leq Q\leq\sqrt{x},$
\[
\sum_{q\leq Q}\max_{y\leq x}\max_{a}\Delta\left(y,q,a\right)\ll{A}\sqrt{x}Q\log^{5}x,
\]
and the related \textit{Barban-Davenport-Halberstam} theorem states that
\[
\sum_{q\leq Q}\sum_{\left(a,q\right)=1}\Delta\left(x,q,a\right)^{2} \ll_A xQ\log x.
\]
In both cases, the moral is that for \emph{most} $a$ and $q$, the deviation
$\Delta\left(x,q,a\right)$ cannot be much larger than $\sqrt{x}\log^{\O\left(1\right)}x$, and so in particular the error in the prime number theorem for arithmetic progressions similarly cannot be much larger than $\sqrt{x}\log^{\O\left(1\right)}x$ for most $a$ and $q$.

Analogous equi-distribution questions for $y$-smooth numbers over arithmetic progressions (counted by
$\Psi\left(x,y,q,a\right)$) have been studied (See \cite{Hildebrand-Tenenbaum-survey}
for a survey of results).
Granville~\cite{G1, G2} and
Soundararajan~\cite{Soundararajan} studied this question further. Soundararajan proved that
$
\Psi\left(x,y,q,a\right)\sim\frac{\Psi_{q}\left(x,y\right)}{\phi\left(q\right)}
$
and an analogous statement of equidistribution on cosets of a subgroup of $\left(\Z/q\Z\right)^*$.
Recently, Harper~\cite{Harper-sound} expanded the range of $y$ for which the result
is applicable. Building on Soundararajan's work further Harper~\cite{Harper}
also provided Bombieri-Vinagradov and Barban-Davenport-Halberstam type bounds for the smooth counting function:
\begin{fact}[{Harper~\cite[Theorem 1]{Harper}}]\label{HarperFact} Let $c$ and $K$ be fixed and effective constants. Then for any $\log^K F < B < F$, with $u \defeq \log F / \log B$, and $Q \leq \sqrt{\Psi\left(F,B\right)}$:
\[
\sum_{r \leq Q} \max_{\left(s,r\right) = 1} \left|\Psi\left(F,B; r,s\right) - \frac{\Psi_r\left(F,B\right)}{\phi\left(r\right)}\right| \ll \Psi\left(F,B\right) \left(e^{-\frac{cu}{\log^2 u}} + B^{-c}\right) + Q\sqrt{\Psi\left(F,B\right)} \log^{7/2} F
\]
with an implied effective constant $C = C\left(c,K\right)$.
\end{fact}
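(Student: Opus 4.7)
The plan is to follow the Bombieri--Vinogradov template, with inputs from the theory of smooth numbers replacing the classical prime-counting inputs. First, decompose via orthogonality of Dirichlet characters:
\[
\Psi(F, B; r, s) - \frac{\Psi_r(F, B)}{\phi(r)} = \frac{1}{\phi(r)} \sum_{\chi \imod{r},\, \chi \neq \chi_0} \bar{\chi}(s) \Psi(F, B; \chi),
\]
where $\Psi(F, B; \chi) = \sum_{n \leq F,\, n\text{ is }B\text{-smooth}} \chi(n)$. Pulling the maximum over $s$ outside the absolute values and reducing to primitive characters (the imprimitive contributions being absorbed via Euler-factor truncation at primes dividing the conductor) localises the problem to bounding a sum of the shape $\sum_{r \leq Q} \phi(r)^{-1} \sum^{*}_{\chi \imod r} |\Psi(F, B; \chi)|$.

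Second, split into a \emph{small conductor} regime ($r \leq R_0$ with $R_0$ polylogarithmic in $F$) and a \emph{large conductor} regime ($r > R_0$). In the small-conductor range, use the Hildebrand--Tenenbaum saddle-point analysis together with the equi-distribution results of Soundararajan to obtain the pointwise bound
\[
|\Psi(F, B; \chi)| \ll \Psi(F, B)\bigl(e^{-cu/\log^2 u} + B^{-c}\bigr),
\]
where the first term captures the standard $\rho(u)$-decay and the second absorbs contributions from possible Siegel--Landau exceptional real zeros of $L(s, \chi)$ for small-conductor characters. Summing over such $r$ contributes the first error term in the statement.

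For large conductors, the main analytic tool is the multiplicative large sieve
\[
\sum_{r \leq Q} \frac{r}{\phi(r)} \sideset{}{^{*}}\sum_{\chi \imod r}\Bigl|\sum_{n \leq F} a_n \chi(n)\Bigr|^2 \ll (Q^2 + F)\sum_{n \leq F} |a_n|^2,
\]
applied with $a_n = \mathbbm{1}_{n\text{ is }B\text{-smooth}}$ so that $\sum_n |a_n|^2 = \Psi(F, B)$. Cauchy--Schwarz converts this second-moment bound into a first-moment bound, and $\sum_{r \leq Q} \phi(r)^{-1} \cdot r \ll \log Q$ type estimates together with the reduction to primitive characters absorb the polylogarithmic losses, producing the $Q \sqrt{\Psi(F, B)} \log^{7/2} F$ term. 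The constraint $Q \leq \sqrt{\Psi(F, B)}$ is exactly the point where the large sieve's quadratic loss $Q^2$ balances the total mass $\Psi(F, B)$ of smooth numbers.

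The main obstacle will be ruling out \emph{pretentious} characters: those $\chi$ for which $|\Psi(F, B; \chi)|$ is anomalously large. By the Granville--Soundararajan framework, this can happen only when $\sum_{p < B}(1 - \operatorname{Re}\chi(p))/p$ is small, i.e.\ $\chi$ ``pretends'' to be principal on primes up to $B$. Harper's innovation, which would need to be recapitulated, is a large-sieve-style counting argument showing that the number of such rogue primitive $\chi$ of conductor at most $Q$ is small enough that their combined contribution does not exceed the first error term, and crucially that the savings come out \emph{effectively} as $e^{-cu/\log^2 u}$ rather than the weaker $e^{-cu}$ or ineffective Siegel-zero bound available from a naive treatment. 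The hypothesis $B > \log^K F$ is precisely what prevents $u = \log F/\log B$ from growing so fast that these savings collapse.
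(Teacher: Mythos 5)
This statement is imported by the paper as an external result (Harper's Theorem~1) and is not proved there; the nearest thing to an in-paper proof is the argument for Lemma~\ref{HZ-1-large}, which recapitulates Harper's strategy for a restricted (smooth-moduli) variant. Measured against that strategy, your outline has the right skeleton --- orthogonality, reduction to primitive characters, a conductor split, and the multiplicative large sieve for the biggest conductors --- but two of its steps fail as written. The claimed pointwise bound $|\Psi(F,B;\chi)| \ll \Psi(F,B)\left(e^{-cu/\log^2 u} + B^{-c}\right)$ is simply false for a character whose $L$-function has a Landau--Siegel zero, and no saddle-point or equidistribution argument delivers it uniformly. What is actually available is a bound in terms of the largest real zero $\beta_\chi$ (Fact~\ref{character-bound}); one must then invoke Page's theorem to show there is at most one exceptional real primitive character of small conductor and bound its contribution by a separate estimate (Fact~\ref{Harper-Page}). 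It is this exceptional-zero analysis, not a pretentious-style count of ``rogue'' characters, that produces the effective $e^{-cu/\log^2 u}$ saving; your proposal inverts this by asking the $B^{-c}$ term to absorb the Siegel-zero contribution, which it cannot.

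The second gap is the two-range split at a polylogarithmic threshold $R_0$. The large sieve bound $(Q^2+F)\Psi(F,B)$ is essentially trivial for conductors below a fixed power of $F$, since the $F$ term dominates; after Cauchy--Schwarz it yields a contribution of size roughly $\sqrt{F\,\Psi(F,B)}$ times a power of $\log F$, which equals $\Psi(F,B)\,\varrho(F,B)^{-1/2}$ up to logarithms and so \emph{exceeds} the main term rather than saving over it. The large sieve only becomes useful once the conductor exceeds $F^{\eta}$ (Fact~\ref{large-characters}), and even then the hypothesis $B > \log^K F$ with $K$ large relative to $\eta$ is needed so that $F^{1/2-\eta} \ll \sqrt{\Psi(F,B)}$. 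In the intermediate range, roughly $e^{\eta\sqrt{\log F}} \leq r \leq F^{\eta}$, neither tool applies: there one needs a zero-free-region estimate for the typical characters (Fact~\ref{zero-free-character}) combined with a log-free zero-density estimate (Fact~\ref{zero-density}) showing that characters with zeros near $s=1$ are rare enough that the trivial bound suffices for them. Without this third range and the zero-density input, the argument does not close.
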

Harper also provides a Barban-Davenport-Halberstam type theorem, which we do not need but which our methods also naturally provide.
\begin{fact}[{Harper~\cite[Theorem 2]{Harper}}]
 There exist $c$ and $K$ fixed and effective constants such that for any $\log^K F < B < F$, with $u \defeq \log F / \log B$, and $Q \leq \Psi\left(F,B\right)$:
\[
\sum_{r \leq Q} \sum_{\left(s,r\right) = 1} \left|\Psi\left(F,B; r,s\right) - \frac{\Psi_r\left(F,B\right)}{\phi\left(r\right)}\right|^2 \ll \Psi\left(F,B\right)^2 \left(e^{-\frac{cu}{\log^2 u}} + B^{-c}\right) + Q\Psi\left(F,B\right)
\]
with an implied effective constant $C = C\left(c,K\right)$.
\end{fact}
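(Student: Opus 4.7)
The plan is to convert counting in residue classes into character sums via orthogonality, and then to bound those character sums on average using a combination of an individual Soundararajan--Harper-type estimate and a large-sieve-style mean-square inequality. This is the standard template for Barban--Davenport--Halberstam type results, adapted to the indicator of $B$-smooth integers in place of the von Mangoldt function.

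First, for each modulus $r$, orthogonality of Dirichlet characters modulo $r$ yields
\[
\Psi(F,B;r,s) - \frac{\Psi_r(F,B)}{\phi(r)} = \frac{1}{\phi(r)} \sum_{\substack{\chi \pmod{r} \\ \chi \neq \chi_0}} \bar{\chi}(s)\, \Psi(F,B;\chi),
\]
where $\Psi(F,B;\chi) := \sum_{n \leq F,\, n \text{ is } B\text{-smooth}} \chi(n)$; the principal character contributes exactly $\Psi_r(F,B)/\phi(r)$, which is why it cancels. Squaring and summing over $s$ coprime to $r$, orthogonality collapses the double character sum to give
\[
\sum_{(s,r)=1} \left| \Psi(F,B;r,s) - \frac{\Psi_r(F,B)}{\phi(r)} \right|^2 = \frac{1}{\phi(r)} \sum_{\substack{\chi \pmod{r} \\ \chi \neq \chi_0}} |\Psi(F,B;\chi)|^2.
\]
Summing over $r \leq Q$ and then writing each $\chi$ as induced from its primitive inducer $\chi^*$ of conductor $q \mid r$ (absorbing a $\log Q$-size arithmetic factor $W(q)$ from summing $1/\phi(qk)$ over $k \leq Q/q$) reduces the problem to estimating
\[
T := \sum_{q \leq Q} \frac{W(q)}{\phi(q)} \sum_{\substack{\chi^* \pmod{q} \\ \chi^* \neq \chi_0}}^{*} |\Psi(F,B;\chi^*)|^2.
\]

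The estimation of $T$ uses two complementary ingredients. The first is an \emph{individual} bound for a primitive non-principal character $\chi^*$ of conductor $q \leq \sqrt{\Psi(F,B)}$, of the form $|\Psi(F,B;\chi^*)| \ll \Psi(F,B) \bigl(e^{-cu/\log^2 u} + B^{-c}\bigr)^{1/2}$, established via the saddle-point method of Hildebrand--Tenenbaum on the Mellin transform of the smooth indicator combined with a Granville--Soundararajan style multiplicative decomposition of $\chi^*$ over its support; this is essentially Fact~\ref{HarperFact} specialised to a single character. The second is a large-sieve mean-square inequality
\[
\sum_{q \leq Q} \frac{q}{\phi(q)} \sum_{\chi^* \pmod q}^{*} |\Psi(F,B;\chi^*)|^2 \ll (Q^2 + \Psi(F,B))\, \Psi(F,B),
\]
proved by Montgomery--Vaughan style duality applied to the characteristic function of smooth integers in $[1, F]$. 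Using the individual bound everywhere, together with $\sum_{\chi^* \pmod q}^{*} 1 \leq \phi(q)$, yields the main term $\Psi(F,B)^2 (e^{-cu/\log^2 u} + B^{-c})$. Using the large sieve directly in the range $Q^2 \leq \Psi(F,B)$ yields the residual $Q\,\Psi(F,B)$.

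The main obstacle is ingredient (a), namely proving the individual character-sum decay $e^{-cu/\log^2 u}$ uniformly over all primitive non-principal $\chi^*$ of conductor up to $\sqrt{\Psi(F,B)}$ with \emph{effective} constants. This requires a careful saddle-point analysis of $\Psi(F,B;\chi^*)$ against the Dickman function, with explicit control on the location of hypothetical exceptional zeros of $L(s,\chi^*)$ that could otherwise spoil the bound; the admissible range $\log^K F < B < F$ is exactly the window in which these saddle-point approximations remain valid and in which the smoothness density is large enough for the large sieve term $\Psi(F,B)$ to dominate the arithmetic loss from $W(q)/\phi(q)$. Everything else --- the reduction to primitive characters, the bookkeeping of arithmetic weights, and the combination of the two bounds by taking their minimum in each dyadic range of $q$ --- is standard.
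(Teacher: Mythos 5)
You should first note that the paper itself does not prove this statement: it is quoted as an external Fact from Harper~\cite{Harper} (Theorem 2), and the surrounding text even remarks that it is not needed for the main argument. So the only meaningful comparison is with Harper's actual proof and with the paper's proof of its own restricted variant, Lemma~\ref{HZ-1-large}, which follows the same overall strategy. Your opening reductions are consistent with that template: orthogonality in $s$, the collapse to $\frac{1}{\phi(r)}\sum_{\chi\neq\chi_0}|\Psi(F,B;\chi)|^2$, and the passage to primitive characters with an arithmetic weight are all standard (modulo the routine correction terms from $\Psi(F,B;\chi)$ versus $\Psi(F,B;\chi^*)$ coming from smooth $n$ with $(n,r/q)>1$, which you do not address but which are harmless).

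The genuine gap is your ingredient (a). There is no effective bound of the form $|\Psi(F,B;\chi^*)|\ll \Psi(F,B)\bigl(e^{-cu/\log^2 u}+B^{-c}\bigr)^{1/2}$ valid uniformly for \emph{every} primitive non-principal $\chi^*$ of conductor up to $\sqrt{\Psi(F,B)}$ — nor up to the smaller threshold your argument actually needs, which (after dividing by $\phi(r)$ and applying the large sieve dyadically) is of size roughly $\varrho(F,B)^{-1}$, a power of $F$ when $B$ is near $\log^K F$. Proving such a uniform bound with effective constants would amount to effectively excluding Landau--Siegel zeros throughout that conductor range, which is precisely what neither Harper nor this paper does. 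The actual proof (and the paper's proof of Lemma~\ref{HZ-1-large}) replaces your uniform individual bound by a decomposition: for small conductors, the effective estimate of Fact~\ref{character-bound} is applied only to characters whose $L$-functions have no real zero near $1$, Page's theorem guarantees at most one exceptional (necessarily real) small-conductor character, and that character is handled by Fact~\ref{Harper-Page}; for medium conductors one uses Fact~\ref{zero-free-character} for characters with a zero-free region and the log-free zero-density estimate of Fact~\ref{zero-density} to show that the remaining characters are rare enough that their contribution may be bounded trivially by $\Psi(F,B)$; large conductors are handled by the large sieve. On that last point, your stated large-sieve inequality is also wrong: the first factor is $Q^2+F$ (the interval length), not $Q^2+\Psi(F,B)$, so it cannot be ``used directly'' once $Q^2\le\Psi(F,B)$; the resulting $F/R$ loss at moderate conductors is exactly why the intermediate range, where no uniform individual bound exists, cannot be bypassed. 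In short, the skeleton is right, but the step you label the ``main obstacle'' is not something a more careful saddle-point analysis can deliver — as stated it is unprovable with effective constants, and the exceptional-character/zero-density decomposition is the missing idea.
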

In our application we will bound these quantities when the common difference $q = a-mb$ is known to be $y$-smooth: i.e. sums of
form
\[
\sum_{\substack{q\leq Q\\q\text{ is }y-smooth}}\max_{\left(a,q\right)=1}\left\vert
\Psi\left(x,y,q,a\right)-\frac{\Psi_{q}\left(x,y\right)}{\phi\left(q\right)}\right\vert .
\]
The essential difficulty is akin to that of computing the conditional expectation
$\E\left(X\mid S\right)$ for a random variable $X$ and a rare event $S$ (i.e.
$q$ is smooth). We build on these works, and use lemmas and techniques of Harper. Drappeau~\cite{Drappeau1} provides extensions of a similar flavour,
bounding weighted sums
\[
\sum_{q\leq Q}\lambda\left(q\right)\max_{\left(a,q\right)=1}\left\vert
\Psi\left(x,y,q,a\right)-\frac{\Psi_{q}\left(x,y\right)}{\phi\left(q\right)}\right\vert .
\]
with the weighting function $\lambda$ being sub-multiplicative and with $\lambda\left(q\right) \ll q^{1-\epsilon}$.
Our results require a larger range of application; to appeal to Drappeau's results directly seems to require the use of a weight $\lambda$ with $\lambda\left(q\right) \geq \1_{\{z : z\textrm{ is }y\textrm{-smooth}\}}\left(q\right) Q\psi^{-1}(Q, y)$, which is not submultiplicative.

We will state and use
Harper's ideas to derive a sharper result for the restricted sum, as
needed in our case.

\begin{remark}
Using Harper's result (Fact~\refp{HarperFact}) directly with our arguments allows one to prove that ``almost all'' moduli are in fact $B$-good and derive a weaker expected run time bound of $L_n\left(\frac{1}{3}, \O(\log \log n)\right)$.
\end{remark}


\begin{definition}\label{def-q-max} We define:
\[
Q_{\max} \defeq \max_{a,m} |am+b|  = L_n\left(\frac{2}{3}, \delta^{-1}\left(1+\o\left(1\right)\right)\right).
\]
Hereafter will reuse the variables $m$, $b$ to maintain commonality of notation with Harper.
\end{definition}
We seek to bound the probability that a $B$-smooth modulus less than $Q_{\max}$ is $B'$-bad. Naturally, we can show that this is small if we can show that the number of $B'$-bad moduli below $Q$ is much smaller than $\Psi\left(Q_{\max}, B\right)$. We can certainly achieve this if we allow $B$ to be sufficiently large, although it will increase the bounds on the expected run time which can be achieved. We state the following lemma, which we prove later.
\begin{lemma}\label{HZ-1-large}
Let $\epsilon > 0$ be fixed. Then there exist effective constants $K, c$ such that for any $\log^K x < y < x^{1/\log \log x}$, with $u \defeq \log x / \log y$, $x^\epsilon \leq Q \leq \sqrt{\Psi\left(x,y\right)}$ and $\omega = \boldsymbol{\omega}\left(1\right)$ with $\omega = y^{\O\left(1\right)}$:
\begin{equation*}
\begin{split}
\sum_{\substack{r \in [Q\omega^{-1}, Q]\\ r\textrm{ is }y\textrm{-smooth}}} \max_{\left(a,r\right) = 1} \left|\Psi\left(x,y; r,a\right) - \frac{\Psi_r\left(x,y\right)}{\phi\left(r\right)}\right| \ll \Psi\left(x,y\right) \varrho\left(Q,y\right)\left(e^{-\frac{cu}{\log^2 u}} + y^{-c}\right) 
+ Q\sqrt{\Psi\left(x,y\right)} \log^{7/2} x
\end{split}
\end{equation*}
for some effective implied constant $C$ in the $\ll$.
\end{lemma}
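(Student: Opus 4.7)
The plan is to follow Harper's proof of Fact~\ref{HarperFact} closely, inserting the $y$-smoothness restriction on the moduli $r$ at each step. The key structural saving is that among moduli $r$ in $[Q\omega^{-1},Q]$ only a $\varrho(Q,y)$-fraction are $y$-smooth, which is exactly the extra factor separating our bound from Harper's.

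First I would apply character orthogonality to write
\[
\max_{(a,r)=1}\left|\Psi(x, y; r, a) - \frac{\Psi_r(x, y)}{\phi(r)}\right| \leq \frac{1}{\phi(r)}\sum_{\chi\neq\chi_0}|\Psi(x,y;\chi)|,
\]
where the sum is over non-principal Dirichlet characters modulo $r$. Inserting this into the left-hand side of the claimed inequality and reorganising the resulting double sum over $y$-smooth $r$ and characters $\chi$ by the conductor $q$ and primitive character $\chi^*$ inducing $\chi$ yields an upper bound of the shape
\[
\sum_{\substack{1 < q\leq Q\\ q\text{ is }y\text{-smooth}}}A(q)\sum_{\substack{\chi^*\text{ primitive}\\\text{modulo } q}}|\Psi(x,y;\chi^*)|,
\]
where $A(q)$ collects the weights $\phi(r)^{-1}$ from $y$-smooth $r\in[Q\omega^{-1},Q]$ with $q\mid r$. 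Crucially the support of $A$ is automatically restricted to $y$-smooth $q$, since every divisor of a $y$-smooth $r$ is itself $y$-smooth; partial summation together with Corollary~\ref{psi-q-psi} then gives $\sum_{q}A(q) \ll \varrho(Q,y)\log Q$.

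Next I would split the sum over primitive $\chi^*$ into small conductor (where Harper's pointwise bound $|\Psi(x,y;\chi^*)|\ll \Psi(x,y)(e^{-cu/\log^2 u}+y^{-c})$ applies) and large conductor (handled by the mean-value estimate for Dirichlet polynomials and the character large sieve, exactly as in Harper). The small-conductor contribution, multiplied by the density factor $\varrho(Q,y)$ arising from the bound on $\sum_q A(q)$, produces the first term on the right of the claimed inequality; the large-conductor contribution is inherited from Harper's proof essentially unchanged and produces the $Q\sqrt{\Psi(x,y)}\log^{7/2}x$ term.

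The main obstacle will be the bookkeeping in the reorganisation by primitive characters. The induced character $\chi$ modulo $r$ equals $\chi^*$ times the indicator $(n,r/q)=1$, and one must check that this extra coprimality condition contributes only harmless factors when averaged over $y$-smooth $r$ with $q\mid r$; the passage from $|\Psi(x,y;\chi)|$ to $|\Psi(x,y;\chi^*)|$ here uses Corollary~\ref{psi-q-psi} and requires a mild bound on $\omega(r/q)$, which in our range follows from $r\leq Q\leq\sqrt{\Psi(x,y)}$. Care is also needed because Harper's proof of Fact~\ref{HarperFact} interleaves a Sathe--Selberg--Delange decomposition with a large sieve over characters; the $y$-smoothness of $r$ must be propagated through both steps without losing more than a constant in the decay rate $c$, and this tight tracking constitutes the bulk of the technical work.
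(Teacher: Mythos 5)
Your high-level strategy matches the paper's: apply character orthogonality, sort the characters by conductor, and extract the extra factor $\varrho(Q,y)$ by noting that divisors of $y$-smooth $r$ are $y$-smooth and bounding inverse-totient sums restricted to smooth integers (the paper's Claim~\refp{smooth-inverse-sum}, which is essentially what you need). However, your proposed factorisation ``bound $\sum_q A(q)$ and multiply by a pointwise bound on $|\Psi(x,y;\chi^*)|$'' cannot work uniformly for small conductors, and this is the crux: the pointwise estimate $|\Psi(x,y;\chi^*)| \ll \Psi(x,y)(e^{-cu/\log^2 u}+y^{-c})$ is simply false for the possible exceptional character whose $\L$-function has a Siegel zero near $1$, and that single character can contribute on the order of $\Psi(x,y)$. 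The paper therefore splits conductors into \emph{three} ranges ($r<m$, $m\le r<M$, $M\le r\le Q$ with $m=\min(y^\eta,e^{\eta\sqrt{\log x}})$, $M=x^\eta$), separately controls the contribution of characters lying outside the zero-free regions $\mathcal{G}_1$ and $\mathcal{G}_2$ --- Page's theorem bounds the number of exceptional small-conductor characters, Fact~\refp{Harper-Page} bounds their character sums, and Fact~\refp{zero-density} (a log-free zero-density estimate) handles the medium-conductor exceptional characters --- and applies Fact~\refp{large-characters} for the large-conductor range. You would need to reinstate this exceptional-character bookkeeping; your two-way split and the single displayed $\sum_q A(q)$ bound skip it.

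A smaller point: you spend effort on passing from $\Psi(x,y;\chi_r)$ to $\Psi(x,y;\chi^*_q)$ (the indicator $(n,r/q)=1$), but the paper sidesteps this entirely because Harper's Facts~\refp{character-bound} and~\refp{zero-free-character} are stated for imprimitive characters of modulus $q$ with bounded conductor $r=\operatorname{cond}(\chi_q)$, so they can be applied directly to $\Psi(x,y;\chi_r)$ without reduction to the primitive character. Adopting that formulation would save you the coprimality correction you flag as an obstacle.
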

\begin{rem}
We first note that this does not appear to derive from Drappeau's work~\cite{Drappeau1, Drappeau2} on weighted sums of this style. In particular, Drappeau requires that the weights are multiplicative on the integers and bounded by some function tending to zero. In our case we have to cut out the small moduli and the weights do not decline; without the exclusion of the small $r < Q\omega^{-1}$, an analogue of Lemma~\refp{HZ-1-large} need not hold, as the smooth numbers become substantially more dense.
\end{rem}
\begin{rem}
Given Harper's results and a general philosophy of cancellation up to square roots, we might expect that the range of $y$ can be extended up to $x$ and the range of $Q$ decreased to $1$. We do not need the additional strength here.

The condition that $\omega = y^{\O\left(1\right)}$ ensures that $\varrho\left(Q\omega^{-1}, y\right)$ is not much larger than $\varrho\left(Q, y\right)$, in a way which will be made precise in the proof of the Lemma.
\end{rem}

\begin{proof}[Proof of Lemma~\refp{smooth-bad-dist}]
We begin by bounding the number of moduli which are $F$-bad for some $F \in [F_{\max}L_n\left(\frac{1}{3}\right)^{-1}, F_{\max}]$. We fix $\omega \defeq B'$ for concreteness. 
Observe that $\Psi\left(F, B'\right) = FL_n\left(\frac{1}{3}\right)^{-1}$. Since $L_n\left(\frac{2}{3}\right) = \boldsymbol{\omega}\left(L_n\left(\frac{1}{3}\right)\right)$:
\[
Q \leq \sqrt{\Psi\left(F,B'\right)}L_n\left(\frac{2}{3},\frac{\epsilon}{4}\right)^{-1}.
\]
Furthermore, for any $K$ fixed, $\boldsymbol{\omega}(\log^K F) = B' = \o(F^{1/\log \log F})$.
Hence we can apply Lemma~\refp{HZ-1-large}. Suppose that a modulus $r$ is $B$-smooth and also $B'$-bad for $F$. Then for some residue $a$ with $\left(a,r\right) = 1$, the contribution to the LHS of Lemma~\refp{HZ-1-large} for this $r$ is at least:
\[
\left(1+\o\left(1\right)\right)\frac{\Psi_r\left(F, B'\right)}{\phi\left(r\right)} = \frac{\Psi\left(F, B'\right)}{r} \geq \frac{\Psi\left(F,B'\right)}{Q}
\]
where for the first equality we use Corollary~\refp{psi-q-psi}, noting that $B \leq B'$ so $r$ is $B'$-smooth, $u < \log \log n$ and the number of divisors of $r$ is bounded by $\log r$ so that the multiplicative error is $1+\o(1)$. Now:
\begin{multline*}
\sum_{\substack{r \in [Q_{\max}\omega^{-1}, Q_{\max}]\\ r\textrm{ is }y\textrm{-smooth}}} \max_{\left(a,r\right) = 1}
\left|\Psi\left(F,B'; r,a\right) - \frac{\Psi_r\left(F,B'\right)}{\phi\left(r\right)}\right| \\
\begin{aligned}
&\leq C \Psi\left(F,B'\right)\varrho\left(Q_{\max}, B'\right) \left(e^{-\frac{cu'}{\log^2 u'}} + B'^{-c}\right)
 + Q_{\max}\sqrt{\Psi\left(F,B'\right)} \log^{7/2} F \\
&= C F\varrho\left(F,B'\right)\varrho\left(Q_{\max}, B'\right) \left(e^{-\frac{cu'}{\log^2 u'}} + B'^{-c}\right)
 + Q_{\max}F^{1/2}\varrho\left(Fc,B'\right)^{1/2} \log^{7/2} F\end{aligned}
\end{multline*}
First, we observe that $F$ and $Q_{\max}$ are $L_n\left(\frac{2}{3}\right)$, whilst $B'$ is $L_n\left(\frac{1}{3}\right)$. Hence both densities $\varrho\left(Q_{\max}, B'\right)$ and $\varrho\left(F, B'\right)$ are $L_n\left(\frac{1}{3}\right)^{-1}$. From Definitions~\refp{def-q-max} and~\refp{def-F-max} we have
\[
Q_{\max} = L_n\left(\frac{2}{3}, \delta^{-1}(1+\o(1))\right), \quad
F = L_n\left(\frac{2}{3}, (\kappa + \sigma\delta)(1+\o(1))\right),
\]
and from Equation~\refp{d-bound} $2\delta^{-1} < \kappa + \sigma\delta$. Hence $FQ_{\max}^{-2} = L_n\left(\frac{2}{3}\right)$. Since, up to order $L_n\left(\frac{2}{3}\right)^{\o(1)}$, the first term is $F$ and the second is $Q_{\max}F^{1/2}$, we deduce that the first term dominates the second. If $r$ is $B'$-bad for $F$ it contributes at least
$
\left(1+\o\left(1\right)\right) \Psi\left(F, B'\right)Q_{\max}^{-1}
$
to the sum on the left-hand side.

Hence the number of moduli which are in $ [Q_{\max}\omega^{-1}, Q_{\max}]$, are $B$-smooth and $B'$-bad for $F$ is at most:
\begin{align*}
&\left(C+\o\left(1\right)\right)\frac{Q_{\max}}{\Psi\left(F, B'\right)} \Psi\left(F,B'\right)\varrho\left(Q_{\max}, B'\right) \left(e^{-\frac{cu'}{\log^2 u'}} + B'^{-c}\right) 
=\left(C+\o\left(1\right)\right)\Psi\left(Q_{\max}, B'\right) \left(e^{-\frac{cu'}{\log^2 u'}} + B'^{-c}\right) \\
\end{align*}
If a modulus is $B'$-bad near $F_{\max}$, it must be $B'$-bad for some
\[
F \in \left\{F_{\max}L_n\left(\frac{1}{3}\right)^{-1}, F_{\max}\right\}\cup \left\{2^i : 2^i \in \left[F_{\max}L_n\left(\frac{1}{3}\right)^{-1}, F_{\max}\right]\right\},
\]
a set of logarithmic size. We can absorb a logarithmic factor into the constants $c, C$, so the number of moduli which are in $ [Q_{\max}\omega^{-1}, Q_{\max}]$, are $B$-smooth and $B'$-bad is at most:
\[
\left(C+\o\left(1\right)\right)\Psi\left(Q_{\max}, B'\right) \left(e^{-\frac{cu'}{2\log^2 u'}} + B'^{-\frac{c}{2}}\right) = \o\left(\Psi\left(Q_{\max}, B'\right)\right)
\]
Hence even assuming that every $B$-smooth number below $Q_{\max}\omega^{-1}$ is $B'$-bad:
\begin{align*}
\P_{a,m}\left(a-mb\textrm{ is }B'\textrm{-good} \mid a-mb\textrm{ is }B\textrm{-smooth}\right)
\geq 1 - \frac{\Psi\left(Q_{\max} \omega^{-1}, B'\right) + \o\left(\Psi\left(Q_{\max}, B'\right)\right)}{\Psi\left(Q_{\max}, B'\right)}
= 1 - \o\left(1\right).
\end{align*}
\end{proof}

It remains to prove Lemma~\refp{HZ-1-large}. We follow the proof strategy and notation of similar results by Harper~\cite{Harper}. At a high-level, we express the sum on the LHS as a sum over characters $\chi_r$ of modulus $r$, and then write this in terms of primitive characters $\chi^*_r$ of conductor $q$. We split the primitive characters into three sets based on the size of their conductor, and for small and intermediate sized conductors we have to separately deal with characters whose $\L$ functions have zeros near 1. For more details of this general strategy see~\cite{IwaniecKowalski}.

Here, our primary extension over previous work is that that the modulus of each character is $y$-smooth. We are also able to restrict the range of summation to comparatively large moduli. If instead we were to consider every $y$-smooth number less than $Q$, we would not be able to substantially reduce the contribution of characters with moduli very small by comparison to $y$. That these conditions are useful in practice and are tractable on the analytic side suggests a large collection of potentially fruitful new results, restricting sums of this type to a very sparse set instead of an interval.

We study only moduli which are at least $Q\omega^{-1}$, where $\omega$ is not too large with respect to $y$. In particular, this ensures that the set of moduli we sum over is always sparse restricted to any reasonably large subinterval of $[Q\omega^{-1}, Q]$. This allows us to give a substantially stronger bound as the sum is reduced by at least a factor of $\varrho\left(Q,y\right)$, modulo a slight reduction in the constant $c$.

\begin{proof}[Proof of Lemma~\refp{HZ-1-large}]

We will fix $c, K$ depending on $\eta > 0$, with $c$ small and $K$ large. We will fix $\eta$ to be small in terms of a constant $b$ to be determined.
We set 
\begin{equation}\label{m-M-defn}
m \defeq \min\left(y^\eta, e^{\eta \sqrt{\log x}}\right),\quad M \defeq x^\eta.
\end{equation}
The following five Facts are due to Harper~\cite{Harper}, and concern the size of character sums over smooth numbers and density estimates using characters whose $\L$ functions have roots with real part near 1 and small imaginary part.

We recall that the \emph{conductor} $q$ of a Dirichlet Character $\chi_r$ of modulus $r$ is the least $q > 0$ such that $\chi_r(x) = \chi_r(x+q)$ for all $x$. As an immediate corollary, $q \mid r$ and so if the $r$ is $y$-smooth then so are $q$ and $rq^{-1}$. We also recall the \emph{saddlepoint} $\alpha$ of Fact~\refp{HT-smooths}.
\begin{fact}[{\cite[Theorem 3]{Harper}}]\label{character-bound}
There exist constants $b,B > 0$, 
such that if $\log^B x \leq y \leq x$ and $\chi_q$ is a non-principal Dirichlet character with conductor $r \defeq \operatorname{cond}\left(\chi_q\right) \leq y^b$ and modulus $q \leq x$, with the largest real zero $\beta = \beta_{\chi_q}$ of $\L\left(s, \chi_q\right)$ is $\leq 1 - B/\log y$, then:
\[
|\Psi\left(x,y;\chi_q\right)| \ll \Psi\left(x,y\right)\sqrt{\log x \log y}\left(e^{-\left(b\log x\right) \min\left(\left(\log r\right)^{-1}, 1-\beta\right)}\log \log x + e^{-b\sqrt{\log x}} + y^{-b}\right)\qed
\]
\end{fact}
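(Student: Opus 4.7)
The plan is to estimate $\Psi(x,y;\chi_q)$ via a contour integral exploiting the Mellin transform of the smooth indicator, then to use the hypothesis on the zero-free region of $\mathcal{L}(s,\chi_q)$ to gain savings over the trivial-character case. Write $F_y(s,\chi_q) \defeq \prod_{p \leq y}\left(1-\chi_q(p)p^{-s}\right)^{-1}$, and observe the factorisation $F_y(s,\chi_q) = \mathcal{L}(s,\chi_q) \cdot \prod_{p > y}\left(1-\chi_q(p)p^{-s}\right)$, so that zeros of $\mathcal{L}(s,\chi_q)$ control the size of $F_y$ on vertical lines. By Perron's formula (in the truncated form), for $T$ chosen later and the saddlepoint $\alpha = \alpha(x,y)$ from Fact~\ref{HT-smooths},
\[
\Psi(x,y;\chi_q) = \frac{1}{2\pi i}\int_{\alpha - iT}^{\alpha + iT} F_y(s,\chi_q) \frac{x^s}{s}\, ds + \text{(admissible truncation error)}.
\]

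The benchmark is the principal-character estimate $\Psi(x,y) = x^\alpha F_y(\alpha,\chi_0)/(\alpha\sqrt{2\pi \sigma_2(\alpha)})\cdot(1+\o(1))$ obtained by saddlepoint analysis; the entire game is to show that the ratio $F_y(\alpha+it,\chi_q)/F_y(\alpha,\chi_0)$ is small, uniformly on $|t| \leq T$. To do this, I would split the integration range. For $|t|$ near zero (say $|t| \leq (\log y)^{-1}$), the nearby zero $\beta_{\chi_q}$ of $\mathcal{L}$ forces $|\mathcal{L}(\alpha + it,\chi_q)| \ll \exp(-c \log x \cdot (1-\beta))$ through the standard Hadamard product / explicit formula argument, and one propagates this via the factorisation above to $F_y$. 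For intermediate $|t|$, combine the partial-summation identity $\log F_y(s,\chi_q) = \sum_{p \leq y}\chi_q(p)p^{-s} + O(1)$ with the Vinogradov--Korobov zero-free region (uniform for characters of conductor $\leq y^b$, ensuring no zeros with real part $> 1 - B/\log y$) to obtain savings of $\exp(-b\sqrt{\log x})$. For $|t|$ large, the saddle integral in the principal-character case already contributes only a $1/u$ fraction of $\Psi(x,y)$, which accounts for the $y^{-b}$ tail after a standard dyadic decomposition and trivial bound on $|F_y|$.

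The three error terms in the claim correspond exactly to these three regimes: $e^{-(b\log x)\min((\log r)^{-1},1-\beta)}\log \log x$ comes from the zero-dominated small-$|t|$ region (with the $\min$ arising because the zero's influence extends only to a height roughly $(\log r)^{-1}$ from the real axis), $e^{-b\sqrt{\log x}}$ comes from the log-free zero-free region at intermediate heights, and $y^{-b}$ accounts for the tail plus the Perron truncation error after optimisation of $T$. The factor $\sqrt{\log x \log y}$ is the usual cost of the saddlepoint and Perron estimates; the $\log \log x$ stems from summing geometric-series contributions across a dyadic decomposition near the zero.

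The main obstacle is achieving the uniform savings with only the qualitative hypothesis on a single zero, rather than a full quantitative zero-density estimate. In particular, the $\min((\log r)^{-1}, 1-\beta)$ expression is delicate: we must be careful that additional zeros of $\mathcal{L}(s,\chi_q)$ further from the real axis do not degrade the bound, which is handled by a log-free zero-density estimate of Huxley type for characters of conductor at most $y^b$. The condition $r \leq y^b$ (with $b$ small) is essential here, since it keeps the zero-density input applicable and the Euler-product tail controllable. Packaging all terms uniformly in the joint parameters $(x,y,r,\beta)$ is the principal technical burden.
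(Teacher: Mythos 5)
This statement is quoted verbatim as a Fact from Harper (his Theorem~3); the paper offers no internal proof of it --- only the bibliographic citation --- so there is no argument of the paper's to compare your sketch against. What you have written is an attempt at reconstructing Harper's own proof.

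On its own terms, the sketch has a concrete gap. You write the factorisation $F_y(s,\chi_q) = \mathcal{L}(s,\chi_q)\prod_{p>y}\left(1-\chi_q(p)p^{-s}\right)$ and propose to ``propagate'' the zero-free hypothesis from $\mathcal{L}$ to $F_y$ on the Perron contour. That identity is valid only for $\Re(s)>1$, where both Euler products converge absolutely; the tail over $p>y$ diverges on and to the left of the one-line. But your contour sits at $\Re(s)=\alpha(x,y)$, which is strictly below $1$ in the regime of interest (indeed $\alpha\to 0$ as $u\to\infty$), so the factorisation cannot be used to transfer information about $\mathcal{L}(s,\chi_q)$ into the integrand there. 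The mechanism actually used by Soundararajan and Harper is to compare $\log|F_y(\alpha+it,\chi_q)|$ with $\log F_y(\alpha,\chi_0)$ directly, writing the difference as $\Re\sum_{p\le y}\left(\log\left(1-p^{-\alpha}\right)^{-1}-\log\left(1-\chi_q(p)p^{-\alpha-it}\right)^{-1}\right)$ and bounding the prime sum $\sum_{p\le y}\chi_q(p)p^{-\alpha-it}$ via partial summation against $-\mathcal{L}'/\mathcal{L}$ and the explicit formula; this is where the single real zero $\beta$ and the classical zero-free region of width $\gg 1/\log\left(r(2+|t|)\right)$ enter. The heavier inputs you reach for --- Vinogradov--Korobov for Dirichlet characters and a Huxley-type log-free zero-density estimate --- are not needed for the error terms as stated: the $e^{-b\sqrt{\log x}}$ shape is already the classical de la Vall\'ee Poussin/Page regime, and the conditioning on $\beta\le 1-B/\log y$ is precisely what lets the argument sidestep Siegel zeros without a density estimate.
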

\begin{fact}[{\cite[Proposition 1]{Harper}}]\label{zero-free-character}
There exist constants $d, C > 0$ such that for $\log^{1.1} x \leq y \leq x$
, and $\chi_q$ a non-principal Dirichlet character with conductor $r \defeq \operatorname{cond}\left(\chi_q\right) \leq x^d$ and modulus $q \leq x$, with $\L\left(z,\chi_q\right)$ having no zeros for
$\Re\left(z\right) > 1-\epsilon, |\Im\left(z\right)| < H$, with
\[
C \left(\log y\right)^{-1} < \epsilon \leq \alpha\left(x,y\right) / 2,\qquad y^{0.9}\log^2 x \leq H \leq x^d,
\]
and either
\[
y \geq \left(Hr\right)^C \quad\textrm{or}\quad \epsilon \geq 40\log \log\left(qyH\right) \left(\log y\right)^{-1}
\]
then:
\[
|\Psi\left(x,y;\chi\right)| \ll \Psi\left(x,y\right)\sqrt{\log x \log y}\left(x^{-0.3\epsilon}\log H + H^{-0.02}\right) \qed
\]
\end{fact}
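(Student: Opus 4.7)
The plan is to prove this via the saddle-point / contour-integration machinery of Hildebrand--Tenenbaum for $\Psi(x,y)$, adapted to twists by a character. Writing the smooth-number Dirichlet series $F(s,\chi) \defeq \sum_{n\text{ is }y\text{-smooth}} \chi(n) n^{-s} = \prod_{p \leq y}(1-\chi(p)p^{-s})^{-1}$, a truncated Perron-type formula gives
\[
\Psi(x,y;\chi) = \frac{1}{2\pi i}\int_{\alpha-iT}^{\alpha+iT} F(s,\chi)\frac{x^s}{s}\,ds + \text{error},
\]
where $\alpha = \alpha(x,y)$ is the saddle point of Fact~\ref{HT-smooths}. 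Because $\chi$ is non-principal, $F(s,\chi)$ has no pole at $s=1$, so there is no main term to extract: everything is estimation of the integral. The essential identity is $\log F(s,\chi) = \log L(s,\chi) - \sum_{p > y}\chi(p)p^{-s} + O(\sum_p p^{-2\Re s})$, which allows the zero-free hypothesis on $L(s,\chi)$ to be transferred to $F(s,\chi)$, provided the tail $\sum_{p > y}\chi(p)p^{-s}$ can be controlled.

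Second, I would split the contour at height $y^{0.9}\log^2 x$, which is the hypothesis-dictated lower threshold for $H$. On the low-height piece $|\Im s| \leq y^{0.9}\log^2 x$, I would shift the contour from $\Re s = \alpha$ leftwards to $\Re s = \alpha - \epsilon$, crossing no zeros by the zero-free hypothesis. Bounding $F(s,\chi)$ on the shifted line using the logarithmic connection to $L(s,\chi)$ and saddle-point estimates for $\Psi(x,y)$ on its Dirichlet series yields savings $x^{-\epsilon}$. The loss from $x^{-\epsilon}$ to $x^{-0.3\epsilon}$ absorbs the convexity bounds on $L$ near the edge of the zero-free region, a factor from the length of the horizontal segments of the shifted contour ($\log H$ enters here from summing over dyadic blocks), and the error from passing between $F$ and $L$. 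On the high-height piece $y^{0.9}\log^2 x < |\Im s| \leq T$, I would use standard large-$t$ convexity bounds on $L(s,\chi)$, together with the rapid decay of $\Psi(x,y)$'s saddle-point factor and the $1/s$ weight in the Perron integral, to extract the $H^{-0.02}$ factor.

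The main obstacle is the transfer between $F(s,\chi)$ and $L(s,\chi)$ in the strip near $\Re s = 1 - \epsilon$: the tail $\sum_{p>y}\chi(p)p^{-s}$ is not absolutely convergent in that range, so one must instead use explicit formulas or Vinogradov--Korobov-type partial summations to control it. This is exactly where the auxiliary dichotomy $y \geq (Hr)^C$ \emph{or} $\epsilon \geq 40\log\log(qyH)/\log y$ pays for itself: the first alternative makes $y$ so much larger than the analytic data that trivial bounds on the tail suffice, while the second makes $\epsilon$ large enough that the tail is dominated via cancellation from $\chi$ on short intervals. A subsidiary but nontrivial point is the uniformity in the conductor $r$; this requires keeping conductor-dependent constants explicit throughout, which is cleanest if one replaces $L(s,\chi)$ with its imprimitive variant $L(s,\chi_q)$ and treats the missing Euler factors at $p \mid q \setminus \text{cond}(\chi)$ as a small multiplicative correction (this is harmless since $y$-smoothness bounds the number of such primes by $\log q / \log 2$, each contributing at most $(1-p^{-\alpha})^{-1}$).
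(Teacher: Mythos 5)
This Fact is cited in the paper without proof from Harper's~\cite{Harper} Proposition~1, so I am comparing your sketch against Harper's actual argument, which in turn builds on Soundararajan.

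Your high-level skeleton --- Perron's formula for $F(s,\chi) \defeq \prod_{p\leq y}(1-\chi(p)p^{-s})^{-1}$, splitting the contour by height, using the logarithmic connection between $F(s,\chi)$ and $L(s,\chi)$, and invoking the hypothesis dichotomy to control the prime tail $\sum_{p>y}\chi(p)p^{-s}$ --- is in the right spirit and matches Harper's strategy. But the central mechanism you propose for the low-height piece, shifting the contour to $\Re(s) = \alpha - \epsilon$, has a genuine gap. A minor confusion first: $F(s,\chi)$ is a \emph{finite} Euler product, hence entire and everywhere non-vanishing, so the shift crosses no singularities regardless of any zero-free hypothesis for $L(s,\chi)$; that hypothesis has no bearing on whether the shift is legal. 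More seriously, the shift actively loses. By definition, $\alpha = \alpha(x,y)$ \emph{minimises} $\sigma \mapsto \log F(\sigma,\chi_0) + \sigma\log x$ over $\sigma > 0$, where $\chi_0$ denotes the principal character. The trivial pointwise bound $|F(\alpha-\epsilon+it,\chi)| \leq F(\alpha-\epsilon,\chi_0)$ on the shifted line therefore yields $F(\alpha-\epsilon,\chi_0)\,x^{\alpha-\epsilon} \geq F(\alpha,\chi_0)\,x^{\alpha}$: the integrand is \emph{larger} on the shifted line than the saddle-point value, not smaller. As you correctly note there is no pole to collect since $\chi$ is non-principal, and the $1/s$ factor gives no help at low height since $\alpha$ stays bounded away from $0$ when $y \geq \log^{1.1}x$. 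So the claimed $x^{-\epsilon}$ saving cannot come from the shift; if the logarithmic connection to $L$ can control $|F|$ on the shifted line well enough to beat this loss, the same argument would control $|F|$ on $\Re(s)=\alpha$ even better, and the shift is a distraction.

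The actual saving in Harper's argument comes from staying on the line $\Re(s) = \alpha$ and showing that $|F(\alpha+it,\chi)|$ is \emph{much smaller} than $F(\alpha,\chi_0)$ for non-principal $\chi$. This is Soundararajan's key lemma as elaborated by Harper: the zero-free region for $L(s,\chi)$ is converted, via an explicit-formula/zero-density argument applied to $\sum_{p\leq y}\chi(p)p^{-\alpha-it}$, into a bound of roughly the form $|F(\alpha+it,\chi)| \ll F(\alpha,\chi_0)\exp(-c(\log x)\min(\epsilon,(\log r)^{-1}))$, and the $x^{-0.3\epsilon}\log H$ in the statement arises from integrating this over $|t| \leq H$ while tracking dyadic ranges. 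Your intuition about the role of the dichotomy (trivial tail control when $y \geq (Hr)^C$, improved estimates when $\epsilon$ is large relative to $\log\log(qyH)/\log y$) and your plan for the high-height piece producing the $H^{-0.02}$ term are both sound in outline; the broken piece is specifically the leftward contour shift as the source of the gain.
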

\begin{fact}[\cite{IwaniecKowalski}, with {\cite[pp. 15]{Harper}} giving the precise form]\label{zero-density}
\[
\sum_{R < r \leq 2R}\sum_{\substack{\chi^*_r\imod{r}\\ \L\left(z, \chi^*_r\right) = 0\textrm{ for some}\\ \Re\left(z\right) > \frac{299}{300}, |\Im\left(z\right)| \leq r^{100}}} \frac{1}{\phi\left(r\right)} \ll R^{-1/10} \qed
\]
\end{fact}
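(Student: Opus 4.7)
The plan is to deduce this from the standard large-sieve hybrid zero density theorem for primitive Dirichlet characters, as developed by Montgomery, Huxley, and Jutila and presented systematically in~\cite{IwaniecKowalski}. For each primitive character $\chi^* \bmod q$ let $N(\sigma, T; \chi^*)$ denote the number of zeros $\rho = \beta + i\gamma$ of $\L(s, \chi^*)$ with $\beta \geq \sigma$ and $|\gamma| \leq T$. Since the indicator that $\L(\cdot, \chi^*_r)$ has a zero in the region specified in the statement is bounded above by $N(299/300, r^{100}; \chi^*_r)$, and $\phi(r) \gg r/\log \log r$ uniformly on $r \in (R, 2R]$, the sum in the statement is at most
\[
\frac{C \log \log R}{R}\sum_{R < r \leq 2R}\sideset{}{^*}\sum_{\chi^*_r \bmod r} N\!\left(\tfrac{299}{300}, (2R)^{100}; \chi^*_r\right).
\]

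Next I would invoke a hybrid zero density estimate of the form
\[
\sum_{q \leq Q}\sideset{}{^*}\sum_{\chi \bmod q} N(\sigma, T; \chi) \ll (Q^2 T)^{A(1-\sigma)}(\log QT)^{\O(1)},
\]
valid for $\sigma \geq 4/5$, with $A = 12/5$ sufficing on this range by Huxley's theorem. Applying this with $Q = 2R$, $T = (2R)^{100}$, and $1-\sigma = 1/300$ yields an overall bound of order $R^{-1 + 102A/300 + \o(1)}$; for $A = 12/5$ this evaluates to $R^{-0.184 + \o(1)}$, which is strictly better than the claimed $R^{-1/10}$ for all sufficiently large $R$.

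There is no genuinely hard step: the exponents $\sigma = 299/300$ and $T \leq r^{100}$ in the statement are deliberately chosen to sit well inside the range where standard zero density estimates apply, with density exponent safely below the threshold $A < 300\cdot(9/10)/102 \approx 2.647$ that would be needed to match the claim. The only care required is to verify that the polynomial-in-$T$ dependence of the Huxley--Jutila bound does not degrade when $T$ grows as a polynomial in $R$; in the hybrid form $q$ and $T$ enter only through the common product $Q^2 T$, so $T = (2R)^{100}$ contributes just a bounded multiple to the exponent of $R$, and this is absorbed by the slack between $12/5$ and $2.647$.
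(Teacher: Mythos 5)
The paper states this as a \emph{Fact} with a \verb|\qed| immediately after the display, i.e.\ it is cited from Iwaniec--Kowalski (with Harper giving the precise numerical form) and no proof is given in the paper itself. Your blind derivation reconstructs the expected argument, and the arithmetic is correct: with a hybrid density bound $\sum_{q\leq Q}\sum^*_{\chi}N(\sigma,T;\chi)\ll (Q^2T)^{A(1-\sigma)}(\log QT)^{\O(1)}$, taking $Q=2R$, $T=(2R)^{100}$, $1-\sigma = 1/300$ gives an exponent $102A/300$ before the $1/\phi(r)\ll(\log\log R)/R$ savings, and $A<2700/1020\approx 2.647$ suffices; the Huxley exponent $12/5$ clears this with room to spare and lands on $R^{-0.184+\o(1)}$.

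The one place to be careful with attribution is the source of the hybrid estimate with $A=12/5$. Montgomery's original hybrid density theorem --- the version most commonly quoted from Iwaniec--Kowalski's Theorem 10.4 --- has the density exponent $3/(2-\sigma)$, which tends to $3$ as $\sigma\to 1$; at $\sigma=299/300$ it gives $A\approx 2.99$, and the resulting bound is $R^{+0.017}$, which is \emph{not} sufficient. One genuinely needs the sharper exponent $5/2$ or $12/5$ on the range $\sigma\geq 4/5$. For the Riemann zeta function this is Huxley (1972); for the hybrid sum over moduli it is due to Jutila (1972) and Huxley (1974) (``Large values of Dirichlet polynomials III''), and in Iwaniec--Kowalski it appears as a refinement/remark rather than in the headline Theorem 10.4. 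Your write-up should cite the Jutila/Huxley hybrid form explicitly, or cite Harper's \cite[p.~15]{Harper} directly as the paper does, rather than saying simply ``Huxley's theorem'' (which on its own usually refers to the $\zeta$ case). With that citation corrected, the proof is sound and matches the derivation the paper is implicitly relying on.
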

\begin{fact}[{\cite[Proposition 2]{Harper}}]\label{large-characters}
For any $0 < \eta < 1/80$,
$y \leq x^{9/10}$, and $x^\eta \leq Q \leq \sqrt{x}$:
\begin{equation*}
\sum_{M \leq r \leq Q}
\sum_{\chi_r^*}
\sum_{s \in [M, Q]}\frac{1}{\phi\left(s\right)}
\sum_{\substack{\chi_s \\ \chi^*_r\textrm{ induces }\chi_s}} |\Psi\left(x,y;\chi_s\right)|  \ll \log^{7/2}x \sqrt{\Psi\left(x,y\right)}\left(Q + x^{1/2 - \eta}\log^2 x\right)\qed
\end{equation*}
\end{fact}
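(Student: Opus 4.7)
The plan is to reduce Fact~\refp{large-characters} to the multiplicative large sieve inequality for primitive characters, via Cauchy--Schwarz applied to a reorganized sum, and Harper's Barban--Davenport--Halberstam-type estimate for $y$-smooth integers in arithmetic progressions (the theorem quoted earlier in the excerpt).

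First, I would collapse the inner triple sum by observing that for each pair $(s,\chi_s)$ with $M\leq s\leq Q$ and $\chi_s$ mod $s$ of conductor $\geq M$, there is exactly one primitive character $\chi_r^*$ with $r=\text{cond}(\chi_s)$ that induces it. Hence
\[
\Sigma \;=\; \sum_{M\leq s\leq Q}\frac{1}{\phi(s)}\sum_{\substack{\chi_s\bmod s\\ \text{cond}(\chi_s)\geq M}}|\Psi(x,y;\chi_s)|.
\]
Next, apply Cauchy--Schwarz to decouple magnitude from cardinality: $\Sigma^2\leq A\cdot B$, where $A=\sum 1/\phi(s)\leq Q$ (since there are at most $\phi(s)$ characters mod $s$) and $B=\sum |\Psi(x,y;\chi_s)|^2/\phi(s)$. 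The condition $\text{cond}(\chi_s)\geq M>1$ excludes the principal character, so by character orthogonality
\[
B \;\leq\; \sum_{M\leq s\leq Q}\sum_{(a,s)=1}\left|\Psi(x,y;s,a)-\frac{\Psi_s(x,y)}{\phi(s)}\right|^{2}.
\]

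Third, I would apply Harper's Barban--Davenport--Halberstam bound for smooth numbers to obtain $B\ll \Psi(x,y)^2(e^{-cu/\log^2 u}+y^{-c})+Q\Psi(x,y)$. The first resulting term after $\sqrt{AB}$ is $Q\sqrt{\Psi(x,y)}$, matching the leading $Q$ in the claim. The secondary $x^{1/2-\eta}\log^2 x$ error arises from a dyadic decomposition of the conductor $r\in[R,2R]$ over $M\leq R\leq Q$: for each block one invokes the multiplicative large sieve
\[
\sum_{R\leq r\leq 2R}\frac{r}{\phi(r)}\sideset{}{^*}\sum_{\chi_r^*}|\Psi(x,y;\chi_r^*)|^{2}\;\ll\;(x+R^{2})\Psi(x,y),
\]
passing from induced $\chi_s$ to primitive $\chi_r^*$ via $\chi_s(n)=\chi_r^*(n)\mathbbm{1}_{(n,s/r)=1}$ and Möbius over $d\mid s/r$. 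The restriction $r\geq M=x^{\eta}$ converts the $x$-term of the large sieve into a contribution of size $\sqrt{x/M}=x^{(1-\eta)/2}$ per block, which is sharpened to $x^{1/2-\eta}$ by using the $y$-smoothness of $d$ in the Möbius expansion together with the saddle-point estimate of Fact~\refp{HT-smooths} for $\Psi(x/d,y)$. Logarithmic losses from the dyadic decomposition in $r$ and $s$, together with partial summation in the character count, accumulate to the $\log^{7/2}x$ factor.

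The main obstacle is obtaining the sharp $x^{1/2-\eta}$ dependence rather than the cruder $x^{(1-\eta)/2}$ that a direct Cauchy--Schwarz of Barban--Davenport--Halberstam would yield; this requires a careful split of the conductor range into low (handled by the character-sum bounds of Fact~\refp{character-bound}), intermediate (handled by the zero-density estimate of Fact~\refp{zero-density} combined with Fact~\refp{zero-free-character}), and high (handled by the large sieve above), and exploiting the hypothesis $y\leq x^{9/10}$ to ensure the saddle-point $\alpha(x,y)$ is bounded away from $0$ so that $\sum_{d\leq Q/M,\,d\text{ smooth}}\Psi(x/d,y)$ converges sufficiently rapidly. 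The lower bound $\eta>0$ is essential: if $\eta=0$, the low-conductor characters dominate and the claimed bound fails.
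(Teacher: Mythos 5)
First, a point of order: this statement is not proved in the paper at all --- it is imported verbatim as a Fact, citing Harper's Proposition~2, with the \(\qed\) indicating it is taken as known. So there is no ``paper proof'' to compare against; your proposal has to be judged as an attempt to reprove Harper's result, and as such it has a genuine gap.

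The central step --- Cauchy--Schwarz against Harper's Barban--Davenport--Halberstam theorem --- cannot deliver the stated bound, and is also logically backwards, since in Harper's paper the BDH-type theorem is itself derived from the character-sum machinery of which this proposition is a part. Quantitatively: your argument gives \(\Sigma \ll \sqrt{Q}\,\Psi(x,y)\bigl(e^{-cu/\log^2 u}+y^{-c}\bigr)^{1/2} + Q\sqrt{\Psi(x,y)}\). The second term is fine, but the first is too large by a \emph{power of \(x\)} whenever \(y=x^{o(1)}\): e.g.\ for \(y=\log^K x\) one has \(\Psi(x,y)=x^{1-1/K+o(1)}\) while \(e^{-cu/\log^2u}+y^{-c}=x^{-o(1)}\), so \(\sqrt{Q}\,\Psi\sqrt{\varepsilon}\) exceeds \(\log^{7/2}x\,\sqrt{\Psi}\,(Q+x^{1/2-\eta}\log^2 x)\) by roughly \(x^{1/4-1/(2K)}\). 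This is exactly the regime the present paper needs (\(y=L_n(1/3)\), \(x\) of size \(L_n(2/3)\)), so the loss is not cosmetic. Your attempted repair via the multiplicative large sieve also misidentifies the mechanism: the direct large sieve gives \((R^2+x)^{1/2}\) per dyadic conductor block, i.e.\ a \(\sqrt{x}\) term that is \emph{not} divided by anything when \(r\ge M=x^{\eta}\); the M\"obius expansion over \(d\mid s/r\) only handles imprimitivity and produces no \(x^{-\eta}\) saving, and the low/intermediate/high conductor split with Facts~\ref{character-bound}, \ref{zero-free-character}, \ref{zero-density} is irrelevant here, since those inputs are for small and medium conductors (as in Lemma~\ref{HZ-1-large}), whereas this proposition concerns only conductors \(\ge x^{\eta}\). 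The actual source of the \(x^{1/2-\eta}\) term in Harper's argument is a \emph{bilinear} decomposition: because \(y\le x^{9/10}\), every \(y\)-smooth \(n\) in the relevant range has a divisor in a prescribed interval, so \(\Psi(x,y;\chi)\) can be written as a product of two character sums of controlled lengths; applying Cauchy--Schwarz and the large sieve to each factor produces a cross term of size \(\sqrt{x}/R\le x^{1/2-\eta}\) (this is also where \(\eta<1/80\) is used). Without that bilinear structure the \(\sqrt{x}\) barrier cannot be beaten, so as written your proof does not go through.
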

\begin{fact}[{\cite[pp. 16]{Harper}}]\label{Harper-Page}
For any real and non-principal character $\chi_q$ of modulus at most $Q$ and conductor at most $y^\eta$:
\begin{equation*}
|\Psi\left(x,y;\chi_q\right)| \ll \Psi\left(x,y\right)\sqrt{\log x \log y} \left(\log y\exp\left(-\O\left(\frac{u}{\log^2\left(u+1\right)}\right)\right) + \O\left(y^{-0.02}\right)\right)\qed
\end{equation*}
\end{fact}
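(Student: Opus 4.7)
The plan is to apply the quoted character-sum bound of Fact~\refp{character-bound} directly, and to handle the potential presence of a Siegel-type exceptional real zero using Page's theorem together with an effective substitute for Siegel's theorem (the Deuring--Heilbronn zero-repulsion principle).

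Let $r = \operatorname{cond}(\chi_q) \le y^\eta$ and let $\beta = \beta_{\chi_q}$ denote the largest real zero of $\L(s,\chi_q)$. Fact~\refp{character-bound} already bounds $|\Psi(x,y;\chi_q)|$ by a multiple of $\Psi(x,y)\sqrt{\log x\,\log y}$ times an error factor controlled by $\exp(-b\log x \cdot \min((\log r)^{-1}, 1-\beta))$. In the non-exceptional case $\beta \le 1 - B/\log y$, the estimate $(\log r)^{-1} \ge (\eta\log y)^{-1}$ shows the minimum is at least $(\eta\log y)^{-1}$, producing $\exp(-bu/\eta)$; for $\eta$ taken small this is strictly stronger than the target $\exp(-\O(u/\log^{2}(u+1)))$, and so the required inequality follows with room to spare.

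The genuinely delicate case is a single real character $\chi_q$ whose $\L$-function has an exceptional Siegel-type zero with $1-\beta < B/\log y$. By Page's theorem (equivalently, Landau's theorem on the zeros of $\L(s,\chi)\L(s,\chi\chi')$), at most one such real primitive character with conductor $\le y^\eta$ can occur. Siegel's lower bound $1-\beta \gg_\epsilon q^{-\epsilon}$ would complete the proof but is ineffective; the standard replacement is the Deuring--Heilbronn zero-repulsion principle, which yields an effective lower bound of the shape $1-\beta \gg 1/\log^{\O(1)} q$. Inserting this into Fact~\refp{character-bound} gives the main exponential factor $\exp(-\O(u/\log^{\O(1)}(u+1)))$, with the additional $\log y$ prefactor of the target bound absorbing the loss from extracting the single exceptional character, and the $y^{-0.02}$ term arising directly from the $y^{-b}$ contribution in Fact~\refp{character-bound}.

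The main obstacle is preserving effectivity of the implied constants throughout the exceptional-character case: an appeal to Siegel would give a cleaner exponent but leaves the constants ineffective, which is fatal for the downstream use in Lemma~\refp{HZ-1-large}, where the constants must be uniform over all moduli in a dyadic window. The Deuring--Heilbronn detour trades a cleaner exponent for an effective one, and it is precisely this trade-off that forces the $\log^{2}(u+1)$ in the denominator of the exponent rather than a pure linear decay in $u$. Collecting the two cases then gives the bound as stated, with the $\sqrt{\log x\,\log y}$ prefactor and the additive $y^{-0.02}$ term inherited directly from Fact~\refp{character-bound}.
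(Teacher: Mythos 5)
The quoted result is cited from Harper's paper (p.~16) and is not reproved here, so what you are really doing is reconstructing Harper's argument. Your treatment of the non-exceptional case via Fact~\ref{character-bound} is essentially right (modulo the minor slip that the minimum in the exponent is governed by $1-\beta \geq B/\log y$, not $(\log r)^{-1}$, but both give $\exp(-\Omega(u))$). The exceptional-character case, however, has a genuine gap.

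First, Fact~\ref{character-bound} cannot be applied in that case at all: its hypotheses \emph{require} that the largest real zero of $\L(s,\chi_q)$ satisfies $\beta \leq 1 - B/\log y$. The exceptional case you have isolated is precisely the one where this fails, so ``inserting a lower bound into Fact~\ref{character-bound}'' is not a legal move regardless of which lower bound you use. Second, the Deuring--Heilbronn zero-repulsion principle does not yield a lower bound of the form $1-\beta \gg 1/\log^{\O(1)} q$ for the exceptional zero itself. Zero repulsion says that if $\L(s,\chi)$ has a real zero very close to $1$, then the \emph{other} zeros of $\L$-functions of comparable conductor are pushed away from $1$; it is silent about how close the exceptional zero may be. The best unconditional effective lower bound on $1-\beta$ for a real primitive character of conductor $q$ comes from Dirichlet's class-number formula and is of shape $q^{-1/2}$ up to logarithms. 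Since the conductor here is permitted to be as large as $y^\eta$, this gives $1-\beta \gg y^{-\eta/2}$, which is far too weak: feeding it into the exponential would produce a bound that tends to $1$, not to $\exp(-\Omega(u/\log^2(u+1)))$.

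The very shape of the target, with a $\log y$ prefactor and $u/\log^2(u+1)$ rather than $u$ in the exponent, signals that the exceptional case is handled by an argument that makes no appeal to a zero-free region at all. Harper's treatment (following Soundararajan) exploits the structure of a real character directly, using positivity (e.g.\ that $1 \ast \chi \geq 0$) together with Rankin-trick/saddle-point estimates for the $y$-truncated Euler product $\prod_{p\leq y}(1-\chi(p)p^{-\alpha})^{-1}$ at the saddle $\alpha(x,y)$. That is where the $\log y \exp(-\O(u/\log^2(u+1)))$ term comes from; it does not arise from substituting a zero location into Fact~\ref{character-bound}. To repair the argument you would need to replace your exceptional-case step by such a direct saddle-point estimate, rather than by any lower bound on $1-\beta$.
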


The first step of the argument is to note that by the orthogonality of characters we can write the $r$-periodic function $\1_{x \equiv a \imod r}$ for $\left(a,r\right) = 1$ as:
\[
\frac{1}{\phi\left(r\right)}\sum_{\chi_r} \chi_r\left(x\right) \chi_r\left(a\right)^{-1}
\]
Note that the contribution of the principal character $\chi_0$ to the above formula is exactly $\phi^{-1}\left(r\right)$, and so:
\[
\Psi\left(x,y; r,a\right) - \frac{\Psi_r\left(x,y\right)}{\phi\left(r\right)} =
\frac{1}{\phi\left(r\right)}\sum_{\substack{\chi_r \imod{r} \\ \chi_r \neq \chi_0}} \Psi\left(x, y; \chi_r\right) \chi_r\left(a\right)^{-1}
\]

Taking the modulus of the left-hand side, and noting that for any $\left(a,r\right) = 1$ and any $\chi_r$ with $|\chi_r\left(a\right)| = 1$:
\begin{equation}\label{eq:conj1-large-sum}
\sum_{\substack{r \in [Q\omega^{-1}, Q]\\ r\textrm{ is }y\textrm{-smooth}}}
\max_{\left(a,r\right) = 1} \left|\Psi\left(x,y; r,a\right) - \frac{\Psi_r\left(x,y\right)}{\phi\left(r\right)}\right|
 \leq \sum_{\substack{r \in [Q\omega^{-1}, Q]\\ r\textrm{ is }y\textrm{-smooth}}} \frac{1}{\phi\left(r\right)} \sum_{\substack{\chi_r \imod{r} \\ \chi_r \neq \chi_0}}|\Psi\left(x,y;\chi_r\right)| \eqdef \mathcal{W}
\end{equation}
We split $\mathcal{W}$ into contributions from characters of \emph{small} conductor $r < m$, of \emph{medium} conductor $m \leq r < M$, or with \emph{large} conductor $M \leq r$. In the first two cases, we additionally split the characters between a small number of exceptional characters whose $\mathcal{L}$ functions have zeros near $1$, and the generic case where the $\mathcal{L}$ function has no such zero. Let:
\begin{align*}
\mathcal{G}_1 &\defeq \bigcup_{1 < r \leq m}\left\{ \chi^*_r\imod{r} : \L\left(z, \chi^*_r\right) \neq 0\textrm{ for }z\in \R, z > 1 - \frac{\eta B}{\log m}\right\}, \\
\mathcal{G}_2 &\defeq \bigcup_{m < r \leq M}\left\{ \chi^*_r\imod{r} : \L\left(z, \chi^*_r\right) \neq 0\textrm{ for }\Re\left(z\right) > \frac{299}{300}, |\Im\left(z\right)| \leq r^{100}\right\}.
\end{align*}
We will control the contribution of characters in $\mathcal{G}_1$ and $\mathcal{G}_2$ with Facts~\refp{character-bound} and~\refp{zero-free-character} respectively. The number of characters of small conductor which are not in $\mathcal{G}_1$ is controlled by Page's theorem, and their contribution is bounded trivially. The contribution of characters of medium conductor which are not in $\mathcal{G}_2$ is controlled via Fact~\refp{zero-density}, and those with large modulus by Fact~\refp{large-characters}.

\begin{rem}\label{G1-loses-term} Suppose $\chi_r \in \mathcal{G}_1$ with largest real root of $\L\left(z, \chi_r\right)$ at $\beta$. Then $1 - \beta > \frac{\eta B}{\log m}$ and $\log r \leq \log m \leq \eta \sqrt{\log x}$. In particular:
\[
\left(b\log x\right) \min\left(\left(\log r\right)^{-1}, 1-\beta\right) > \min\left(\eta^{-1}, B\right)b\sqrt{\log x},
\]
with $\eta$ taken to be small and $B$ large. Hence if Fact~\refp{character-bound} is applied the exponent in the first error term can be taken to be much lower than $-b\sqrt{\log x}$, since $b$ is small, and hence the second term dominates the first.
\end{rem}

For our application we require a few ancillary claims:
\begin{claim}\label{smooth-inverse-sum} For any $S$ exceeding an absolute constant $S_0$, and any $\omega$ such that $\log \omega \leq \frac{1}{2}\log^2 S - \log S - \frac{3}{2}$:
\[
\sum_{\substack{S < s < S\omega \\ s\textrm{ is }y\textrm{-smooth}}} \frac{1}{\phi\left(s\right)} \leq 4\varrho\left(S, y\right)\log \omega \log \log S.
\]
\end{claim}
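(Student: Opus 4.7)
The plan is to decouple the Euler-$\phi$ denominator from the harmonic sum over $y$-smooth integers. Using the Rosser--Schoenfeld bound $s/\phi(s) \leq e^\gamma \log\log s + O(1/\log\log s)$ valid for $s \geq 3$, and rearranging the hypothesis on $\omega$ as $\log(S\omega) \leq \tfrac{1}{2}\log^2 S - \tfrac{3}{2}$, a second logarithm yields $\log\log(S\omega) \leq 2\log\log S - \log 2$. So uniformly for $s \in (S, S\omega)$,
\[
\frac{1}{\phi(s)} \leq \frac{1}{s}\bigl(2 e^\gamma \log\log S + O(1)\bigr),
\]
reducing the target to the estimate $\sum_{S < s \leq S\omega,\; y\text{-smooth}} 1/s \leq (1+o(1))\varrho(S,y)\log \omega$.

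For this harmonic sum I would use Abel summation with $\Psi^*(t) \defeq \Psi(t,y) - \Psi(S,y)$:
\[
\sum_{S < s \leq S\omega,\; y\text{-smooth}} \frac{1}{s} = \varrho(S\omega,y) - \varrho(S,y) + \int_S^{S\omega}\frac{\Psi(t,y)}{t^2}\,dt.
\]
Fact~\refp{HT-smooths} applied iteratively over pieces of ratio at most $y$ gives $\Psi(t,y)/t \leq (1+o(1))\varrho(S,y)$ on $[S, S\omega]$, because $\alpha(t,y) < 1$ forces the density of $y$-smooths to decay in $t$ up to controllable multiplicative errors. The first two boundary terms cancel favourably (both are of the form $\pm\varrho(\cdot,y)$ with the non-positive one dominating), and the integral is bounded by $(1+o(1))\varrho(S,y)\log \omega$.

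Combining the two steps,
\[
\sum_{S < s < S\omega,\; y\text{-smooth}} \frac{1}{\phi(s)} \leq \bigl(2 e^\gamma \log\log S + O(1)\bigr)(1+o(1))\varrho(S,y)\log \omega.
\]
Since $2 e^\gamma \approx 3.56 < 4$, the gap $4 - 2 e^\gamma$ absorbs both the $O(1)$ from Rosser--Schoenfeld and the $o(1)$ from the saddlepoint iteration, provided $S$ exceeds some absolute constant $S_0$ chosen so that $\log\log S_0$ is sufficiently large relative to the implied constants.

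The main obstacle is the saddlepoint iteration: Fact~\refp{HT-smooths} applies only on ratios $c \in [1, y]$, so to cover $[S, S\omega]$ one must apply it $\lceil \log \omega /\log y\rceil$ times, each introducing a multiplicative factor $1+O(1/u + \log y/y)$. Ensuring the product of these factors remains $1+o(1)$ uniformly in $y$ and $\omega$ is what ultimately pins down $S_0$ and requires care: one must verify that the cumulative error stays well within the gap $4 - 2 e^\gamma$ for all admissible $y,\omega$ and $S > S_0$.
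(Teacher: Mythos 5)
Your decomposition into (i) a uniform upper bound on $s/\phi(s)$ over the range $s \leq S\omega$ and (ii) a bound on the harmonic sum $\sum 1/s$ over $y$-smooth $s \in (S, S\omega)$ mirrors the paper's structure. For step (i) you invoke Rosser--Schoenfeld where the paper derives $\phi(s)/s \geq 1/(4\log\log S)$ directly from the primorial extremal argument plus Mertens-type estimates; these are the same idea, and your route even yields the slightly better constant $2e^\gamma \approx 3.56$. Your Abel-summation identity for step (ii) is also correct as stated.

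The gap is in how you bound the integrand in step (ii). Everything hinges on $\varrho(t,y) \leq (1+o(1))\varrho(S,y)$ uniformly for $t \in [S, S\omega]$, which you propose to extract by iterating Fact~\ref{HT-smooths} over $\lceil \log\omega / \log y\rceil$ steps of ratio at most $y$. That iteration does not close over the full range the Claim allows. The hypothesis permits $\log\omega$ up to $\tfrac{1}{2}\log^2 S$, so the number of saddlepoint steps can be as large as $\log^2 S/(2\log y)$. Each step carries a factor $1 + O(u_k^{-1} + y^{-1}\log y)$; the $u_k^{-1}$ contributions compound to $(1 + \log\omega/\log S)^{O(1)}$, which can be a positive power of $\log S$ rather than $1+o(1)$, and the $y^{-1}\log y$ contributions compound to $\exp(O(\log\omega / y))$, which diverges when $y$ is as small as $\log^{O(1)} S$ (a regime the hypotheses permit) and $\omega$ is near the top of the allowed range. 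You flag this as requiring care, but it is not bookkeeping: your headroom is only $4/(2e^\gamma) \approx 1.12$, and the accumulated multiplicative loss genuinely overruns it. The paper sidesteps the issue entirely by splitting $(S, S\omega)$ into dyadic blocks and invoking Hildebrand's \emph{exact} inequality $\Psi(2x,y) \leq 2\Psi(x,y)$ (Theorem~4 of~\cite{Hildebrand}), which gives $\varrho(S2^{i+1},y) \leq \varrho(S2^i,y)$ with no error term at all; monotonicity of $\varrho$ across dyadic scales then makes the accumulation problem vanish. Substituting that exact monotonicity for your iterated saddlepoint estimate is the missing ingredient.
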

\begin{rem} In applications, we have $\log \omega = \o(\log^2 S)$, which plainly suffices.
\end{rem}
\begin{proof}
We obtain a uniform lower bound on $\phi(x) x^{-1}$ for $x \leq S\omega$. Note that
$
\phi(x)x^{-1} = \prod_{p \mid x, p \textrm{ prime}}\left(1 - p^{-1}\right).
$
Then any value of $\phi(x)x^{-1}$ attained for $x \leq S\omega$ is attained for a square-free $x$ in the same range. Suppose there are primes $p < p'$ such that $p \nmid x$ and $p' \mid x$. Let $x' = xp{p'}^{-1} < x$. Then
\[
\frac{\phi(x')}{x'} = \frac{\phi(x)}{x}\left(1-\frac{1}{p}\right)\left(1-\frac{1}{p'}\right)^{-1}
= \frac{\phi(x)}{x}\left(1-\frac{p^{-1}-{p'}^{-1}}{1-{p'}^{-1}}\right) < \frac{\phi(x)}{x}.
\]
Immediately, we deduce that for $x \leq S\omega$ the minimal value of $\phi(x)x^{-1}$ is obtained for $x = \prod_{p \leq k, p \textrm{ prime}} p$ for some prime $k$. For such an $x$,
\[
\frac{\phi(x)}{x} = \prod_{p \leq k, p \textrm{ prime}}(1 - p^{-1}),
\]
which is a decreasing function of $k$. Hence the minimal value is obtained for $k$ maximal such that $x \leq S\omega$.

For all $k \geq 2$ and $x$ the product of the primes below $k$, we have~\cite[Theorems 4 and 7]{Rosser}:
\[
\log x = \sum_{p \leq k, p\textrm{ prime}} \log p > \frac{k}{2} - 1,\quad \frac{\phi(x)}{x} = \prod_{p \leq k, p\textrm{ prime}} \left(1-\frac{1}{p}\right) \geq \frac{1}{2\log (k+2)}
\]
Note that $k \leq 2\log x + 1$, and $\log x \leq \log S + \log \omega \leq \frac{1}{2}\log^2 S - \frac{3}{2}$. Then:
\[
\frac{\phi\left(x\right)}{x} \geq \frac{1}{2\log (2 \log x + 3)} \geq \frac{1}{4\log \log S}.
\]
Hence for all $s \leq S\omega$, $\phi\left(s\right) \geq s\left(4\log \log S\right)^{-1}$, and so:
\begin{align*}
\sum_{\substack{S < s < S\omega \\ s\textrm{ is }y\textrm{-smooth}}} \frac{1}{\phi\left(s\right)}
&\leq 4\log \log S \sum_{\substack{S \leq s \leq S\omega \\ s\textrm{ is }y\textrm{-smooth}}} s^{-1}
\leq 4\log \log S \sum_{i=0}^{\lceil\log_2 \omega\rceil}\sum_{\substack{S2^i \leq s \leq S2^{i+1} \\ s\textrm{ is }y\textrm{-smooth}}} s^{-1}
\\
&\leq 4\log \log S \sum_{i=0}^{\lceil\log_2 \omega\rceil}\frac{\Psi\left(S2^{i+1},y\right) - \Psi\left(S2^i, y\right)}{S2^i}\\
&\leq 4\log \log S \sum_{i=0}^{\lceil\log_2 \omega\rceil} \varrho\left(S2^i, y\right)
\leq 4\varrho\left(S, y\right)\log \omega \log \log S
\end{align*}
To show the last two inequalities, we note that $\Psi\left(2x, y\right) \leq 2\Psi\left(x, y\right)$, a result of Hildebrand~\cite[Theorem 4]{Hildebrand}. Hence $\Psi\left(S2^{i+1},y\right) - \Psi\left(S2^i, y\right) \leq \Psi\left(S2^i, y\right)$ which yields the first inequality; from $\varrho\left(2x, y\right) \leq \varrho\left(x, y\right)$, we obtain $\varrho\left(S2^i, y\right) \leq \varrho\left(S, y\right)$ as required for the second inequality.
\end{proof}

\begin{claim}\label{smooth-density-smoothness} Suppose that $u = \frac{\log x}{\log y} \rightarrow \infty$. Then for any $c \geq 0$.
$
\varrho\left(x, y\right) = \varrho\left(xy^c, y\right) (\log x)^{\O(\lceil c\rceil)}.
$
\end{claim}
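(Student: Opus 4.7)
The plan is to prove the upper bound $\varrho(x,y) \leq \varrho(xy^c, y)(\log x)^{O(\lceil c\rceil)}$ using the Hildebrand--Tenenbaum saddle point estimate (Fact~\refp{HT-smooths}); the matching lower bound $\varrho(xy^c,y)\leq \varrho(x,y)$ follows from the observation that $\Psi(\cdot,y)$ grows sub-linearly, so $\varrho$ is monotone decreasing in its first argument.

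First I would set up the iteration. Fact~\refp{HT-smooths} allows us to multiply $x$ by any factor $c' \in [1,y]$, so for $c \leq 1$ we may apply it directly with multiplier $y^c \leq y$. For $c > 1$, split $c = c_1 + \ldots + c_k$ with $k = \lceil c \rceil$ and each $c_i \in (0,1]$, and apply the saddle point estimate $k$ times along the chain $x, xy^{c_1}, xy^{c_1+c_2}, \ldots, xy^c$. This yields
\[
\Psi(xy^c, y) = \Psi(x,y)\, y^{\sum_{i=1}^{k} c_i \alpha_i}\,\prod_{i=1}^{k}\!\left(1 + \O\!\left(\frac{1}{u_i} + \frac{\log y}{y}\right)\right)
\]
where $\alpha_i$ is the saddle point at the $(i-1)$-th intermediate value of $x$. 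Since $u \to \infty$ and the range of $u_i$ is comparable to $u$, each error factor is $1+\o(1)$, and $(1+\o(1))^k = \exp(\o(k)) \leq (\log x)^{\o(\lceil c \rceil)}$, which is absorbed into the claimed bound.

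The main estimate is bounding $1-\alpha_i$. From the explicit formula in Fact~\refp{HT-smooths},
\[
1-\alpha(x',y) = \frac{\log y - \log(y/\log x' + 1)}{\log y} + \O\!\left(\frac{\log\log y}{\log y}\right) = \frac{\log\!\left(\frac{y\log x'}{y+\log x'}\right)^{\!-1}\log y}{\log y} \cdot \ldots
\]
which after simplification gives $1-\alpha(x',y) = \O(\log\log x'/\log y)$ regardless of whether $y \geq \log x'$ or $y < \log x'$ (in the latter case the bound is in fact sharper). Since $\log\log(xy^c) = \log\log x + \O(\log c)$ across all intermediate points, each $1-\alpha_i = \O(\log\log x/\log y + \log c/\log y)$. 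Hence
\[
y^{c - \sum_i c_i\alpha_i} = y^{\sum_i c_i(1-\alpha_i)} \leq y^{\O(c\log\log x/\log y)} \cdot y^{\O(c\log c/\log y)} \leq (\log x)^{\O(c)}.
\]
Dividing by $y^c$ then gives $\varrho(x,y)/\varrho(xy^c,y) \leq (\log x)^{\O(\lceil c \rceil)}$ as required.

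The main obstacle is the bookkeeping in the iterative step: ensuring that the cumulative multiplicative error from $k$ applications of the saddle point formula, together with the variation of $\alpha_i$ across the chain (noting $\alpha$ depends on $x'$), stays bounded by $(\log x)^{O(\lceil c \rceil)}$. This is handled by observing that $\alpha$ is monotone in $x'$ for fixed $y$ (so the $\alpha_i$ are controlled by $\alpha(xy^c,y)$ at one end and $\alpha(x,y)$ at the other), and by the uniformity of the error term $\O(1/u + \log y/y)$ throughout the chain since $u \to \infty$ and $y$ is much larger than $\log y$.
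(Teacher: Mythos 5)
Your proposal is correct and takes essentially the same route as the paper: both split $c$ into $\lceil c\rceil$ increments of size at most $1$ and iterate the Hildebrand--Tenenbaum saddle-point estimate (Fact~\ref{HT-smooths}) along the chain $x, xy^{c_1}, \ldots, xy^{c}$. The paper works directly with the ratio $\varrho(xy^z,y)/\varrho(x,y)$ and simplifies it to $\log^{-z+\O(1)}x$ in one display, whereas you isolate the quantity $1-\alpha_i = \O(\log\log x'/\log y)$ and sum $c_i(1-\alpha_i)$; these are the same computation organised differently, and both yield the stated bound.
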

\begin{proof}
From Fact~\refp{HT-smooths}, for any $1 \leq v \leq y$:
\begin{align*}
\Psi\left(vx, y\right) &= \Psi\left(x,y\right)v^{\alpha\left(x,y\right)}\left(1+\O\left(u^{-1}+y^{-1}\log y\right)\right), \text{ where} \\
\alpha\left(x,y\right) &= \frac{\log\left(\frac{y}{\log x} + 1\right)}{\log y} \left(1+ \O\left(\frac{\log \log \left(y+1\right)}{\log y}\right)\right) \end{align*}
As a corollary, for any $0 \leq z \leq 1$:
\begin{align*}
\frac{\varrho\left(xy^z,y\right)}{\varrho\left(x,y\right)} &= \frac{\Psi\left(xy^z,y\right)}{y^z\Psi\left(x,y\right)} 
= \frac{1}{y^z}\left(\frac{y}{\log x} + 1\right)^{z\left(1+\O\left(\frac{\log \log\left(y+1\right)}{\log y}\right)\right)}
\left(1 + \O\left(\frac{1}{u} + \frac{\log y}{y}\right)\right)
\end{align*}
Note that $\log \left(\frac{y}{\log x} + 1\right) = \O(\log y)$, $\left(\log x\right)^{-1} + y^{-1} = (1+\o(1))\left(\log x\right)^{-1}$ and $\log y = u^{-1} \log x$. Hence:
\begin{align*}
\frac{\varrho\left(xy^z,y\right)}{\varrho\left(x,y\right)} &= \left(\frac{1}{\log x} + \frac{1}{y}\right)^z \exp\left(\O\left(\log \log\left(y+1\right)\right)\right)
\left(1 + \O\left(\frac{1}{u} + \frac{\log y}{y}\right)\right) \\
&= \frac{\log^{\O\left(1\right)}y}{\log^z x}
\left(1 + \frac{1}{u}\O\left(1 + \frac{\log x}{y}\right)\right) 
= \log^{-z+\O\left(1\right)}x
\end{align*}
Since $\log\left(xy^i\right) = \left(1+i/u\right)\log x = \log^{1+\o_u\left(1\right)}x$ for all $0 \leq i \leq c$, we can apply the above bound $\lceil c \rceil$ times with $z = c \lceil c\rceil^{-1}$ to obtain the claimed bound.
\end{proof}
We first bound the contribution to $\mathcal{W}$ 
from characters in $\mathcal{G}_1$ via Fact~\refp{character-bound} and Remark~\refp{G1-loses-term}:
\begin{align*}
&\sum_{\chi^* \in \mathcal{G}_1}\sum_{\substack{q \in [Q\omega^{-1}, Q]\\ q\textrm{ is }y\textrm{-smooth}}}\frac{1}{\phi\left(q\right)}
\sum_{\substack{\chi_q \\ \chi^*\textrm{ induces }\chi_q}} |\Psi\left(x,y;\chi_q\right)|
\\
&\ll \sum_{\chi^* \in \mathcal{G}_1}\sum_{\substack{q \in [Q\omega^{-1}, Q]\\ q\textrm{ is }y\textrm{-smooth}}} \frac{1}{\phi\left(q\right)}
\sum_{\substack{\chi_q \\ \chi^*\textrm{ induces }\chi_q}}
\Psi\left(x,y\right) \sqrt{\log x \log y}\left(e^{-b\sqrt{\log x}} + y^{-b}\right),
\end{align*}
We write the smooth modulus as $q = s r$ for $r = \operatorname{cond}\left(\chi^*\right)$. Using the fact that $
\phi\left(rs\right) \geq \phi\left(r\right)\phi\left(s\right) \forall r, s$, and that the number of primitive characters of modulus $r$ is at most $\phi\left(r\right)$, the above is:
\begin{align*}
&= \Psi\left(x,y\right) \sqrt{\log x \log y}\left(e^{-b\sqrt{\log x}} + y^{-b}\right)
\sum_{\substack{r < m\\r\textrm{ is }y\textrm{-smooth}}}
\sum_{\chi_r^* \in \mathcal{G}_1}
\sum_{\substack{s \in \left[\frac{Q}{\omega r}, \frac{Q}{r}\right]\\ s\textrm{ is }y\textrm{-smooth}}} \frac{1}{\phi\left(rs\right)} \\
&\leq \Psi\left(x,y\right) \sqrt{\log x \log y}\left(e^{-b\sqrt{\log x}} + y^{-b}\right)
\sum_{\substack{r < m\\r\textrm{ is }y\textrm{-smooth}}}
\sum_{\chi_r^* \in \mathcal{G}_1} \frac{1}{\phi\left(r\right)}
\sum_{\substack{s \in \left[\frac{Q}{\omega r}, \frac{Q}{r}\right]\\ s\textrm{ is }y\textrm{-smooth}}} \frac{1}{\phi\left(s\right)}
\\
&\leq \Psi\left(x,y\right) \sqrt{\log x \log y}\left(e^{-b\sqrt{\log x}} + y^{-b}\right)
\sum_{\substack{r < m\\r\textrm{ is }y\textrm{-smooth}}} 1
\sum_{\substack{s \in \left[\frac{Q}{\omega r}, \frac{Q}{r}\right]\\ s\textrm{ is }y\textrm{-smooth}}} \frac{1}{\phi\left(s\right)}
\end{align*}
Note that $\omega r = y^{\O(1)}$. We now use Claims~\refp{smooth-inverse-sum} and~\refp{smooth-density-smoothness}to bound the above as
\begin{align*}
&\leq \Psi\left(x,y\right) \sqrt{\log x \log y}\left(e^{-b\sqrt{\log x}} + y^{-b}\right)
\sum_{\substack{r < m\\r\textrm{ is }y\textrm{-smooth}}}4\varrho\left(\frac{Q}{\omega r}, y\right)\log\omega \log \log Q
\\
&\leq \Psi\left(x,y\right) \left(e^{-b\sqrt{\log x}} + y^{-b}\right)
\varrho\left(Q, y\right) \left[4m\sqrt{\log x \log y}\log\omega \log \log Q \log^{\O\left(1\right)}x\right]
\end{align*}
From Equation~\refp{m-M-defn}, $m \leq y^{\eta}$, and we can ensure $\eta < \frac{b}{4}$. So we obtain:
\[
\ll \Psi\left(x,y\right) \left(e^{-\frac{b}{2}\sqrt{\log x}} + y^{-\frac{b}{2}}\right)\varrho\left(Q, y\right).
\]
This suffices for Lemma~\refp{HZ-1-large}, as $u \log y (\log u)^{-2} = \log x (\log u)^{-2}$ so:
\begin{equation}\label{yu-is-small}
\min\left(\log y, u \log^{-2} u\right) = \o\left(\sqrt{\log x}\right).
\end{equation}
Hence the first term can be absorbed into $c$; the second can be absorbed if we choose $c < \frac{b}{2}$.

We now bound the contribution to $\mathcal{W}$ from characters in $\mathcal{G}_2$. Recall that these characters have modulus in $(m, M]$. We take $\eta$ small enough that $M^2 \log M < x^{1/1000}$. Set:
\[
\epsilon \defeq \min\left\{\frac{1}{300}, \frac{10 \log r}{\log y}\right\}\quad\textrm{and}\quad H\defeq r^{100}
\]
Proceeding similarly and using Fact~\refp{zero-free-character}:
\begin{align*}
&\sum_{\chi^* \in \mathcal{G}_2}\sum_{\substack{q \in [Q\omega^{-1}, Q]\\ q\textrm{ is }y\textrm{-smooth}}}\frac{1}{\phi\left(q\right)}
\sum_{\substack{\chi_q \\ \chi^*\textrm{ induces }\chi_q}} |\Psi\left(x,y;\chi_q\right)|
\\
&\leq \sum_{\substack{m \leq r < M\\ r\textrm{ is }y\textrm{-smooth}}}
\sum_{\chi^*_r \in \mathcal{G}_2}\frac{1}{\phi\left(r\right)}
\sum_{\substack{s \in \left[\frac{Q}{\omega r}, \frac{Q}{r}\right]\\ s\textrm{ is }y\textrm{-smooth}}}\frac{1}{\phi\left(s\right)}
\sum_{\substack{\chi_{rs} \\ \chi^*_r\textrm{ induces }\chi_{rs}}} |\Psi\left(x,y;\chi_{rs}\right)|
\\
&\ll \sum_{\substack{m \leq r < M\\ r\textrm{ is }y\textrm{-smooth}}}
1
\sum_{\substack{s \in \left[\frac{Q}{\omega r}, \frac{Q}{r}\right]\\ s\textrm{ is }y\textrm{-smooth}}}\frac{1}{\phi\left(s\right)}
\Psi\left(x,y\right) \sqrt{\log x \log y}\left(\frac{\log r}{x^{0.001}} + r^{-2}\right)
\end{align*}
Recalling that $M^2 \log M \leq x^{0.001}$ and using Claim~\refp{smooth-inverse-sum} we obtain:
\begin{align*}
&\ll \Psi\left(x,y\right) \sqrt{\log x \log y}
\sum_{\substack{m \leq r < M\\ r\textrm{ is }y\textrm{-smooth}}}
\sum_{\substack{s \in \left[\frac{Q}{\omega r}, \frac{Q}{r}\right]\\ s\textrm{ is }y\textrm{-smooth}}}\frac{1}{r^2\phi\left(s\right)}
\\
&\leq \Psi\left(x,y\right) \sqrt{\log x \log y}
\sum_{\substack{m \leq r < M\\ r\textrm{ is }y\textrm{-smooth}}}
\frac{4\log \omega  \log \log Q}{r^2} \varrho\left(\frac{Q}{\omega r}, y\right)
\end{align*}
Note that $\varrho\left(\frac{Q}{\omega r}, y\right)r^{-1} = \frac{\omega}{Q} \Psi\left(\frac{Q}{\omega r}, y\right)$, and so is decreasing in $r$. Hence by Claim~\refp{smooth-density-smoothness}, the above is:
\begin{align*}
&\leq\Psi\left(x,y\right)\sqrt{\log x \log y}\;4\log \omega \log \log Q \varrho\left(\frac{Q}{\omega m}, y\right)m^{-1}
\sum_{\substack{m \leq r < M\\ r\textrm{ is }y\textrm{-smooth}}}r^{-1}
\\
&\leq\Psi\left(x,y\right)m^{-1} \varrho\left(Q, y\right) \left[4\log^{\O\left(1\right)}x \sqrt{\log x \log y} \log \omega \log \log Q \log M\right]\\
&\leq\Psi\left(x,y\right)\left(e^{-\eta\sqrt{\log x}/2} + y^{-\eta / 2}\right)\varrho\left(Q, y\right) \\
\end{align*}
as from Equation~\refp{m-M-defn}, $m^{1/2}$ dominates the logarithmic terms. Again by Equation~\refp{yu-is-small} this suffices for Lemma~\refp{HZ-1-large}.

We now bound the contribution to $\mathcal{W}$ 
from characters of small conductor which are not in $\mathcal{G}_1$.
By Page's Theorem~\cite[Lemma 8]{Page}, there is at most one character with conductor $< m$ and not in $\mathcal{G}_1$, if $\eta$ is chosen small enough in terms of $B$. Furthermore, such a character must be real. If such a character $\chi^*_{e}$ exists with conductor $r_e$, we proceed similarly and bound its contribution as:
\begin{align*}
&\sum_{\substack{r_e \mid q \in \left[Q\omega^{-1}, Q\right]\\ q\textrm{ is }y\textrm{-smooth}}}\frac{1}{\phi\left(q\right)}
\sum_{\substack{\chi_q \\ \chi^*_e\textrm{ induces }\chi_q}} |\Psi\left(x,y;\chi_q\right)| 
\ll \frac{1}{\phi\left(r_e\right)}
\sum_{\substack{s \in \left[\frac{Q}{\omega r_e}, \frac{Q}{r_e}\right]\\ s\textrm{ is }y\textrm{-smooth}}}\frac{1}{\phi\left(s\right)}
\max_{\substack{\chi_q : q < m\\ \chi^*_e\textrm{ induces }\chi_q}} |\Psi\left(x,y; \chi_q\right)| \\
&\ll \frac{4}{\phi\left(r_e\right)}\log \omega \log \log Q (\log x)^{\O\left(1\right)}\varrho
\left(Q, y\right)
\max_{\substack{\chi_q : q < m\\ \chi^*_e\textrm{ induces }\chi_q}} |\Psi\left(x,y; \chi_q\right)|
\end{align*}
Fact~\refp{Harper-Page} bounds all of these $|\Psi\left(x,y;\chi_q\right)|$,
and we can absorb logarithmic terms into the constant in $\exp(\O(u \log^{-2} u))$. Hence the contribution of characters lying over $\chi_e^{*}$ is:
\[
\ll \Psi\left(x,y\right)\varrho\left(Q, y\right)\left(\exp\left(-\O\left(\frac{u}{\log^2\left(u+1\right)}\right)\right) + y^{-\O\left(1\right)}\right)
\]
which suffices for Lemma~\refp{HZ-1-large}.

We now bound the contribution to $\mathcal{W}$ from characters of medium conductor which are not in $\mathcal{G}_2$. Similarly we get:
\begin{align*}
&\sum_{\substack{\chi^* \notin \mathcal{G}_2 \\ \operatorname{cond}\left(\chi^*\right)\in \left[m,M\right)}}\sum_{\substack{q \in [Q\omega^{-1}, Q]\\ q\textrm{ is }y\textrm{-smooth}}}\frac{1}{\phi\left(q\right)}
\sum_{\substack{\chi_q \\ \chi^*\textrm{ induces }\chi_q}} |\Psi\left(x,y;\chi_q\right)|
\\
&\ll\sum_{\substack{m < r \leq M\\ r\textrm{ is }y\textrm{-smooth}}}
\sum_{\chi^*_r \notin \mathcal{G}_2}\frac{1}{\phi\left(r\right)}
\sum_{\substack{s \in \left[\frac{Q}{\omega r}, \frac{Q}{r}\right]\\ s\textrm{ is }y\textrm{-smooth}}}\frac{1}{\phi\left(s\right)}
|\Psi\left(x,y, \chi_r\right)|
\end{align*}
Using the trivial bound $|\Psi\left(x,y, \chi_r\right)| \leq \Psi\left(x,y\right)$ and Claim~\refp{smooth-inverse-sum} the above is:
\begin{align*}
&\ll \Psi\left(x,y\right)\log \omega \log \log Q
\sum_{\substack{m < r \leq M\\ r\textrm{ is }y\textrm{-smooth}}}
\sum_{\chi^*_r \notin \mathcal{G}_2}\frac{1}{\phi\left(r\right)} \varrho\left(\frac{Q}{\omega r}, y\right) \\
&\ll \Psi\left(x,y\right) \log \omega \log \log Q
\sum_{i=0}^{\lfloor\log_2\left(M / m\right)\rfloor}
\sum_{\substack{m2^i < r \leq m2^{i+1}\\ r\textrm{ is }y\textrm{-smooth}}}
\sum_{\chi^*_r \notin \mathcal{G}_2}\frac{1}{\phi\left(r\right)} \varrho\left(\frac{Q}{\omega r}, y\right)
\end{align*}
Note that $Q/\omega M = y^{\boldsymbol{\omega}\left(1\right)}$, so in Fact~\refp{HT-smooths} the saddlepoint $\alpha \rightarrow 0$ and hence $\varrho\left(\frac{Q}{\omega r}, y\right)r^{-1/10}$ decreases when $r$ is doubled. Hence using Fact~\refp{zero-density} and Claim~\refp{smooth-density-smoothness} the above is:
\begin{align*}
&\ll \Psi\left(x,y\right) \log \omega \log \log Q
\sum_{i=0}^{\lfloor\log_2\left(M / m\right)\rfloor}
\varrho\left(\frac{Q}{\omega m2^i}, y\right)\left(m2^i\right)^{-1/10} \\
&\ll \Psi\left(x,y\right) \log \omega \log \log Q \log M
m^{-1/10} \varrho\left(\frac{Q}{\omega m}, y\right) \\
&\ll \Psi\left(x,y\right)\varrho\left(Q, y\right) \log^{\O\left(1\right)}x \log \omega \log \log Q \log M
m^{-1/10}\\
&\ll \Psi\left(x,y\right) \varrho\left(Q, y\right) \left(e^{-\eta\sqrt{\log x}/20} + y^{-\eta / 20}\right)
\end{align*}
as we absorb the logarithmic terms into $m^{1/20}$; this suffices for Lemma~\refp{HZ-1-large}.

We bound the contribution to $\mathcal{W}$ from large modulus characters using Fact~\refp{large-characters}.
Now $\varrho(x, \log^a x) = x^{-1/a + \o(1)}$ for any constant $a$ \cite[Corollary 7.9]{MontgomeryVaughan} and $y > \log^K x$, and so
\[
\sqrt{\varrho(x, y)}\varrho(Q, y) \geq x^{-\frac{3}{2K} + \o(1)}
\]
Hence if we set $K > 3\eta$ we can absorb all the logarithmic terms to bound the contribution of large modulus characters as
\[
\ll \log^{7/2}x \sqrt{\Psi\left(x,y\right)}Q  + \Psi(x,y)\varrho(Q, y)x^{-\eta}
\]
which suffices for Lemma~\refp{HZ-1-large} as $x^{-\eta} = y^{-\o(1)}$.
\end{proof}

\section*{Acknowledgements}
We thank Enrico Bombieri, Sary Drappeau, Andrew Granville, Adam Harper, Kumar Murty and Kannan Soundararjan for their technical suggestions and discussions. We thank Paul Balister and Rob Morris for their extensive comments and suggestions.

\bibliographystyle{plain}
\bibliography{NFS}

\end{document}